\newtheorem{theorem}{Theorem}[section]
\newtheorem{lemma}[theorem]{Lemma}
\newtheorem{lemdefi}{Lemma-Definition}
\newtheorem{proposition}[theorem]{Proposition}
\newtheorem{corollary}[theorem]{Corollary}
\newtheorem{definition}[theorem]{Definition}
\newtheorem{remark}[theorem]{Remark}
\newtheorem{example}[theorem]{Example}
\newcommand{\ggot}{\ensuremath{\mathfrak{g}}}
\newcommand{\hgot}{\ensuremath{\mathfrak{h}}}
\newcommand{\kgot}{\ensuremath{\mathfrak{k}}}
\newcommand{\qgot}{\ensuremath{\mathfrak{q}}}
\newcommand{\tgot}{\ensuremath{\mathfrak{t}}}
\newcommand{\Rgot}{\ensuremath{\mathfrak{R}}}
\newcommand{\Xgot}{\ensuremath{\mathfrak{X}}}
\newcommand{\Bcal}{\ensuremath{\mathcal{B}}}
\newcommand{\Ccal}{\ensuremath{\mathcal{C}}}
\newcommand{\Kcal}{\ensuremath{\mathcal{K}}}
\newcommand{\Lcal}{\ensuremath{\mathcal{L}}}
\newcommand{\Ocal}{\ensuremath{\mathcal{O}}}
\newcommand{\Pcal}{\ensuremath{\mathcal{P}}}
\newcommand{\Scal}{\ensuremath{\mathcal{S}}}
\newcommand{\Xcal}{\ensuremath{\mathcal{X}}}
\newcommand{\Zcal}{\ensuremath{\mathcal{Z}}}
\newcommand{\Ucal}{\ensuremath{\mathcal{U}}}
\newcommand{\Vcal}{\ensuremath{\mathcal{V}}}
\newcommand{\Z}{\ensuremath{\mathbb{Z}}}
\newcommand{\C}{\ensuremath{\mathbb{C}}}
\newcommand{\Q}{\ensuremath{\mathbb{Q}}}
\newcommand{\R}{\ensuremath{\mathbb{R}}}
\newcommand{\N}{\ensuremath{\mathbb{N}}}
\newcommand{\Pbb}{\ensuremath{\mathbb{P}}}
\newcommand\quot{{/\hspace{-0.5ex}/}}
\newcommand\inv{{^{-1}}}
\newcommand\conv{{\operatorname{Conv}}}
\newcommand\cone{{\operatorname{Cone}}}
\newcommand\Wt{{\operatorname{Wt}}}
\newcommand\Proj{{\operatorname{Proj}}}
\newcommand\Spec{{\operatorname{Spec}}}
\newcommand\Hom{{\operatorname{Hom}}}
\newcommand{\T}{\ensuremath{\hbox{\bf T}}}
\newcommand{\iu}{{i\mkern1mu}}
\begin{document}

\title{HKKN-stratifications in a non-compact framework}

\author{Paul-Emile Paradan\thanks{IMAG, Univ. Montpellier, CNRS, email : paul-emile.paradan@umontpellier.fr} \and 
Nicolas Ressayre\thanks{ICJ, Univ. Lyon 1, CNRS, email : ressayre@math.univ-lyon1.fr}}

\date{October 2025}

\maketitle

\begin{abstract} 
The aim of this paper is twofold. First, we study HKKN stratifications, both algebraically and analytically, 
for a Cartesian product between a vector space and a compact K\"ahler manifold. 
We then use these stratifications to prove convexity properties of the moment map 
for non-compact analytic subsets invariant under a Borel subgroup.
\end{abstract}

\tableofcontents

\section{Introduction}

When we want to describe the Kirwan polytope $\Delta(M)$ associated with the action of a reductive Lie group $G$ on a projective variety $M$, we are led to consider the Cartesian 
product $M\times G/B$ with the flag manifold $G/B$. Each regular dominant weight $\mu$ determines an ample line bundle $\Lcal_\mu$ which polarizes the $G$-action on 
$M\times G/B$, and the shifting trick gives that $\mu\notin \Delta(M)$ if and only if the open subset $(M\times G/B)^{\Lcal_\mu-ss}$  of semistable points relatively to $\Lcal_\mu$ 
is empty. In this case, the Hesselink-Kempf-Kirwan-Ness stratification of the polarized  variety $(M\times G/B,\Lcal_\mu)$ shows that a dense open subset of $M\times G/B$ is 
diffeomorphic to a dense open subset of a variety of the form $G\times_{P(\beta)}Y_\beta$, where $P(\beta)$ is a parabolic subgroup and $Y_\beta$
is a Bia\l ynicki-Birula variety, both attached to  a $1$-parameter subgroup $\beta$.

The origin of this article comes from the desire to make the same kind of argument work for an aﬃne $G$-variety $X$. For this we need an HKKN stratification for a 
Cartesian product between an affine and a projective variety, or more generally for a projective over affine variety. In this article, we study such HKKN stratifications 
both algebraically and analytically. 

Let us explain the content of the various sections of the article.

In Section~\ref{sec:HKKN-algebraic}, we work out the algebraic point of view by considering an HKKN stratification of the 
$G$-variety\footnote{Here $E$ and $V$ are two $G$-modules.} $V\times \Pbb E$. To this end, we introduced the following {\em relative} notions:
\begin{itemize}
\item $x:=(v,[w])\in V\times \Pbb E$ is said relatively semistable when $\overline{G(v,w)}\cap (V\times \{0\})=\emptyset$, and relatively unstable otherwise.
\item For any $1$-parameter subgroup $\tau:\C^*\to G$, we introduced a relative numerical invariant $\varpi_{rel}(-,\tau):V\times \Pbb E\to \R\cup\{-\infty\}$.
\item The relative Mumford invariant $\mathbf{M}_{rel}: V\times \Pbb E\to \R\cup\{-\infty\}$ is defined as $\mathbf{M}_{rel}(x)=\sup_{\tau\neq 0}\frac{\varpi_{rel}(x,\tau)}{\|\tau\|}$.
\end{itemize}

A direct application of Hilbert-Mumford Lemma  shows that $x\in V\times \Pbb E$ is relatively semistable if and only if $\mathbf{M}_{rel}(x)\leq 0$. 
When $x\in V\times \Pbb E$ is relatively unstable, we show the existence of a rational optimal destabilizing $1$-parameter subgroup $\tau_x$ such that 
$$
\mathbf{M}_{rel}(x)= \frac{\varpi_{rel}(x,\tau_x)}{\|\tau_x\|}=\|\tau_x\|.
$$
Here, the parabolic subgroup $P(\tau_x)$ is uniquely defined, as is the $G$-orbit $\langle \tau_x\rangle$ of the $1$-PS $\tau_x$. 
The set $\{\langle \tau_x\rangle, x \ {\rm relatively\ unstable}\}$ is finite, equal to $\{\langle \tau^1\rangle, \cdots, \langle \tau^n\rangle\}$.
The HKKN stratification 
\begin{equation}\label{eq:stratification-relative-intro}
V\times \Pbb E = (V\times \Pbb E)^{ss}\bigcup \Scal_{\langle \tau^1\rangle}\bigcup\cdots\bigcup \Scal_{\langle \tau^n\rangle}
\end{equation}
is defined by the relations $(V\times \Pbb E)^{ss}=\{x \ {\rm relatively\ semistable}\}$ and \break $\Scal_{\langle \tau^i\rangle}=\{x \ {\rm relatively\ unstable},\ \langle \tau_x\rangle=\langle \tau^i\rangle\}$. 
Finally, we show that each stratum $\Scal_{\langle \tau^i\rangle}$ is a smooth subvariety that admits the decomposition
$$
\Scal_{\langle \tau^i\rangle}\simeq G\times_{P(\tau^i)}\Scal_{\tau^i}
$$
where $\Scal_{\tau^i}=\{x \in\Scal_{\langle \tau^i\rangle} ,P(\tau_x)=P(\tau^i)\}$ is a dense open subset of a Bia\l ynicki-Birula variety attached to $\tau^i$.

The stratification (\ref{eq:stratification-relative-intro}) descends to any $G$-subvariety $X\subset V\times \Pbb E$: we have 
\begin{equation}\label{eq:stratification-relative-intro-X}
X = X^{ss}\bigcup X_{\langle \tau^1\rangle}\bigcup\cdots\bigcup X_{\langle \tau^n\rangle}
\end{equation}
where $X^{ss}=\{x\in X \ {\rm relatively\ semistable}\}$, and $X_{\langle \tau^i\rangle}:=X\cap \Scal_{\langle \tau^i\rangle}$.

Stratification (\ref{eq:stratification-relative-intro-X}) is already considered in the works of D. Halpern-Leistner \cite{Hal15}, but its existence 
is proven in a different way to ours. When $\dim E=1$, our stratification recovers the one given in the twisted affine setting by V. Hoskins \cite{Hos14} 
and M. Harada and G. Wilkin \cite{HW11}. 

\medskip

In Section~\ref{sec:polyhedron}, we explain how to associate a moment polyhedron $\Ccal(X)$ with any $G$-invariant closed subvariety $X\subset V\times \Pbb E$. 
As M. Brion did in the projective case \cite{Br87}, we show that for any relatively unstable $x\in V\times\Pbb E$, Mumford numerical invariant $\mathbf{M}_{rel}(x)>0$ 
is equal to the distance between $0$ and $\Ccal(\overline{Gx})$, and moreover, the projection of $0$ onto $\Ccal(\overline{Gx})$ defines an optimal 
destabilizing  $1$-parameter subgroup for some $x'\in G.x$. 

\medskip

In Sections~\ref{sec:semistability-kahler} and \ref{sec:KN-stratification}, we develop the analytic point of view by considering 
HKKN stratifications of non-compact K\"ahler Hamiltonian $G$-manifolds.  Many of the results and ideas presented in these sections arise 
from numerous studies of quotients on K\"ahler manifolds, including those of V. Guillemin, S. Sternberg, L. Ness, F. Kirwan, 
R. Sjamaar, P. Heinzner, I. Mundet i R., A. Teleman, L. Bruasse, C. Woodward, X. Chen, S. Sun and alii. In particular, we will adapt some key proofs 
drawn from the manuscript \cite{GRS21}.  In these sections, we show, in a self-contained exposition, that classical properties of 
HKKN stratifications in the compact setting are still valid when working with manifolds of the form $V\times M$, where $V$ is a 
$G$-module and $M$ is a compact K\"ahler Hamiltonian $G$-manifold. 

Let's recall some classic features of the K\"ahler framework. Let $K$ be a maximal compact subgroup of $G$, and let $\Phi$ the moment map associated to the Hamiltonian action of $K$ on $V\times M$. An element $x\in V\times M$ is called $\Phi$-semistable if $\overline{Gx}\cap \Phi^{-1}(0)\neq\emptyset$, and $\Phi$-unstable otherwise. Heinzner-Huckleberry-Loose have shown that the set $(V\times M)^{\Phi-ss}$ formed by the $\Phi$-semistable elements is an open subset  which permits to define a ``good'' quotient of $V\times M$ \cite{HHL94,HH96}. In this context, we can characterize $\Phi$-semistability as in the algebraic framework. The moment map defines a numerical invariant map 
$\varpi_{\Phi}:V\times M\times \kgot\to \R\cup\{-\infty\}$, and one defines a 
Mumford invariant $\mathbf{M}_{\Phi}: V\times M\to \R\cup\{-\infty\}$ by the relation $\mathbf{M}_{\Phi}(x)=\sup_{\lambda\neq 0}\frac{\varpi_{\Phi}(x,\lambda)}{\|\lambda\|}$. A result of A. Teleman \cite{Teleman04} on ``energy complete'' Hamiltonian manifolds applies here: $x$ is $\Phi$-semistable if and only if $\mathbf{M}_{\Phi}(x)\leq 0$.

The novelty here is that we can characterize $\Phi$-semistability by other means, even if the moment map $\Phi$ is not necessarily proper. 

First, we associate a Kempf-Ness function $\Psi_x:G\to\R$ to any $x\in V\times M$ \cite{Mun00}. Notice that, unlike the compact setting, the $\Psi_x$ functions are generally not globally Lipschitz. 
We prove in Section~\ref{sec:kempf-ness-functions} that, as in the compact framework,  $x$ is $\Phi$-semistable if and only if $\Psi_x$ is bounded below. 
This fundamental result is used in Section~\ref{sec:convexity} to prove certain convexity properties.

Second, we show that the negative gradient flow line $t\mapsto x(t)$ of 
$\frac{1}{2}\|\Phi\|^2$ through $x\in V\times M$ is well-defined for any $t\geq 0$, and moreover the limit $x_\infty:=\lim_{t\to\infty}x(t)$ always exists. 
Next, we show, as in the compact setting, that $x$ is $\Phi$-semistable if and only if $\Phi(x_\infty)=0$.

One of the main tool of our article concerns the existence and uniqueness of an optimal destabilizing vector $\lambda_x\in\kgot$ for each 
$x\in V\times M$ that is $\Phi$-unstable (see Theorems~\ref{theo:maximal-destabilizing-1} and \ref{theo:maximal-destabilizing-2}). This vector satisfies two 
important relations\footnote{For (ii), we use an identification $\kgot^*\simeq\kgot$ given by an invariant scalar product on $\kgot^*$.} :
\begin{enumerate}
\item $\mathbf{M}_{\Phi}(x)=\|\lambda_x\|=\frac{\varpi_{\Phi}(x,\lambda_x)}{\|\lambda_x\|}$.
\item $\Phi(x_\infty)$ belongs to the orbit $K\lambda_x$.
\end{enumerate}

The existence and uniqueness of an optimal destabilizing vector satisfying (i) is a consequence of a result by Bruasse-Teleman \cite{BT05}. 
In Section~\ref{sec:numerical invariant-phi}, we propose a self-contained proof of (i) and (ii) that follows from a result essentially due to Chen-Sun \cite{ChenSun}, and our proof of uniqueness closely follows that given by Kempf \cite{Kem78} in the compact algebraic setting.

\medskip

We can now define the HKKN stratification of $V\times M$ as in the compact framework. Let $\mathrm{Crit}(f)$ be the set of critical points of the function $f=\frac{1}{2}\|\Phi\|^2$. We first verify that $\Phi(\mathrm{Crit}(f))=\bigcup_{\beta\in\Bcal} K\beta$ is a {\em finite} collection of coadjoint orbits. For any $\beta\in\Bcal$, we then define the strata $\Scal_{\langle\beta\rangle}:=\{x\in V\times M,\  \Phi(x_\infty)\in K\beta\}$. 
Here, $\Scal_{\langle0\rangle}$ corresponds to the open set  
$(V\times M)^{\Phi-ss}$, so that 
$$
V\times M=(V\times M)^{\Phi-ss} \bigcup \bigcup_{\beta\in\Bcal-\{0\}}\Scal_{\langle\beta\rangle}.
$$
When $\beta\neq 0$, we define the substrata
$$
\Scal_\beta:=\{x\in V\times M,\  \lambda_x=\beta\}\subset \Scal_{\langle\beta\rangle}.
$$
In Section~\ref{sec:decomposition-S-beta}, we prove that $\Scal_\beta$ is a $P(\beta)$-invariant open subset of a 
Bia\l ynicki-Birula complex submanifold $Y_\beta$, and that $\Scal_{\langle\beta\rangle}$ is a 
locally closed complex submanifold isomorphic to $G\times_{P(\beta)}\Scal_\beta$.

\medskip

The last section is dedicated to the proof of some convexity properties of the moment map. This subject has been studied by numerous authors, 
but we are contributing new results/ideas that can be summarized as follows: we use Kempf-Ness functions and HKKN stratifications 
to prove convexity results for moment polytopes in the K\"ahler framework.

Let $B\subset G$ be the Borel subgroup associated with a Weyl chamber $\tgot^*_{0,+}$. For any subset $X\subset V\times M$, we define $\Delta(X):=\Phi(X) \cap \tgot^*_{0,+}$.

Our main result is the following 

\bigskip

{\bf Theorem A.} \ {\em  Let $X$ be a $B$-invariant irreducible analytic subset of $V\times M$. Then $\Delta(X)$ is closed and convex.}

\bigskip

Theorem A generalizes a result of Guillemin and Sjamaar \cite{GSj06} in two ways: the analytic subset $X$ is not necessarily compact and the K\"ahler structure on $M$ is not assumed to be integral.

\bigskip

Next, we specialize Theorem A for projective over affine varieties.

\bigskip

{\bf Theorem B.} \ {\em  Let $X$ be a $G$-invariant irreducible algebraic variety of $V\times \Pbb E$. There is a set $\{\lambda_1,\ldots,\lambda_\ell\}$ of dominant weights and a set $\{\xi_1,\ldots,\xi_p\}$ of rational dominant elements such that 
$$
\Delta(X)=\mathrm{convex \ hull}(\{\xi_1,\ldots,\xi_p\})+\sum_{j=1}^\ell\ \R^{\geq 0} \lambda_j.
$$
}

\bigskip

Theorem B is both a generalization of Brion's result \cite{Br87}
obtained for $G$-invariant projective algebraic varieties and of that
obtained by Sjamaar \cite{Sjamaar98} for $G$-invariant affine
varieties. These results assert that $\Delta(X)$ is a rational
polytope when $X$ is projective, it is a rational convex polyhedral
cone in the affine case and it is a rational polyhedron in general. 

\bigskip

There is another way to describe the moment polytope of $\overline{Bx}\subset V\times \Pbb E$. Let $\Phi_{T_0}: V\times \Pbb E\to\tgot_0^*$ 
be the moment map for the maximal torus $T_0\subset K$. Let $U:=[B,B]$ be the unipotent radical of the Borel subgroup.

\bigskip

{\bf Theorem C.} \ {\em  For any $x\in V\times \Pbb E$, we have
$$
\Delta(\overline{Bx})=\tgot_{+,0}^*\cap \bigcap_{u\in U}\Phi_{T_0} (\overline{Tux}).
$$
}

Theorem C generalizes a result obtained by Guillemin and Sjamaar \cite{GSj05} in the projective setting.

\subsection*{Notation} \label{sec:notations}

Throughout the paper, $K$ will denote a compact, connected Lie group, and $\kgot$ its Lie algebra. Let $G$ be its complexification with Lie algebra $\ggot=\kgot\oplus i\kgot$. 
Let $\exp:\ggot\to G$ be the exponential map. The elements of $\ggot_{ell}:=G(\kgot)=\{g\xi, \xi\in\kgot\}\subset\ggot$ are called elliptic.

Let $T_0\subset K$ and $T\subset G$ be maximal tori such that $T$ is the complexification of $T_0$. Let us denote by $\Xgot_{*}(T)$ the set of 
$1$-parameter subgroups\footnote{$1$-PS for short.} of $T$. The map 
\begin{equation}\label{eq:1-PS}
\gamma\mapsto \gamma_0:=\frac{d}{dt}\vert_{t=0} \gamma(e^{it})
\end{equation} 
defines a group isomorphism between $\Xgot_{*}(T)$ and the lattice $\Lambda:=\frac{1}{2\pi}\ker(\exp\vert_{\tgot_0})\subset\tgot_0$, so that 
$\gamma(e^{z})=\exp(-iz\gamma_0),\forall z\in \C$. The rational $1$-parameter subgroups of $T$ are those belonging to the 
$\Q$-vector space $\Xgot_*(T)_\Q:=\Xgot^*(T)\otimes_{\Z}\Q$:  $\tau \in \Xgot_*(T)_\Q$ if there exists $n\geq 1$ such that $n\tau\in \Xgot_*(T)$.
The map (\ref{eq:1-PS}) defines an isomorphism between $\Xgot_*(T)_\Q$ and  the $\Q$-vector space $\tgot_{0,\Q}\subset \tgot_{0}$ generated by $\Lambda$.

Let $\Xgot_{*}(G)$ be the set of $1$-PS of $G$. Since all $1$-PS of $G$ are conjugated to an element of $\Xgot_{*}(T)$, the map (\ref{eq:1-PS}) 
defines a bijective map between $\Xgot_{*}(G)$ and the subset $G(\Lambda)=\{g\lambda, \lambda\in\Lambda\}\subset \ggot_{ell}$. The rational $1$-parameter subgroups of $G$ 
belongs to the set $\Xgot_*(G)_\Q$ defined by the relation:  $\tau \in \Xgot_*(G)_\Q$ if there exists $n\geq 1$ such that $n\tau\in \Xgot_*(G)$. 
Here (\ref{eq:1-PS}) defines a bijective map between $\Xgot_*(G)_\Q$ and the subset $\ggot_{ell,\Q}:=G(\tgot_{0,\Q})$ of {\em rational elliptic} element of $\ggot$. So, an element $\mu\in\ggot$ is rational elliptic if there exists $n\geq 1$ such that  $e^{z}\in \C^*\mapsto \exp( -iz n\mu)$ defines a $1$-PS of $G$.

The characters of $T$ form an abelian group under pointwise multiplication, denoted $\Xgot^*(T)$. The map $\chi\mapsto \chi_0$, defined by the relation 
$\chi(\exp(\xi))=e^{i\langle\chi_0,\xi\rangle},\forall \xi\in\tgot_0^*$, realizes a canonical isomorphism between $\Xgot^*(T)$ and the lattice 
$\Lambda^*= \hom(\Lambda, \Z)\subset\tgot_0^*$. The $\Q$-vector space $\Xgot^*(T)_\Q:=\Xgot^*(T)\otimes_{\Z}\Q$ is canonically isomorphic with 
 the $\Q$-vector space $\tgot^*_{0,\Q}\subset \tgot^*_{0}$ generated by $\Lambda^*$.

Let us now choose a $K$-invariant inner product on $\kgot$, that takes integral values on the lattice $\Lambda$. It defines a $K$-invariant isomorphism of 
real vector spaces $\kgot^*\simeq\kgot$, and also an isomorphism $\tgot^*_{0,\Q}\simeq \tgot_{0,\Q}$ of $\Q$-vector spaces, invariant under the Weyl group action.

The quadratic map $\delta:\ggot\to\R$ defined by the relation $\delta(\xi_1\oplus i\xi_2):= \| \xi_1\|^2-\|\xi_2\|^2$ is $G$-invariant. Hence, for any elliptic element 
$\mu\in\ggot_{ell}$, the quantity $\delta(\mu)$ is positive if $\mu\neq 0$. Thus, we can define the norm of $\gamma\in \Xgot_{*}(G)$ by setting
$\|\gamma\|:=\sqrt{\delta(\gamma_0)}$, and we have $\|g \gamma g^{-1}\|=\|\gamma\|$ for any $g\in G$.

Let $\tgot^*_{0,+}$ be a Weyl chamber of $\tgot^*_0$. Let $\Lambda^*_+:=\Lambda^*\cap \tgot^*_{0,+}$ be the set of dominant weights of $G$; it is a 
submonoid of the group $\Lambda^*$ of characters of $T$. For each $\chi\in \Lambda^*_+,$ 
we choose a simple $G$-module $V_\chi$ of highest weight $\chi$.

\section{HKKN-stratifications in the relative algebraic framework} \label{sec:HKKN-algebraic}

\subsection{GIT-quotient}
\label{sec:defss}

Let $E$ et $V$ be two $G$-modules and $X$ be a closed $G$-stable
subvariety of $V\times\Pbb E$.
We adapt the notion of semistability for the action of $G$ on $X$
thinking about $X$ as a relative projective variety.
The situation is closed to that studied in \cite{GHH15,HHZ23}
where semistability for projective varieties over affine ones is
defined. But here, we emphasize on the induced stratification and the
moment polyhedron.
\\

\noindent{\bf Notation.}
The elements of $V\times\Pbb E$ are denoted by $x$. 
For such a $x$, we write $x=(v,m)$, implying that $v\in V$ and $m\in \Pbb E$. 
For $m\in\Pbb E$, $\tilde m$ denote a chosen nonzero point in the line $m$ in $E$.
\\

We associate to the subvariety $X\subset V\times\Pbb E$ the affine variety $\tilde X:=\{(v,\tilde{m}),\ (v,m)\in X\}$ of 
$V\times E$ which is a cone in the $E$-variable.
Consider, on the ring $\C[\tilde X]$ of regular functions, the
graduation induced by the $\C^*$-action on $E$ by homotheties.
For any $h\in \N$, $\C[E]_h$ denotes the space of homogeneous polynomials
of degree $h$.
Then $
\C[V\times E]=\oplus_{h\in\N}\C[V]\otimes \C[E]_h
$
is graded, and $\C[\tilde X]_h$ is the image of $\C[V]\otimes \C[E]_h$
under the restriction map to $\tilde X$. In other words, $X$ is
characterized by a $G$-stable graded ideal $I(X):=\oplus_{k\in\N}
I_k(X)$, where $I_k(X)\subset \C[V]\otimes \C[E]_k$ and  the quotient
$R(X):=\C[\tilde X]=\C[V\times E]/I(X)$ is graded: $R(X):=\oplus_{k\in\N} R_k(X)$ where 
$R_k(X)=\C[V]\otimes \C[E]_k/I_k(X)$.\\

\bigskip
Consider the affine variety $X_0=\Spec(R_0(X))$ that is equal to the image of $X$ under
the projection $V\times\Pbb E\longrightarrow  V$.

\begin{lemdefi}{}\label{def:us}
  Let $x=(v,m)\in X$.
  The following are equivalent:
  \begin{enumerate}
  \item there exist $h>0$ and a $G$-invariant function $f\in
    R_h(X)$ such that $f(v,\tilde m)\neq 0$;
    \item the
  orbit-closure $\overline{G.(v,\tilde m)}$ does not intersect $V\times \{0\}$.
  \end{enumerate}
  Then, the point $x$ is said to be   {\it (relatively) semistable}.
  Otherwise, $x$ is said to be {\it (relatively) unstable}.
The term relatively is omitted hereafter.  
For example,  the set of semistable and unstable points are
  respectively  denoted by ${X}^{\rm ss}$ and ${X}^{\rm
    us}$.
The subset ${X}^{\rm ss}$ is open and $G$-stable. 
\end{lemdefi}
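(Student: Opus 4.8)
The plan is to prove the equivalence $(i)\Leftrightarrow(ii)$ together with the two closing assertions (openness and $G$-stability of $X^{\rm ss}$). The equivalence is essentially a translation, via the Nullstellensatz, between the non-vanishing of an invariant function and a geometric separation statement about orbit closures, so I would organize the argument around the cone $V\times\{0\}$ and the grading on $R(X)$.

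\textbf{Proof of $(i)\Rightarrow(ii)$.} Suppose there exist $h>0$ and a $G$-invariant $f\in R_h(X)$ with $f(v,\tilde m)\neq 0$. Since $\C[E]_h$ consists of homogeneous degree-$h$ polynomials, every element of $R_h(X)$ vanishes identically on $V\times\{0\}$; in particular $f\equiv 0$ there. Now $f$ is continuous (indeed regular) and $G$-invariant, so it is constant on each orbit $G.(v,\tilde m)$ with value $f(v,\tilde m)\neq 0$, and by continuity it takes this same nonzero value on the closure $\overline{G.(v,\tilde m)}$. If this closure met $V\times\{0\}$, then $f$ would be forced to equal $f(v,\tilde m)\neq 0$ at a point of $V\times\{0\}$, contradicting $f\equiv 0$ there. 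Hence $\overline{G.(v,\tilde m)}\cap(V\times\{0\})=\emptyset$, which is $(ii)$.

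\textbf{Proof of $(ii)\Rightarrow(i)$.} Assume the orbit closure $Z:=\overline{G.(v,\tilde m)}$ does not meet $V\times\{0\}$. Both $Z$ and $\tilde X\cap(V\times\{0\})$ are closed $G$-stable subvarieties of the affine cone $\tilde X$ which are disjoint. By the Nullstellensatz their ideals are coprime, so there is a regular function on $\tilde X$ that equals $1$ on $Z$ and $0$ on $V\times\{0\}$. To upgrade this to a \emph{$G$-invariant homogeneous} invariant, I would proceed in two steps. First, because $V\times\{0\}$ is cut out inside $\tilde X$ by the positive-degree part $\bigoplus_{h>0}R_h(X)$, the disjointness forces the existence of some homogeneous $g\in R_h(X)$, $h>0$, with $g(v,\tilde m)\neq 0$ (otherwise $(v,\tilde m)$ would lie in the common zero locus of all positive-degree elements, i.e.\ in the image of $V\times\{0\}$, contradicting $(ii)$; here one uses that the closed $G$-orbit in $Z$ is mapped by every positive-degree invariant to the same value, so separation of $Z$ from $V\times\{0\}$ by the graded ring already produces a nonconstant-on-fibers element). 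Second, average $g$ over $G$ using the Reynolds operator for the reductive group $G$: since the grading is $G$-stable, the projection $R_h(X)\to R_h(X)^G$ is $G$-equivariant, and one checks that the $G$-invariant projection of $g$ still does not vanish at $(v,\tilde m)$, by the standard argument that the orbit $G.(v,\tilde m)$ and $V\times\{0\}$ are separated by some invariant once they are separated by an arbitrary function. This yields the desired $G$-invariant $f\in R_h(X)^G$ with $f(v,\tilde m)\neq 0$.

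\textbf{Openness and $G$-stability.} The set $X^{\rm ss}=\bigcup_{h>0,\ f\in R_h(X)^G}\{x=(v,m): f(v,\tilde m)\neq 0\}$ is a union of the nonvanishing loci of regular functions, hence open; note the condition $f(v,\tilde m)\neq 0$ is independent of the chosen representative $\tilde m$ since scaling $\tilde m$ only rescales $f(v,\tilde m)$ by a nonzero factor, $f$ being homogeneous of positive degree. The $G$-stability of $X^{\rm ss}$ is immediate from characterization $(ii)$: the condition $\overline{G.(v,\tilde m)}\cap(V\times\{0\})=\emptyset$ depends only on the orbit $G.(v,\tilde m)$, which is preserved under the $G$-action on $X$.

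\textbf{Main obstacle.} I expect the delicate point to be the second half of $(ii)\Rightarrow(i)$: producing a function that is simultaneously \emph{homogeneous of positive degree} and \emph{$G$-invariant}. Reductivity of $G$ (via the Reynolds operator) is what makes the invariant-separation step work, and compatibility of the Reynolds operator with the grading is what keeps the averaged function homogeneous. The standard fact that disjoint closed $G$-stable subsets of an affine $G$-variety are separated by an invariant function handles the $G$-invariance, while the cone structure in the $E$-variable — equivalently, that $V\times\{0\}$ is the vanishing locus of $\bigoplus_{h>0}R_h(X)$ — is what forces the separating invariant into positive degree.
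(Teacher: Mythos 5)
Your direction $(i)\Rightarrow(ii)$ and the closing assertions on $X^{\rm ss}$ match the paper's argument, but there is a genuine gap in the second step of your proof of $(ii)\Rightarrow(i)$. The Reynolds operator does not preserve non-vanishing at a point: for an arbitrary homogeneous $g\in R_h(X)$, $h>0$, with $g(v,\tilde m)\neq 0$, its invariant projection can perfectly well vanish at $(v,\tilde m)$. Concretely, take $V=\{0\}$, $E=\C^2$ with $G=\C^*$ acting with weights $1$ and $-1$, and $\tilde m=(1,1)$; this point is semistable (the invariant $z_1z_2$ equals $1$ there), the linear form $g=z_1$ is homogeneous of degree $1$ and nonzero at $\tilde m$, yet its Reynolds projection is $0$ because $\C[E]_1$ contains no invariants. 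So the claim ``one checks that the $G$-invariant projection of $g$ still does not vanish at $(v,\tilde m)$'' is false, and the separation lemma you cite does not rescue it: that lemma produces \emph{some} invariant, unrelated to your chosen $g$, and your argument never explains why \emph{that} invariant can be taken homogeneous of positive degree. Note also that your Step 1 (existence of a homogeneous positive-degree $g$ with $g(v,\tilde m)\neq 0$) is trivial — any linear form on $E$ not vanishing at $\tilde m$ works — and uses nothing about disjointness, a sign that the real content of the implication cannot sit there. In short, you try to secure homogeneity first and invariance second, and the second operation destroys the first's connection to the point.

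The repair is to perform the two operations in the opposite order, which is exactly the paper's proof. First apply the separation theorem for reductive groups to the disjoint closed $G$-stable subsets $\overline{G.(v,\tilde m)}$ and $V\times\{0\}$ of $V\times E$: there exists $f\in\C[V\times E]^G$ with $f|_{V\times\{0\}}=0$ and $f(v,\tilde m)\neq 0$. (Internally this does use the Reynolds operator, but applied to a Nullstellensatz partition of unity $1=f_1+f_2$ with $f_i$ in the ideals of the two sets; the legitimate property is that the Reynolds operator preserves ideals of $G$-stable subsets, not values at points.) Then decompose $f=\sum_{h\geq 0}f_h$ into homogeneous components for the grading induced by the homothety action on $E$; since this $\C^*$-action commutes with the $G$-action, each $f_h$ is again $G$-invariant. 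The hypothesis $f|_{V\times\{0\}}=0$ says precisely that $f_0=0$, so some $f_h$ with $h>0$ satisfies $f_h(v,\tilde m)\neq 0$, and its restriction to $\tilde X$ is the required element of $R_h(X)^G$. Your instinct that reductivity and the $G$-stability of the grading are the two essential ingredients is correct; they must be combined as ``separate invariantly, then take homogeneous components,'' not ``choose homogeneous, then average.''
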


\begin{proof}
  The first assertion implies the second one, since any
  $f\in R_h(X)$, with $h>0$, vanishes along
  $(V\times\{0\})\cap\tilde X$. 

Conversely, $\overline{G.(v,\tilde m)}$ and $V\times\{0\}$ are disjoint closed
$G$-stable subsets of $V\times E$. Thus, there exists $f\in\C[V\times E]^G$
such that $f_{|V\times\{0\}}=0$ and $f(v,\tilde m)\neq 0$. Write
$f=\sum_{h\geq 0}f_h$ with $f_h \in\C[V]\otimes\C[E]_h$.
The assumption $f_{|V\times\{0\}}=0$ means that $f_0=0$. 
Thus, there exists a positive $h$ such that $f_h(v,\tilde m)\neq 0$. 
But $(f_h)_{|V\times\{0\}}=0$. 
  
  The last assertion on $X^{\rm ss}$ is obvious.
\end{proof}

Define the quotient $X^{\rm ss}\quot G$ to be
$$
X^{\rm ss}\quot G:=\Proj(\oplus_{h\in\N}R_h(X)^G)
$$
endowed with the quotient map
$$
\pi\,:\, X^{\rm ss}\longrightarrow  X^{\rm ss}\quot G. 
$$
The inclusion $R_0(X)^G$ in $R(X)^G$
induces a commutative diagram
\begin{center}
\begin{tikzpicture}[>=latex, node distance=1.4cm]

\node (Xss) {$X^{\rm ss}$};
\node (XssG) [right=3cm of Xss] {$X^{\rm ss} \quot G$};
\node (X) [below of=Xss] {$X$};
\node (X0) [below of=X] {$X_0$};
\node (X0G) [right=3cm of X0] {$X_0 \quot G$};

\draw[->] (Xss) -- node[above] {$\pi$} (XssG);
\draw[->] (Xss) -- (X);
\draw[->] (X) -- (X0);
\draw[->] (X0) -- node[above] {$\pi_0$} (X0G);
\draw[->] (XssG) -- node[right] {$p$} (X0G);

\end{tikzpicture}
\end{center}
where $p$ is projective.

\begin{lemma}
  \label{lem:fibrequot}
  The quotient map $\pi$ is affine and for any affine open subset $U\subset
  X^{\rm ss}\quot G$, the restriction of $\pi$ on $\pi\inv(U)$ is
  the affine quotient $\pi\inv(U)\longrightarrow  \pi\inv(U)\quot G$. 
In particular, each fiber of $\pi$ contains a unique closed $G$-orbit in $X^{\rm ss}$.
\end{lemma}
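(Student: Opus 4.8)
The plan is to present $\pi$ as a gluing of affine GIT quotients over the standard affine charts of $\Proj(\oplus_h R_h(X)^G)$, using that $X$ itself is $\Proj R(X)$ over the affine base $X_0$. First I would fix the local model. For a homogeneous invariant $f\in R_h(X)^G$ with $h>0$, set $X_f:=\{x=(v,m)\in X^{\rm ss}: f(v,\tilde m)\neq 0\}$; this is well defined because $f$ is homogeneous of positive degree, and by Lemma-Definition~\ref{def:us} these sets cover $X^{\rm ss}$. As a principal open subset of $X=\Proj R(X)$, the chart $X_f$ is the affine variety $\Spec((R(X)_f)_0)$, where $(R(X)_f)_0$ is the degree-zero part of the localization $R(X)[f^{-1}]$. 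On the quotient side, the standard affine chart $D_+(f)\subset X^{\rm ss}\quot G$ is $\Spec(((R(X)^G)_f)_0)$, and, $\pi$ being the morphism induced by the graded inclusion $R(X)^G\hookrightarrow R(X)$, a direct computation of the induced map on homogeneous primes gives $\pi\inv(D_+(f))=X_f$.

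The crux is then the algebraic identity $((R(X)^G)_f)_0=((R(X)_f)_0)^G$. Since $G$ is reductive (it is the complexification of the compact group $K$) and $f$ is $G$-invariant, taking invariants commutes both with localization at $f$ and with the grading, which yields this equality. Concretely, $R(X)_f=\varinjlim(R(X)\xrightarrow{f}R(X)\xrightarrow{f}\cdots)$ has $G$-equivariant transition maps, and the invariants of a reductive group commute with this filtered colimit through the Reynolds operator. Consequently $\pi$ restricts over $D_+(f)$ to the affine GIT quotient $X_f\to X_f\quot G=\Spec(\C[X_f]^G)$. As $X^{\rm ss}\quot G$ is covered by the affine opens $D_+(f)$ whose preimages $X_f$ are affine, $\pi$ is an affine morphism.

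Next I would upgrade from the charts to an arbitrary affine open $U$. Both assignments $W\mapsto\Ocal_{X^{\rm ss}\quot G}(W)$ and $W\mapsto\Ocal_{X^{\rm ss}}(\pi\inv(W))^G$ are sheaves on $X^{\rm ss}\quot G$, and the previous step shows that they agree, compatibly with restriction maps, on the basis $\{D_+(f)\}$. Hence they coincide as sheaves, so $\C[\pi\inv(U)]^G=\C[U]$ for every affine open $U$; since $\pi\inv(U)$ is affine ($\pi$ being affine), this says precisely that $\pi\inv(U)\to U$ is the affine quotient $\pi\inv(U)\to\pi\inv(U)\quot G$.

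Finally, the statement on closed orbits is local on the base and reduces to the affine case just proved, namely that for an affine variety with a reductive group action every fibre of the affine quotient contains a unique closed orbit. Given $y\in X^{\rm ss}\quot G$, choose an affine $U\ni y$; the fibre $\pi\inv(y)$ is a fibre of $\pi\inv(U)\to U$, and since $\pi\inv(y)$ is closed in $X^{\rm ss}$, an orbit contained in it is closed in $X^{\rm ss}$ if and only if it is closed in $\pi\inv(U)$. This gives existence and uniqueness of the closed orbit in $\pi\inv(y)$. The one genuinely non-formal point is the identity $((R(X)^G)_f)_0=((R(X)_f)_0)^G$, where reductivity of $G$ is used and where $\pi$ is actually identified with a local affine quotient; the remaining steps are bookkeeping with the $\Proj$ construction and sheaf gluing.
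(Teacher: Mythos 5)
Your proof is correct and follows essentially the same route as the paper: work over the principal affine charts $D(f)$ attached to homogeneous invariants $f\in R_h(X)^G$, identify $\pi\inv(D(f))$ with the affine chart whose coordinate ring is the degree-zero localization, use that taking $G$-invariants commutes with localizing at the invariant $f$, and reduce the closed-orbit statement to the affine case. The only difference is one of exposition: you make explicit the identity $((R(X)^G)_f)_0=((R(X)_f)_0)^G$ and the sheaf-gluing step from the basis $\{D(f)\}$ to arbitrary affine opens, both of which the paper leaves implicit.
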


\begin{proof}
Let $f\in R_h(X)^G$ be nonzero, for some positive $h$.
Consider $D(f)\subset   X^{\rm ss}\quot G$ the associated
principal open subset. 
Then, $D(f)$ is affine with $R(X)^G_{(f)}=\{\frac
g{f^k}\,:\,g\in R_{kh}(X)^G$ with $k>0\}$ as ring of regular
function.

Similarly, thinking about $f$ as an element of  $R(X)_{(f)}$,
we get that $\pi^\inv(D(f))$ identifies with  the principal open subset $\tilde D(f)$ in $\tilde X$. 
Its ring of regular functions is 
$R(X)_{(f)}$. 
Hence, the lemma is true with $U=D(f)$. 
Since the subsets $D(f)$ form a covering the first assertion follows.

The "In particular" assertion being true when $X$ is affine, it is a consequence of the first part. 
\end{proof}

\noindent{\bf Basic Examples.}
\begin{enumerate}
  \item When $V=\{0\}$, $X$ is projective and endowed with the very ample $G$-linearized line bundle $\Ocal(1)$. 
  Then $X^{\rm ss}\quot G=X^{\rm ss}(\Ocal(1))\quot G$. 
  \item If $E=\C$ endowed with the trivial $G$-action, $X$ is affine, $X^{\rm ss}=X$ and 
  $X^{\rm ss}\quot G=X\quot G$ is the affine quotient.
  \item Let $\theta$ be a character of $G$ and $E$ be the associated 1-dimensional representation of $G$. 
  Here, $X$ is affine and $X^{\rm ss}\quot G$ is the quotient denoted by $X\quot_\theta G$ in {\it e.g.} \cite{King94}.
\end{enumerate}

Let
$\tau\,:\,\C^*\longrightarrow  G\in \Xgot_{*}(G)$ and $x\in X$.
We write $\lim_{t\to \infty}\tau(t)x=x_0$, if the map $f\,:\,\C^*\longrightarrow  X, t\longmapsto\tau(t\inv )x$ extends to a
regular function $\bar f\,:\,\C\longrightarrow  X$ with $\bar f(0)=x_0$. 

\begin{theorem}{\bf Hilbert-Mumford}
  \label{th:HM}
  Let $x=(v,m)\in X$.
 The point $x$ is unstable if and only if there exists
 $\tau\in \Xgot_{*}(G)$ such that
 \begin{enumerate}
 \item $\lim_{t\to \infty}\tau(t)v$ exists in $V$; and
 \item $\lim_{t\to \infty}\tau(t)\tilde m=0$. \end{enumerate}
\end{theorem}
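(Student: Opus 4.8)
The plan is to treat the two implications separately. The backward one is immediate, while the forward one rests on the classical Hilbert--Mumford theorem for affine varieties under a reductive group; note that $G$, being the complexification of the compact connected group $K$, is reductive, so that theorem applies.

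For $(\Leftarrow)$, suppose $\tau\in\Xgot_*(G)$ satisfies (i) and (ii). Set $p=(v,\tilde m)\in V\times E$ and consider $f\colon t\mapsto\tau(t\inv)p$. By (i) the first component $\tau(t\inv)v$ extends regularly across $t=0$ with value $v_0:=\lim_{t\to\infty}\tau(t)v$, and by (ii) the second component $\tau(t\inv)\tilde m$ extends with value $0$. Hence $f$ extends to a regular map $\bar f\colon\C\to V\times E$ with $\bar f(0)=(v_0,0)$. Since $\bar f(t)\in G.p$ for $t\neq 0$, we get $(v_0,0)\in\overline{G.(v,\tilde m)}\cap(V\times\{0\})$, so this intersection is nonempty and $x$ is unstable.

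For $(\Rightarrow)$, assume $x$ is unstable, i.e. $\overline{G.p}\cap(V\times\{0\})\neq\emptyset$ with $p=(v,\tilde m)$. Observe that $V\times\{0\}$ is a closed $G$-stable subset of the affine $G$-variety $V\times E$, the action being linear in the $E$-factor. First I would locate the unique closed $G$-orbit $O$ inside $\overline{G.p}$: the nonempty closed $G$-stable set $\overline{G.p}\cap(V\times\{0\})$ contains a closed orbit (take one of minimal dimension), which is then closed in $V\times E$ and hence must be the unique closed orbit of $\overline{G.p}$; in particular $O\subset V\times\{0\}$. The classical Hilbert--Mumford theorem then furnishes a $1$-PS $\sigma\in\Xgot_*(G)$ with $\lim_{t\to 0}\sigma(t)p\in O$. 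Replacing $\sigma$ by $\tau(t):=\sigma(t\inv)$ to match the convention of the statement, we obtain $\lim_{t\to\infty}\tau(t)p=(v_0,0)$ for some $v_0\in V$; reading off components yields exactly (i) $\lim_{t\to\infty}\tau(t)v=v_0\in V$ and (ii) $\lim_{t\to\infty}\tau(t)\tilde m=0$.

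The main obstacle is the forward direction, and precisely the invocation of the refined Hilbert--Mumford theorem: its content is that membership of $V\times\{0\}$ in the orbit closure can be realized along a single $1$-parameter subgroup, not merely in the Zariski closure. The bookkeeping step that the target closed set necessarily contains the unique closed orbit of $\overline{G.p}$---so that driving $p$ to that orbit automatically lands in $V\times\{0\}$---is the crux; everything else reduces to splitting limits in the product $V\times E$ and flipping the $1$-PS to reconcile the $t\to0$ and $t\to\infty$ conventions.
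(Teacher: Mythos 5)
Your proposal is correct and follows essentially the same route as the paper: the backward direction by extending $t\mapsto\tau(t^{-1})(v,\tilde m)$ across $t=0$, and the forward direction by locating the unique closed $G$-orbit of $\overline{G.(v,\tilde m)}$ inside the closed $G$-stable set $V\times\{0\}$ and then invoking the classical affine Hilbert--Mumford theorem (the paper cites \cite[Theorem~6.9]{PV94}) to reach it along a single $1$-PS. The only cosmetic difference is how the closed orbit is produced (you take an orbit of minimal dimension in the intersection, the paper takes the unique closed orbit in $\overline{G.(x',0)}$ for a point $(x',0)$ of the intersection), which is the same standard argument.
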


\begin{proof}
  If there exists $\tau\in \Xgot_{*}(G)$ as in the theorem, $(\lim_{t\to
    0}\tau(t) x,0)\in \overline{\tau(\C^*)(v,\tilde m)}\subset
  \overline{G(v,\tilde m)}$.
  Hence, $x$ is unstable.

  Conversely, assume that $x$ is unstable.
 By definition, there exists $x'\in X$ such that $(x',0)\in
 \overline{G(v,\tilde m)}$.
 Let $x^0$ be such that $G.(x^0,0)$ is the only closed $G$-orbit in
 $\overline{G(x',0)}$.
 Then $G.(x^0,0)$ is also the only closed $G$-orbit in $G.(v,\tilde m)$.
 The classical Hilbert-Mumford theorem (see {\it e.g.} \cite[Theorem~6.9]{PV94}) implies
 there exists $\tau$ as in the theorem.
\end{proof}

\subsection{Instability degree}

Fix $\tau\in \Xgot_{*}(G)$.
Then $E$ decomposes as $E=\oplus_{n\in\Z}E_n$ where $E_n=\{\tilde m\in
E\,:\,\forall t\in\C^*\quad \tau(t)\tilde m=t^n\tilde m\}$.
Let $\tilde m=\sum_n \tilde m_n$ be a vector written according this decomposition.
Assuming $\tilde m\neq 0$ and setting $m=[\tilde m]\in\Pbb E$, define
\begin{equation}
  \label{eq:defm}
  \varpi_E(m,\tau):=-\max\{n\in\Z\,:\,\tilde m_n\neq 0\}.
\end{equation}
Obviously, $\lim_{t\to \infty}\tau(t)\tilde m=0$ if and only if $\varpi_E(m,\tau)>0$.

We have a well-defined $Ad(G)$-invariant map $\Vert\ \Vert\,:\,\Xgot_{*}(G)-\{0\}\longrightarrow \  \R_{>0}$ (see Section {\bf Notation}).
Set 
$$
\bar \varpi_E(m,\tau)=\frac{\varpi_E(m,\tau)}{\Vert\tau\Vert},\quad \forall \tau\in \Xgot_{*}(G)-\{0\}.
$$
For  $x=(v,m)\in V\times \Pbb E$ and $\tau\in \Xgot_{*}(G)$, we define:
\begin{equation} \label{eq:defvarpibar}
\bar \varpi_{rel}(x,\tau)=
  \begin{cases}
     -\infty&\mbox{if }\lim_{t\to \infty}\tau(t)v\mbox{ does not exist in }V,\\
    \bar \varpi_E(m,\tau)& \mbox{otherwise.}
  \end{cases}
\end{equation}
Hence, $\lim_{t\to \infty}\tau(t)x$ exists in $V\times \Pbb E$ when $ \bar{\varpi}_{rel}(x,\tau)\neq-\infty$.

Set 
\begin{equation}
  \label{eq:defM}
  \mathbf{M}_{rel}(x):=\sup_{
      \tau\in \Xgot_{*}(G)-\{0\}
  }\bar \varpi_{rel}(x,\tau).
\end{equation}

\begin{remark}\label{rem:G-stable}
Notice that $\mathbf{M}_{rel}(x)=-\infty$ if and only if $v\in V$ is $G$-stable, i.e. the orbit $Gv$ is closed and the stabilizer subgroup 
$G_v$ is finite.
\end{remark}

The following result is a direct consequence of Hilbert-Mumford Theorem~\ref{th:HM}.

\begin{proposition}\label{prop:unstable-M-rel}
The point
$x=(v,m)\in V\times \Pbb E$ is unstable if and only if $\mathbf{M}_{rel}(x)>0$.
\end{proposition}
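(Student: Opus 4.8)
The plan is to read off the statement directly from the Hilbert-Mumford Theorem~\ref{th:HM} by translating its two conditions into the quantities defining $\mathbf{M}_{rel}$. The dictionary is the following: condition (i), the existence of $\lim_{t\to\infty}\tau(t)v$ in $V$, is by~(\ref{eq:defvarpibar}) exactly the statement that $\bar\varpi_{rel}(x,\tau)\neq-\infty$; and condition (ii), $\lim_{t\to\infty}\tau(t)\tilde m=0$, is exactly $\varpi_E(m,\tau)>0$, as noted just after~(\ref{eq:defm}). Since $\|\tau\|>0$ for $\tau\neq0$, the latter is equivalent to $\bar\varpi_E(m,\tau)>0$.

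For the forward implication, I would suppose $x$ is unstable. Theorem~\ref{th:HM} furnishes a $\tau\in\Xgot_*(G)$ satisfying (i) and (ii). This $\tau$ is nonzero: since $\tilde m\neq0$ while $\lim_{t\to\infty}\tau(t)\tilde m=0$, the $1$-PS cannot be trivial. Condition (i) gives $\bar\varpi_{rel}(x,\tau)=\bar\varpi_E(m,\tau)$, and condition (ii) gives $\bar\varpi_E(m,\tau)=\varpi_E(m,\tau)/\|\tau\|>0$. Taking the supremum in~(\ref{eq:defM}) then yields $\mathbf{M}_{rel}(x)\geq\bar\varpi_{rel}(x,\tau)>0$.

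For the converse, suppose $\mathbf{M}_{rel}(x)>0$. A positive supremum cannot occur with every term $\leq0$, so there exists $\tau\neq0$ with $\bar\varpi_{rel}(x,\tau)>0$. In particular $\bar\varpi_{rel}(x,\tau)\neq-\infty$, which by~(\ref{eq:defvarpibar}) means both that $\lim_{t\to\infty}\tau(t)v$ exists in $V$ (condition (i)) and that $\bar\varpi_{rel}(x,\tau)=\bar\varpi_E(m,\tau)$; hence $\varpi_E(m,\tau)=\|\tau\|\,\bar\varpi_E(m,\tau)>0$, which is condition (ii). Theorem~\ref{th:HM} then declares $x$ unstable.

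There is no substantive obstacle here: the proposition is essentially a restatement of Hilbert-Mumford in the numerical language, which is why it is flagged as a direct consequence. The only points requiring care are the bookkeeping around the value $-\infty$ (ensuring that a positive value of $\bar\varpi_{rel}$ automatically lands in the second branch of~(\ref{eq:defvarpibar}), so that condition (i) is free), and the extraction of an honest $\tau$ witnessing positivity of the supremum, together with the observation that such a $\tau$ is necessarily nonzero.
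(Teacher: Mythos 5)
Your proof is correct and is exactly the argument the paper intends: the paper states the proposition as "a direct consequence of Hilbert-Mumford Theorem~\ref{th:HM}" without writing out details, and your translation of conditions (i)--(ii) into the positivity of $\bar\varpi_{rel}$ (including the $-\infty$ bookkeeping and the nontriviality of the witnessing $\tau$) is precisely the omitted verification.
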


When considering the action of the maximal torus $T\subset G$, we define
\begin{equation}
  \label{eq:defM-T}
  \mathbf{M}^T_{rel}(x):=\sup_{
      \tau\in \Xgot_{*}(T)-\{0\}
  }\bar \varpi_{rel}(x,\tau).
\end{equation}

\subsection{Optimal destabilizing $1$-parameter subgroups}

\subsubsection{The case of a torus action}

In this section, we consider  the action of a maximal torus $T\subset G$ on $V\times \Pbb E$. Under the $T$-action, $E$ decomposes in weight spaces:
\begin{equation}
  \label{eq:1}
  E=\bigoplus_{\chi\in \Xgot^*(T)}E_\chi,
\end{equation}
where $E_\chi=\{\tilde m\in E\,:\,\forall t\in T\quad t\tilde m=\chi(t)\tilde m\}$.
If $\tilde m=\sum \tilde m_\chi\in V$ is written according this decomposition, we set
$\Wt_T(\tilde m)=\{\chi\,:\,\tilde m_\chi\neq 0\}$.
For $v\in V$, one defines $\Wt_T(v)$ similarly.

Recall that $\Xgot^*(T)\otimes_\Z\R\simeq \tgot_0^*$ is equipped with a scalar product $(\ ,\ )$ that is induced by the invariant scalar product on $\kgot^*$. 
If $S\subset \tgot_0^*$ is a finite set, we denote by $\conv(S)$ the convex hull of $S$. We also consider $\cone(S):=\sum_{s\in S}\R^+s$. 

Let  $C\subset \tgot_0^*$ be a non-empty closed convex set that does not contain $0$. It is well known that the distance between $0$ and $C$ is characterized by 
\begin{equation}
  \label{eq:distsup}
  d(0,C)=\sup_{\eta\in \tgot_0^*-\{0\}}\,
\inf_{\xi\in C}\frac{(\eta,\xi)}{\Vert \eta\Vert}.
\end{equation}

Furthermore, the supremum on $\eta$ is attained exactly when $\eta$ is positively proportional to the orthogonal projection of $0$ onto $C$.

\begin{definition}
To any $x=(v,m)\in V\times\Pbb E$, we associate the closed convex polyhedron 
     \begin{equation}
       \label{eq:defPxy}
       \Pcal_T(x):=\conv(\Wt_T(\tilde m))+\cone(\Wt_T(v))\subset
       \tgot_0^*.
     \end{equation} 
\end{definition}

Let $\langle\ ,\ \rangle$ denote the duality bracket between $\Xgot^*(T)\simeq \Lambda^*$
and $\Xgot_{*}(T)\simeq \Lambda$.

     \begin{proposition}
       \label{prop:ssG=T}
       For $x\in V\times\Pbb E$, we have:
       \begin{enumerate}
       \item $x$ is $T$-semistable if and only if $0\in
         \Pcal_T(x)$.
       \item If $x$ is  $T$-unstable then : 
       \begin{enumerate}
       \item $\mathbf{M}_{rel}^T(x)$ is the distance between $0$ and $\Pcal_T(x)$.
       \item There exists a unique rational $1$-parameter subgroup
           $\tau_x\in \Xgot_{*}(T)_\Q\simeq \tgot_{0,\Q}$ such that $\mathbf{M}_{rel}(x)=\bar \varpi_{rel}(x,\tau_x)=\Vert\tau_x\Vert$.
           Moreover, using the identification $\tgot^*_0\simeq\tgot_0$ given by the scalar product, $\tau_x$ is identified as the
           opposite of the orthogonal projection of $0$ onto $\Pcal_T(x)$.
         \item The set $\{\chi\in \Wt_T(\tilde m)\cup\Wt_T(v)\;|\;
           \langle\tau_x,\chi\rangle=-\Vert\tau_x\Vert\}$ is non-empty
           and contained in $\Wt_T(\tilde m)$.
       \end{enumerate}
         \end{enumerate}
         The rational $1$-parameter subgroup $\tau_x$ is said to be 
         {\em optimal destabilizing} for $x$. 
     \end{proposition}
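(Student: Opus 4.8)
The plan is to translate every quantity attached to $x=(v,m)$ into convex geometry on $\tgot_0^*$ and then read off (i) and (ii) from the distance formula (\ref{eq:distsup}). The core is one bookkeeping identity. Fix $\tau\in\Xgot_{*}(T)-\{0\}$, let $\tau_0\in\tgot_0$ be its image and, via the inner product, also its image in $\tgot_0^*$, so that $\langle\chi,\tau\rangle=(\chi,\tau_0)$. The weight decomposition gives $\varpi_E(m,\tau)=-\max_{\chi\in\Wt_T(\tilde m)}\langle\chi,\tau\rangle$, while $\lim_{t\to\infty}\tau(t)v$ exists in $V$ exactly when $\langle\chi,\tau\rangle\le 0$ for all $\chi\in\Wt_T(v)$. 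Setting $\eta:=-\tau_0\in\tgot_0^*$ and using that the support function of a Minkowski sum is the sum of support functions, and that the support function of $\cone(\Wt_T(v))$ is either $0$ or $-\infty$, these combine into
\[
\bar\varpi_{rel}(x,\tau)=\frac{\inf_{\xi\in\Pcal_T(x)}(\eta,\xi)}{\|\eta\|},\qquad \eta=-\tau_0,
\]
valid for every $\tau\neq 0$ (both sides are $-\infty$ precisely when the cone condition fails). Here $\|\tau\|=\|\tau_0\|=\|\eta\|$ since $\tau_0$ is elliptic, and $-\max_\chi\langle\chi,\tau\rangle=\min_{\chi\in\Wt_T(\tilde m)}(\eta,\chi)$ is the support function of the $\conv$-part.

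With this identity, (i) is immediate: by the torus form of Hilbert--Mumford (Theorem~\ref{th:HM} applied to $T$), $x$ is $T$-unstable iff some integral $\tau$ has $\bar\varpi_{rel}(x,\tau)>0$, i.e. iff $\mathbf{M}^T_{rel}(x)>0$; and by (\ref{eq:distsup}) the supremum of $\inf_\xi(\eta,\xi)/\|\eta\|$ equals $d(0,\Pcal_T(x))$, which vanishes exactly when $0\in\Pcal_T(x)$. For (ii)(a) I must check that restricting the supremum to the lattice loses nothing. Since the objective is homogeneous of degree $0$ and, by (\ref{eq:distsup}), is maximised exactly in the direction of the projection $p_0$ of $0$ onto $\Pcal_T(x)$, it suffices that $p_0$ be rational. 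This holds because $\Pcal_T(x)$ is a rational polyhedron (vertices and ray generators lie in $\Lambda^*$): $p_0$ lies in the relative interior of a rational face and minimises $\|\cdot\|^2$ over its (rational) affine hull, hence $p_0\in\tgot^*_{0,\Q}$; the inner product being integral on $\Lambda$ identifies $\tgot^*_{0,\Q}$ with $\tgot_{0,\Q}$, so $-p_0$ is a rational $1$-PS direction and $\mathbf{M}^T_{rel}(x)=d(0,\Pcal_T(x))$.

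For (ii)(b) I set $\tau_x:=-p_0\in\Xgot_{*}(T)_\Q$. The variational inequality $(p_0,\xi)\ge\|p_0\|^2$ for all $\xi\in\Pcal_T(x)$, with equality at $\xi=p_0$, feeds the identity above to give $\bar\varpi_{rel}(x,\tau_x)=\|p_0\|^2/\|p_0\|=\|p_0\|=\|\tau_x\|=\mathbf{M}^T_{rel}(x)$, the asserted equalities; by construction $\tau_x$ is the opposite of the orthogonal projection of $0$ onto $\Pcal_T(x)$. Uniqueness follows from the clause in (\ref{eq:distsup}) that the supremum is attained only in directions positively proportional to $p_0$: any competitor $\tau'$ achieving $\mathbf{M}^T_{rel}(x)$ forces $-\tau'\in\R_{>0}\,p_0$, and the normalisation $\bar\varpi_{rel}(x,\tau')=\|\tau'\|$ then pins the scale to $-\tau'=p_0$.

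Finally (ii)(c) I would read off from the exposed face of $\Pcal_T(x)$ cut out by the supporting hyperplane $H=\{\xi:(p_0,\xi)=\|p_0\|^2\}$. Decomposing the closest point as $p_0=c_0+k_0$ with $c_0\in\conv(\Wt_T(\tilde m))$, $k_0\in\cone(\Wt_T(v))$, the relations $(p_0,c_0)=\|p_0\|^2$ and $(p_0,k_0)=0$ force the generators entering $p_0$ to split into weights of $\tilde m$ lying on $H$ and weights of $v$ lying on $H_0=\{(p_0,\cdot)=0\}$. In particular $\min_{\chi\in\Wt_T(\tilde m)}(p_0,\chi)=\|p_0\|^2$ is attained, so the set of $\chi$ realising the maximum in $\varpi_E(m,\tau_x)$, i.e. with $\langle\tau_x,\chi\rangle=-\varpi_E(m,\tau_x)=-\|\tau_x\|^2$, is non-empty, consists of weights of $\tilde m$, and the active $v$-weights instead satisfy $\langle\tau_x,\chi\rangle=0$. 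I expect the main technical obstacle to be exactly this rationality/discreteness bridge in (ii)(a)--(b): guaranteeing that $p_0$ is rational so that $\tau_x$ is a genuine rational $1$-PS and that the supremum over the discrete set $\Xgot_{*}(T)$ equals the continuous supremum $d(0,\Pcal_T(x))$; the delicate point in (c) is ensuring that the weights attaining the extremal value are controlled by the exposed face rather than by accidental alignments of inactive $v$-weights with $H$.
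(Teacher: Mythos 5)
Your treatment of (i), (ii)(a) and (ii)(b) is correct and is essentially the paper's own argument: both reduce everything to the distance formula (\ref{eq:distsup}) together with the rationality of the polyhedron $\Pcal_T(x)$. Your explicit identity $\bar\varpi_{rel}(x,\tau)=\inf_{\xi\in\Pcal_T(x)}(\eta,\xi)/\Vert\eta\Vert$ with $\eta=-\tau_0$ is a clean packaging of what the paper does via the Hahn--Banach chain of equivalences plus the remark on orthogonal projections preceding the proposition, and your rationality argument for the projection $p_0$ (rational polyhedron, rational scalar product, hence rational exposed face and rational minimizer) is exactly the paper's.

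The one genuine gap is in (ii)(c), and the obstacle you flag at the end is not merely technical. As literally stated (with the exponent read as $\Vert\tau_x\Vert^2$, which is forced since otherwise the set is generically empty for integrality reasons), the containment in $\Wt_T(\tilde m)$ can fail: take $T=(\C^*)^2$, $\Wt_T(\tilde m)=\{(1,0)\}$, $\Wt_T(v)=\{(1,1)\}$. Then $\Pcal_T(x)=(1,0)+\R^{+}(1,1)$, $p_0=(1,0)$, $\tau_x=-(1,0)$, and the $v$-weight $(1,1)$ satisfies $\langle\tau_x,(1,1)\rangle=-1=-\Vert\tau_x\Vert^2$ without being a weight of $\tilde m$. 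What your computation $(p_0,c_0)=\Vert p_0\Vert^2$, $(p_0,k_0)=0$ actually yields --- and what is what gets used later, e.g.\ in the proof of Theorem~\ref{th:Ness} --- is: the minimum of $(p_0,\cdot)$ over $\Wt_T(\tilde m)$ equals $\Vert p_0\Vert^2$ and is attained (whence non-emptiness), every $\chi\in\Wt_T(v)$ satisfies $(p_0,\chi)\geq 0$, and only the $v$-weights with $(p_0,\chi)=0$ can enter a minimal decomposition of $p_0$. This cannot be upgraded to exclude ``inactive'' $v$-weights lying accidentally on the supporting hyperplane, so the containment assertion needs to be reformulated rather than proved. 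For what it is worth, the paper offers no proof of (c) at all (``an easy fact that we leave to the reader''), so you did not miss a trick available in the source; everything else in your proposal matches the paper's route.
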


\begin{proof}
Let $x=(v,m)$ and $\tau\in \Xgot_{*}(T)$. Observe first that $\lim_{t\to
\infty}\tau(t)v$ exists if and only if $\langle \chi, \tau\rangle\leq 0,\forall \chi\in \Wt_T(v)$, and $\lim_{t\to
\infty}\tau(t)\tilde m=0$ if and only if $\langle \chi, \tau\rangle< 0,\forall \chi\in \Wt_T(\tilde m)$. Now,
$$
       \begin{array}{ccll}
         0\not\in\Pcal_T(x)&\iff&\exists\tau\in \Xgot_{*}(T),\ \forall
                                \eta\in\Pcal_T(x) \quad
                                \langle\eta,\tau\rangle>0.  \quad \mbox{
                                                            (Hahn-Banach's
                                                         theorem)}\\
         
                         &\iff&\exists\tau\in \Xgot_{*}(T), \quad\left\{
                                \begin{array}{l}
                                \forall
                                \chi\in\Wt_T(v) \quad
                                  \langle\chi,\tau\rangle\geq 0,\  \text{and}\\
                                  \forall \chi\in\Wt_T(\tilde m) \quad
                                  \langle\chi,\tau\rangle> 0.
                                \end{array}
         \right .\\
         
&\iff&\exists\tau\in \Xgot_{*}(T),\quad\left\{
  \begin{array}{l}
    \lim_{t\to \infty}(-\tau)(t)v\ \mbox{exists}, \ \text{and}\\
       \lim_{t\to \infty}(-\tau)(t)\tilde m=0.
      \end{array}
      \right .\\
         &\iff&x\mbox{ is $T$-unstable.}
       \end{array}
 $$

The assertions $(a)$ and $(b)$ are a direct consequence of the
reminder regarding orthogonal projection that precedes the proposition. 

Since $\Pcal_T(x)$ is a rational polyhedron, and the scalar product takes rational values on $\tgot_{0,\Q}^*$, the orthogonal projection of
$0$ on $\Pcal_T(x)$ is a rational element. Thus, $\tau_x$ is rational. 

The last assertion is an easy fact that we leave to the reader.
     \end{proof}

\subsubsection{The general case}\label{subsec:the-general-case}

Let 
$$
P(\tau):=\{g\in G\,:\,\lim_{t\to \infty}\tau(t)g\tau(t\inv)\mbox{ exists in }G\}
$$
be the parabolic subgroup associated to $\tau\in \Xgot_{*}(G)$, and let 
$$
U(\tau):=\{g\in G\,:\,\lim_{t\to \infty}\tau(t)g\tau(t\inv)=e\}.
$$
be the unipotent radical of $P(\tau)$. We have a Levi decomposition $P(\tau)=U(\tau)G^\tau$ where $G^\tau$ denotes the centralizer of the image of $\tau$ in $G$.
Moreover, for $u\in U(\tau)$ and $l\in G^\tau$, we have $\lim_{t\to
  \infty}\tau(t)(ul)\tau(t\inv)=l$.

\begin{lemma}
  \label{lem:minv}
  Let $x\in V\times\Pbb E$ and $\tau\in \Xgot_{*}(G)$.
  \begin{enumerate}
  \item For any $g\in G$,  
    $\bar\varpi_{rel}(x,\tau)=\bar\varpi_{rel}(g x,g\tau g\inv)$.
    \item For any $p\in P(\tau)$, 
    $\bar\varpi_{rel}(x,\tau)=\varpi_{rel}(x,p\tau p\inv)$.
  \end{enumerate}
\end{lemma}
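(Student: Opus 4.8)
The plan is to establish (i) directly from $Ad(G)$-invariance of the norm and equivariance of the weight data, and then to deduce (ii) from (i) together with an invariance of $\bar\varpi_{rel}(\cdot,\tau)$ under the unipotent radical $U(\tau)$.

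For (i): first I would note $\Vert g\tau g\inv\Vert=\Vert\tau\Vert$, so it suffices to match the unnormalized datum $\varpi_E$ and the condition defining the $-\infty$ case. Writing $x=(v,m)$ and $gx=(gv,gm)$ with $g\tilde m$ representing the line $gm$, I would observe $(g\tau g\inv)(t)(gv)=g\,\tau(t)v$; since $g$ is a linear automorphism of $V$, the limit $\lim_{t\to\infty}(g\tau g\inv)(t)(gv)$ exists iff $\lim_{t\to\infty}\tau(t)v$ exists. For the projective factor, the $\tau$-weight spaces transport as $E_n(g\tau g\inv)=g\,E_n(\tau)$, so the decomposition of $g\tilde m$ relative to $g\tau g\inv$ is $\sum_n g\tilde m_n$; injectivity of $g$ gives $\max\{n:g\tilde m_n\neq0\}=\max\{n:\tilde m_n\neq0\}$, hence $\varpi_E(gm,g\tau g\inv)=\varpi_E(m,\tau)$. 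Combining the two factors proves (i).

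The core of (ii) is the claim that $\bar\varpi_{rel}(ux,\tau)=\bar\varpi_{rel}(x,\tau)$ for every $u\in U(\tau)$, with $\tau$ fixed. The key structural fact, which I expect to be the main point, is that $u$ preserves the descending filtration $E_{\le n}(\tau):=\bigoplus_{k\le n}E_k(\tau)$ and acts as the identity on the associated graded. To prove this, take $w\in E_n(\tau)$ and set $a(t):=\tau(t)u\tau(t)\inv$, so that $a(t)\to e$ as $t\to\infty$ by definition of $U(\tau)$. From $\tau(t)(uw)=a(t)\,\tau(t)w=t^n a(t)w$, comparison with $\tau(t)(uw)=\sum_k t^k(uw)_k$ yields $a(t)w=\sum_k t^{k-n}(uw)_k$; letting $t\to\infty$ and using $a(t)w\to w$ forces $(uw)_k=0$ for $k>n$ and $(uw)_n=w$, i.e.\ $uw\in w+E_{<n}(\tau)$. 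The identical computation applies to the $V$-factor.

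Assembling (ii): from the filtration property, if $N=\max\{n:\tilde m_n\neq0\}$ then the $E_N$-component of $u\tilde m$ is again $\tilde m_N\neq0$, so $\varpi_E(um,\tau)=\varpi_E(m,\tau)$; likewise $v\in V_{\le0}(\tau)$ iff $uv\in V_{\le0}(\tau)$ (the converse using $u\inv\in U(\tau)$), so existence of $\lim_{t\to\infty}\tau(t)(\cdot)$ in $V$ is preserved. This yields the claim $\bar\varpi_{rel}(ux,\tau)=\bar\varpi_{rel}(x,\tau)$. Finally, writing $p=ul$ with $u\in U(\tau)$ and $l\in G^\tau$, the fact that $l$ centralizes the image of $\tau$ gives $p\tau p\inv=u\tau u\inv$; applying (i) with $g=u\inv$ gives $\bar\varpi_{rel}(x,u\tau u\inv)=\bar\varpi_{rel}(u\inv x,\tau)$, and the claim (for $u\inv\in U(\tau)$) rewrites the right-hand side as $\bar\varpi_{rel}(x,\tau)$. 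Since $\Vert p\tau p\inv\Vert=\Vert\tau\Vert$, the normalized and unnormalized forms of the statement coincide. The main obstacle is the unipotent filtration computation; everything else is bookkeeping.
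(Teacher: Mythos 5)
Your proof is correct and rests on the same key fact as the paper's, namely that $\lim_{t\to\infty}\tau(t)u\tau(t)^{-1}=e$ for $u\in U(\tau)$; the paper phrases this as the direct trajectory identity $p\tau(t)p^{-1}z=(p\tau(t)p^{-1}\tau(t^{-1}))\,\tau(t)z$ with prefactor converging to $u$, while you unwind the same limit into the statement that $u$ preserves the $\tau$-filtration and acts trivially on the associated graded, then combine $U(\tau)$-invariance of the point with part (i). This is the same argument written out in full detail (including the part (i) verification that the paper declares obvious), so no substantive difference.
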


\begin{proof}
The first assertion is obvious. The second one is a consequence of the
fact that, for any point $z$, any $u\in U(\tau)$ and $l\in G^\tau$ and any $t\in\C^*$, we have
$$
p\tau(t) p\inv z=(p\tau(t) p\inv\tau(t\inv))\tau(t) z
$$
and
$$
\lim_{t\to \infty} p\tau(t) p\inv\tau(t\inv)=u,
$$
where $p=ul$.
\end{proof}

Several objects or properties that are associated to $\tau\in \Xgot_{*}(G)$ are left unchanged by multiplying $\tau$ by a positive
integer. Examples of such things are the assertions ``$\lim_{t\to \infty}\tau(t)x$ exists'',
 ``$\tau$ fixes $x$''. Then, we can freely use these properties for rational $1$-parameter subgroups $\tau\in\Xgot_{*}(G)_{\Q}$.

\begin{theorem}{}
  \label{th:Kempf}

  Let $x\in  V\times\Pbb E$ be unstable.
  \begin{enumerate}
  \item There exists nontrivial $\tau \in \Xgot_{*}(G)_{\Q}$ such that 
    $ \mathbf{M}_{rel}(x)=\bar \varpi_{rel}(x,\tau)=\Vert \tau\Vert$.
    Such a $\tau$ is said to be an \emph{optimal destabilizing} $1$-parameter subgroup for $x$. 
    \item Let $\tau_1$, $\tau_2\in \Xgot_{*}(G)_\Q$ be optimal destabilizing $1$-parameter subgroups for $x$. Then
    \begin{enumerate}
    \item $P(\tau_1)=P(\tau_2)$;
    \item $\exists p\in P(\tau_1),\quad \tau_2=p\tau_1p\inv$.
    \end{enumerate}
    We can therefore define the parabolic subgroup $P(x)$ as $P(\tau_1)$.
  \end{enumerate}
    \end{theorem}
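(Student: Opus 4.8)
The plan is to reduce everything to the torus case established in Proposition~\ref{prop:ssG=T}, exploiting the conjugation behaviour of $\bar\varpi_{rel}$ recorded in Lemma~\ref{lem:minv}.

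For the existence statement (i), I would first use that every $1$-PS of $G$ is conjugate into $T$, so that Lemma~\ref{lem:minv}(i) gives
$$
\mathbf{M}_{rel}(x)=\sup_{g\in G}\mathbf{M}^T_{rel}(gx).
$$
The crucial observation is a finiteness one: as $g$ ranges over $G$, the weight supports $\Wt_T(g\tilde m)$ and $\Wt_T(gv)$ are subsets of the fixed finite sets of $T$-weights of $E$ and of $V$, so the polyhedron $\Pcal_T(gx)=\conv(\Wt_T(g\tilde m))+\cone(\Wt_T(gv))$ takes only finitely many values. Since $x$ is unstable, $\mathbf{M}_{rel}(x)>0$ by Proposition~\ref{prop:unstable-M-rel}, hence by Proposition~\ref{prop:ssG=T}(ii)(a) the positive numbers $\mathbf{M}^T_{rel}(gx)=d(0,\Pcal_T(gx))$ form a finite set. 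The supremum is therefore attained at some $g_0$, and I take $\tau_{g_0x}\in\Xgot_*(T)_\Q$ to be the unique rational optimal $1$-PS from Proposition~\ref{prop:ssG=T}(ii)(b); it is nontrivial because $\Vert\tau_{g_0x}\Vert=\mathbf{M}_{rel}(x)>0$. Setting $\tau:=g_0\inv\tau_{g_0x}g_0$ and applying Lemma~\ref{lem:minv}(i) together with the conjugation-invariance of the norm yields $\mathbf{M}_{rel}(x)=\bar\varpi_{rel}(x,\tau)=\Vert\tau\Vert$, which is (i).

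For uniqueness (ii), let $\tau_1,\tau_2$ be optimal. The idea is to conjugate both into one torus. Picking Borel subgroups $B_i\subset P(\tau_i)$, the two Borels share a maximal torus, so $P(\tau_1)\cap P(\tau_2)$ contains a maximal torus $T'$ of $G$. Since the image of each $\tau_i$ lies in some maximal torus of $P(\tau_i)$, and all maximal tori of $P(\tau_i)$ are conjugate under $P(\tau_i)$, there exist $p_i\in P(\tau_i)$ with $\tau_i':=p_i\tau_ip_i\inv\in\Xgot_*(T')$. By Lemma~\ref{lem:minv}(ii) and conjugation-invariance of the norm, each $\tau_i'$ remains optimal, so $\bar\varpi_{rel}(x,\tau_i')=\mathbf{M}_{rel}(x)=\Vert\tau_i'\Vert$. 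As $\tau_i'\in\Xgot_*(T')$ forces $\bar\varpi_{rel}(x,\tau_i')\leq\mathbf{M}^{T'}_{rel}(x)\leq\mathbf{M}_{rel}(x)$, all these are equalities; hence $\tau_i'$ is the optimal $1$-PS for the $T'$-action, and the uniqueness clause of Proposition~\ref{prop:ssG=T}(ii)(b) gives $\tau_1'=\tau_2'=:\tau'$.

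To conclude, I use that parabolic subgroups are self-normalizing: since $p_i\in P(\tau_i)$, we get $P(\tau')=P(\tau_i')=p_iP(\tau_i)p_i\inv=P(\tau_i)$ for $i=1,2$, whence $P(\tau_1)=P(\tau_2)$, which is (ii)(a), and we may set $P(x):=P(\tau_1)$. Finally $\tau_2=(p_2\inv p_1)\tau_1(p_2\inv p_1)\inv$ with $p_1\in P(\tau_1)$ and $p_2\in P(\tau_2)=P(\tau_1)$, so $p:=p_2\inv p_1\in P(\tau_1)$, giving (ii)(b). I expect the uniqueness step to be the main obstacle: exhibiting a common maximal torus inside the two parabolics and then checking that the conjugated $1$-PS is genuinely optimal (not merely $T'$-optimal), so that the torus-case uniqueness can be invoked.
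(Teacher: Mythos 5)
Your proposal is correct and follows essentially the same route as the paper: existence via the reduction $\mathbf{M}_{rel}(x)=\sup_{g\in G}\mathbf{M}^T_{rel}(gx)$ together with the finiteness of the possible weight supports $\Wt_T(g\tilde m)\times\Wt_T(gv)$, and uniqueness via a common maximal torus $T'\subset P(\tau_1)\cap P(\tau_2)$, conjugation into $T'$ by elements $p_i\in P(\tau_i)$ (using Lemma~\ref{lem:minv}), and the torus-case uniqueness of Proposition~\ref{prop:ssG=T}. The only differences are expository (you justify the common maximal torus via Borel subgroups and spell out why $\mathbf{M}_{rel}(x)=\mathbf{M}^{T'}_{rel}(x)$, where the paper cites the red book and is terser), not mathematical.
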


    \begin{proof}   
 Since any $1$-parameter subgroup of $G$ is $G$-conjugated to a $1$-parameter subgroup of $T$, we have
 $$
\mathbf{M}_{rel}(x)=\sup_{g\in G}\bigg (\sup_{
    \tau\in \Xgot_{*}(T)-\{0\}
  }\bar \varpi_{rel}(g x,\tau)\bigg )
  =\sup_{g\in G}\mathbf{M}_{rel}^T(g x).
  $$

 Fix $g\in G$ such that $\mathbf{M}_{rel}^T(g x)>0$, i.e. $gx$ is $T$-unstable. By Proposition~\ref{prop:ssG=T}, there exists a 
 unique $\tau_{gx}\in \Xgot_{*}(T)_\Q$ such that
$$
\mathbf{M}_{rel}^T(g x)=\bar \varpi_{rel}(g x,\tau_{gx})=\Vert \tau_{gx}\Vert.
$$

Using the fact that there are only finitely  many possibilities for the set
$\Wt_T(g\tilde m)\times\Wt_T(gv)$ when $g$ varies, we see that $\{\tau_{gx},\, gx\ \mbox{is}\ T\mbox{-unstable}\}$ is finite. The first claim follows.\\

Let $\tau_1,\tau_2\in \Xgot_{*}(G)_\Q$ such that
$$
\bar \varpi_{rel}(x,\tau_1)=\bar \varpi_{rel}(x,\tau_2)=\mathbf{M}_{rel}(x)=\Vert \tau_1\Vert=\Vert \tau_2\Vert.
$$
The intersection $P(\tau_1)\cap P(\tau_2)$ contains a maximal torus
$T'$ of $G$ (see {\it e.g.} \cite[Corollary~28.3]{redbook}). Let $p_1\in P(\tau_1)$ and $p_2\in
P(\tau_2)$ such that $\tau_1':=p_1\tau_1p_1\inv$ and
$\tau_2':=p_2\tau_2p_2\inv$ belong to $\Xgot_{*}(T')$.

By Lemma~\ref{lem:minv}, 
$\varpi_E(x,\tau_1)=\varpi_E(x,\tau_1')$.
And similarly for $\tau_2$ and $\tau_2'$. Hence,
$$
\mathbf{M}_{rel}(x)=\mathbf{M}_{rel}^{T'}(x)=\bar \varpi_{rel}(x,\tau_1')=\bar \varpi_{rel}(x,\tau_2').
$$
The unicity of the optimal destabilizing $1$-PS for $T'$ shows that $\tau_1'=\tau_2'$. It follows that
$P(\tau_1)=p_1P(\tau_1)p_1\inv=P(\tau_1')=P(\tau_2')=p_2P(\tau_2)p_2\inv=P(\tau_2)$.

Finally, the identity $\tau_2= p\tau_1p\inv$, with $p=p_2\inv p_1\in P(\tau_1)$ proves the last
assertion. 
    \end{proof}

Given an unstable element $x\in  V\times\Pbb E$, we denote by $\Lambda(x)\subset  \Xgot_{*}(G)_\Q$ the set of optimal destabilizing $1$-parameter subgroups for $x$.

\begin{lemma}\label{lem:unicity-tau-x}There exists a unique $\tau_x\in \kgot$ such that $\Lambda(x)$ is equal to the orbit
$\{p\tau_xp\inv,\ p\in P(x)\}$.
\end{lemma}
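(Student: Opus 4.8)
The plan is to first upgrade Theorem~\ref{th:Kempf} into the statement that $\Lambda(x)$ is a single conjugacy orbit under $P(x)$, and then to show that this orbit meets $\kgot$ in exactly one point. For the single-orbit statement, fix $\nu_0\in\Lambda(x)$. Theorem~\ref{th:Kempf}(ii) gives $\Lambda(x)\subseteq\{p\nu_0 p\inv,\ p\in P(x)\}$. Conversely, for $p\in P(x)=P(\nu_0)$, Lemma~\ref{lem:minv}(ii) yields $\bar\varpi_{rel}(x,p\nu_0 p\inv)=\bar\varpi_{rel}(x,\nu_0)=\mathbf{M}_{rel}(x)$, while $\Vert p\nu_0 p\inv\Vert=\Vert\nu_0\Vert=\mathbf{M}_{rel}(x)$ by $\mathrm{Ad}(G)$-invariance of the norm; hence $p\nu_0 p\inv\in\Lambda(x)$. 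Thus $\Lambda(x)=\{p\nu_0 p\inv,\ p\in P(x)\}$, and since taking $p=e$ forces any candidate $\tau_x$ to lie in $\Lambda(x)\cap\kgot$, the lemma reduces to proving that $\Lambda(x)\cap\kgot$ is a single point.

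\emph{Existence.} As $\nu_0$ is elliptic, write $\nu_0=g\beta g\inv$ with $g\in G$ and $\beta\in\kgot$ a rational elliptic element. By transitivity of $K$ on the flag variety $G/P(\beta)$ one has $G=K\,P(\beta)$, so $g=k'q$ with $k'\in K$ and $q\in P(\beta)$. Put $\tau_x:=k'\beta k'\inv\in\kgot$. Using $q\in P(\beta)$, we get $P(x)=P(\nu_0)=gP(\beta)g\inv=k'P(\beta)k'\inv=P(\tau_x)$. Moreover both $\tau_x=k'\beta k'\inv$ and $\nu_0=(k'q)\beta(k'q)\inv$ belong to the single orbit $k'\big(\mathrm{Ad}(P(\beta))\beta\big)k'\inv=\{p\tau_x p\inv,\ p\in P(x)\}$, whence $\tau_x\in\Lambda(x)\cap\kgot$.

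\emph{Uniqueness.} Let $U:=U(\tau_x)$ be the unipotent radical of $P(x)=P(\tau_x)$. Since the Levi $G^{\tau_x}$ fixes $\tau_x$, the orbit is $\Lambda(x)=\{u\tau_x u\inv,\ u\in U\}$. Write $\ggot=\bigoplus_n\ggot_n$ for the weight decomposition under $\tau_x$, so $\mathrm{ad}(\tau_x)$ acts by $in$ on $\ggot_n$ and $\ugot=\bigoplus_{n<0}\ggot_n$; then $u\tau_x u\inv-\tau_x\in\ugot$ for every $u\in U$. Suppose $u\tau_x u\inv\in\kgot$ and set $Z:=u\tau_x u\inv-\tau_x\in\kgot\cap\ugot$. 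The antilinear involution $\sigma$ of $\ggot$ with $\ggot^\sigma=\kgot$ commutes with $\mathrm{ad}(\tau_x)$ (as $\sigma(\tau_x)=\tau_x$), hence sends $\ggot_n$ onto $\ggot_{-n}$; therefore $Z=\sigma(Z)$ lies in $\bigoplus_{n<0}\ggot_n\cap\bigoplus_{n>0}\ggot_n=\{0\}$, giving $u\tau_x u\inv=\tau_x$. Thus $\Lambda(x)\cap\kgot=\{\tau_x\}$, which is exactly the asserted uniqueness.

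The main obstacle is the existence step. The single-orbit reduction is immediate from the preceding results, and uniqueness is a formal computation in the weight spaces of $\tau_x$; by contrast, exhibiting an element of $\kgot$ inside the $P(x)$-orbit $\Lambda(x)$ requires genuinely moving $\nu_0$ by $P(x)$, which is what the Iwasawa-type factorization $G=K\,P(\beta)$ accomplishes above.
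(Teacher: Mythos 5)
Your proof is correct, and its skeleton coincides with the paper's. The existence step is essentially the paper's own argument in mirror image: the paper takes an optimal destabilizing $\tau$, chooses $g$ with $g\tau g\inv\in\Xgot_{*}(T)_\Q$, factors $g=kp$ with $(k,p)\in K\times P(\tau)$, and concludes that $\tau'=p\tau p\inv=k\inv(g\tau g\inv)k$ is again optimal and lies in $\mathrm{Ad}(k\inv)(\Xgot_{*}(T)_\Q)\subset\kgot$; your factorization $g=k'q$ with $q\in P(\beta)$, coming from $G=KP(\beta)$, accomplishes exactly the same adjustment. The single-orbit bookkeeping you make explicit (Theorem~\ref{th:Kempf}(ii) for one inclusion, Lemma~\ref{lem:minv} together with $\mathrm{Ad}(G)$-invariance of the norm for the reverse one) is left implicit in the paper but is the intended reading. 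Where you genuinely diverge is uniqueness: the paper simply declares it classical and cites Theorem~D.4 of \cite[Appendix~D]{GRS21}, whereas you prove it — the conjugation $\sigma$ of $\ggot$ fixing $\kgot$ exchanges the $\mathrm{ad}(\tau_x)$-weight spaces $\ggot_n$ and $\ggot_{-n}$, hence $\kgot\cap\ugot=\{0\}$, so the $U(\tau_x)$-orbit of $\tau_x$ meets $\kgot$ only at $\tau_x$. This is precisely the content of the cited result, so your version buys self-containedness at the modest cost of the routine (and correctly asserted) verifications that $u\tau_xu\inv-\tau_x\in\ugot$ for $u\in U(\tau_x)$ and that the Levi factor $G^{\tau_x}$ fixes $\tau_x$.
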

\begin{proof}Let $\tau \in \Xgot_{*}(G)_{\Q}$ that is optimal destabilizing for $x$. Let $g\in G$ such that 
$g\tau g^{-1}\in \Xgot_{*}(T)_{\Q}$. Let $(k,p)\in K\times P(\tau)$ such that $g=kp$. Then $\tau'=p\tau p^{-1}$ 
is optimal destabilizing for $x$, and $\tau'\in Ad(k^{-1})(\Xgot_{*}(T)_{\Q})\subset \kgot$. The unicity is classical (see Theorem D.4 in \cite[Appendix~D]{GRS21}).
\end{proof}

\subsection{Optimal $1$-parameter subgroup and flow}

We now prove three results, due to L. Ness \cite{Ness84} (see also \cite{RR84}) 
in the classical setting on the set of unstable points $x$ such that a given $\tau$ belongs to $\Lambda(x)$.
    
    \begin{theorem}(See \cite[Theorem~8.3]{Ness84})
      \label{th:Ness}
      Let $x\in V\times\Pbb E$ be unstable,
      and $\tau\in \Xgot_{*}(G)_{\Q}$ be an optimal destabilizing $1$-parameter subgroups for $x$. 
      Let\footnote{The limit $\lim_{t\to \infty}\tau(t)x$ exists in $V\times \Pbb E$, because $ \varpi_{rel}(x,\tau)\neq-\infty$} $x'=\lim_{t\to\infty}\tau(t)x$. 
      Then
      \begin{enumerate}
      \item $x'$ is unstable; \label{th:Nessi}
      \item $\mathbf{M}_{rel}(x')=\mathbf{M}_{rel}(x)$;\label{th:Nessii}
       \item $\tau$ is an optimal destabilizing $1$-parameter subgroup for $x'$. \label{th:Nessiii}
      \end{enumerate}
    \end{theorem}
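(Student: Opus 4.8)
The plan is to treat (ii) as the heart of the matter: once $\mathbf{M}_{rel}(x')=\mathbf{M}_{rel}(x)$ is known, both (i) and (iii) fall out, so I would dispatch those first. Since $x'=\lim_{t\to\infty}\tau(t)x$ is fixed by $\tau$, writing $x=(v,m)$ and decomposing $v$ and $\tilde m$ into $\tau$-weight components shows that $v'=\lim_{t\to\infty}\tau(t)v$ is the $\tau$-weight-$0$ part of $v$, while $\tilde m'$ is (a scalar multiple of) the top $\tau$-weight component $\tilde m_{n_0}$ of $\tilde m$, with $n_0=\max\{n:\tilde m_n\neq 0\}$. Hence $\varpi_E(m',\tau)=-n_0=\varpi_E(m,\tau)$ and $\lim_{t\to\infty}\tau(t)v'$ exists (it equals $v'$), so $\bar\varpi_{rel}(x',\tau)=\bar\varpi_{rel}(x,\tau)=\mathbf{M}_{rel}(x)=\|\tau\|$. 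In particular $\mathbf{M}_{rel}(x')\ge\bar\varpi_{rel}(x',\tau)=\mathbf{M}_{rel}(x)>0$, which already yields (i) by Proposition~\ref{prop:unstable-M-rel}; and granting the equality in (ii), the relation $\mathbf{M}_{rel}(x')=\bar\varpi_{rel}(x',\tau)=\|\tau\|$ says precisely that $\tau$ is optimal for $x'$, giving (iii).

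It remains to prove the nontrivial inequality $\mathbf{M}_{rel}(x')\le\mathbf{M}_{rel}(x)$, and my plan is to reduce it to a single maximal torus. The key claim (\emph{$\star$}) is that $x'$ admits an optimal destabilizing $1$-PS commuting with $\tau$. To get it, I would take any optimal $\sigma$ for $x'$, with associated parabolic $P(x')=P(\sigma)$ (Theorem~\ref{th:Kempf}). Because optimality transforms equivariantly (Lemma~\ref{lem:minv}(i) gives $g\Lambda(x')g\inv=\Lambda(gx')$, hence $P(gx')=gP(x')g\inv$) and $\tau(\C^*)\subset G_{x'}$, the $1$-PS $\tau$ normalizes $P(x')$; as parabolics are self-normalizing, $\tau\in P(x')$. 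Writing the Levi decomposition $P(\sigma)=U(\sigma)G^\sigma$, the torus $\tau(\C^*)$ can be conjugated into the Levi $G^\sigma$ by some $u\in U(\sigma)$, so $u\tau u\inv$ centralizes $\sigma$, i.e. $\tau$ commutes with $\sigma':=u\inv\sigma u$. Finally $\sigma'$ is still optimal for $x'$: it is a $P(x')$-conjugate of $\sigma$, and Lemma~\ref{lem:minv}(ii) together with $Ad$-invariance of $\|\cdot\|$ shows that $P(x')$-conjugates of an optimal $1$-PS remain optimal. This proves (\emph{$\star$}).

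Given (\emph{$\star$}), I would pick a maximal torus $T'$ with $\tau,\sigma'\in\Xgot_{*}(T')_\Q$ (commuting $1$-PS lie in a common torus). Since $\tau$ and $\sigma'$ are globally optimal for $x$ and $x'$ respectively and lie in $\Xgot_{*}(T')$, one gets $\mathbf{M}_{rel}(x)=\mathbf{M}^{T}_{rel}(x)=d(0,\Pcal_{T'}(x))$ and $\mathbf{M}_{rel}(x')=\mathbf{M}^{T}_{rel}(x')=d(0,\Pcal_{T'}(x'))$ from Proposition~\ref{prop:ssG=T}. Then I compare the two polyhedra via the $T'$-weight description: $\Wt_{T'}(\tilde m')=\{\chi\in\Wt_{T'}(\tilde m):\langle\chi,\tau\rangle=n_0\}$ and $\Wt_{T'}(v')=\{\chi\in\Wt_{T'}(v):\langle\chi,\tau\rangle=0\}$, so $\Pcal_{T'}(x')$ is exactly the face of $\Pcal_{T'}(x)$ on which $\langle\,\cdot\,,\tau\rangle$ attains its maximum $n_0=-\|\tau\|^2$. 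The orthogonal projection $p$ of $0$ onto $\Pcal_{T'}(x)$ equals $-\tau$ (the optimal $1$-PS is the opposite of the projection, Proposition~\ref{prop:ssG=T}) and satisfies $\langle p,\tau\rangle=-\|\tau\|^2=n_0$, so $p$ lies in that face, i.e. $p\in\Pcal_{T'}(x')$. Hence $d(0,\Pcal_{T'}(x'))\le\|p\|=d(0,\Pcal_{T'}(x))$, which is the desired inequality (the reverse direction is trivial from $\Pcal_{T'}(x')\subseteq\Pcal_{T'}(x)$). This establishes (ii), and with it (i) and (iii).

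The hard part will be the claim (\emph{$\star$}): forcing $\tau$ and an optimal $1$-PS of $x'$ into a common torus. Everything else is either a direct weight computation or a citation of Proposition~\ref{prop:ssG=T}, but without (\emph{$\star$}) the supremum defining $\mathbf{M}_{rel}(x')$ runs over all tori and cannot be compared face-by-face against $\Pcal_{T'}(x)$. The two ingredients that make (\emph{$\star$}) work are the equivariance and canonicity of the parabolic $P(x')$ (which force $\tau\in P(x')$) and the fact that $P(x')$-conjugates of an optimal $1$-PS stay optimal (Lemma~\ref{lem:minv}(ii)).
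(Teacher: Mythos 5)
Your proof is correct, and its overall architecture matches the paper's: (i) is obtained directly from $\bar\varpi_{rel}(x',\tau)=\bar\varpi_{rel}(x,\tau)>0$, (ii) is observed to imply (iii), and (ii) itself is proved by reducing to a single maximal torus, where the paper makes exactly the same face observation you do ($\Pcal_{T'}(x')$ is the face of $\Pcal_{T'}(x)$ containing the projection of $0$, whence $\mathbf{M}^{T'}_{rel}(x')=\mathbf{M}^{T'}_{rel}(x)$; incidentally your sign $\langle\chi,\tau\rangle=-\Vert\tau\Vert^{2}$ on that face is the consistent one, the paper's $+\Vert\tau\Vert^{2}$ being a typo). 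Where you genuinely diverge is the reduction step, your claim ($\star$). The paper takes an optimal $\tau'$ for $x'$, chooses a maximal torus $T'\subset P(\tau)\cap P(\tau')$ (two parabolics always share one), and conjugates \emph{both} $\tau$ and $\tau'$ into $T'$ by elements $p\in P(\tau)$, $p'\in P(\tau')$; the price is that the limit point moves, since $\lim_{t\to\infty}(p\tau p^{-1})(t)\,px=px'$, so the torus case is applied at the conjugated pair $(px,px')$ and optimality must be transported back and forth via Lemma~\ref{lem:minv} --- which is exactly where the paper's write-up becomes garbled (``$\tau_1$ is optimal destabilizing for $x_1$'' where $x_1'$ is meant, ``optimal for $x$'' where $x'$ is meant). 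You instead exploit the fact that $\tau$ fixes $x'$: equivariance of $\Lambda(\cdot)$ under the $G$-action (Lemma~\ref{lem:minv}(i) plus $G$-invariance of $\mathbf{M}_{rel}$) and self-normalization of parabolics force $\tau(\C^*)\subset P(x')$, after which only the optimal $1$-PS of $x'$ needs to be conjugated --- inside $P(x')$, which preserves optimality by Lemma~\ref{lem:minv}(ii) and Ad-invariance of the norm --- so as to commute with $\tau$. This buys a cleaner endgame: $\tau$ itself and $\sigma'$ lie in one torus $T'$, the polytope comparison happens at the original pair $(x,x')$ with no relocation of points, and the chain $\mathbf{M}_{rel}(x')=d(0,\Pcal_{T'}(x'))\le\Vert\tau\Vert=\mathbf{M}_{rel}(x)\le\mathbf{M}_{rel}(x')$ closes without further bookkeeping. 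The cost is two standard algebraic-group facts the paper does not invoke (parabolic subgroups are self-normalizing; a torus in $P(\sigma)=U(\sigma)G^{\sigma}$ can be conjugated into the Levi by an element of $U(\sigma)$), which are of the same nature as the fact the paper does use (that $P(\tau)\cap P(\tau')$ contains a maximal torus). No step of yours fails, and there is no circularity, since Theorem~\ref{th:Kempf}, Lemma~\ref{lem:minv} and Proposition~\ref{prop:ssG=T} are all proved independently of Theorem~\ref{th:Ness}.
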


\begin{proof}
Write $x=(v,m)$ and $x'=(v',m')$.
Since $\bar \varpi_E(m',\tau)=\bar \varpi_E(m,\tau)>0$ and  $v'$ is $\tau$-fixed,
the point $(v',m')$ is relatively unstable. Clearly, Assertion~\ref{th:Nessii} implies 
Assertion~\ref{th:Nessiii}.

\bigskip
We first prove Assertion~\ref{th:Nessii} when $G=T$ is a torus.
Observe that
$\Wt_T(\tilde m')=\{\chi\in\Wt_T(\tilde m)\,|\,\langle\tau,\chi\rangle=\Vert\tau\Vert^2\}$.
Hence, the face of $\Pcal_T(x)$ containing the projection of $0$ in its
interior is equal to $\Pcal_T(x')$. It implies that $\mathbf{M}_{rel}^T(x')=\mathbf{M}_{rel}^T(x)$.

\bigskip
We now assume that $G$ is reductive and connected. 
Let $\tau'\in \Xgot_{*}(G)_\Q$ be an optimal destabilizing $1$-PS for $x'$. 
Let  $T'\subset P(\tau)\cap P(\tau')$ be a maximal torus $G$. 
Let $p\in P(\tau)$ and $p'\in
P(\tau')$ such that $\tau_1:=p\tau p\inv$ and
$\tau_1':=p'\tau'p'\inv$ belong to $\Xgot_{*}(T')$.

Then $\tau_1$ is optimal destabilizing for $x_1:=p x$.
Moreover, $\lim_{t\to \infty}\tau_1(t)x_1=x_1'$. 
By the case $G=T'$, we deduce that $\tau_1$ is optimal destabilizing for $x_1$. 

On the other hand, by Lemma~\ref{lem:minv}, $\tau_1'$ is optimal destabilizing for $x$ and $G$. 
Hence, it is for $x$ and $T'$.
In particular, we have
$$
\bar \varpi_{rel}(x,\tau)=\bar \varpi_{rel}(x,\tau_1)=\bar \varpi_{rel}(x,\tau_1')=\bar \varpi_{rel}(x,\tau').
$$ 
\end{proof}

We now prove a converse of Theorem~\ref{th:Ness}.

\begin{theorem}(See \cite[Theorem~9.3]{Ness84})
  \label{th:Nessconv}
Let $x\in V\times\Pbb E$ and $\tau\in \Xgot_{*}(G)_\Q$ such that $\varpi_{rel}(x,\tau)$ is positive. 
Set $x'=\lim_{t\to\infty}\tau(t)x$. 
Assume that $x'$ is unstable and that $\tau$ is an optimal destabilizing $1$-parameter subgroup for $x'$. 
Then 
      \begin{enumerate}
      \item $x$ is unstable; \label{th:Nessci}
      \item $\mathbf{M}_{rel}(x)=\mathbf{M}_{rel}(x')$;\label{th:Nesscii}
        \item $\tau$ is an optimal destabilizing $1$-parameter subgroup for $x$. \label{th:Nessciii}
      \end{enumerate}
\end{theorem}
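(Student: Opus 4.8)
The plan is to reduce all three assertions to a single inequality, then to prove that inequality first for tori and afterwards in general. Assertion (i) is immediate: since $\varpi_{rel}(x,\tau)>0$ we have that $v'=\lim_{t\to\infty}\tau(t)v$ exists in $V$ and $\lim_{t\to\infty}\tau(t)\tilde m=0$, so $\lim_{t\to\infty}\tau(t)(v,\tilde m)=(v',0)$ lies in $\overline{G(v,\tilde m)}\cap(V\times\{0\})$ and $x$ is unstable. Next I would record the identity $\bar\varpi_{rel}(x,\tau)=\bar\varpi_{rel}(x',\tau)$: writing $x'=(v',m')$, the point $v'$ is $\tau$-fixed and $\tilde m'$ is a nonzero multiple of the top $\tau$-weight component of $\tilde m$, so $\varpi_E(m',\tau)=\varpi_E(m,\tau)$. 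As $\tau$ is optimal destabilizing for $x'$, this gives $\bar\varpi_{rel}(x,\tau)=\bar\varpi_{rel}(x',\tau)=\Vert\tau\Vert=\mathbf{M}_{rel}(x')$, whence the easy inequality $\mathbf{M}_{rel}(x)\geq\bar\varpi_{rel}(x,\tau)=\mathbf{M}_{rel}(x')$. Consequently (ii) and (iii) both follow once I establish the reverse inequality $\mathbf{M}_{rel}(x)\leq\mathbf{M}_{rel}(x')$: equality in (ii) then forces $\bar\varpi_{rel}(x,\tau)=\Vert\tau\Vert=\mathbf{M}_{rel}(x)$, which is exactly optimality of $\tau$ for $x$, i.e. (iii).

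I would first treat the case $G=T$. The crucial observation is the inclusion $\Pcal_T(x')\subseteq\Pcal_T(x)$, which holds because passing to the $\tau$-limit only deletes weights: $\Wt_T(\tilde m')=\{\chi\in\Wt_T(\tilde m)\,:\,\langle\chi,\tau\rangle=\max\}\subseteq\Wt_T(\tilde m)$ and $\Wt_T(v')=\{\chi\in\Wt_T(v)\,:\,\langle\chi,\tau\rangle=0\}\subseteq\Wt_T(v)$, so $\conv(\Wt_T(\tilde m'))\subseteq\conv(\Wt_T(\tilde m))$ and $\cone(\Wt_T(v'))\subseteq\cone(\Wt_T(v))$. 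Let $\xi_0$ be the orthogonal projection of $0$ onto $\Pcal_T(x')$; by Proposition~\ref{prop:ssG=T} applied to $x'$, under $\tgot_0^*\simeq\tgot_0$ the element $\tau$ is the opposite of $\xi_0$ and $\Vert\xi_0\Vert=\mathbf{M}^T_{rel}(x')$. Since $\xi_0\in\Pcal_T(x')\subseteq\Pcal_T(x)$, we get $\mathbf{M}^T_{rel}(x)=d(0,\Pcal_T(x))\leq\Vert\xi_0\Vert=\mathbf{M}^T_{rel}(x')$, which is the reverse inequality. Combined with the easy one it yields equality, and uniqueness of the projection shows that $\xi_0$ is also the projection of $0$ onto $\Pcal_T(x)$; hence $\tau$ is optimal destabilizing for $x$.

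For general reductive $G$ I would choose a maximal torus $T'$ with $\tau\in\Xgot_{*}(T')$ (for instance a maximal torus of $G^\tau$), so that $x\mapsto x'$ is a genuine $T'$-flow. Because $\tau$ is $G$-optimal for $x'$ and lies in $T'$, it is also $T'$-optimal for $x'$ (as $\mathbf{M}^{T'}_{rel}(x')\leq\mathbf{M}_{rel}(x')=\Vert\tau\Vert=\bar\varpi_{rel}(x',\tau)\leq\mathbf{M}^{T'}_{rel}(x')$), and the torus case above then gives $\mathbf{M}^{T'}_{rel}(x)=\Vert\tau\Vert$ with $\tau$ being $T'$-optimal for $x$. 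The remaining, and principal, difficulty is to upgrade this to $G$-optimality, namely to prove $\mathbf{M}_{rel}(x)=\mathbf{M}^{T'}_{rel}(x)$; equivalently, that no $1$-parameter subgroup outside $T'$ destabilizes $x$ strictly faster than $\tau$. I expect this to be the main obstacle, precisely because computing $\mathbf{M}^{T'}_{rel}$ on any single torus only produces a \emph{lower} bound for $\mathbf{M}_{rel}(x)$, whereas here I need an upper bound. I would attack it by mirroring the proof of Theorem~\ref{th:Ness}: take a $G$-optimal $\sigma$ for $x$ (Theorem~\ref{th:Kempf}) and a maximal torus $T''\subseteq P(\sigma)\cap P(\tau)$, transport $\sigma$ and $\tau$ into $T''$ by elements of $P(\sigma)$ and $P(\tau)$ using Lemma~\ref{lem:minv}, and force the conclusion $\Vert\sigma\Vert=\Vert\tau\Vert$ and $P(\sigma)=P(\tau)$ from the uniqueness in Theorem~\ref{th:Kempf}. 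The delicate technical point is that conjugating $\tau$ into $T''$ by $p=u\ell\in P(\tau)=U(\tau)G^\tau$ shifts the flow limit from $x'$ to $ux'$ rather than fixing it, since $\lim_{t\to\infty}\tau(t)p\tau(t\inv)=u$; I would control this via Lemma~\ref{lem:minv}, performing the torus comparison at $ux'$ (which carries the same numerical invariants as $x'$).

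An alternative and more conceptual route to the reverse inequality would be to prove the monotonicity $\mathbf{M}_{rel}(y)\geq\mathbf{M}_{rel}(x)$ for every $y\in\overline{Gx}$ (applied to $y=x'$), reflecting that limits are ``more unstable''. For a torus this is exactly the inclusion $\Pcal_T(y)\subseteq\Pcal_T(x)$ exploited above; the difficulty in globalizing it to $G$ is identical to the upgrade step, since the maximizing conjugate realizing $\mathbf{M}_{rel}(x)=\sup_g\mathbf{M}^T_{rel}(gx)$ need not be compatible with the $\tau$-flow. Either way, I anticipate that the genuine content of the theorem lies entirely in this passage from $T'$-optimality to $G$-optimality, the torus statement and the two reductions being formal.
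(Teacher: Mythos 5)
Your proposal is correct and follows essentially the same route as the paper: the paper's proof of (iii) likewise takes a $G$-optimal destabilizer $\tau_1$ of $x$, conjugates $\tau$ by $u\in U(\tau)$ and $\tau_1$ by $p_1\in P(\tau_1)$ into a common maximal torus $T'$, notes that the flow limit shifts to $ux'$ with $\varpi_{rel}(ux',u\tau u\inv)=\varpi_{rel}(x,\tau)$, and closes the chain of inequalities using the weight inclusion $\Wt_{T'}(\tilde m'')\subset\Wt_{T'}(\tilde m)$ — which is exactly your polyhedron inclusion $\Pcal_{T'}(x'')\subseteq\Pcal_{T'}(x)$ in the $E$-variable. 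The only cosmetic difference is that you package the torus case as a separate lemma via the orthogonal-projection characterization of Proposition~\ref{prop:ssG=T}, and the step you correctly flag as the main obstacle (upgrading $T'$-optimality to $G$-optimality, handled by performing the comparison at $ux'$) is precisely what the paper's displayed inequality accomplishes.
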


\begin{proof}
The first assertion is obvious since $\varpi_{rel}(x,\tau)>0$.
The second assertion is a direct consequence of the last one. 
Let us prove the last one. 

Let $\tau_1$ be a rational $1$-parameter subgroup of $G$ optimal destabilizing for $x$. 
Let $p\in U(\tau)$ and $p_1\in P(\tau_1)$ such that $\tau'=u\tau u\inv$
and $\tau_1'=p_1\tau_1 p_1\inv$ are $1$-parameter subgroup of some
maximal torus $T'$ of $G$. 

By Lemma~\ref{lem:minv}, $\tau_1'$ is optimal for $x$.
Moreover, $\lim_{t\to \infty}\tau'(t)x=ux'=:x''$ and $\varpi_{rel}(x'',\tau')=\varpi_{rel}(x,\tau)$. 
In particular, $\tau'$ is optimal destabilizing for $x''$. 

Observe that $\Wt_{T'}(y'')\subset \Wt_{T'}(y)$. Then $\bar
\varpi_E(m'',\tau_1')\geq \bar \varpi_E(m,\tau_1')$. Finally, we have
$$
 \bar \varpi_E(m,\tau_1')\leq  \bar \varpi_E(m'',\tau_1') = \bar \varpi_E(m'',\tau')= \bar \varpi_E(u y,\tau')
$$
and $\tau'$ is optimal for $uy$. So,  $\tau$ is optimal for $x$. 
\end{proof}

\subsection{Algebraic Shifting Trick}

\subsubsection{\`A la Ness}

Fix $\tau\in \Xgot_{*}(G)_\Q$.
Choose $g\in G$ such that  $g\tau g\inv\in \Xgot_{*}(T)$.
Let $T''$ be the subtorus of $T$ such that $\Xgot_{*}(T'')$ spans the
orthogonal of $g\tau g\inv$ in $\Xgot_{*}(T)\otimes \R\simeq \tgot_{0}$.
Let $(G^\tau)'$ be the subgroup of $G^\tau$ generated by the derived group
$[G^\tau,G^\tau]$ and $g\inv T''g$.
Note that $(G^\tau)'$ does not depend on $g$.
Moreover, the map $(G^\tau)'\times \C^*\longrightarrow  G^\tau,
(h,t)\longmapsto h\tau(t)$ is an isogeny. 

\begin{theorem}{}(See~\cite[Theorem~9.4]{Ness84})\label{th:optss}
  Let $x\in V\times\Pbb E$. Let $\tau\in \Xgot_{*}(G)_\Q$ be 
  such that $\bar \varpi_{rel}(x,\tau)=\Vert\tau\Vert>0$.
  Assume that $\tau(\C^*)$ fixes $x$.

  Then, $\tau$ is an optimal destabilizing $1$-parameter subgroup for $x$ if and only if $x$ is
  semistable for the action of $(G^\tau)'$.
\end{theorem}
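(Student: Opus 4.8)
The plan is to reduce the statement to a comparison between optimality for the full group $G$ and optimality for the centralizer $G^\tau$, and to settle the centralizer question by an elementary computation on weights. Write $x=(v,m)$ and set $N:=\Vert\tau\Vert^2$. The hypotheses say that $\tau(\C^*)$ fixes $v$ (so the $\tau$-weight of $v$ is $0$) and acts on the line $m$ by a single character; the normalization $\bar\varpi_{rel}(x,\tau)=\Vert\tau\Vert$ means precisely that the $\tau$-weight of $\tilde m$ equals $-N$.

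First I would record the basic computation inside the centralizer. Any $\sigma\in\Xgot_{*}(G^\tau)_\Q$ decomposes orthogonally as $\sigma_0=\sigma'_0+c\,\tau_0$ with $\sigma'\in\Xgot_{*}((G^\tau)')_\Q$ and $c\in\Q$, since the isogeny $(G^\tau)'\times\C^*\to G^\tau$ identifies $\Xgot_{*}(G^\tau)_\Q=\Xgot_{*}((G^\tau)')_\Q\oplus\Q\tau$, a splitting that is orthogonal because the center of $\ggot^\tau$ is orthogonal to its derived subalgebra. As $\sigma$ commutes with $\tau$ it preserves the $\tau$-weight spaces, and reading off the gradings shows that $\lim_{t\to\infty}\sigma(t)v$ exists iff $\lim_{t\to\infty}\sigma'(t)v$ exists, and that then
\[
\bar\varpi_{rel}(x,\sigma)=\frac{\varpi_E(m,\sigma')+cN}{\sqrt{\Vert\sigma'\Vert^2+c^2N}} .
\]
I would then invoke the elementary inequality $\frac{A+cN}{\sqrt{B+c^2N}}\le\sqrt N$ (valid for $A\le0$, $B\ge0$, $N>0$, all $c\in\R$, the quotient exceeding $\sqrt N$ for $c\gg0$ as soon as $A>0$). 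Taking $A=\varpi_E(m,\sigma')$, $B=\Vert\sigma'\Vert^2$, and using that $(G^\tau)'$-semistability means $\varpi_E(m,\sigma')\le0$ for every $\sigma'$ with $\lim\sigma'(t)v$ existing (the analogue of Proposition~\ref{prop:unstable-M-rel} for the reductive group $(G^\tau)'$), I obtain that $x$ is $(G^\tau)'$-semistable iff $\mathbf{M}^{G^\tau}_{rel}(x)=\Vert\tau\Vert$, i.e. iff $\tau$ is optimal destabilizing for the $G^\tau$-action (by Theorem~\ref{th:Kempf} applied to $G^\tau$, since $\bar\varpi_{rel}(x,\tau)=\Vert\tau\Vert$ is already attained).

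It then remains to prove that \emph{$\tau$ is optimal for $G^\tau$ if and only if it is optimal for $G$}. One implication is immediate: $\mathbf{M}^{G^\tau}_{rel}\le\mathbf{M}_{rel}$ and both are $\ge\Vert\tau\Vert$, so $G$-optimality forces $G^\tau$-optimality. The reverse implication is the main obstacle, since a priori a $1$-PS outside $G^\tau$ could destabilize $x$ more efficiently. To handle it I would first show that the optimal parabolic contains the stabilizer: for $h\in G_x$ and $\rho\in\Lambda(x)$, Lemma~\ref{lem:minv}(i) gives $\bar\varpi_{rel}(x,h\rho h\inv)=\bar\varpi_{rel}(hx,h\rho h\inv)=\bar\varpi_{rel}(x,\rho)$, so $h\rho h\inv\in\Lambda(x)$; as the parabolic is constant on $\Lambda(x)$ (Theorem~\ref{th:Kempf}), $hP(x)h\inv=P(x)$, and parabolics being self-normalizing, $G_x\subset P(x)$.

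Finally, suppose $\tau$ is $G^\tau$-optimal but, for contradiction, not $G$-optimal, and choose $\rho\in\Lambda(x)$ with $\Vert\rho\Vert=\mathbf{M}_{rel}(x)>\Vert\tau\Vert$. Since $\tau(\C^*)\subset G_x\subset P(x)=P(\rho)$, there is $u\in U(\rho)$ with $u\tau u\inv\in\Xgot_{*}(G^\rho)$. Replacing $(x,\tau)$ by $(ux,u\tau u\inv)$ preserves every hypothesis (Lemma~\ref{lem:minv}(i)) and keeps $\rho$ optimal, because $u\in P(\rho)$ forces $\Lambda(ux)=u\Lambda(x)u\inv=\Lambda(x)$; thus I may assume $\tau$ and $\rho$ commute, hence lie in a common maximal torus $T'$. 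Then $T'\subset G^\tau$, so $G^\tau$-optimality gives $\mathbf{M}^{T'}_{rel}(x)=\Vert\tau\Vert$; but $\rho\in\Xgot_{*}(T')$ yields $\mathbf{M}^{T'}_{rel}(x)\ge\bar\varpi_{rel}(x,\rho)=\Vert\rho\Vert>\Vert\tau\Vert$, a contradiction. This closes the reverse implication and, combined with the centralizer computation, proves the theorem.
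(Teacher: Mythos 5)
Your proof is correct, and while your forward direction is essentially the paper's, your treatment of the hard (converse) direction takes a genuinely different route. Your first two steps — the orthogonal splitting $\Xgot_{*}(G^\tau)_\Q=\Xgot_{*}((G^\tau)')_\Q\oplus\Q\tau$ together with the exact inequality $\frac{A+cN}{\sqrt{B+c^2N}}\leq\sqrt N$ for $A\leq 0$ — are a two-sided, global version of the paper's first-order expansion \eqref{eq:DL}, which the paper exploits only for the implication ``optimal $\Rightarrow$ semistable''; this gives you the clean intermediate equivalence ``$x$ is $(G^\tau)'$-semistable iff $\tau$ is optimal destabilizing for the $G^\tau$-action'', which the paper never isolates. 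The real divergence is in proving ``semistable $\Rightarrow$ $G$-optimal''. The paper fixes an arbitrary competitor $\tau_1$, conjugates $\tau$ by $u\in U(\tau)$ and $\tau_1$ by $p_1\in P(\tau_1)$ into a common maximal torus $T'$, and then must transfer the hypothesis from $(G^\tau)'$ to $(G^{\tau'})'$ (the technical ``Claim'', proved with the degeneration map $L$), after which the weight-polytope picture of Proposition~\ref{prop:ssG=T} settles the torus case. You avoid the transfer entirely: you invoke Theorem~\ref{th:Kempf} to produce a $G$-optimal $\rho$ with $\Vert\rho\Vert>\Vert\tau\Vert$, prove $G_x\subset P(x)$ via Lemma~\ref{lem:minv} and self-normalization of parabolics (a Kempf-style step absent from the paper's argument), conjugate $\tau$ into the Levi $G^\rho$ by $u\in U(\rho)$ while observing $\Lambda(ux)=u\Lambda(x)u\inv=\Lambda(x)$, and derive a contradiction inside a common maximal torus $T'\subset G^\tau$. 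What each approach buys: the paper's proof is more self-contained at this point of the text, needing only the polytope description and conjugation invariance, but at the cost of the delicate limit-map Claim; yours trades that Claim for the full strength of Theorem~\ref{th:Kempf} plus two standard algebraic-group facts not stated in the paper (parabolic subgroups are self-normalizing; a torus in a parabolic is conjugate into the Levi by an element of the unipotent radical), and in exchange the argument is structurally cleaner — a weight computation inside $G^\tau$, then a purely group-theoretic comparison of $G^\tau$-optimality with $G$-optimality.
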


\begin{proof} 
Write $x=(v,m)$.
Assume that $\tau$ is an optimal destabilizing $1$-PS for $x$.
  Let $\tau_1$ be a $1$-parameter subgroup of $(G^\tau)'$ such that
$\lim_{t\to \infty}\tau_1(t)x$ exists.

  Since $\tau_1$ and $\tau$ are orthogonal, we have, for any
  $\varepsilon\in\R$,  $\Vert \tau+\varepsilon \tau_1\Vert^2=\Vert
  \tau\Vert^2+\varepsilon^2 \Vert\tau_1\Vert^2$. Moreover, since $m$
  is fixed by $\tau$, we have $\varpi_E(m,\tau+\varepsilon\tau_1)=
  \varpi_E(m,\tau)+\varepsilon \varpi_E(m,\tau_1)$.
  A direct computation shows that
  \begin{equation}
\bar \varpi_E(m,\tau+\varepsilon\tau_1)=\bar \varpi_E(m,\tau)+\varepsilon
\frac{\varpi_E(m,\tau_1)}{\Vert\tau\Vert}+o(\varepsilon).\label{eq:DL}
\end{equation}

But, for any rational $\varepsilon>0$, $\lim_{t\to \infty}(\tau+\varepsilon \tau_1)(t)v$ exists.
As $\bar \varpi_E(m,\tau+\varepsilon\tau_1)\leq\bar \varpi_E(m,\tau)$, the identity 
\eqref{eq:DL} shows that $\varpi_E(m,\tau_1)\leq 0$.
Now, Theorem~\ref{th:HM} implies that $x$ is semistable for the
action of $(G^\tau)'$.

\bigskip
Conversely, assume that $x$ is semistable for the action of
$(G^\tau)'$.
 Let $\tau_1$ be a $1$-parameter subgroup of $G$ such that
$\lim_{t\to \infty}\tau_1(t)x$ exists. Let us show that $\bar \varpi_E(m,\tau_1)\leq \bar \varpi_E(m,\tau)$.

Let $u\in U(\tau)$ and $p_1\in P(\tau_1)$ such that $u\tau
u\inv=:\tau'$ and
$p_1\tau_1p_1\inv=:\tau_1'$ are $1$-parameter subgroups of a maximal
torus $T'$ of $G$. 

\bigskip
\noindent\underline{Claim:} \quad $x$ is semistable for the action
$(G^{\tau'})'$.

Consider the map $L\,:\,(x_1,\tilde m_1)=(\lim_{t\to \infty}\tau(t)x_1,\lim_{t\to
  0}\tau(t) t^{\varpi_E(m,\tau)}\tilde m_1)$ defined on a closed subvariety of
$V\times E$ containing $x$ and stable by $G^\tau$. Hence, $L$ is
defined on $\overline{G^\tau(v,\tilde m)}$.
Moreover, since $\lim_{t\to \infty}\tau(t)u\inv\tau(t)\inv=e$, we have 
$
L((G^\tau)' u\inv (v,\tilde m))=(G^\tau)' (v,\tilde m).
$
Now, $x$ being semistable for $(G^\tau)'$, the closure of
$(G^\tau)' u\inv (v,\tilde m)$ does not intersect $V\times\{0\}$.
The claim follows.

\bigskip
The Claim  implies that $x$ is relatively semistable for $T''$, e.g.
$0\in\Pcal_{T'}(x)$. But $\Pcal_{T'}(x)$ is the orthogonal
projection of $\Pcal_T(x)$ in the direction $\tau'$.
It follows that $\tau'$ is optimal of $x$ and the $T'$-action. In
particular
$$
\bar \varpi_E(m,\tau_1)=\bar \varpi_E(m,\tau_1')\leq \bar \varpi_E(m,\tau')=\bar \varpi_E(m,\tau).
$$
\end{proof}

\subsubsection{With a tensor product}

Let us recall two standard operations on embedded varieties.

{\bf Cartesian product}

If $X\subset V\times\Pbb E$ and $X'\subset V'\times\Pbb(E')$, using
the Segre embedding, we embed $X\times X'$ in $V\times V' \times\Pbb(E\otimes E')$ such that 
$$
R_k(X\times X')\simeq R_k(X)\otimes R_k(X').
$$

{\bf Changing the polarization}

Let $\ell\geq 1$. Then $\Pbb E $ embeds in $\Pbb(\mathrm{Sym}^\ell E)$ by the
Veronese embedding, $[\tilde m]\mapsto [\tilde m^\ell]$. We denote by
$X_{[\ell]}$ the image of $X$ by the induced embedding of
$V\times\Pbb E$ in $V\times\Pbb(\mathrm{Sym}^\ell E)$. Then, 
$$
R_k(X_{[\ell]})\simeq R_{k\ell}(X),\quad \forall k\geq 0.
$$

\bigskip
Recall now an easy property of root systems:

\begin{lemma}
  \label{lem:betatau}
  Let $\tau$ be a $1$-parameter subgroup of $G$.

  Choose a maximal torus $T'$ containing the image of $\tau$.  Let
  $\tau^\flat$ be the character of $T'$ dual of $\tau$ for the
  scalar product on $\Xgot^*(T')_\R$. Let $\ell\geq 1$ such that $\ell\tau^\flat\in \Xgot^*(T')$.

Then $\ell\tau^\flat$ is a character of the parabolic subgroup $P(\tau)$ that is independent of the choice of $T'$.
\end{lemma}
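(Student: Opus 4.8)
The plan is to pass to the Lie algebra and to repackage the scalar product into a single $\mathrm{Ad}(G)$-invariant bilinear form, so that the sought-after character becomes visible as an \emph{intrinsic} linear form that makes no reference to $T'$.

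First I would introduce the complex-bilinear symmetric form $\kappa$ on $\ggot$ obtained by extending the chosen $K$-invariant inner product on $\kgot$ complex-bilinearly. Being $\mathrm{Ad}(K)$-invariant and complex bilinear, $\kappa$ is $\mathrm{Ad}(G)$-invariant, and it is the polarization of the quadratic form underlying $\delta$; in particular $\kappa(\tau_0,\tau_0)=\delta(\tau_0)=\Vert\tau\Vert^2$ for the elliptic element $\tau_0\in\ggot_{ell}$ attached to $\tau$. Transporting the scalar product from $T$ and using $\mathrm{Ad}(G)$-invariance, one checks that the scalar product defining $\tau^\flat$ is precisely the restriction of $\kappa$ to the real cocharacter space, i.e. $(\sigma,\sigma')=\kappa(\sigma_0,\sigma_0')$ for $\sigma,\sigma'\in\Xgot_*(T')_\R$. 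Hence the differential of the character $\tau^\flat$ of $T'$ is the restriction to $\tgot'=\Lie(T')$ of the complex-linear form $\Omega:=\kappa(\tau_0,\,\cdot\,)\colon\ggot\to\C$, which depends only on $\tau$ (through $\tau_0$) and not on the chosen torus.

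Next I would show that $\ell\Omega|_{\pgot(\tau)}$ is the differential of a character of $P(\tau)$ restricting to $\ell\tau^\flat$ on $T'$. The form $\Omega$ vanishes on $[\ggot^\tau,\ggot^\tau]$: since $\tau_0$ is central in $\ggot^\tau$, the $\mathrm{ad}$-invariance of $\kappa$ gives $\kappa(\tau_0,[a,b])=\kappa([\tau_0,a],b)=0$ for $a,b\in\ggot^\tau$. It also vanishes on $\ugot(\tau)$: for a root $\alpha$ with $\langle\alpha,\tau\rangle\neq 0$ and $e_\alpha\in\ggot_\alpha$ one has $[\tau_0,e_\alpha]=c_\alpha e_\alpha$ with $c_\alpha\neq 0$, and $\mathrm{ad}$-invariance then yields $c_\alpha\kappa(\tau_0,e_\alpha)=\kappa(\tau_0,[\tau_0,e_\alpha])=0$, whence $\kappa(\tau_0,e_\alpha)=0$. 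As $\pgot(\tau)=\ggot^\tau\oplus\ugot(\tau)$, these two facts give $\Omega([\pgot(\tau),\pgot(\tau)])=0$, so $\Omega|_{\pgot(\tau)}$ is a Lie-algebra character. Equivalently, evaluating on coroots, $\langle\tau^\flat,\alpha^\vee\rangle=\kappa(\tau_0,(\alpha^\vee)_0)$ is proportional to $\langle\alpha,\tau\rangle$ and hence vanishes for every root $\alpha$ of the Levi $G^\tau$; by the standard criterion $\Xgot^*(P(\tau))=\Xgot^*(G^\tau)=\{\mu\in\Xgot^*(T')\,:\,\langle\mu,\alpha^\vee\rangle=0\ \forall\,\alpha\in\mathrm{roots}(G^\tau)\}$, the integral weight $\ell\tau^\flat$ therefore extends (uniquely, by connectedness of $P(\tau)$) to a character of $P(\tau)$.

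Finally, independence of $T'$ is then immediate: the extending character is the unique character of the connected group $P(\tau)$ whose differential is $\ell\,\Omega|_{\pgot(\tau)}$, and $\Omega=\kappa(\tau_0,\,\cdot\,)$ was built from $\tau$ alone. I expect the only genuine obstacle to be the temptation to argue independence by conjugating two tori $T'=gT''g\inv$ with $g\in G^\tau$: this fails naively because $\mathrm{Ad}(g)$ does not preserve the positive-definite inner product on $\kgot$. The point of introducing $\kappa$ (equivalently $\delta$), whose $\mathrm{Ad}(G)$-invariance encodes the conjugation-invariance of the $1$-PS norm, is exactly to circumvent this; alternatively one can keep the torus-conjugation argument and note that $\kappa$-invariance yields $c_g^*\tau^\flat_{T''}=\tau^\flat_{T'}$, after which invariance of characters under the inner automorphism $c_g$ of $P(\tau)$ finishes the proof.
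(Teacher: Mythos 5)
Your proof is correct. Note that the paper itself offers no argument for Lemma~\ref{lem:betatau}: it is introduced as ``an easy property of root systems'' and stated without proof, so there is no authorial route to compare against. Your strategy --- extending the $K$-invariant inner product to the $\mathrm{Ad}(G)$-invariant complex-bilinear form $\kappa$, identifying the differential of $\tau^\flat$ with the intrinsic linear form $\kappa(\tau_0,\cdot)$, and then using $\mathrm{ad}$-invariance to kill $[\ggot^\tau,\ggot^\tau]$ and $\ugot(\tau)$ --- is a clean way to make both the extension to $P(\tau)$ and the independence of $T'$ manifest at once, and the integrality step via $\langle\ell\tau^\flat,\alpha^\vee\rangle=0$ for the roots of the Levi is the right criterion (a $W_{G^\tau}$-invariant integral weight orthogonal to all coroots of $G^\tau$ is exactly a character of $G^\tau=P(\tau)/U(\tau)$). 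Two cosmetic remarks: first, $\kappa(\mu,\mu)$ equals $\delta(\mu)$ plus an imaginary term $2i\langle\xi_1,\xi_2\rangle$ for general $\mu=\xi_1\oplus i\xi_2$, so $\kappa$ is not literally the polarization of $\delta$; the identity $\kappa(\tau_0,\tau_0)=\delta(\tau_0)=\Vert\tau\Vert^2$ you actually use holds for elliptic elements and is all that is needed. Second, your worry about the naive conjugation argument is somewhat overcautious: two maximal tori containing $\mathrm{im}\,\tau$ are conjugate by some $g\in G^\tau\subset P(\tau)$, the transported scalar products are intertwined by $\mathrm{Ad}(g)$ precisely because the product on $\tgot_0$ is Weyl-invariant, and a character of $P(\tau)$ is invariant under the inner automorphism $c_g$; so that route also closes, as you indicate at the end.
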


Let $T'$ be a maximal torus of $G$ and $\chi$ a character of $T'$.
Then, there exists a unique irreducible representation of $G$ denoted by
$V(\chi)$ having $\chi$ as extremal weight for the $T'$-action. Here, extremal weight
means a weight that is a vertex of $\conv(\Wt_{T'}(V(\chi)))$.
One can construct $V(\chi)$ as follows: choose a Borel subgroup
$B'\supset T'$ such that $\chi$ is dominant relatively to
$B'$. Then, $V(\chi)$ is the irreducible representation with highest
weight $\chi$ for $B'$.

Start now with $\tau\in \Xgot_{*}(G)$. Let $\ell$ and $\tau^\flat$ as in Lemma~\ref{lem:betatau}. 
Thus, the representation $V(\ell\tau^\flat)$ is well-defined.  Let $v_{\ell\tau^\flat}$ be an eigenvector of weight $\ell\tau^\flat$ in $V_{\ell\tau^\flat}$. 

\begin{theorem}
  \label{th:shiftingtrick}
  Let $x=(v,m)\in V\times\Pbb E$ such that $\bar\varpi_{rel}(x,\tau)=\Vert \tau\Vert$. The following statements are equivalent:
  \begin{enumerate}
\item $\tau$ is an optimal destabilizing $1$-parameter subgroup $x$.
\item $(v,[\tilde m^\ell\otimes v_{\ell\tau^\flat}])\in V\times\Pbb(\mathrm{Sym}^\ell E\otimes V(\ell\tau^\flat))$ is semistable.
\end{enumerate}
\end{theorem}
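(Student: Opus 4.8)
The strategy is to reduce the general assertion to the torus case via Theorem~\ref{th:optss}, which already characterizes optimal destabilizing $1$-PS in terms of semistability for the subgroup $(G^\tau)'$. The key observation is that tensoring with the extremal vector $v_{\ell\tau^\flat}$ is designed precisely to shift the instability degree so that $\tau$ becomes a \emph{fixing} $1$-PS of the new point, with $\bar\varpi_{rel}$ value exactly $\|\tau\|$. Concretely, I would first compute how $\bar\varpi_E$ transforms under the two operations (Veronese reparametrization and Segre tensoring) recorded just before the statement. Passing from $\Pbb E$ to $\Pbb(\mathrm{Sym}^\ell E)$ multiplies $\varpi_E$ by $\ell$, and tensoring with $V(\ell\tau^\flat)$ adds the contribution of the weight $\ell\tau^\flat$. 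The whole point of choosing $v_{\ell\tau^\flat}$ of weight $\ell\tau^\flat$ is that $\langle \ell\tau^\flat,\tau\rangle = \ell\|\tau\|^2$ by duality, so that the top $\tau$-weight appearing in $\tilde m^\ell\otimes v_{\ell\tau^\flat}$ is attained along a face that $\tau$ fixes, making $\tau(\C^*)$ act trivially on the new point $(v,[\tilde m^\ell\otimes v_{\ell\tau^\flat}])$.

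Granting that the new point $y:=(v,[\tilde m^\ell\otimes v_{\ell\tau^\flat}])$ is $\tau$-fixed with $\bar\varpi_{rel}(y,\tau)=\|\tau\|>0$, the hypotheses of Theorem~\ref{th:optss} are met. That theorem says $\tau$ is optimal destabilizing for $y$ if and only if $y$ is semistable for $(G^\tau)'$. So I would then argue that $\tau$ is optimal destabilizing for the \emph{original} $x$ if and only if it is optimal destabilizing for $y$: this is where the shift is transparent, since tensoring by the fixed extremal vector only alters the $\tau$-direction of the numerical invariants while leaving the orthogonal $(G^\tau)'$-directions — which are exactly the directions that decide optimality by the computation in the proof of Theorem~\ref{th:optss} — unchanged. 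Thus optimality of $\tau$ for $x$, optimality of $\tau$ for $y$, and $(G^\tau)'$-semistability of $y$ are all equivalent.

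The final step is to upgrade ``$(G^\tau)'$-semistable'' to ``$G$-semistable,'' i.e. to identify statement (ii) of the theorem (full $G$-semistability of $y$) with $(G^\tau)'$-semistability. Since $y$ is fixed by $\tau(\C^*)$ and the central torus $\tau(\C^*)$ together with $(G^\tau)'$ generates $G^\tau$ up to isogeny, while $\bar\varpi_{rel}(y,\tau)=\|\tau\|>0$ guarantees $\tau$ itself destabilizes in the complementary direction, one checks that the extremal vector $v_{\ell\tau^\flat}$ supplies a $G$-invariant section (a power of the character $\ell\tau^\flat$ extends to $P(\tau)$ by Lemma~\ref{lem:betatau}) witnessing that $G$-semistability and $(G^\tau)'$-semistability of $y$ coincide. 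The main obstacle is this last identification: I must verify carefully that the Segre–Veronese construction produces \emph{exactly} the right linearization so that the $\langle\ell\tau^\flat,\tau\rangle=\ell\|\tau\|^2$ contribution cancels the destabilization along $\tau$, leaving the residual semistability condition purely on $(G^\tau)'$. This amounts to a weight-polytope bookkeeping — tracking $\Wt_{T'}$ of $\tilde m^\ell\otimes v_{\ell\tau^\flat}$ and confirming that $0$ lies in the relevant projected polytope iff it did for the $(G^\tau)'$-action — and is the only place demanding genuine care rather than formal manipulation.
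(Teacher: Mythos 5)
There is a genuine gap, and it sits at the very foundation of your plan: the weight bookkeeping for the shifted point is wrong. Write $y:=(v,[\tilde m^\ell\otimes v_{\ell\tau^\flat}])$. The hypothesis $\bar\varpi_{rel}(x,\tau)=\Vert\tau\Vert$ means that the largest $\tau$-weight occurring in $\tilde m$ is $-\Vert\tau\Vert^2$; hence the largest $\tau$-weight in $\tilde m^\ell$ is $-\ell\Vert\tau\Vert^2$, and after tensoring with $v_{\ell\tau^\flat}$ (of $\tau$-weight $+\ell\Vert\tau\Vert^2$) the largest $\tau$-weight of $\tilde m^\ell\otimes v_{\ell\tau^\flat}$ is $0$. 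So $\bar\varpi_{rel}(y,\tau)=0$, \emph{not} $\Vert\tau\Vert$, and $y$ is not fixed by $\tau(\C^*)$ unless $\tilde m$ happens to be a $\tau$-eigenvector. Consequently the hypotheses of Theorem~\ref{th:optss} fail for $y$, and the reduction you build on it collapses. Worse, your claims are inconsistent with the statement you are proving: if one had $\bar\varpi_{rel}(y,\tau)=\Vert\tau\Vert>0$, then $\mathbf{M}_{rel}(y)>0$ and $y$ would be \emph{unstable} by Proposition~\ref{prop:unstable-M-rel}, so assertion (ii) could never hold; likewise the pivot ``$\tau$ is optimal destabilizing for $x$ iff it is optimal destabilizing for $y$'' is vacuous, since a semistable $y$ admits no optimal destabilizing $1$-PS at all. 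The entire point of the shift is the opposite of what you assert: it moves the top $\tau$-weight to $0$ so that $\tau$ becomes \emph{neutral} on $y$, which is what leaves room for $y$ to be semistable.

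The correct mechanism, and the one the paper uses, is a polytope translation rather than an application of Theorem~\ref{th:optss} to $y$. One takes an optimal destabilizing $1$-PS $\tau_1$ (of $y$ in one direction of the proof, of $x$ in the other), chooses a maximal torus $T'\subset P(\tau)\cap P(\tau_1)$, and conjugates $\tau,\tau_1$ into $T'$ by elements of their parabolics (Lemma~\ref{lem:minv} keeps all numerical invariants unchanged). The key identity is
$$
\Pcal_{T'}\big(v,[\tilde m^\ell\otimes v_{\ell\tau^\flat}]\big)=\ell\left(\tau^\flat+\Pcal_{T'}(x)\right),
$$
so $T'$-semistability of $y$ (i.e.\ $0\in\Pcal_{T'}(y)$, Proposition~\ref{prop:ssG=T}) is equivalent to $-\tau^\flat\in\Pcal_{T'}(x)$, which by the projection characterization in Proposition~\ref{prop:ssG=T} is equivalent to $\tau'$ being the optimal destabilizer of $x$ for $T'$; Lemma~\ref{lem:betatau} guarantees $\tau^\flat$ is well defined as a character of $P(\tau)$ independently of $T'$. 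Your intuition that the shift ``only alters the $\tau$-direction'' is the germ of this translation identity, but to make it a proof you must abandon the claim that $y$ is $\tau$-fixed with positive instability and instead run the Kempf-style common-torus argument on the weight polytopes.
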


\begin{proof}
First, let us assume that $\tau$ is optimal destabilizing for $x$, and by contradiction, that $(v,[\tilde{m}^\ell\otimes v_{\ell\tau^\flat}])$ is unstable.
Let $\tau_1$ be a $1$-parameter subgroup of $G$ that is optimal destabilizing for 
$(v,[\tilde{m}^\ell\otimes v_{\ell\tau^\flat}])$.

Let $T'$ be a maximal torus of $G$ contained in $P(\tau)\cap P(\tau_1)$. 
Let $p\in P(\tau)$ and $p_1$ in $P(\tau_1)$ such that 
$\tau':=p\tau p\inv$ and $\tau_1':=p_1\tau_1 p_1\inv$ are rational $1$-parameter subgroups of $T'$.

According to Lemma~\ref{lem:minv}, $\tau'$ and $\tau_1'$ are optimal destabilizing for the $T'$-action on $x$ and $(v,[\tilde{m}^\ell\otimes v_{\ell\tau^\flat}])$, respectively.
In particular, $-(\tau')^\flat$ belongs to $\Pcal_{T'}(x)$, by Proposition~\ref{prop:ssG=T}.

On the other hand, $\ell\tau^\flat$ is a character of $T'$ and 
\begin{align*}
\Wt_{T'}([\tilde m^\ell \otimes v_{\ell\tau^\flat}])		&=   \ell\tau^\flat+\Wt_{T'}(\tilde m^\ell)), \\
\conv(\Wt_{T'}(\tilde m^\ell))                                        &=  \ell\,\conv(\Wt_{T'}(\tilde m)). 
\end{align*}

It follows that $\Pcal_{T'}(v,[\tilde m^\ell \otimes v_{\ell\tau^\flat}])=\ell\left(\tau^\flat+\Pcal_{T'}(x) \right)$. 
Using $\tau_1'$, Lemma~\ref{lem:minv} implies that $[\tilde m^\ell\otimes v_{\ell\tau^\flat}]$ is unstable
for $T'$.
Hence,  $0$ does not belong to $\Pcal_{T'}(v, [\tilde m^\ell\otimes
v_{\ell\tau^\flat}])$, that is $-\tau^\flat$ does not belong to 
$\Pcal_{T'}(x)$. 

Since, by Lemma~\ref{lem:betatau}, $\ell\tau^\flat=\ell(\tau')^\flat$ as
characters of $P(\tau)=P(\tau')$ this is a contradiction.

\bigskip
The converse works similarly. Assume that $(v,[\tilde{m}^\ell\otimes v_{\ell\tau^\flat}])
\in V\times\Pbb(\mathrm{Sym}^\ell V\otimes V(\ell\tau^\flat))$ is
semistable.
Let $\tau_1$ be a $1$-parameter subgroup of $G$ that is optimal destabilizing for
$x$.

Let again $T'$ be a maximal torus of $G$ contained in $P(\tau)\cap P(\tau_1)$. 
Let $p\in P(\tau)$ and $p_1$ in $P(\tau_1)$ such that 
$\tau':=p\tau p\inv$ and $\tau_1':=p_1\tau_1 p_1\inv$ are $1$-parameter subgroups of $T'$. 

Since $[\tilde m^\ell\otimes v_{\ell\tau^\flat}]$ is semistable for $T'$,
$-(\tau')^\flat$ belongs to $\Pcal_{T'}(x)$. 
Then, Proposition~\ref{prop:ssG=T} implies that $\tau'$ is optimal destabilizing for
$x$ and the action of $T'$. 
But $\tau_1'$ being a $1$-parameter of $T'$ that is optimal destabilizing for
$x$ and the $G$-action, we deduce that $\tau'$ is optimal destabilizing for
$x$ and $G$, too. 
Lemma~\ref{lem:minv} implies now that $\tau$ is optimal destabilizing for $x$. 
\end{proof}

\subsection{A stratification of the nullcone}
\label{sec:strat}

\subsubsection{A partition of the unstable locus}

In this section, we introduce the HKKN-stratification in the
relative setting of  a closed $G$-stable irreducible 
subvariety $X\subset V\times\Pbb E$.

For any dominant $\tau\in \Xgot_{*}(G)_\Q-\{0\}$, we denote by $\langle\tau\rangle$ the set of
rational $1$-parameter subgroups of $G$ conjugated to $\tau$, and we define 
\begin{equation}
  \label{eq:defSdtau}
  X_{\langle\tau\rangle}:=\left\{x\in X^{\rm us}\,:\, \mathbf{M}_{rel}(x)=\Vert\tau\Vert\mbox{ and } 
  \langle\tau\rangle\cap \Lambda(x)\neq\emptyset\right\}.
\end{equation}
Since there are only finitely many possibilities for the sets
$\Wt_T(v)\times\Wt_T(\tilde m)$ when $x=(v,m)$ varies in $X^{\rm us}$, one easily sees that $X_{\langle\tau\rangle}$
is not empty for finitely many classes $\langle\tau\rangle$. 
Moreover, Theorem~\ref{th:Kempf} shows that  $X^{\rm us}$
is the disjoint union of the sets $X_{\langle\tau\rangle}$.

To describe the geometry of $X_{\langle\tau\rangle}$, define 
\begin{equation}
\label{eq:defSdtau2}
X_{\tau}:=\{x\in X^{\rm us}\,:\, \mathbf{M}_{rel}(x)=\Vert\tau\Vert\mbox{ and } \tau\in \Lambda(x)\},
\end{equation}
and
\begin{equation}
\label{eq:defZdtau}
Z_{\tau}:=\{x\in X_{\tau}\,:\, \tau(\C^*) \mbox{ fixes }x \}.
\end{equation}

Set $V_{\tau\leq 0}=\{v\in V\,:\, \lim_{t\to \infty}\tau(t)v\mbox{ exists in } V\}$. 
Decompose $E=\oplus_{q\in \Q}E_q$ under the action of $\tau$. 
Here, the index set is $\Q$ since $\tau$ is rational. 
Set $E_{\tau\leq -\Vert \tau\Vert^2}=\oplus_{q\leq -\Vert \tau\Vert^2}E_q$ and similarly $E_{\tau < -\Vert \tau\Vert^2}$.

We consider the closed subvarieties
$$
A_\tau:=X\bigcap V_{\tau\leq 0}\times\Pbb(E_{\tau\leq -\Vert \tau\Vert^2})
\quad\mathrm{and}\quad B_\tau:=X\bigcap V_{\tau= 0}\times\Pbb(E_{\tau= -\Vert \tau\Vert^2}).
$$
and the open subset of $A_\tau$ defined by 
$$
\Ucal_\tau:=X\bigcap V_{\tau\leq 0}\times\left(\Pbb(E_{\tau\leq -\Vert \tau\Vert^2})-\Pbb(E_{\tau< -\Vert \tau\Vert^2})\right).
$$
The map $x\longmapsto \lim_{t\to \infty}\tau(t)x$ defines a linear projection $\pi_\tau :\Ucal_\tau\to B_\tau$. 

By definition, we have $X_\tau\subset \Ucal_\tau$ and $Z_\tau\subset B_\tau$. Thanks to Theorem~\ref{th:optss}, we know that  $Z_\tau$ 
is equal to open subset $B_\tau^{ss}$ of semistable points for the action of $(G^\tau)'$, and by Theorem~\ref{th:Ness}, 
we know that $x\in A_\tau$ belongs to $X_\tau$ if and only if $\pi_\tau(x)\in Z_\tau$. Hence, $X_\tau$ is an open subset of $A_\tau$, 
and thus a locally closed subset of $X$.

\bigskip

\noindent{\bf Fibered product.}
Let $X$ be a $G$-variety and $S\subset X$ be a locally closed subset that is stable under the action of some given parabolic subgroup $P\subset G$. 
Consider the incidence variety
$$
I=\{(gP/P,x)\in G/P\times X\;:\;g\inv x\in S\}
$$
with the two projections $p_{G/P}$ and $p_X$ on $G/P$ and $X$, respectively. 
Observe that $I$ is locally closed as a subset of $G/P\times X$, and hence, is a variety. 
One can immediately check that the fibers of the map
$$
\begin{array}{ccl}
   G\times S&\longrightarrow &I\\   
   (g,s)&\longmapsto&(gP/P,gs)
\end{array}
$$
are the $P$-orbits for the action given by $p.(g,s)=(gp\inv,ps)$. 
When we think about $I$ as a quotient, we denote it by $G\times_P S$. 

Since the map $G\longrightarrow  G/P$ is locally trivial (by Bruhat decomposition), the map $p_{G/P}\,:\,I=G\times_P S\longrightarrow  G/P$
is locally trivial with fiber $S$.

\bigskip
\begin{proposition}\label{prop:Kirwan}
  Let $\tau\in \Xgot_{*}(G)_\Q$ be non-trivial.
  Then:
  \begin{enumerate}
\item \label{ass:K1}$Z_{\tau}$ and $X_{\tau}$ are locally closed in $X$. 
\item \label{ass:K2}$X_{\tau}$ is the set of $x\in X$ such that $\lim_{t\to \infty}\tau(t)x$ exists and belongs to $Z_{\tau}$.
\item \label{ass:K3}$X_{\tau}$ is stable under the action of $P(\tau)$, and $gX_{\tau}\cap X_\tau\neq\emptyset$ if and only if $g\in P(\tau)$.
\item \label{ass:K4} The  map:
$$
\begin{array}{ccl}
  G\times_{P(\tau)}X_{\tau} &\longrightarrow  &X_{\langle\tau\rangle}\\
 {}[g:z]& \longmapsto & gz
\end{array}
$$
is bijective. 
\item \label{ass:K5}The boundary $\overline{X_{\langle\tau\rangle}}-X_{\langle\tau\rangle}$ is contained in the union of $X_{\langle\tau'\rangle}$ with $\Vert\tau'\Vert>\Vert\tau\Vert$.
\item \label{ass:K6}$X_{\langle\tau\rangle}$ is locally closed in $X$.
\end{enumerate}
\end{proposition}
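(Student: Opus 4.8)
The plan is to treat the six assertions in dependency order. Assertions (1) and (2) are local statements about $X_\tau$ and $Z_\tau$ that essentially fall out of the discussion of $A_\tau,B_\tau,\Ucal_\tau,\pi_\tau$ preceding the proposition; (3) is the $P(\tau)$-rigidity of $X_\tau$; (4) is then a formal consequence of (3) together with the definition of the fibered product; (5) is the genuine geometric input; and (6) follows from (5) by a descending induction using the finiteness of the set of stratum norms. For (1) I would simply record that $Z_\tau=B_\tau^{ss}$ is open in the closed subset $B_\tau\subset X$ by Theorem~\ref{th:optss}, while $X_\tau$ is open in the closed subset $A_\tau\subset X$ by Theorem~\ref{th:Ness}, so both are locally closed. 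For (2), the inclusion $X_\tau\subseteq\{x:\lim_{t\to\infty}\tau(t)x\text{ exists and lies in }Z_\tau\}$ is immediate from $X_\tau\subseteq\Ucal_\tau$ and $\pi_\tau(X_\tau)\subseteq Z_\tau$. For the reverse inclusion I would first observe that the hypothesis already forces $x\in\Ucal_\tau$: writing $x=(v,m)$ and $x'=(v',m')=\lim_{t\to\infty}\tau(t)x\in Z_\tau\subseteq B_\tau$, we get $v\in V_{\tau\leq 0}$, and since $m'$ is the top $\tau$-weight direction of $\tilde m$ and lies in $\Pbb(E_{\tau=-\Vert\tau\Vert^2})$, the largest $\tau$-weight of $\tilde m$ equals $-\Vert\tau\Vert^2$; hence $\tilde m\in E_{\tau\leq-\Vert\tau\Vert^2}$ with nonzero component in $E_{-\Vert\tau\Vert^2}$, i.e.\ $x\in\Ucal_\tau$. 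The characterization ``$x\in X_\tau\iff\pi_\tau(x)\in Z_\tau$'' then yields $x\in X_\tau$.

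For (3), the $G$-invariance of $\mathbf{M}_{rel}$ together with Lemma~\ref{lem:minv}(i) gives the equivariance $\Lambda(gx)=g\Lambda(x)g\inv$. Stability of $X_\tau$ under $p\in P(\tau)$ then follows by combining the two parts of Lemma~\ref{lem:minv}: $\bar\varpi_{rel}(px,\tau)=\bar\varpi_{rel}(x,p\inv\tau p)=\bar\varpi_{rel}(x,\tau)=\Vert\tau\Vert=\mathbf{M}_{rel}(px)$, so $\tau\in\Lambda(px)$. Conversely, if $gx\in X_\tau$ with $x\in X_\tau$, then $\tau$ and $g\inv\tau g$ both belong to $\Lambda(x)$; Theorem~\ref{th:Kempf} forces $P(\tau)=P(g\inv\tau g)=g\inv P(\tau)g$, and since parabolic subgroups are self-normalizing, $g\in P(\tau)$. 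Assertion (4) is then formal: surjectivity uses that for $x\in X_{\langle\tau\rangle}$ a conjugate $g\tau g\inv\in\Lambda(x)$ puts $g\inv x\in X_\tau$, and injectivity is precisely the rigidity just proved in (3).

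The heart is assertion (5), which I expect to be the main obstacle. Let $y\in\overline{X_{\langle\tau\rangle}}-X_{\langle\tau\rangle}$; since $X^{\rm us}$ is closed ($X^{\rm ss}$ being open), $y\in X^{\rm us}$, so $y\in X_{\langle\tau'\rangle}$ with $\Vert\tau'\Vert=\mathbf{M}_{rel}(y)$, and it suffices to prove $\mathbf{M}_{rel}(y)>\Vert\tau\Vert$. Write $y=\lim_n g_nz_n$ with $z_n\in X_\tau$. Using that $K$ acts transitively on the flag variety, i.e.\ $G=K\,P(\tau)$, write $g_n=k_np_n$; then $p_nz_n\in X_\tau$ by $P(\tau)$-stability, and after extracting a convergent subsequence $k_n\to k$ (compactness of $K$) we obtain $y':=k\inv y=\lim_n(p_nz_n)\in\overline{X_\tau}$. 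Because $V_{\tau\leq 0}$ is closed, the $V$-component of $y'$ lies in $V_{\tau\leq 0}$, so $\bar\varpi_{rel}(y',\tau)=\bar\varpi_E(\cdot,\tau)\neq-\infty$; and $\bar\varpi_E(\cdot,\tau)$ is upper semicontinuous, since the top $\tau$-weight of a nonzero vector can only jump up under a limit. Passing to the limit of the constant sequence $\bar\varpi_{rel}(p_nz_n,\tau)=\Vert\tau\Vert$ therefore gives $\Vert\tau\Vert\leq\bar\varpi_{rel}(y',\tau)\leq\mathbf{M}_{rel}(y')=\mathbf{M}_{rel}(y)$. For strictness, if $\mathbf{M}_{rel}(y)=\Vert\tau\Vert$ then equality holds throughout, so $\bar\varpi_{rel}(y',\tau)=\Vert\tau\Vert=\mathbf{M}_{rel}(y')$, i.e.\ $\tau\in\Lambda(y')$ and $y'\in X_\tau$; hence $y=ky'\in G\cdot X_\tau=X_{\langle\tau\rangle}$ by (4), contradicting $y\notin X_{\langle\tau\rangle}$. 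Thus $\mathbf{M}_{rel}(y)>\Vert\tau\Vert$, as required.

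Finally, (6) follows from (5) and the finiteness of the set of stratum norms. Listing the distinct values $0<c_1<\dots<c_N$ attained by $\Vert\tau'\Vert$, I would show by descending induction that each $F_{>c}:=\bigcup_{\Vert\tau'\Vert>c}X_{\langle\tau'\rangle}$ is closed: for the top value it is a finite union of strata whose closures, by (5), add nothing; and for lower $c$, assertion (5) gives $\overline{X_{\langle\tau\rangle}}\subseteq X_{\langle\tau\rangle}\cup F_{>\Vert\tau\Vert}$. Since $F_{>\Vert\tau\Vert}$ is closed and disjoint from $X_{\langle\tau\rangle}$, we obtain $X_{\langle\tau\rangle}=\overline{X_{\langle\tau\rangle}}\cap(X-F_{>\Vert\tau\Vert})$, which exhibits $X_{\langle\tau\rangle}$ as the intersection of a closed and an open subset, hence locally closed; this simultaneously closes the induction step by showing $F_{\geq\Vert\tau\Vert}$ is closed. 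The delicate point throughout is the strict boundary inequality in (5): the reduction to $K$ via $G=K\,P(\tau)$ and the upper semicontinuity of $\bar\varpi_E(\cdot,\tau)$ are exactly what convert a limit over the non-compact group $G$ into a usable limit over the compact $K$.
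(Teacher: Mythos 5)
Your treatment of assertions (i)--(iv) coincides with the paper's, which simply quotes the discussion preceding the proposition (Theorems~\ref{th:Ness}, \ref{th:Nessconv} and \ref{th:optss} for (i)--(ii), Lemma~\ref{lem:minv} and Theorem~\ref{th:Kempf} for (iii)--(iv)). For (v)--(vi), however, you take a genuinely different route. The paper works inside $A_\tau$: it proves $\mathbf{M}_{rel}(x)>\Vert\tau\Vert$ for every $x\in A_\tau-X_\tau$ by a two-case analysis on $x_0=\lim_{t\to\infty}\tau(t)x$ (either $x_0\in B_\tau-Z_\tau$, in which case an optimal $1$-PS $\tau_1$ of $(G^\tau)'$ for $x_0$ and the perturbation formula~\eqref{eq:DL} produce the better destabilizer $\tau+\varepsilon\tau_1$; or the $E$-component of $x$ lies in $E_{\tau<-\Vert\tau\Vert^2}$, giving $\varpi_E(x,\tau)>\Vert\tau\Vert^2$ outright), and then uses properness of $G\times_{P(\tau)}A_\tau\to X$ (since $G/P(\tau)$ is projective) to get $\overline{X_{\langle\tau\rangle}}=G\cdot A_\tau$ and $\overline{X_{\langle\tau\rangle}}-X_{\langle\tau\rangle}=G\cdot(A_\tau-X_\tau)$; this yields (v) and, because proper maps are closed, (vi) simultaneously, with no induction. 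You instead write a boundary point as $y=\lim_n k_n(p_nz_n)$ using $G=KP(\tau)$, extract $k_n\to k$ by compactness of $K$, and pass to the limit in $\bar\varpi_{rel}(p_nz_n,\tau)=\Vert\tau\Vert$ via upper semicontinuity of $\bar\varpi_E(\cdot,\tau)$ and closedness of $V_{\tau\leq0}$, obtaining $\mathbf{M}_{rel}(y)\geq\Vert\tau\Vert$ with strictness forced by the rigidity in (iii)--(iv); assertion (vi) then follows by descending induction over the finitely many stratum norms. This is correct, and it is essentially the mechanism the paper itself deploys later in the K\"ahler setting (Lemma~\ref{lem:bar-S-beta}); what your route buys is independence from the shifting trick (Theorem~\ref{th:optss}) and from formula~\eqref{eq:DL} at this point, while the paper's properness argument buys the stronger structural facts that $\overline{X_{\langle\tau\rangle}}=G\cdot A_\tau$ and that the boundary is closed, without any induction.

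Two small points should be made explicit. First, your justification of semicontinuity is stated backwards: under a limit in $\Pbb E$ the top $\tau$-weight of (a unit-norm lift of) $\tilde m$ can only \emph{drop}, since nonzero weight components of the limit must come from nonzero components of the sequence; it is precisely for this reason that $\bar\varpi_E(\cdot,\tau)$ can only jump up, i.e.\ is upper semicontinuous, and the inequality you actually use is the correct one. Second, arguing with sequences computes Euclidean closures, whereas the proposition concerns Zariski closures; this is harmless because $X_{\langle\tau\rangle}=G\cdot X_\tau$ is constructible (Chevalley), so its Zariski and Euclidean closures agree, and a Euclidean-closed constructible set (such as your $F_{>c}$) is Zariski closed --- but the remark deserves a sentence.
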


\begin{proof}
Assertions~\ref{ass:K1} and  \ref{ass:K2} have been already explained and Assertion~\ref{ass:K3} is a direct consequence of Lemma~\ref{lem:minv}. Assertion~\ref{ass:K3} 
follows from Lemma~\ref{lem:minv} and Theorem~\ref{th:Kempf}. Assertion~\ref{ass:K4} follows from the previous assertion and the fact that $X_{\langle\tau\rangle}=G.X_\tau$.

Consider a point $x$ in the closed subset $A_\tau - X_{\tau}$ of the subvariety $A_\tau$, and define $x_0=\lim_{t\to\infty}\tau(t)x$. Two cases occur. 

Either $x_0\in B_\tau-Z_\tau$. Then, $x_0$ is unstable for the action of $(G^\tau)'$ on $B_\tau$. Let $\tau_1$ be a $1$-parameter subgroup of $(G^\tau)'$ optimal for $x_0$. 
Then Formula~\eqref{eq:DL} shows that $\bar\varpi(x,\tau+\varepsilon \tau_1)>\bar\varpi(x,\tau)=\Vert \tau\Vert$, which gives $\mathbf{M}_{rel}(x)>\Vert\tau\Vert$. 

Or $x_0\in X\bigcap V_{\tau= 0}\times\Pbb(E_{\tau< -\Vert \tau\Vert^2})$. This is possible only if $x\in V_{\tau\leq 0}\times\Pbb(E_{\tau< -\Vert \tau\Vert^2})$: 
this last inclusion implies that $\varpi_E(x,\tau)>\Vert \tau\Vert^2$ and then $\mathbf{M}_{rel}(x)>\Vert\tau\Vert$.

We have therefore verified that $\mathbf{M}_{rel}(x)>\Vert\tau\Vert,\forall x\in A_\tau - X_{\tau}$.

Since $G/P(\tau)$ is projective, the map $\pi\,:\,G\times_{P(\tau)}A_\tau\longrightarrow  X$ is proper.
Since $X_\tau$ is dense in $A_\tau$ and $X_{\langle\tau\rangle}=G.X_\tau$, the image of $\pi$ is $G.A_\tau=\overline{X_{\langle\tau\rangle}}$. 
Moreover, $\overline{X_{\langle\tau\rangle}}-X_{\langle\tau\rangle}=G.(A_\tau - X_{\tau})$. 
Then, the previous paragraph shows that the points $x$ in $\overline{X_{\langle\tau\rangle}}-X_{\langle\tau\rangle}$ 
satisfy $\mathbf{M}_{rel}(x)>\Vert\tau\Vert$. Assertion~\ref{ass:K5} is proved. 

The properness of $\pi$ shows also that  $\overline{X_{\langle\tau\rangle}}-X_{\langle\tau\rangle}$ is closed. 
Hence, $X_{\langle\tau\rangle}$ is locally closed. 
\end{proof}

\begin{corollary}\label{coro:open-strata} 
Let $X\subset V\times\Pbb E$ be a closed $G$-stable irreducible variety such that $X^{ss}=\emptyset$. 
Then, among the $\tau\in\Xgot_*(T)^+$ such that $X_{\langle\tau\rangle}$ is not empty, a unique one $\tau_0$ has minimal length.
Moreover, $X_{\langle \tau_0\rangle}$ is the unique open strata.
\end{corollary}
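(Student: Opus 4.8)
The plan is to work entirely with the finite partition $X=\bigsqcup_{\langle\tau\rangle}X_{\langle\tau\rangle}$ provided by Theorem~\ref{th:Kempf} together with the boundary estimate of Proposition~\ref{prop:Kirwan}\ref{ass:K5}, and then to extract the uniqueness statements from the irreducibility of $X$. First I would record that, since $X^{ss}=\emptyset$ and $X$ is irreducible (hence nonempty), $X$ is the disjoint union of the finitely many nonempty locally closed strata $X_{\langle\tau\rangle}$. Set $r:=\min\{\Vert\tau\Vert : X_{\langle\tau\rangle}\neq\emptyset\}$ and let $S:=\bigcup_{\Vert\tau\Vert=r}X_{\langle\tau\rangle}$ be the union of the strata of minimal length; the goal is to show $S$ is a single stratum and is the unique open one.

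The key step is the openness of $S$. For this I would show that its complement $F:=\bigcup_{\Vert\tau'\Vert>r}X_{\langle\tau'\rangle}$ is closed. By Proposition~\ref{prop:Kirwan}\ref{ass:K5}, for every class with $\Vert\tau'\Vert>r$ one has $\overline{X_{\langle\tau'\rangle}}\subset X_{\langle\tau'\rangle}\cup\bigcup_{\Vert\tau''\Vert>\Vert\tau'\Vert}X_{\langle\tau''\rangle}\subset F$. Since $F$ is a finite union of such strata, this gives $F=\bigcup_{\Vert\tau'\Vert>r}\overline{X_{\langle\tau'\rangle}}$, a finite union of closed sets, hence closed; therefore $S=X\setminus F$ is open.

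Then I would invoke irreducibility twice. Being a nonempty open subset of the irreducible space $X$, the set $S$ is irreducible, in particular connected and dense. On the other hand, each $X_{\langle\tau\rangle}$ with $\Vert\tau\Vert=r$ is closed in $S$: its boundary $\overline{X_{\langle\tau\rangle}}-X_{\langle\tau\rangle}$ lies in strata of length $>r$ by Proposition~\ref{prop:Kirwan}\ref{ass:K5}, hence is disjoint from $S$, so $\overline{X_{\langle\tau\rangle}}\cap S=X_{\langle\tau\rangle}$. Thus $S$ is a finite disjoint union of subsets each closed in $S$; each is then also open in $S$ (its complement being a finite union of closed sets), and connectedness of $S$ forces the union to have a single term. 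This yields a unique minimal class $\langle\tau_0\rangle$ with $S=X_{\langle\tau_0\rangle}$ open and dense. Finally, any other nonempty stratum is disjoint from the dense open $X_{\langle\tau_0\rangle}$, so its closure is contained in the proper closed set $X\setminus X_{\langle\tau_0\rangle}$ and it cannot be dense; a nonempty open subset of an irreducible space being dense, no other stratum is open. Passing from classes $\langle\tau\rangle$ to their unique dominant representative in $\Xgot_*(T)^+$ then phrases this as the existence of a unique minimal-length $\tau_0$ with $X_{\langle\tau_0\rangle}$ the unique open stratum.

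I expect the main obstacle to be the openness of $S$, namely chaining the inclusions of Proposition~\ref{prop:Kirwan}\ref{ass:K5} correctly so that the strictly-longer strata assemble into a closed set; the remaining uniqueness is a short connectedness argument, but one that genuinely requires the irreducibility hypothesis, since without it several minimal strata could coexist as the pieces of a disconnected open set.
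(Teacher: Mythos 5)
Your proof is correct and follows essentially the same route as the paper: both split $X$ into the union $S$ of minimal-length strata and the union $F$ of longer ones, deduce from Proposition~\ref{prop:Kirwan}\ref{ass:K5} that $F$ is closed (hence $S$ open), and then use irreducibility of $X$ to force a single minimal stratum. Your write-up merely makes explicit two steps the paper leaves implicit, namely that $F$ equals a finite union of closures and the connectedness argument showing $S$ is one stratum.
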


\begin{proof}Since $X^{ss}=\emptyset$, we have a finite partition $X=\bigcup_{\tau\neq 0}X_{\langle \tau\rangle}$ into locally closed subvarieties. 
Let $r=\min\{\Vert\tau\Vert,\ X_{\langle \tau\rangle}\neq\emptyset\}>0$. We can write $X=\Ucal\bigcup F$ with 
$\Ucal:= \bigcup_{\tau, \Vert\tau\Vert=r}X_{\langle \tau\rangle}$ and $F:=\bigcup_{\tau, \Vert\tau\Vert > r}X_{\langle \tau\rangle}$. 
Thanks to the Assertion~\ref{ass:K5} of Proposition~\ref{prop:Kirwan}, we know that $F$ is a closed subvariety of $X$, and then $\Ucal$ is a non-empty Zariski open subset. 
Since $X$ is irreducible, there exists only one $\tau_0$ such that  $\Vert\tau_0\Vert=r$, and such that  $\Ucal=X_{\langle \tau_0\rangle}$. 
\end{proof}

\subsection{Moment polyhedron}\label{sec:polyhedron}

\subsubsection{Definition}

We fix a maximal torus $T$ of $G$ and a Borel subgroup $B$ containing $T$. 
Let $\Xgot^*(T)$ be the group of $T$-character and $\Xgot^*(T)^+$ denote the set of dominant characters.
For $\chi\in \Xgot^*(T)^+$, we denote by $V_\chi$ the irreducible representation of highest weight $\chi$.

To a closed $G$-stable {\em irreducible} subvariety of $X\subset V\times\Pbb E$, we associate
$$
\Ccal(X)=\{\lambda\in \Xgot^*(T)_{\Q}\;:\; \exists n>0,\quad n\lambda\in\Xgot^*(T)^+\mbox{ and } V_{n\lambda}\subset R_{n}(X)\}.
$$
\begin{remark}\label{rem:P-X-0}
Definition~\ref{def:us} means that \ $0\in \Ccal(X)\iff X^{ss}\neq\emptyset$. 
\end{remark}

If $X$ is affine (case $E=\C$ with trivial $G$-action), it is well known (see
{\it e.g.} \cite[Proof of Proposition~2.1]{Br87}) that $\Ccal(X)$ is a convex polyhedral cone in
$\Xgot^*(T)_\Q$.
If $C$ is a convex set in a rational affine space, its {\it recession cone}  is defined as the set of vectors $v$ such that for any
$p\in C$, $p+v$ belongs to $C$. This condition is equivalent to requiring that there exists $p\in C$, such that $p+tv\in C,\ \forall t\in \Q^{\geq 0}$.

\begin{lemma}
  \label{lem:CX}
  If $X$ is a $G$-stable irreducible subvariety of $V\times\Pbb E$,
  the set $\Ccal(X)$ is a rational polyhedron with recession cone
  $\Ccal(X_0)$.
  In particular, there is a set $\{\lambda_1,\ldots,\lambda_\ell\}$ of dominant weights and a set $\{\xi_1,\ldots,\xi_p\}$ of rational dominant elements such that 
\begin{equation}\label{eq:P-X-convex}
\Ccal(X)=\mathrm{convex \ hull}_\Q(\{\xi_1,\ldots,\xi_p\})+\sum_{j=1}^\ell\ \Q^{\geq 0} \lambda_j.
\end{equation}
\end{lemma}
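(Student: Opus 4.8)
The plan is to reduce the statement to the affine case already recalled above (Brion's result), applied not to $G$ but to the larger reductive group $\widehat G:=G\times\C^*$, where $\C^*$ acts on $E$ by homotheties. First I would observe that $R(X)=\C[\tilde X]$ is a finitely generated $\C$-algebra and that $\tilde X$ is an irreducible affine $\widehat G$-variety, the $\N$-grading $R(X)=\oplus_k R_k(X)$ being precisely the decomposition into $\C^*$-weight spaces. Since $\C^*$ carries no roots, the dominant characters of $\widehat G$ are the pairs $(\chi,k)$ with $\chi\in\Xgot^*(T)^+$, the simple module being $V_\chi\boxtimes\C_k$, and because $\tilde X$ is a cone in the $E$-variable only $k\geq 0$ occurs. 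Thus the relevant \emph{weight monoid}
$$
S:=\{(\chi,k)\in\Xgot^*(T)^+\times\N\;:\;V_\chi\subset R_k(X)\}
$$
is a submonoid of $\Xgot^*(T)^+\times\N$ (closed under addition because $R(X)$ is a domain, $X$ being irreducible, by multiplying highest-weight vectors), and by construction $\Ccal(X)=\{\lambda\in\Xgot^*(T)_\Q:\exists n>0,\ (n\lambda,n)\in S\}$.

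The key input, and the step I expect to be the main obstacle, is the finite generation of $S$ — equivalently, that $\cone(S)\subset\tgot_0^*\times\R$ is a rational polyhedral cone. This is exactly the affine case applied to the $\widehat G$-action on $\tilde X$; it is what the cited \cite[Proof of Proposition~2.1]{Br87} establishes (resting on the finite generation of the algebra of invariants of a maximal unipotent subgroup). Granting this, the remainder is elementary convex geometry.

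Next I would run a \emph{slicing} argument, proving $\Ccal(X)=\{\lambda\in\Xgot^*(T)_\Q:(\lambda,1)\in\cone(S)\}$. One inclusion simply rescales $(n\lambda,n)\in S$ to $(\lambda,1)\in\cone(S)$; for the other I would write $(\lambda,1)$ as a rational non-negative combination of the finitely many generators of $S$ and clear denominators, landing $(N\lambda,N)$ back in the monoid $S$ for some $N>0$. Hence $\Ccal(X)$ is the set of rational points of the slice $\cone(S)\cap(\tgot_0^*\times\{1\})$, which is a rational polyhedron; this proves the first assertion.

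Finally I would identify the recession cone. A direction $\lambda$ is recessive for this slice exactly when $(\lambda,0)\in\cone(S)$; since every generator of $S$ has non-negative last coordinate, this forces $\lambda$ into the cone generated by $S_0:=S\cap(\Xgot^*(T)^+\times\{0\})=\{\chi:V_\chi\subset R_0(X)\}$. As $R_0(X)=\C[X_0]$, the affine case for $X_0$ identifies the rational points of $\cone(S_0)$ with $\Ccal(X_0)$, so the recession cone of $\Ccal(X)$ is $\Ccal(X_0)$. The decomposition \eqref{eq:P-X-convex} then follows from the Motzkin--Weyl structure theorem for rational polyhedra: the vertices supply rational dominant elements $\xi_1,\dots,\xi_p$, and the recession cone $\Ccal(X_0)$ is spanned by finitely many dominant weights $\lambda_1,\dots,\lambda_\ell$.
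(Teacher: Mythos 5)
Your proof is correct, and its first half is essentially the paper's argument in different packaging: both rest on the finite generation of the weight monoid $S=\{(\chi,n)\in\Xgot^*(T)\times\N:\ V_\chi\subset R_n(X)\}$, obtained from the Hadziev--Grosshans finite generation of $\C[\tilde X]^U$ together with the domain property of $R(X)^U$; your formulation via the affine case for $\widehat G=G\times\C^*$ acting on $\tilde X$ is the same input, and your height-one slicing of $\cone(S)$ matches the paper's step that $\Ccal(X)$ is the image of $S\cap(\Xgot^*(T)\times\N^*)$ under $(\chi,n)\mapsto\chi/n$ (you make the rational-Carath\'eodory detail for the reverse inclusion explicit). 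Where you genuinely diverge is the recession cone. The paper asserts that $R(X)$ is a finitely generated $\C[X_0]$-\emph{module}, deduces the same for $R(X)^U$ over $\C[X_0]^U$, and reads off $\Ccal(X)=\conv(\{\xi_1,\dots,\xi_s\})+\Ccal(X_0)$ from homogeneous generators $f_0=1,f_1,\dots,f_s$, which yields the recession cone and the decomposition \eqref{eq:P-X-convex} in one stroke. You instead argue purely convex-geometrically: the recession cone of the slice is the height-zero face $\cone(S)\cap\{n=0\}$, which equals $\cone(S_0)$ because every generator of $S$ has non-negative height, and \eqref{eq:P-X-convex} is then quoted from the Motzkin structure theorem for rational polyhedra. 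Your route buys genuine robustness here: the paper's module-finiteness claim is false as literally stated (already for $X=V\times\Pbb E$, $R(X)=\C[V]\otimes\C[E]$ is not module-finite over $\C[V]$); it has to be read as \emph{algebra}-finiteness of $R(X)^U$ over $\C[X_0]^U$, after which the paper's computation still goes through because the coefficients $a_in_i/n$ attached to the positive-degree generators sum to $1$. Your cone argument sidesteps this issue entirely, at the modest cost of invoking the structure theorem, whereas the paper's (corrected) argument has the advantage of exhibiting the $\xi_i$ explicitly as weights of algebra generators. One small repair to your final step: $\Ccal(X)$ need not be pointed (the dominant chamber contains lines when $G$ has central torus factors), so ``the vertices'' should be replaced by ``a suitable finite set of rational points of $\Ccal(X)$''; the Motzkin decomposition holds in that generality, so the conclusion is unaffected.
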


\begin{proof}
It is well-know that $S_X=\{(\chi, n)\in \Xgot^*(T)\times \N\;:\; V_\chi\subset  R_n(X)\}$ is the set of weights of
$T\times\C^*$ acting on $R(X)^U=\C[\tilde X]^U$. A theorem of Hadziev and Grosshans (see e.g. \cite[Theorem 2.7]{Br10}) says that the algebra $\C[\tilde X]^U$ is finitely generated.
Since it is also an integral domain, we can conclude that $S_X$ is a finitely generated semigroup of $\Xgot^*(T)\times \N$. 
One deduces easily that $\Ccal(X)$ is a rational polyhedron, since it is the
image of  $S_X\cap \Xgot^*(T)\times \N^*$ under the map $(\chi,n)\mapsto\frac \chi n$.

Since $R(X)^U$ is a module over $\C[X_0]^U$, $\Ccal(X_0)$ is
contained in the recession cone of $\Ccal(X)$. Conversely, $ R(X)$ is a finitely generated $\C[X_0]$-module,
and hence, $ R(X)^U$ is a finitely generated $\C[X_0]^U$-module. 
Let $f_0,\dots,f_s$ in $ R(X)^U$ be a set of generators. 
One may assume that $f_0=1$ and $f_1,\dots,f_s$ are eigenvectors for
$T\times\C^*$ of weight $(\chi_i,n_i)$. Then 
$$
\Ccal(X)={\rm convex\ hull}_\Q(\{\xi_1,\ldots,\xi_s\})+\Ccal(X_0),
$$
with $\xi_i= \frac{\chi_i}{n_i}$. We deduce that the recession cone of $\Ccal(X)$ is exactly $\Ccal(X_0)$. Finally,
since $\Ccal(X_0)$ is a convex polyhedral cone in $\Xgot^*(T)_\Q$, there is a set $\{\lambda_1,\ldots,\lambda_\ell\}$ of dominant weights such that 
$\Ccal(X_0)=\sum_{j=1}^\ell\ \Q^{\geq 0} \lambda_j$.
\end{proof}

\begin{remark}
Conversely, any rational polyhedron in $\Q^n$ is the moment polyhedron of some action of $T=(\C^*)^n$ on a projective over affine variety.
This follows easily from the decomposition's theorem for polyhedron (see \cite[Section~1.C]{BG:poly}).  
\end{remark}

\medskip
Let $\chi$ be a dominant character of $T$. Let $v_\chi$ be an eigenvector of weight $\chi$ in
$V_\chi$. The projective variety $\Ocal_\chi=G.[v_\chi]\subset\Pbb(V_\chi)$ satisfies 
$R(\Ocal_\chi)\simeq \oplus_{n\geq 0} V_{n\chi}^*$.

\begin{lemma}[Shifting Trick]\label{lem:Shifting}
Let $\lambda$ be a rational dominant weight, and $\ell\geq 1$ such that $\ell\lambda\in\Xgot^*(T)^+$. 
The following conditions are equivalent
\begin{enumerate}
\item $\lambda\in\Ccal(X)$.
\item $\exists n\geq 1$, $V_{n\ell\lambda}\subset R_{n\ell}(X)$.
\item $\exists n\geq 1$, $R_n(X_{[\ell]}\times \Ocal_{\ell\lambda})^G\neq 0$.
\item $(X_{[\ell]}\times \Ocal_{\ell\lambda})^{ss}\neq \emptyset$.
\item $0\in\Ccal(X_{[\ell]}\times \Ocal_{\ell\lambda})$.
\end{enumerate}
\end{lemma}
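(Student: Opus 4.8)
The plan is to establish a cycle of implications among the five conditions, proving the chain $(i)\Rightarrow(ii)\Rightarrow(iii)\Rightarrow(iv)\Rightarrow(v)\Rightarrow(i)$, where most links are essentially unwinding of definitions and the only substantive input is the interplay between the Veronese/Segre operations introduced just before the lemma and the identifications $R_k(\Ocal_{\ell\lambda})\simeq V_{k\ell\lambda}^*$ and $R_k(X_{[\ell]})\simeq R_{k\ell}(X)$.

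First I would treat $(i)\Leftrightarrow(ii)$ directly from the definition of $\Ccal(X)$. By definition $\lambda\in\Ccal(X)$ means there is some $n'>0$ with $n'\lambda\in\Xgot^*(T)^+$ and $V_{n'\lambda}\subset R_{n'}(X)$. The content is that one may always arrange the multiplicity to be a multiple of $\ell$: the forward direction uses that if $V_{n'\lambda}\subset R_{n'}(X)$ then, multiplying sections, $V_{kn'\lambda}\subset R_{kn'}(X)$ for all $k\geq 1$ (using that $R(X)$ is a graded algebra and $V_{n'\lambda}\cdot V_{n'\lambda}\supset V_{2n'\lambda}$ as a Cartan component), so taking $k$ so that $kn'$ is a multiple of $\ell$ yields $(ii)$; the reverse is immediate. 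For $(ii)\Leftrightarrow(iii)$ I would use the tensor-product description preceding the lemma: by the Cartesian-product rule $R_n(X_{[\ell]}\times\Ocal_{\ell\lambda})\simeq R_n(X_{[\ell]})\otimes R_n(\Ocal_{\ell\lambda})\simeq R_{n\ell}(X)\otimes V_{n\ell\lambda}^*$, and taking $G$-invariants gives $R_n(X_{[\ell]}\times\Ocal_{\ell\lambda})^G\simeq \Hom_G(V_{n\ell\lambda},R_{n\ell}(X))$, which is nonzero for some $n$ precisely when $(ii)$ holds.

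Next, $(iii)\Leftrightarrow(iv)$ is exactly Lemma-Definition~\ref{def:us} applied to the variety $X_{[\ell]}\times\Ocal_{\ell\lambda}$: condition $(i)$ of that lemma-definition (existence of a $G$-invariant $f\in R_h$, $h>0$, nonvanishing at the point) characterizes nonempty semistable locus, and since semistability is an open condition the existence of \emph{some} nonzero invariant in positive degree is equivalent to $(X_{[\ell]}\times\Ocal_{\ell\lambda})^{ss}\neq\emptyset$. Finally $(iv)\Leftrightarrow(v)$ is precisely Remark~\ref{rem:P-X-0}, which records that $0\in\Ccal(Y)\iff Y^{ss}\neq\emptyset$ for any $G$-stable irreducible $Y$.

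The step requiring the most care is $(i)\Rightarrow(ii)$, namely lifting an arbitrary witness $V_{n'\lambda}\subset R_{n'}(X)$ to a witness whose degree is divisible by $\ell$. The subtlety is that the semigroup $S_X=\{(\chi,n):V_\chi\subset R_n(X)\}$ need not a priori contain every sufficiently divisible point on the ray through $(\lambda,1)$; one must invoke that the Cartan component $V_{n\lambda}$ appears in the product $R_{n_1}(X)\cdot R_{n_2}(X)$ whenever $V_{n_1\lambda}$ and $V_{n_2\lambda}$ appear, which holds because $V_{n_1\lambda}\otimes V_{n_2\lambda}$ contains $V_{(n_1+n_2)\lambda}$ with multiplicity one and the highest-weight covariant survives in the (integral domain) algebra $R(X)$. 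This closure-under-addition of $S_X$ along the ray — equivalently, that $\Ccal(X)$ is genuinely convex, already recorded in Lemma~\ref{lem:CX} — is what guarantees that the degree can be taken divisible by $\ell$.
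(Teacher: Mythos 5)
Your proof is correct and follows essentially the same route as the paper: the substantive link is the identification $R_n(X_{[\ell]}\times \Ocal_{\ell\lambda})\simeq R_{n\ell}(X)\otimes V_{n\ell\lambda}^*$, whose $G$-invariants detect $V_{n\ell\lambda}\subset R_{n\ell}(X)$, giving $(ii)\Leftrightarrow(iii)$, while the remaining equivalences are read off from Lemma-Definition~\ref{def:us} and Remark~\ref{rem:P-X-0}. The only difference is that you make explicit the power argument for $(i)\Rightarrow(ii)$ (nonvanishing of $f^{\ell}$ for a highest-weight vector $f$ in the integral domain $R(X)$), a step the paper treats as immediate from the definitions; this is a legitimate detail to record but not a different approach.
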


\begin{proof}
The equivalences $(i)\iff (ii)$ and $(iii)\iff (iv)\iff (v)$ follow directly from the definitions.
The equivalence $(ii)\iff (iii)$ is due to the fact that $V_{n\ell\lambda}\subset R_{n\ell}(X)$ if and only if 
$R_n(X_{[\ell]}\times \Ocal_{\ell\lambda})=R_{n\ell}(X)\otimes V_{n\ell\lambda}^*$ contains a nonzero $G$-invariant.
\end{proof}

\subsubsection{Some inequalities}

Let $\langle -,-\rangle$ be the duality bracket between $\tgot^*_0$ and $\tgot_0$.

\begin{lemma}
\label{lem:ineqopenstrata}
Assume that $X^{\rm ss}$ is empty and let $\tau\in \Xgot_{*}(T)_\Q$ be the unique dominant element such that $X_{\langle\tau\rangle}$ 
is the open stratum. Then, for any $\lambda\in\Ccal(X)$, we have
$$
\langle\lambda,\tau\rangle\geq\Vert \tau\Vert^2.
$$
\end{lemma}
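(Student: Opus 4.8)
The plan is to use the Shifting Trick (Lemma~\ref{lem:Shifting}) to convert membership $\lambda\in\Ccal(X)$ into the existence of a semistable point on an auxiliary product variety, and then read off the inequality by evaluating the relative numerical invariant against the dominant $1$-PS $\tau$ at a carefully chosen such point.

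Every element of $\Ccal(X)$ is rational dominant, so I fix $\lambda\in\Ccal(X)$ and $\ell\geq 1$ with $\ell\lambda\in\Xgot^*(T)^+$. By Lemma~\ref{lem:Shifting}, the product $Y:=X_{[\ell]}\times\Ocal_{\ell\lambda}\subset V\times\Pbb(\mathrm{Sym}^\ell E\otimes V_{\ell\lambda})$ satisfies $Y^{\rm ss}\neq\emptyset$. The key structural observation is that $Y$ is irreducible: it is the product of the irreducible variety $X_{[\ell]}\cong X$ with the homogeneous, hence irreducible, orbit $\Ocal_{\ell\lambda}$. Consequently the nonempty open subset $Y^{\rm ss}$ is dense in $Y$.

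The main step — and the place where I expect the real work to lie — is to produce a semistable point of $Y$ lying over the open stratum, with $\tau$ itself (not merely a conjugate) optimal, so that dominance of $\tau$ can be exploited. Since $X_{\langle\tau\rangle}$ is open and dense in $X$ by Corollary~\ref{coro:open-strata}, the open set $X_{\langle\tau\rangle}\times\Ocal_{\ell\lambda}$ is dense in $Y$ and thus meets the dense open $Y^{\rm ss}$; I pick $y=(x,z)$ in the intersection. Writing $x=g x_0$ with $x_0\in X_\tau$ and $g\in G$ (recall $X_{\langle\tau\rangle}=G\cdot X_\tau$), and using that both $Y^{\rm ss}$ and $\Ocal_{\ell\lambda}$ are $G$-stable, I replace $y$ by $g^{-1}y=(x_0,z')$, a semistable point with $x_0=(v_0,m_0)\in X_\tau$ and $z'\in\Ocal_{\ell\lambda}$ with lift $\tilde z'\in V_{\ell\lambda}$. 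For such an $x_0$, Theorem~\ref{th:Kempf} gives $\bar\varpi_{rel}(x_0,\tau)=\Vert\tau\Vert$, so $\lim_{t\to\infty}\tau(t)v_0$ exists and $\varpi_E(m_0,\tau)=\Vert\tau\Vert^2$.

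Finally I would compute the invariant on the product. Using the Cartesian-product identification $R_k(Y)\simeq R_k(X_{[\ell]})\otimes R_k(\Ocal_{\ell\lambda})$ together with the additivity of top weights under tensor products,
\[
\varpi_{rel}(g^{-1}y,\tau)=\ell\,\varpi_E(m_0,\tau)+\varpi_{V_{\ell\lambda}}(\tilde z',\tau)=\ell\Vert\tau\Vert^2+\varpi_{V_{\ell\lambda}}(\tilde z',\tau).
\]
Semistability of $g^{-1}y$ forces $\varpi_{rel}(g^{-1}y,\tau)\leq 0$ by Proposition~\ref{prop:unstable-M-rel}. On the other hand, since $\tau$ is dominant and $V_{\ell\lambda}$ has highest weight $\ell\lambda$, every weight $\mu$ of $V_{\ell\lambda}$ satisfies $\langle\mu,\tau\rangle\leq\langle\ell\lambda,\tau\rangle$ (because $\ell\lambda-\mu$ is a nonnegative sum of positive roots and $\tau$ pairs nonnegatively with simple roots), whence $\varpi_{V_{\ell\lambda}}(\tilde z',\tau)\geq-\ell\langle\lambda,\tau\rangle$. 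Combining the two bounds gives $\ell\Vert\tau\Vert^2\leq\ell\langle\lambda,\tau\rangle$, i.e. $\langle\lambda,\tau\rangle\geq\Vert\tau\Vert^2$, as desired. The only delicate point is the irreducibility-and-density argument guaranteeing a semistable point over $X_\tau$ with $\tau$ dominant; the remaining steps are a direct weight computation.
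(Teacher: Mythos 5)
Your proof is correct, and it takes a genuinely different route from the paper's. The paper argues directly with a semi-invariant: $\lambda\in\Ccal(X)$ yields $h>0$ and $f\in (\C[V]\otimes\C[E]_h)^{(B)_{h\lambda}}$ not identically zero on $X$, hence not on the dense open stratum $X_{\langle\tau\rangle}$; dominance of $\tau$ is used there to see that $BP(\tau)$ is open in $G$, which together with the $B$-semi-invariance of $f$ and the $P(\tau)$-invariance of $X_\tau$ produces a point $(v,m)\in X_\tau$ with $f(v,\tilde m)\neq 0$; the inequality then falls out of the scaling identity $f(\tau(t)(v,t^{\Vert\tau\Vert^2}\tilde m))=t^{h(\Vert\tau\Vert^2-\langle\lambda,\tau\rangle)}f(v,\tilde m)$, whose limit as $t\to\infty$ must exist by \eqref{eq:lim}, forcing the exponent to be nonpositive. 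You instead pass through the shifting trick (Lemma~\ref{lem:Shifting}) to get $Y^{\rm ss}\neq\emptyset$ for $Y=X_{[\ell]}\times\Ocal_{\ell\lambda}$, use irreducibility of $Y$ to intersect the two dense opens $Y^{\rm ss}$ and $X_{\langle\tau\rangle}\times\Ocal_{\ell\lambda}$, and then exploit $G$-invariance of the semistable locus to translate the witness so that it lies over $X_\tau$ -- a cleaner localization than the paper's, since it avoids the Bruhat-decomposition step entirely. The price is that dominance of $\tau$ must enter elsewhere, namely in the estimate $\varpi_{V_{\ell\lambda}}(\tilde z',\tau)\geq -\ell\langle\lambda,\tau\rangle$, which requires the representation-theoretic fact that every weight of $V_{\ell\lambda}$ is $\leq\ell\lambda$ in dominance order; combined with additivity of $\varpi$ under the Segre/Veronese embeddings and the Hilbert--Mumford inequality of Proposition~\ref{prop:unstable-M-rel} applied to $\tau$, this closes the argument. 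At bottom the two proofs perform the same weight computation -- a $B$-semi-invariant of weight $h\lambda$ on $X$ is precisely the data of a $G$-invariant on the shifted product -- but yours is more geometric (semistable points and the numerical criterion) while the paper's is more economical and stays entirely inside $X$. Two small points: the equality $\bar\varpi_{rel}(x_0,\tau)=\Vert\tau\Vert$ for $x_0\in X_\tau$ is just the definition \eqref{eq:defSdtau2} of $X_\tau$ (together with $\tau\in\Lambda(x_0)$), not Theorem~\ref{th:Kempf}; and since $\tau$ is only rational, applying Proposition~\ref{prop:unstable-M-rel} tacitly uses the invariance of $\bar\varpi_{rel}$ under replacing $\tau$ by $n\tau$, which the paper licenses in Section~\ref{subsec:the-general-case}.
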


\begin{proof}
By definition, there exist a positive $h$ and $f\in (\C[E]\otimes\C[V]_h)^{(B)_{h\lambda}}$ 
such that $f$ is not identically zero on $X$. 
Hence, by assumption, $f$ is not identically zero on the stratum $X_{\langle\tau\rangle}$. 

On the one hand, Proposition~\ref{prop:Kirwan} implies that $G\times_{P(\tau)}X_\tau$ maps surjectively 
on $X_{\langle\tau\rangle}$. On the other hand, $\tau$ being dominant $BP(\tau)$ is open in $G$. 
We deduce that $f$ is not identically zero on $BX_\tau$ and finally on $X_\tau$, by $B$-equivariance. 

Let $x=(v,m)\in X_\tau$ such that $f(v,\tilde m)\neq 0$.  Consider the limit $x':=\lim_{t\to\infty}\tau(t)x=(v',m')$. Since $\varpi_E(m,\tau)=\Vert \tau\Vert^2$, 
$\lim_{t\to\infty}\tau(t)t^{\Vert\tau\Vert^2}\tilde m=\tilde m'$ 
(up to changing the representative $\tilde m'$ if necessary).
Hence,
\begin{equation}
  \label{eq:lim}
  \lim_{t\to\infty}\tau(t)(v,t^{\Vert\tau\Vert^2}\tilde m)=(v',\tilde m').
\end{equation}
Consider now 
$$
\begin{array}{ll}
f(\tau(t)(v,t^{\Vert\tau\Vert^2}\tilde m))&=(\tau(t)\inv f) (v,t^{\Vert\tau\Vert^2}\tilde m) \\
&=t^{\langle -h\lambda,\tau\rangle}f(v,t^{\Vert\tau\Vert^2}\tilde m)\\
&=t^{\langle -h\lambda,\tau\rangle+h\Vert\tau\Vert^2}f(v,\tilde m).
\end{array}
$$
Now, Equation~\eqref{eq:lim} implies that this last quantity tends to $f(v',\tilde m')$, 
when $t$ goes to $\infty$. 
Hence, the exponent of $t$ is nonpositive: $\langle -\lambda,\tau\rangle+\Vert\tau\Vert^2\leq 0$. 
\end{proof}

\subsubsection{The stratum in terms of moment polyhedrons}

\begin{proposition}
\label{prop:stratproj}
Assume that $X^{\rm ss}$ is empty.
Let $\tau\in \Xgot_{*}(T)_\Q$ be dominant such that $X_{\langle \tau\rangle}$ is the open stratum in $X$. 
  
Then, $\tau^\flat$ is the orthogonal  projection of $0$ onto $\Ccal(X)$.
\end{proposition}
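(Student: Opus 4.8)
The plan is to identify $\tau^\flat$ with the orthogonal projection of $0$ onto the closed convex set $\Ccal(X)$ via the standard variational characterization: for a nonempty closed convex set $C$, a point $p\in C$ is the orthogonal projection of $0$ onto $C$ if and only if $(\lambda-p,p)\geq 0$ for every $\lambda\in C$. So I would reduce the statement to the two assertions \emph{(a)} $\tau^\flat\in\Ccal(X)$ and \emph{(b)} $(\lambda-\tau^\flat,\tau^\flat)\geq 0$ for all $\lambda\in\Ccal(X)$. Assertion \emph{(b)} is essentially already proved: under the scalar-product identification $\tgot^*_{0,\Q}\simeq\tgot_{0,\Q}$, the weight $\tau^\flat$ corresponds to $\tau$, so that $\langle\lambda,\tau\rangle=(\lambda,\tau^\flat)$ and $\Vert\tau\Vert^2=\Vert\tau^\flat\Vert^2=(\tau^\flat,\tau^\flat)$. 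Lemma~\ref{lem:ineqopenstrata} states $\langle\lambda,\tau\rangle\geq\Vert\tau\Vert^2$ for every $\lambda\in\Ccal(X)$, which rewrites exactly as $(\lambda-\tau^\flat,\tau^\flat)\geq 0$.

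The substance of the proof is assertion \emph{(a)}, which I would establish with the shifting trick. Since $\tau$ is dominant and the scalar product is Weyl-invariant, $\tau^\flat$ is a rational dominant weight; fix $\ell\geq 1$ with $\ell\tau^\flat\in\Xgot^*(T)^+$. As $X_{\langle\tau\rangle}$ is the open stratum it is nonempty, and $X_{\langle\tau\rangle}=G.X_\tau$ by Proposition~\ref{prop:Kirwan}, so there is a point $x=(v,m)\in X_\tau$, say $m=[\tilde m]$. By definition of $X_\tau$ we have $\mathbf{M}_{rel}(x)=\Vert\tau\Vert$ and $\tau\in\Lambda(x)$, whence $\bar\varpi_{rel}(x,\tau)=\mathbf{M}_{rel}(x)=\Vert\tau\Vert$ and $\tau$ is optimal destabilizing for $x$. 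Theorem~\ref{th:shiftingtrick} then gives that $(v,[\tilde m^\ell\otimes v_{\ell\tau^\flat}])\in V\times\Pbb(\mathrm{Sym}^\ell E\otimes V_{\ell\tau^\flat})$ is semistable. This point is precisely the Segre image of $(v,[\tilde m^\ell])\in X_{[\ell]}$ and $[v_{\ell\tau^\flat}]\in\Ocal_{\ell\tau^\flat}$, so it is a point of $X_{[\ell]}\times\Ocal_{\ell\tau^\flat}$; as relative semistability depends only on the orbit closure in the ambient module (Lemma-Definition~\ref{def:us}), it is semistable there as well. Hence $(X_{[\ell]}\times\Ocal_{\ell\tau^\flat})^{ss}\neq\emptyset$, and the Shifting Trick Lemma~\ref{lem:Shifting} yields $\tau^\flat\in\Ccal(X)$.

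Combining \emph{(a)} and \emph{(b)} with the variational characterization concludes that $\tau^\flat$ is the orthogonal projection of $0$ onto $\Ccal(X)$. The main obstacle I anticipate is assertion \emph{(a)}: one must correctly reinterpret the semistability produced by Theorem~\ref{th:shiftingtrick} --- a priori semistability in the large space $V\times\Pbb(\mathrm{Sym}^\ell E\otimes V_{\ell\tau^\flat})$ --- as the relative semistability of a genuine point of the subvariety $X_{[\ell]}\times\Ocal_{\ell\tau^\flat}$, which is exactly what Lemma~\ref{lem:Shifting} requires. Verifying that dominance of $\tau$ transfers to dominance of $\tau^\flat$, and matching the constructed point with the Segre/Veronese embedding defining $X_{[\ell]}\times\Ocal_{\ell\tau^\flat}$, are the points demanding care.
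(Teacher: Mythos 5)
Your proof is correct and follows essentially the same route as the paper: both establish $\tau^\flat\in\Ccal(X)$ by applying Theorem~\ref{th:shiftingtrick} at a point of the open stratum (you correctly take $x\in X_\tau$ so that $\tau$ itself is optimal), and then combine this with the half-space inequality of Lemma~\ref{lem:ineqopenstrata} to identify $\tau^\flat$ as the projection of $0$. The only difference is cosmetic: where the paper converts the semistability of $(v,[\tilde m^\ell\otimes v_{\ell\tau^\flat}])$ into the containment $V_{nh\tau^\flat}\subset R_{nh}(X)$ by an explicit section-theoretic computation, you invoke the equivalence $(iv)\Leftrightarrow(i)$ of Lemma~\ref{lem:Shifting}, which packages exactly that computation.
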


\begin{proof}
We first prove that $\tau^\flat$ belongs to $\Ccal(X)$.
Fix a positive integer $\ell$ such that $\ell\tau^\flat\in \Xgot^*(T)$.
Let $x=(v,m)\in X_{\langle \tau\rangle}$. Theorem~\ref{th:shiftingtrick} implies that $(x,[\tilde m^\ell\otimes v_{\ell\tau^\flat}])$ is semistable. 

Set $Y=G.[v_{\ell\tau^\flat}]\subset\Pbb(V_{\ell\tau^\flat})$. Observe that $Y\simeq G/P(-\tau)$.
Consider $V\times\Pbb E \times Y\subset V\times\Pbb(\mathrm{Sym}^\ell(E)\otimes V_{\ell\tau^\flat})$. 
Then, there exist a positive integer $n$ and $f\in(\C[V]\otimes \C[\mathrm{Sym}^\ell(E)\otimes V_{\ell\tau^\flat}]_n)^G$ 
such that $f(v,\tilde m^\ell\otimes v_{\ell\tau^\flat})\neq 0$. 

But, on the one hand, the restriction map on the affine cones induces, for any nonnegative integer $h$ 
$$
\C[V]\otimes \C[\mathrm{Sym}^\ell(E)\otimes V_{\ell\tau^\flat}]_h\longrightarrow  
H^0(V\times\Pbb E \times Y,\Ocal(nh)\otimes\Lcal_{n\tau^\flat}^h).
$$
And, on the other hand, 
$$
H^0(V\times\Pbb E \times Y,\Ocal(nh)\otimes\Lcal_{n\tau^\flat}^h)\simeq
\C[V]\otimes \mathrm{Sym}^{nh}(E)^*\otimes V(nh\tau^\flat)^*.
$$
Hence, $\C[V]\otimes \mathrm{Sym}^{nh}(E)^*\otimes V(nh\tau^\flat)^*$ contains a nonzero $G$-invariant; that is, 
$V(nh\tau^\flat)$ embeds in $\C[V]\otimes \C[E]_{nh}$. 
Thus, $\tau^\flat$ belongs to $\Ccal(X)$.

But, Lemma~\ref{lem:ineqopenstrata} implies that $\Ccal(X)$ is contained in the half space defined by 
$(\tau^\flat,\lambda)\geq\Vert\tau^\flat\Vert^2$. Thus, $\tau^\flat$ is the orthogonal projection of $0$ onto $\Ccal(X)$.
\end{proof}

\medskip

     The next result is the non-abelian analog of Proposition~\ref{prop:ssG=T}.
     
      \begin{proposition}
      Let $x\in V\times\Pbb E$, and let $\Ccal(\overline{Gx})$ be the moment polyhedron of the variety $\overline{Gx}\subset V\times\Pbb E$. 
       \begin{enumerate}
       \item $x$ is semistable if and only if $0\in \Ccal(\overline{Gx})$.
       \item If $x$ is unstable then : 
       \begin{enumerate}
       \item $\mathbf{M}_{rel}(x)$ is the distance between $0$ and $\Ccal(\overline{Gx})$.
       \item The orthogonal projection of $0$ onto $\Ccal(\overline{Gx})$ defines\footnote{Using the identification $\tgot^*_0\simeq\tgot_0$ given by the invariant scalar product.} 
       an optimal destabilizing $1$-parameter subgroup of some $x'\in Gx$.
       \end{enumerate}
          \end{enumerate}
     \end{proposition}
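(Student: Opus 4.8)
The plan is to apply the stratification machinery developed above to the single orbit closure $X:=\overline{Gx}$, which is a closed, $G$-stable, irreducible subvariety of $V\times\Pbb E$. The one recurring tool will be the elementary observation that a nonempty $G$-stable open subset of the irreducible variety $X$ is dense and therefore must contain the dense orbit $Gx$ (being a single orbit, $Gx$ meets a $G$-stable subset only if it is contained in it).

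For assertion (i), recall from Lemma-Definition~\ref{def:us} that semistability of $x$ is intrinsic, so $x$ is semistable exactly when $x\in X^{ss}$. If $x$ is semistable then $X^{ss}\neq\emptyset$, and Remark~\ref{rem:P-X-0} gives $0\in\Ccal(X)$. Conversely, if $0\in\Ccal(X)$, then Remark~\ref{rem:P-X-0} yields $X^{ss}\neq\emptyset$; as $X^{ss}$ is open and $G$-stable in the irreducible $X$, the recurring observation forces $Gx\subset X^{ss}$, whence $x$ is semistable.

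For assertion (ii) I would assume $x$ unstable, so by (i) we have $X^{ss}=\emptyset$. Corollary~\ref{coro:open-strata} then supplies a unique dominant $\tau_0$ of minimal norm whose stratum $X_{\langle\tau_0\rangle}$ is open in $X$; the recurring observation again forces $Gx\subset X_{\langle\tau_0\rangle}$, so that $x$ lies in its own open stratum. By the very definition of $X_{\langle\tau_0\rangle}$ this gives $\mathbf{M}_{rel}(x)=\|\tau_0\|$. On the other hand Proposition~\ref{prop:stratproj} identifies $\tau_0^\flat$ (in the sense of Lemma~\ref{lem:betatau}) with the orthogonal projection of $0$ onto $\Ccal(X)$, so $d(0,\Ccal(X))=\|\tau_0^\flat\|$; since the scalar-product identification $\tgot^*_0\simeq\tgot_0$ is an isometry, $\|\tau_0^\flat\|=\|\tau_0\|$, which proves (a). For (b), membership $x\in X_{\langle\tau_0\rangle}$ means $\langle\tau_0\rangle\cap\Lambda(x)\neq\emptyset$; choosing $\sigma=g\tau_0g\inv\in\Lambda(x)$ and using $\Lambda(g\inv x)=g\inv\Lambda(x)g$ (a consequence of Lemma~\ref{lem:minv} together with the $G$-invariance of $\mathbf{M}_{rel}$), we obtain $\tau_0\in\Lambda(x')$ for $x':=g\inv x\in Gx$. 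Since under $\tgot^*_0\simeq\tgot_0$ the projection point $\tau_0^\flat$ corresponds precisely to $\tau_0$, this exhibits the orthogonal projection of $0$ onto $\Ccal(\overline{Gx})$ as an optimal destabilizing $1$-parameter subgroup of $x'\in Gx$, giving (b).

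The argument is essentially a transport of Proposition~\ref{prop:ssG=T} through the stratification, and the only genuinely delicate point is locating $x$ in the open stratum of its own orbit closure: a priori $x$ could sit in a deeper stratum, and the resolution is precisely that the open stratum is dense and $G$-stable while $Gx$ is a dense orbit. The remaining care is purely in matching normalizations ($\tau_0$ versus $\tau_0^\flat$, the isometry of norms, and the conjugation bookkeeping in $\Lambda(\cdot)$).
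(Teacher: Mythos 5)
Your proof is correct and follows essentially the same route as the paper, which disposes of (i) via the definition of semistability and of (ii) by applying Proposition~\ref{prop:stratproj} to $X=\overline{Gx}$. The details you supply --- that $Gx$ must lie in the unique open stratum furnished by Corollary~\ref{coro:open-strata} because that stratum is open, $G$-stable and dense, together with the conjugation bookkeeping for $\Lambda(\cdot)$ via Lemma~\ref{lem:minv} --- are precisely what makes the paper's ``direct consequence'' rigorous.
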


\begin{proof}
Point  (i) follows from the definition of semistability, and point (ii) is a direct consequence of Proposition~\ref{prop:stratproj} applied to $X=\overline{Gx}$.   
\end{proof}

\subsubsection{On the geometry of the strata}

We can now improve Proposition~\ref{prop:Kirwan}:

\begin{proposition}
\label{prop:stratiso}
The  map 
$$
\begin{array}{ccl}
G\times_{P(\tau)}X_{\tau} &\longrightarrow  &X_{\langle\tau\rangle}\\
{}[g:z]& \longmapsto & gz
\end{array}
$$
is an isomorphism. 
\end{proposition}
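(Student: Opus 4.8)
The plan is to upgrade the bijection of Proposition~\ref{prop:Kirwan}(\ref{ass:K4}) to an isomorphism by showing that the morphism $\Psi:W:=G\times_{P(\tau)}X_{\tau}\to X_{\langle\tau\rangle}$, $[g:z]\mapsto gz$, is a \emph{proper monomorphism}. Indeed, a proper morphism that is injective on points and unramified (i.e.\ has injective differential at every point) is a closed immersion, and being surjective onto the reduced variety $X_{\langle\tau\rangle}$ it is then an isomorphism: at the unique preimage $w$ of $y\in X_{\langle\tau\rangle}$ the finite local homomorphism $\Ocal_{X_{\langle\tau\rangle},y}\to\Ocal_{W,w}$ is surjective by Nakayama's lemma (unramifiedness gives $\mathfrak{m}_w=\mathfrak{m}_y\Ocal_{W,w}$) and injective because $\Psi$ is dominant. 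Since bijectivity is already known, two things remain: properness and injectivity of the differential.

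For properness, recall from the proof of Proposition~\ref{prop:Kirwan} that $\pi:G\times_{P(\tau)}A_{\tau}\to X$ is proper. I claim $\pi^{-1}(X_{\langle\tau\rangle})=G\times_{P(\tau)}X_{\tau}$, so that $\Psi$, being the restriction of $\pi$ over the locally closed subvariety $X_{\langle\tau\rangle}\subset X$, is proper by base change. Let $[g:z]$ with $z=(v,m)\in A_{\tau}$ and $gz\in X_{\langle\tau\rangle}$. Since $A_{\tau}\subset V_{\tau\le0}\times\Pbb(E_{\tau\le-\Vert\tau\Vert^{2}})$, the limit $\lim_{t\to\infty}\tau(t)v$ exists and $\bar\varpi_{rel}(z,\tau)\ge\Vert\tau\Vert$, whence $\mathbf{M}_{rel}(z)\ge\Vert\tau\Vert$; on the other hand $\mathbf{M}_{rel}$ is $G$-invariant by Lemma~\ref{lem:minv}, so $\mathbf{M}_{rel}(z)=\mathbf{M}_{rel}(gz)=\Vert\tau\Vert$. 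As the proof of Proposition~\ref{prop:Kirwan} shows that every point of $A_{\tau}-X_{\tau}$ satisfies $\mathbf{M}_{rel}>\Vert\tau\Vert$, we conclude $z\in X_{\tau}$, proving the claim.

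The heart of the matter is the injectivity of $d\Psi$. Write $\ugot^{-}:=\Lie U(-\tau)$, the sum of the strictly positive $\tau$-weight spaces of $\ggot$, so that $\ggot=\pgot(\tau)\oplus\ugot^{-}$ and $T_{eP(\tau)}(G/P(\tau))\cong\ugot^{-}$. Since $X_{\tau}$ is $P(\tau)$-stable, at a point $[e:z]$ the differential reads $d\Psi(\xi,w)=\xi_X(z)+w$ with $\xi\in\ugot^{-}$, $w\in T_zX_{\tau}$, where $\xi_X$ denotes the fundamental vector field. The non-injectivity locus of $d\Psi$ is closed and $G$-stable, hence determined by its trace $N_0\subset X_{\tau}$ on the fiber over $eP(\tau)$; moreover $N_0$ is stable under the flow $z\mapsto\tau(t)z$, because $\tau(t)\in P(\tau)$ acts on $W$ by $\tau(t)\cdot[e:z]=[e:\tau(t)z]$ and $\Psi$ is equivariant. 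As any $z\in X_\tau$ flows to $z_\infty:=\lim_{t\to\infty}\tau(t)z\in Z_\tau$, the limit $z_\infty$ lies in the closed $\tau$-stable set $N_0$ whenever $z\in N_0$; so if $N_0$ were nonempty it would meet $Z_\tau$. It therefore suffices to prove $d\Psi$ injective at $[e:z]$ for $z\in Z_\tau$. For such a $\tau$-fixed $z$ the tangent space $T_zX$ is $\tau$-graded; the inclusion $X_{\tau}\subset A_{\tau}\subset V_{\tau\le0}\times\Pbb(E_{\tau\le-\Vert\tau\Vert^{2}})$ forces $T_zX_{\tau}\subseteq(T_zX)_{\le0}$, whereas $\xi_X(z)\in(T_zX)_{>0}$ for $\xi\in\ugot^{-}$ by equivariance. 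Hence $\xi_X(z)+w=0$ forces $w=0$ and $\xi_X(z)=0$ by weight separation, and it remains to see that $\ugot^{-}\cap\ggot_z=0$, where $\ggot_z$ is the Lie algebra of the stabilizer $G_z$.

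Suppose $\xi\in\ugot^{-}\cap\ggot_z$. Then $\exp(s\xi)$ fixes $z$ for all $s$, so $\tau_s:=\exp(s\xi)\,\tau\,\exp(-s\xi)$ is again optimal destabilizing for $z$ by Lemma~\ref{lem:minv}. By Theorem~\ref{th:Kempf} and Lemma~\ref{lem:unicity-tau-x} the set $\Lambda(z)$ is a single $P(\tau)$-conjugacy class of $\tau$, so $\tau_s=p_s\tau p_s^{-1}$ with $p_s\in P(\tau)$; then $p_s^{-1}\exp(s\xi)\in G^{\tau}\subset P(\tau)$, forcing $\exp(s\xi)\in P(\tau)$ for all $s$, i.e.\ $\xi\in\pgot(\tau)$. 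As $\ugot^{-}\cap\pgot(\tau)=0$ we get $\xi=0$. This yields injectivity of $d\Psi$ everywhere and completes the proof. The main obstacle is exactly this last input: everything hinges on the uniqueness of the optimal destabilizing $1$-parameter subgroup up to $P(\tau)$-conjugacy, which is what prevents the transverse orbit directions $\ugot^{-}$ from becoming tangent to the slice $X_{\tau}$; the flow argument and the weight bound then reduce the verification to this one transversality statement at the $\tau$-fixed points.
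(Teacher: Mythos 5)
Your proof is correct and takes a genuinely different route from the paper's. The paper follows Brion: it invokes the criterion that a $G$-equivariant morphism $\Gamma:X_{\langle\tau\rangle}\to G/P(\tau)$ suffices to split off the $G/P(\tau)$-factor, and constructs $\Gamma$ representation-theoretically, using the moment-polyhedron results (Lemma~\ref{lem:ineqopenstrata} and Proposition~\ref{prop:stratproj}) to produce a base-point-free $G$-module $M_{h\beta}$ of highest-weight sections on the stratum and to show the resulting map lands in the cone of primitive points $C\simeq G/P(\tau)\times\Pbb(\Hom^G(V_{h\beta}^*,M_{h\beta}^*))$. You instead upgrade the bijection of Proposition~\ref{prop:Kirwan} by a local criterion: properness (obtained correctly by base change from $G\times_{P(\tau)}A_\tau\to X$, using the computation $\mathbf{M}_{rel}>\Vert\tau\Vert$ on $A_\tau-X_\tau$ to identify the set-theoretic preimage of $X_{\langle\tau\rangle}$ with $G\times_{P(\tau)}X_\tau$) together with injectivity of the differential, reduced by the $\tau(\C^*)$-flow to the fixed locus $Z_\tau$, where a weight-separation argument plus Kempf's uniqueness (Theorem~\ref{th:Kempf}) kills $\ugot^-\cap\ggot_z$. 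Your approach is more elementary in that it bypasses Section~\ref{sec:polyhedron} entirely and only uses Theorem~\ref{th:Kempf}, Lemma~\ref{lem:minv} and Proposition~\ref{prop:Kirwan}; the paper's approach has the advantage of producing the equivariant retraction $\Gamma:X_{\langle\tau\rangle}\to G/P(\tau)$ explicitly, which is of independent interest and works uniformly without any transversality analysis at singular points of $X$. Two small points in your argument deserve a word: (a) since $X$ may be singular, the closedness of the non-injectivity locus of $d\Psi$ is not completely formal, but it does hold (take unit vectors $v_n\in\ker d\Psi_{w_n}$ with $w_n\to w$; a limit point $v$ lies in the Zariski tangent space $T_wW$ because the latter is the kernel of the Jacobian of a generating set of the ideal, and $v$ is killed by $d\Psi_w$), and Euclidean closedness is enough for your limit $z_\infty=\lim_{t\to\infty}\tau(t)z$; (b) the chain ``proper, universally injective and unramified implies closed immersion, hence isomorphism onto the reduced target'' is standard but should be cited rather than re-derived via the Nakayama sketch. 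Neither affects the validity of the argument.
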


\begin{proof}
Here, we adapt an argument of \cite{Br87}.
It is sufficient to prove that there exists a $G$-equivariant morphism (see {\it e.g.} \cite[Appendice]{Res04})
$$
\Gamma\,:\,X_{\langle\tau\rangle}\longrightarrow  G/P(\tau).
$$

Up to replacing $X$ by the closure of $X_{\langle\tau\rangle}$, 
we may assume that $X^{\rm ss}=\emptyset$ and that $X_{\langle\tau\rangle}$ is the open stratum. 
Let $\beta$ be the orthogonal projection of $0$ on $\Ccal(X)$. 

Let $x=(v,m)\in X$. 
Since $\Ccal(\overline{Gx})\subset\Ccal(X)$, Proposition~\ref{prop:stratproj} implies that 
$x\in X_{\langle\tau\rangle}$ if and only if $\beta\in\Ccal(\overline{Gx})$. 
Hence, for any $x\in X_{\langle\tau\rangle}$ there exist $g\in G$, $h\in\N_{>0}$ and 
$f\in (\C[V]\otimes\C[E]_h)^{(B)_{h\beta}}$  such that $f(g(v,\tilde m))\neq 0$. 
Since $X_{\langle\tau\rangle}$  is paracompact, one can find a finite number of triples
$(g_i,h_i,f_i)_{1\leq i\leq s}$ such that 
$$
\forall x\in X_{\langle\tau\rangle},\quad \exists i, \quad f_i(g_i(v,\tilde m))\neq 0.
$$
We can assume that all polynomial functions $f_i$ have the same degree $h$ 
(even if this means replacing them with some of their powers)

Let $M_{h\beta}$ be the $G$-module generated by the $f_i$'s in $\C[V]\otimes\C[E]_{h}$. 
We get 
\begin{equation}
\label{eq:openstratcov}
\forall x=(v,m)\in X_{\langle\tau\rangle} \qquad \exists f\in M_{h\beta} \quad f(v,\tilde m)\neq 0.
\end{equation}

The inclusion $M_{h\beta}\subset  R_h(X)$ induces a $G$-equivariant map
$$
\tilde\varphi\,:\,\tilde X\longrightarrow  M_{h\beta}^*,
$$
that is homogeneous of degree $h$ in the variable in $E$.
Moreover, \eqref{eq:openstratcov} implies that the cone over the open stratum is mapped 
on $M_{h\beta}^*-\{0\}$. We get a well-defined morphism
$$
\bar \varphi\,:\,X_{\langle\tau\rangle}\longrightarrow \Pbb(M_{h\beta}^*).
$$

Consider now $\Hom^G(V_{h\beta}^*,M_{h\beta}^*)$ and the canonical isomorphism
$$
\begin{array}{ccl}
  V_{h\beta}^*\otimes\Hom^G(V_{h\beta}^*,M_{h\beta}^*)&\longrightarrow &M_{h\beta}^*\\
  \psi\otimes u&\longmapsto&u(\psi).
\end{array}
$$
A point in the projective space of a representation is said to be primitive if it is 
fixed by some Borel subgroup of $G$. Consider the set $C$ (resp. $C_{h\beta}$) of 
primitive points in $\Pbb(M_{h\beta}^*)$ (resp. in $\Pbb(V_{h\beta}^*)$). Then
\begin{equation}
  \label{eq:isocone}
  C\simeq C_{h\beta}\times\Pbb(\Hom^G(V_{h\beta}^*,M_{h\beta}^*))
  \qquad\mbox{and}\qquad
  C_{h\beta}\simeq G/P(\tau). 
\end{equation}

Similarly, write $M_{h\beta}=V_{h\beta}\otimes \C^p$, where $\C^p=\Hom^G(V_{h\beta},M_{h\beta})$.
Consider the $G$-equivariant map 
$$
\psi\,:\,\mathrm{Sym}^\bullet M_{h\beta}\longrightarrow R(X),
$$
defined by the universal property of the symmetric algebra.
Consider on $\Xgot^*(T)$ the dominance order $\leq$. 
Fix a positive integer $n$. 
The weights of $T$ on $\mathrm{Sym}^n M_{h\beta}$ are sums of $n$ weights of $T$ on $M_{h\beta}$ that is on $V_{h\beta}$.
In particular, any $T$-weight $\chi$ on $\mathrm{Sym}^n M_{h\beta}$ satisfies $\chi\leq nh\beta$.
One deduces that 
$$
\mathrm{Sym}^n M_{h\beta}=\left(V_{nh\beta}\otimes \mathrm{Sym}^n\C^p\right)\oplus S,
$$
where $S$ is a sum of highest weight modules $V_\lambda$ such that $\lambda<nh\beta$. 
On the other hand, by Lemma~\ref{lem:ineqopenstrata}, if $V_\nu$ is contained in $R_{nh}(X)$ then
$(\nu,\beta)\geq nh\Vert\beta\Vert^2$.
As a consequence, $S$ is contained in the kernel of $\psi$. 
This means that $\bar\varphi$ factors by the immersion $C\subset\Pbb(M_{h\beta})$. 
We obtain a morphism 
$$
\bar\varphi_1\,:\,X_{\langle\tau\rangle}\longrightarrow  C.
$$
But, by \eqref{eq:isocone}, $C$ projects on $G/P(\tau)$, and one gets the expected morphism $\Gamma$ by 
composing $\bar\varphi_1$ with this projection. 
\end{proof}

\begin{corollary}
\label{cor:smstrat}
If $X$ is smooth, each strata $X_{\langle\tau\rangle}$ is smooth.
\end{corollary}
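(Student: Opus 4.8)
The plan is to reduce the statement, via the isomorphism of Proposition~\ref{prop:stratiso}, to the smoothness of the slice $X_\tau$, and then to recognize $X_\tau$ as an open piece of a Bia\l ynicki-Birula attracting cell inside the smooth $\tau$-invariant variety $X$.

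First I would carry out the reduction. By Proposition~\ref{prop:stratiso} the map $G\times_{P(\tau)}X_\tau\to X_{\langle\tau\rangle}$ is an isomorphism, and by the discussion of the fibered product $G\times_P S$ the projection $G\times_{P(\tau)}X_\tau\to G/P(\tau)$ is Zariski-locally trivial with fiber $X_\tau$. Since $G/P(\tau)$ is a smooth projective variety, local triviality (locally the total space looks like an open subset of $G/P(\tau)$ times $X_\tau$) shows that $G\times_{P(\tau)}X_\tau$ is smooth if and only if $X_\tau$ is smooth. Hence it suffices to prove that $X_\tau$ is smooth.

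Next I would analyze the $\tau$-action near its fixed locus. The fixed-point set $X^\tau$ is smooth, being the fixed locus of a reductive ($\C^*$) action on the smooth variety $X$. The subset $B_\tau=X\cap \bigl(V_{\tau= 0}\times\Pbb(E_{\tau= -\Vert\tau\Vert^2})\bigr)$ consists of $\tau$-fixed points and is open and closed in $X^\tau$, hence smooth; consequently its open subset $Z_\tau=B_\tau^{\mathrm{ss}}$ (Theorem~\ref{th:optss}) is smooth as well. The key point is that $\Ucal_\tau$ is exactly the attracting set of the fixed component $B_\tau$ for the $\tau$-action on $X$: for $x=(v,m)\in X$ the limit $\lim_{t\to\infty}\tau(t)x$ exists and lies in $B_\tau$ precisely when $v$ has only nonpositive $\tau$-weights and the top $\tau$-weight of $\tilde m$ equals $-\Vert\tau\Vert^2$, which is the defining condition of $\Ucal_\tau$. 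By the Bia\l ynicki-Birula theorem applied to the smooth variety $X$, the attracting cell $\Ucal_\tau$ is a smooth locally closed subvariety and the limit map $\pi_\tau\colon\Ucal_\tau\to B_\tau$ is a Zariski-locally trivial fibration in affine spaces.

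Finally I would conclude: since $\pi_\tau$ is a morphism and $Z_\tau$ is open in $B_\tau$, the set $X_\tau=\pi_\tau^{-1}(Z_\tau)$ is open in $\Ucal_\tau$, hence smooth; combined with the first paragraph this shows that $X_{\langle\tau\rangle}$ is smooth. I expect the main obstacle to be the careful justification that the Bia\l ynicki-Birula structure theorem is available in this projective-over-affine (non-complete) setting. One must verify that on $\Ucal_\tau$ the relevant limits genuinely exist --- which holds because the projective factor $\Pbb E$ is proper while the vector factor has been restricted to $V_{\tau\leq 0}$ --- so that the attracting cell and its affine-bundle structure over $B_\tau$ remain valid even though $X$ itself is not proper.
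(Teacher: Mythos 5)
Your proposal is correct and follows essentially the same route as the paper: smoothness of the fixed locus $X^\tau$, identification of $Z_\tau$ as an open subset of the union of fixed components with $\varpi_{rel}=\Vert\tau\Vert^2$ (your $B_\tau$ is exactly the paper's $Y$), smoothness of the Bia\l ynicki-Birula attracting cell with $X_\tau$ open inside it, and the final reduction via Proposition~\ref{prop:stratiso}. Your extra remark about checking that the Bia\l ynicki-Birula structure survives in the non-complete, projective-over-affine setting is a sensible precaution that the paper handles simply by citing \cite{Bia02}.
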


\begin{proof}
By \cite{Fogarty} or \cite{Iversen}, the fixed point set $X^\tau$ is smooth. 
Observe that the map $x\mapsto \varpi_{rel}(x,\tau)$ is constant on the irreducible components of $X^\tau$, 
since this fact is true when $X=V\times\Pbb E$.
Let $Y$ be the union of the irreducible components of $X^\tau$ where $\varpi_{rel}(y,\tau)=\Vert\tau\Vert^2$. 
Observe that Theorem~\ref{th:optss} implies that $Z_\tau$ is open in $Y$, and hence, $Z_\tau$ is smooth.\\

Let $Y^+=\{x\in X\,:\,\lim_{t\to \infty}\tau(t)x \mbox{ exists and belongs to }Y\}$, 
the associated Bia\l ynicki-Birula cell.
By \cite{Bia02}, $Y^+$ is a locally closed smooth subvariety of $X$. 
But Proposition~\ref{prop:Kirwan} implies that $X_\tau$ is open in $Y^+$. 
Hence, $X_\tau$ is smooth.

Finally, Proposition~\ref{prop:stratiso} shows that $X_{\langle\tau\rangle}$ is smooth.
\end{proof}

\subsection{The case of a $B$-stable variety}\label{sec:polyhedron:B}

\subsubsection{The polyhedron $\Ccal(X)$}

Let $U$ be the unipotent radical of $B$.
Let $\lambda$ be a rational dominant weight and $n$ be a positive integer. 
As observed in the proof of Lemma~\ref{lem:CX}, $V_{n\lambda}\subset R_{n}(X)$ if and only if $R_{n}(X)^U$ 
contains an eigenvector of weight $n\lambda$ for $T$. 
As observed in \cite{GSj06}, this last condition only needs $X$ to be $B$-stable to make sense. 
So, for any closed $B$-stable {\em irreducible} subvariety $X\subset V\times\Pbb E$, we define its moment polyhedron by
\begin{equation}
  \label{eq:defmpB}
  \Ccal(X)=\left\{\lambda\in \Xgot^*(T)_\Q\;:\; \exists n\geq 1,\quad n\lambda\in\Xgot^*(T)^+\mbox{ and } \C_{n\lambda}\subset R_{n}(X)^{U}\right\}.
\end{equation}
Here $\C_{n\lambda}$ denotes the $1$-dimensional $T$-module associated to the character $n\lambda$.

In this context, Remark~\ref{rem:P-X-0} remains valid, and we have a weakened version of lemma~\ref{lem:CX}.

\begin{lemma}\label{lem:P-X-B-0}
Let $X\subset V\times\Pbb E$ be a $B$-stable irreducible subvariety.
\begin{enumerate}
\item $0\in \Ccal(X)\iff X^{ss}:=X\cap (V\times\Pbb E)^{ss}\neq\emptyset$.
\item $\Ccal(X)$ is a $\Q$-convex set of $\Xgot^*(T)_\Q$, with recession cone containing $\Ccal(X_0)$.
\end{enumerate}
\end{lemma}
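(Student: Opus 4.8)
The plan is to treat the two assertions separately, using throughout that $R(X)=\C[\tX]$ is an $\N$-graded ring (graded by $E$-degree, preserved by $B$ since $B$ acts linearly on $E$), that $\tX$ is $B$-stable and irreducible, that $V\times\{0\}$ is $G$-stable, and that $G=KB$ (as $K$ acts transitively on the flag manifold $G/B$).

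For assertion (i), I would first dispatch the easy implication $X^{ss}\neq\emptyset\Rightarrow 0\in\Ccal(X)$: if $x=(v,m)$ is semistable there is a $G$-invariant $f\in(\C[V]\otimes\mathrm{Sym}^hE^*)^G$ with $h\geq1$ and $f(v,\tilde m)\neq0$, and its restriction to $\tX$ is a nonzero element of $R_h(X)^B\subset R_h(X)^U$ of weight $0$, so $0\in\Ccal(X)$. For the converse I would pick a nonzero $\bar f\in R_n(X)^B$ with $n\geq1$ (a $U$-invariant of weight $0$ is $B$-invariant). Since $\bar f$ has positive $E$-degree it vanishes on $V\times\{0\}$, so any $z=(v,\tilde m)\in\tX$ with $\bar f(z)\neq0$ has $\tilde m\neq0$; set $x=(v,m)$. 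As $\bar f$ is $B$-invariant it is constant, equal to $\bar f(z)\neq0$, on $\overline{Bz}\subseteq\tX$, whence $\overline{Bz}\cap(V\times\{0\})=\emptyset$. It then remains to upgrade this to $\overline{Gz}\cap(V\times\{0\})=\emptyset$. If this failed, I would take $w\in\overline{Gz}\cap(V\times\{0\})$, write it as a Euclidean limit $w=\lim_i g_iz$ with $g_i=k_ib_i\in KB$, pass to a subsequence with $k_i\to k\in K$, and observe that $b_iz=k_i^{-1}(g_iz)\to k^{-1}w$, so $k^{-1}w\in\overline{Bz}$; but $V\times\{0\}$ is $G$-stable, so $k^{-1}w\in V\times\{0\}$, contradicting $\overline{Bz}\cap(V\times\{0\})=\emptyset$. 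Hence $z$ is $G$-semistable and $x\in X^{ss}$.

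For assertion (ii), the starting point is that $R(X)^U=\C[\tX]^U$ is an $\N$-graded integral domain on which $T$ acts preserving the grading, so the set $S_X:=\{(\chi,n)\in\Xgot^*(T)\times\N:\C_\chi\subset R_n(X)^U\}$ is a subsemigroup: the product of two nonzero weight vectors is again a nonzero (domain!) weight vector, with weights and degrees adding. By definition $\Ccal(X)$ is the image of $\{(\chi,n)\in S_X:n\geq1,\ \chi\in\Xgot^*(T)^+\}$ under $(\chi,n)\mapsto\chi/n$. Convexity then follows by clearing denominators: given $\chi_1/n_1,\chi_2/n_2\in\Ccal(X)$ with dominant $\chi_i$ and $t=p/q\in[0,1]\cap\Q$, the pair $(pn_2\chi_1+(q-p)n_1\chi_2,\ qn_1n_2)$ lies in $S_X$ (a semigroup sum of multiples of $(\chi_1,n_1)$ and $(\chi_2,n_2)$), its weight is dominant, and its ratio equals $t(\chi_1/n_1)+(1-t)(\chi_2/n_2)$.

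Finally, for the recession cone I would take $\nu\in\Ccal(X_0)$ and choose $m\geq1$ with a nonzero $h\in R_0(X)^U$ of dominant weight $m\nu$. For any $\lambda=\chi/n\in\Ccal(X)$ with eigenvector $f\in R_n(X)^U$ of dominant weight $\chi$, the products $fh^k$ are nonzero $U$-eigenvectors of dominant weight $\chi+km\nu$ in degree $n$, so $\lambda+\tfrac{km}{n}\nu\in\Ccal(X)$ for every $k\geq0$; the convexity just proved then forces the whole ray $\lambda+\Q^{\geq0}\nu$ into $\Ccal(X)$, so $\nu$ lies in the recession cone (in the interesting case $\Ccal(X)\neq\emptyset$). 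I expect the only genuinely delicate step to be the converse of (i) — passing from a merely $B$-invariant nonvanishing function to honest $G$-semistability — which is exactly where $G=KB$ and the compactness of $K$ do the work; everything else is semigroup bookkeeping.
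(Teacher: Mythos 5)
Your proof is correct. For part (ii) it is the same argument as the paper's: the authors likewise note that $S_X=\{(\chi,n):\C_\chi\subset R_n(X)^U\}$ is a semigroup because $R(X)^U$ is an integral domain (finite generation being unavailable since $X$ is only $B$-stable), deduce $\Q$-convexity by passing to the ratios $\chi/n$, and obtain the recession-cone containment from the fact that $R(X)^U$ is a module over $\C[X_0]^U$ --- your products $fh^k$ are exactly how that module structure gets used. The only genuine divergence is in the converse direction of (i). The paper disposes of it in one line by asserting that $0\in\Ccal(X)$ is equivalent to the existence of $\varphi\in R_n(V\times\Pbb E)^B=R_n(V\times\Pbb E)^G$ with $\varphi\vert_X\neq 0$; the nontrivial half of this --- producing an ambient $G$-invariant from a nonzero $B$-invariant of $R_n(X)$ --- is not a formal statement about $B$-submodules of $G$-modules (for $G=\mathrm{SL}_2(\C)$ and $W$ its adjoint representation, the highest-weight line $I\subset W$ is a $B$-submodule with $(W/I)^B\neq 0$ while $W^G=0$), so it genuinely rests on the geometry. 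You bypass the lifting altogether: a $B$-invariant of positive $E$-degree is constant and nonzero on some $\overline{Bz}\subset\tX$ while vanishing on $V\times\{0\}$, and the decomposition $G=KB$ together with the compactness of $K$ upgrades $\overline{Bz}\cap(V\times\{0\})=\emptyset$ to $\overline{Gz}\cap(V\times\{0\})=\emptyset$. This is the same mechanism the authors deploy elsewhere (e.g.\ to show that $\Delta(X)$ is closed for $B$-invariant $X$ in Section~\ref{sec:convexity-closed}), and it makes your treatment of (i) more self-contained than the one in the paper.
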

\begin{proof}
Condition $0\in \Ccal(X)$ is equivalent to the existence of $\varphi\in R_n(V\times\Pbb E)^B=R_n(V\times\Pbb E)^G$ such that $\varphi\vert_X\neq 0$, and the later condition means that $X^{ss}$ is non-empty. Therefore, Point $(i)$ is proved. 

For $(ii)$, we adapt the proof of Lemma~\ref{lem:CX} with a few  modifications. $S_X=\{(\chi, n)\in \Xgot^*(T)\times \N\;:\; \C_\chi\subset  R_n(X)^U\}$ is the set of weights of $T\times\C^*$ acting on $R(X)^U=\C[\tilde X]^U$.  Here, since $X$ is invariant only with respect to $B$, the algebra $\C[\tilde X]^U$ is not necessarily finitely generated. Nevertheless, $S_X$ is a semigroup of $\Xgot^*(T)\times \N$ because $R(X)^U$ is an integral domain, and we can deduce that $\Ccal(X)$ is 
a $\Q$-convex set of  $\Xgot^*(T)_\Q$, since it is the image of $S_X\cap \Xgot^*(T)\times \N^*$ under the map $(\chi,n)\mapsto\frac \chi n$.

Since $R(X)^U$ is a module over $\C[X_0]^U$, $\Ccal(X_0)$ is contained in the recession cone of $\Ccal(X)$. 
\end{proof}

Let $\chi$ be a dominant character of $T$. In the orbit $\Ocal_\chi=G.[v_\chi]\subset\Pbb(V_\chi)$, the element $[v_\chi]$ is fixed by the action of the Borel subgroup $B$.

In this context, Lemma~\ref{lem:Shifting} becomes

\begin{lemma}\label{lem:Shifting-2}
Let $X\subset V\times\Pbb E$ be a closed irreducible $B$-stable subvariety. Let $\lambda$ be a rational dominant weight, and $\ell\geq 1$ such that $\ell\lambda\in\Xgot^*(T)^+$. Thus,  
$X_{[\ell]}\times \{[v_{\ell\lambda}]\}$ is a closed irreducible $B$-stable subvariety of $V\times\Pbb(\mathrm{Sym}^\ell E\otimes V_{\ell\lambda})$. The following conditions are equivalent
\begin{enumerate}
\item $\lambda\in\Ccal (X)$.
\item $(X_{[\ell]}\times \{[v_{\ell\lambda}]\})^{ss}\neq\emptyset$.
\item $0 \in \Ccal(X_{[\ell]}\times \{[v_{\ell\lambda}]\})$.
\end{enumerate}
\end{lemma}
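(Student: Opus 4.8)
The plan is to reduce the three equivalences to Lemma~\ref{lem:P-X-B-0} together with an explicit computation of the graded ring of the product variety. Throughout write $Y:=X_{[\ell]}\times\{[v_{\ell\lambda}]\}$. First I would dispose of the preliminary assertion that $Y$ is a closed irreducible $B$-stable subvariety: the Veronese embedding $V\times\Pbb E\hookrightarrow V\times\Pbb(\mathrm{Sym}^\ell E)$ and the Segre embedding $\Pbb(\mathrm{Sym}^\ell E)\times\Pbb(V_{\ell\lambda})\hookrightarrow\Pbb(\mathrm{Sym}^\ell E\otimes V_{\ell\lambda})$ are $G$-equivariant closed immersions, and since $v_{\ell\lambda}$ is a highest weight vector the point $[v_{\ell\lambda}]$ is fixed by $B$. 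Hence $Y$ is the image of the closed irreducible variety $X_{[\ell]}\cong X$ under the $B$-equivariant closed map $[\tilde m^\ell]\mapsto[\tilde m^\ell\otimes v_{\ell\lambda}]$, so it is closed, irreducible and $B$-stable.

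The heart of the matter is to identify $R_n(Y)$ as a $T\times U$-module. Combining the Cartesian-product formula $R_n(X\times X')\simeq R_n(X)\otimes R_n(X')$ with the change-of-polarization formula $R_n(X_{[\ell]})\simeq R_{n\ell}(X)$, it remains to compute $R_n(\{[v_{\ell\lambda}]\})$. The affine cone over $[v_{\ell\lambda}]$ is the line $\C v_{\ell\lambda}$, whose degree-$n$ graded piece is one-dimensional, spanned by the $n$-th power of the linear coordinate dual to $v_{\ell\lambda}$; as a $T$-module this is $\C_{-n\ell\lambda}$. The key point is that $U$ fixes the highest weight vector $v_{\ell\lambda}$, hence acts trivially on this line and on its coordinate ring. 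Since the standard isomorphisms above are $B$-equivariant, I obtain a $T\times U$-equivariant identification $R_n(Y)\simeq R_{n\ell}(X)\otimes\C_{-n\ell\lambda}$ in which $U$ acts only on the first factor, so that $R_n(Y)^U\simeq R_{n\ell}(X)^U\otimes\C_{-n\ell\lambda}$.

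With this computation the equivalences are pure bookkeeping. By definition $0\in\Ccal(Y)$ means $\C_0\subset R_n(Y)^U$ for some $n\geq1$, which by the displayed identification says precisely that the weight-$n\ell\lambda$ part of $R_{n\ell}(X)^U$ is nonzero, i.e. $\C_{n\ell\lambda}\subset R_{n\ell}(X)^U$. The implication $(iii)\Rightarrow(i)$ is then immediate by reading the definition of $\Ccal(X)$ at degree $n\ell$ (noting $n\ell\lambda=n(\ell\lambda)$ is dominant). For $(i)\Rightarrow(iii)$ I would clear the divisibility-by-$\ell$ issue with the usual power trick: a nonzero $U$-invariant $f\in R_{n_0}(X)^U$ of weight $n_0\lambda$ yields, since $R(X)^U$ is an integral domain, a nonzero $f^{\ell}\in R_{n_0\ell}(X)^U$ of weight $n_0\ell\lambda$, which furnishes $(iii)$ with $n=n_0$. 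Finally $(ii)\iff(iii)$ is exactly Lemma~\ref{lem:P-X-B-0}(i) applied to $Y$.

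Since the reasoning is essentially formal once the ring computation is in place, the only delicate step is the $T\times U$-equivariance of the identification $R_n(Y)^U\simeq R_{n\ell}(X)^U\otimes\C_{-n\ell\lambda}$: one must check that $U$ really acts trivially on the tensor factor coming from $[v_{\ell\lambda}]$---which rests on $v_{\ell\lambda}$ being a highest weight vector---and that the product and Veronese isomorphisms are $B$-equivariant so that passing to $U$-invariants commutes with the tensor decomposition. Everything else is tracking $T$-weights and the power trick.
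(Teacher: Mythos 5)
Your proof is correct and takes essentially the same route as the paper: the equivalence (ii)$\iff$(iii) is delegated to Lemma~\ref{lem:P-X-B-0}, and (i)$\iff$(iii) follows from the identification $R_n(X_{[\ell]}\times\{[v_{\ell\lambda}]\})\simeq R_{n\ell}(X)\otimes\C_{n\ell\lambda}^*$ combined with the power trick passing between degrees $n$ and $n\ell$. The only difference is that you spell out the $B$-equivariance of this identification (triviality of the $U$-action on the factor coming from $[v_{\ell\lambda}]$) and the closedness/irreducibility of the product variety, details the paper's chain of equivalences leaves implicit.
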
 
\begin{proof}
Equivalence $(ii)\iff (iii)$ is proved in Lemma~\ref{lem:P-X-B-0}, and $(i)\iff (iii)$ is a consequence of the following equivalences:
\begin{align*}
\exists n\geq 1, \C_{n\lambda}\subset R_{n}(X)^{U}		&\iff  \exists n\geq 1, \C_{n\ell\lambda}\subset R_{n\ell}(X)^{U} \\
  											&\iff  \exists n\geq 1, \left(R_n(X_{[\ell]})\otimes\C_{n\ell\lambda}^*\right)^B\neq 0 \\
											&\iff  \exists n\geq 1, \left(R_n(X_{[\ell]}\times \{[v_{\ell\lambda}]\})\right)^B\neq 0 \\
											&\iff 0 \in \Ccal(X_{[\ell]}\times \{[v_{\ell\lambda}]\}).
\end{align*}
\end{proof}

\begin{remark}
It should be noted that when working with a $B$-stable subvariety, the set $\Ccal(X)$ may be empty. This is the case, for example, if we take $X:=\{[v_\chi]\}\subset \Pbb(V_\chi)$.
\end{remark}

\subsubsection{The weight polyhedron $\Pcal_U(x)$}

Following \cite{Frantz}, we associate to $x\in V\times\Pbb E$, the set 
\begin{equation}
  \label{def:PUx}
  \Pcal_U(x):=\bigcap_{u\in U}\Pcal_T(ux)\ \subset \tgot_0^*,
\end{equation}
where the closed convex polyhedrons $\Pcal_T(y), y\in V\times\Pbb E,$ are defined in (\ref{eq:defPxy}). 
Since in this intersection, only finitely many polyhedrons can occur, 
$\Pcal_U(x)$ is a closed convex rational polyhedron. 

\begin{lemma}
  \label{lem:ssPUx}
  The point $x$ is semistable if and only if $0$ belongs to $\Pcal_U(x)$.
\end{lemma}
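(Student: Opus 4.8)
The plan is to reduce everything to the torus criterion of Proposition~\ref{prop:ssG=T}, which says that a point $y$ is $T$-semistable if and only if $0\in\Pcal_T(y)$. Since $0\in\Pcal_U(x)$ means exactly that $0\in\Pcal_T(ux)$ for every $u\in U$, the statement to prove becomes: $x$ is semistable if and only if $ux$ is $T$-semistable for all $u\in U$.

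For the easy implication I would argue on the level of numerical invariants. The map $\mathbf{M}_{rel}$ is $G$-invariant (by Lemma~\ref{lem:minv}(i), conjugation merely permutes $\Xgot_{*}(G)$), and $\mathbf{M}^T_{rel}(y)\leq\mathbf{M}_{rel}(y)$ trivially, since the supremum defining $\mathbf{M}^T_{rel}$ runs over the smaller set $\Xgot_{*}(T)\subset\Xgot_{*}(G)$. Hence if $x$ is semistable, then $\mathbf{M}_{rel}(ux)=\mathbf{M}_{rel}(x)\leq 0$ for every $u$ (Proposition~\ref{prop:unstable-M-rel}), so $\mathbf{M}^T_{rel}(ux)\leq 0$ and each $ux$ is $T$-semistable by Proposition~\ref{prop:ssG=T}; that is, $0\in\Pcal_U(x)$.

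The substance is the converse: assuming $x$ is unstable, I must produce a single $u\in U$ with $0\notin\Pcal_T(ux)$. The idea is to start from any $G$-destabilizing $1$-parameter subgroup and strip it down to a torus one acting on a $U$-translate of $x$. By Hilbert--Mumford (Theorem~\ref{th:HM}) there is $\tau\in\Xgot_{*}(G)$ destabilizing $x$, i.e. $\bar\varpi_{rel}(x,\tau)>0$. Conjugating $\tau$ into the antidominant Weyl chamber, I write $\tau=h\rho h\inv$ with $\rho\in\Xgot_{*}(T)$ chosen so that $U\subseteq P(\rho)$; then $\bar\varpi_{rel}(h\inv x,\rho)>0$ by Lemma~\ref{lem:minv}(i). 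Now I apply the Bruhat decomposition $h\inv=t_1u_1\dot w\,t_2u_2$ with $t_i\in T$, $u_i\in U$, $\dot w\in N(T)$, and peel the factors off the point one at a time. The torus factor $t_1$ and then the unipotent factor $u_1\in P(\rho)$ are removed using Lemma~\ref{lem:minv}(i) and~(ii) (which leave $\bar\varpi_{rel}$ unchanged because $\rho$ centralizes $T$ and $u_1\inv\in P(\rho)$); the Weyl factor $\dot w$ is absorbed into the $1$-parameter subgroup, replacing $\rho$ by $\rho':=\dot w\inv\rho\dot w\in\Xgot_{*}(T)$ again via Lemma~\ref{lem:minv}(i); and finally $t_2\in T$ is removed. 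What survives is the chain of equalities giving $\bar\varpi_{rel}(u_2x,\rho')=\bar\varpi_{rel}(h\inv x,\rho)>0$, with $\rho'\in\Xgot_{*}(T)$ and $u_2\in U$. Thus $u_2x$ is $T$-unstable, so $0\notin\Pcal_T(u_2x)$ and a fortiori $0\notin\Pcal_U(x)$.

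The main obstacle, and the reason a naive attempt fails, is that one cannot in general conjugate the destabilizing $\tau$ into $T$ by an element of $U$ alone (the lower-triangular $1$-parameter subgroup of $\SU(1,1)^{\C}=\mathrm{SL}_2$ is the model counterexample). The Bruhat decomposition resolves this: the genuinely non-toral part of $h$ is the Weyl element $\dot w$, which is harmless because conjugating a torus $1$-parameter subgroup by $\dot w$ keeps it inside $T$, while the unipotent obstruction is split off only after arranging $U\subseteq P(\rho)$, which is precisely what the antidominant choice of chamber guarantees. The one point to treat carefully is the sign bookkeeping in the definition of $P(\rho)$, to be certain that the antidominant chamber is indeed the one for which $U\subseteq P(\rho)$.
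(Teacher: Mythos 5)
Your proof is correct, and for the substantive (converse) direction it takes a genuinely different route from the paper's. The paper starts from an \emph{optimal} destabilizing $1$-PS $\tau$ (Theorem~\ref{th:Kempf}), invokes the standard fact that $P(\tau)\cap B$ contains a maximal torus of $G$ to conjugate $\tau$ into $\Xgot_{*}(B)$ by an element of $P(\tau)$ --- which, by Lemma~\ref{lem:minv}, moves neither the point $x$ nor the value $\bar\varpi_{rel}(x,\cdot)$ --- and then uses $B=TU$ to conjugate the result into $\Xgot_{*}(T)$ by a single $u\in U$, concluding as you do that $ux$ is $T$-unstable. You instead take an \emph{arbitrary} destabilizer (so you need only Hilbert--Mumford, Theorem~\ref{th:HM}, and not Kempf's existence theory), conjugate it to an antidominant $\rho\in\Xgot_{*}(T)$ at the cost of moving $x$ to $h\inv x$, and then eliminate $h\inv$ through its Bruhat decomposition $t_1u_1\dot w t_2u_2$: the torus factors are harmless because $T$ centralizes $\rho$ and $\rho'$, the factor $u_1$ is absorbed because antidominance forces $U\subseteq P(\rho)$ so that Lemma~\ref{lem:minv}(ii) applies, and $\dot w$ merely replaces $\rho$ by $\rho'=\dot w\inv\rho\dot w\in\Xgot_{*}(T)$. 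Both arguments rest on the same two pillars, Lemma~\ref{lem:minv} and Proposition~\ref{prop:ssG=T}, and both terminate with a $T$-destabilizer of a single $U$-translate of $x$; the paper's is shorter, hiding the work in the parabolic-intersection fact, while yours is more hands-on and makes explicit where the choice of chamber enters. Your sign check is indeed the right one for this paper's conventions: since $P(\tau)=\{g:\lim_{t\to\infty}\tau(t)g\tau(t\inv)\ \mbox{exists}\}$ and $U$ is generated by the root groups of the roots of $B$, one has $U\subseteq P(\rho)$ precisely when $\rho$ is antidominant. (Your easy direction via $\mathbf{M}_{rel}$-invariance is a harmless variant of the paper's one-line observation that semistability of $x$ gives semistability, hence $T$-semistability, of every $ux$.)
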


\begin{proof}
If $x$ is semistable, so is $ux$. 
Hence, $0$ belongs to $\Pcal_T(ux)$. 

Conversely, assume that $x$ is unstable. Let $\tau$ be an optimal destabilizing vector for $x$. 
Since $P(\tau)\cap B$ contains a maximal torus of $G$, $\tau$ is $P(\tau)$-conjugated to a $1$-parameter subgroup of $B$. 
We may assume that $\tau\in\Xgot_*(B)$. Now, $B=UT$ and there exists $u\in U$ such that $u\tau u\inv$ is a $1$-parameter subgroup of $T$. 
Then, $0$ does not belong to $\Pcal_T(ux)$.
\end{proof}

\subsubsection{A description of $\Ccal(X)$}

The following result was obtained by M. Frantz \cite{Frantz}, when $X$ is a $G$-invariant projective variety.

\begin{theorem} \label{th:FGS}
Let $X$ be closed irreducible $B$-stable subvariety of $V\times\Pbb E$. Then, 
$$
\Ccal(X)\supset\Xgot^*(T)^+_\Q\cap -\Pcal_U(x),\qquad \forall x\in X,
$$
with equality for all $x$ in a nonempty Zariski open subset of $X$.
\end{theorem}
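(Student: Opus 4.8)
The plan is to reduce the whole statement, via the shifting trick (Lemma~\ref{lem:Shifting-2}) and the description of semistability by the weight polyhedron (Lemma~\ref{lem:ssPUx}), to a single computation relating $\Pcal_U$ of a point and of its shift. Fix a rational dominant $\lambda$ and $\ell\geq 1$ with $\ell\lambda\in\Xgot^*(T)^+$. For $x=(v,m)\in X$ set $Y:=X_{[\ell]}\times\{[v_{\ell\lambda}]\}$ and $y:=(v,[\tilde m^\ell\otimes v_{\ell\lambda}])\in Y$. Since $v_{\ell\lambda}$ spans the $B$-stable line, it is fixed by $U$, so for every $u\in U$ one has $uy=(uv,[(u\tilde m)^\ell\otimes v_{\ell\lambda}])$. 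Using the weight identities from the proof of Theorem~\ref{th:shiftingtrick}, namely $\conv(\Wt_T((u\tilde m)^\ell))=\ell\,\conv(\Wt_T(u\tilde m))$ and $\Wt_T((u\tilde m)^\ell\otimes v_{\ell\lambda})=\ell\lambda+\Wt_T((u\tilde m)^\ell)$, together with $\ell\,\cone(S)=\cone(S)$, I get $\Pcal_T(uy)=\ell(\Pcal_T(ux)+\lambda)$. Intersecting over $u\in U$ and using that the affine bijection $z\mapsto \ell(z+\lambda)$ commutes with intersections yields the central formula $\Pcal_U(y)=\ell(\Pcal_U(x)+\lambda)$; in particular $0\in\Pcal_U(y)$ if and only if $-\lambda\in\Pcal_U(x)$.

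For the inclusion, I fix $x$ and $\lambda\in\Xgot^*(T)^+_\Q\cap-\Pcal_U(x)$. Then $-\lambda\in\Pcal_U(x)$, so the formula gives $0\in\Pcal_U(y)$, hence $y$ is semistable by Lemma~\ref{lem:ssPUx}; thus $Y^{ss}\neq\emptyset$ and Lemma~\ref{lem:Shifting-2} gives $\lambda\in\Ccal(X)$. This proves $\Ccal(X)\supseteq\Xgot^*(T)^+_\Q\cap-\Pcal_U(x)$ for every $x$, and consequently $\Ccal(X)=\bigcup_{x\in X}\big(\Xgot^*(T)^+_\Q\cap-\Pcal_U(x)\big)$.

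For the generic equality I use that $\Pcal_U$ takes only finitely many values on $X$ (as noted after its definition): since $X$ is irreducible, there is a nonempty Zariski-open $\Omega_0\subseteq X$ on which $\Pcal_U$ is constant, say equal to $Q_0$. For each $\lambda\in\Ccal(X)$ consider $\Omega_\lambda:=\{x\in X:-\lambda\in\Pcal_U(x)\}$. By the central formula $\Omega_\lambda$ is exactly the preimage of the semistable locus $Y^{ss}$ under the isomorphism $x\mapsto y$ from $X$ onto $Y$ (Veronese followed by product with a point); hence $\Omega_\lambda$ is Zariski-open, and it is nonempty — thus dense, $X$ being irreducible — precisely because $\lambda\in\Ccal(X)$ forces $Y^{ss}\neq\emptyset$. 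Two dense open subsets of an irreducible variety meet, so $\Omega_0\cap\Omega_\lambda\neq\emptyset$; choosing $x$ there gives $-\lambda\in\Pcal_U(x)=Q_0$. Letting $\lambda$ range over $\Ccal(X)$ shows $\Ccal(X)\subseteq-Q_0$, so for every $x\in\Omega_0$ one has $\Ccal(X)\subseteq\Xgot^*(T)^+_\Q\cap-\Pcal_U(x)$, which combined with the inclusion above gives equality on $\Omega_0$.

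The step I expect to require the most care is this last one. The naive route — proving that $\Pcal_U(x)$ is \emph{maximal} for generic $x$ and then reading off equality from the union formula — is misleading: a rank-one example shows that the generic value of $\Pcal_U$ need not contain the polyhedra attached to special points. The correct mechanism runs the other way: a special point may have a larger $\Pcal_U(x)$, but the part of $-\Pcal_U(x)$ outside the dominant chamber is discarded, so its contribution $\Xgot^*(T)^+_\Q\cap-\Pcal_U(x)$ can be strictly smaller or even empty, while the density argument shows the generic polyhedron $-Q_0$ already contains all of $\Ccal(X)$. Making the intersection-of-dense-opens argument airtight — verifying via the shift isomorphism that each $\Omega_\lambda$ is genuinely open, and extracting $\Omega_0$ from the finiteness of the values of $\Pcal_U$ — is where the real work lies.
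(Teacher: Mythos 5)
Your central formula $\Pcal_U(y)=\ell(\Pcal_U(x)+\lambda)$ and the resulting inclusion $\Ccal(X)\supseteq\Xgot^*(T)^+_\Q\cap-\Pcal_U(x)$ are exactly the paper's first two steps (shifting trick, Lemma~\ref{lem:ssPUx}, $U$-invariance of $[v_{\ell\lambda}]$). Where you genuinely diverge is the generic-equality step. The paper takes the collection of polyhedra $\Pcal_U(x)$ that are maximal for inclusion, picks a separating point $\lambda_i$ in each, and intersects the corresponding dense open sets to force the collection to be a singleton. Your route — fix a nonempty open $\Omega_0$ where $\Pcal_U\equiv Q_0$, then show $-\lambda\in Q_0$ for each $\lambda\in\Ccal(X)$ by intersecting $\Omega_0$ with the single dense open $\Omega_\lambda$ — is arguably the more robust of the two: the dense open sets produced by the shifting trick exist only for $\lambda$ rational \emph{dominant} with $\lambda\in\Ccal(X)$, whereas a separating point of a maximal $\Pcal_U(x)$ need not be of that form; and your own rank-one example ($\SU(2)$, $X=\Pbb^1$, where $\Pcal_U$ takes the two incomparable values $\{\epsilon\}$ and $\{-\epsilon\}$) shows that the values of $\Pcal_U$ can indeed fail to have a unique maximal element, so that only the truncations $\Xgot^*(T)^+_\Q\cap-\Pcal_U(x)$ are comparable generically. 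Your closing caveat about the ``naive route'' is therefore well taken.

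The one step you leave open is the existence of $\Omega_0$. Finiteness of the set of values of $\Pcal_U$ together with irreducibility of $X$ is \emph{not} by itself enough to produce a nonempty Zariski-open set of constancy: an irreducible variety can be partitioned into finitely many non-constructible pieces none of which contains an open set. What saves you is that the level sets of $x\mapsto\Pcal_U(x)$ are constructible: for each pair of weight subsets $(A,B)$ the incidence locus $\{(x,u)\in X\times U:\Wt_T(u\tilde m)=A,\ \Wt_T(uv)=B\}$ is locally closed, so by Chevalley its image in $X$ is constructible; the map sending $x$ to the (finite) collection $\{\Pcal_T(ux):u\in U\}$ therefore has constructible fibres, and so does $\Pcal_U$. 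One of the finitely many level sets is then dense, hence contains a dense open $\Omega_0$. This is routine, and you explicitly flag it as the remaining work, so I regard the proof as correct modulo this standard verification.
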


\begin{proof}Let $\lambda$ be a rational dominant weight, and $\ell\geq 1$ such that $\ell\lambda\in\Xgot^*(T)^+$. 
Then, by Lemma~\ref{lem:Shifting-2}, $\lambda\in\Ccal(X)$ if and only if there 
exists $x\in X$ such that $(x^\ell,[v_{\ell\lambda}])\in X_{[\ell]}\times \Ocal_{\ell\lambda}$ is semistable. 
By Lemma~\ref{lem:ssPUx}, this is equivalent to $0\in \Pcal_U((x^\ell,[v_{\ell\lambda}]))$. 
But, $[v_{\ell\lambda}]$ being fixed by $B$, we have
$$
\Pcal_U((x^\ell,[v_{\ell\lambda}])=\ell\left(\lambda+\Pcal_U(x)\right).
$$
This proves that 
$$
\lambda\in\Ccal(X)\iff \exists x\in X,\quad \lambda\in -\Pcal_U(x).
$$
The inclusion of the statement follows. Observe that, for every $\lambda$ as before, the above proof provides 
an open set $\Omega_{\lambda}$ of points $x$ such that $-\lambda\in\Pcal_U(x)$.
Let $\Pcal_1,\dots,\Pcal_s$ be the collection of polyhedrons  $\Pcal_U(x)$ that are maximal for the inclusion. 
For each $i$, one can fix $\lambda_i\in\Pcal_i$ that belongs to no other $\Pcal_j$. 
Since $X$ is irreducible, the intersection of the $\Omega_{\lambda_i}$ is nonempty. 
Pick $x$ in it. 
Then $\Pcal_U(x)$ contains any $\lambda_i$; this is a contradiction unless $s=1$. 
The last part of the statement follows.
\end{proof}

\begin{corollary}\label{coro:Bx} For any $x\in V\times \Pbb E$, we have 
$$
\Ccal(\overline{Bx})=\Xgot^*(T)^+_\Q\cap -\Pcal_U(x).
$$
\end{corollary}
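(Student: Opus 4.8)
The plan is to deduce the statement from Theorem~\ref{th:FGS} applied to the variety $X:=\overline{Bx}$. This is a closed $B$-stable subvariety, and it is irreducible because $B$ is connected, so that $Bx$ is the image of the irreducible variety $B$ under the orbit map $b\mapsto bx$, and $\overline{Bx}$ is its closure. Since $x\in X$, the inclusion part of Theorem~\ref{th:FGS} applied with $y=x$ gives at once
$$
\Ccal(\overline{Bx})\supset \Xgot^*(T)^+_\Q\cap -\Pcal_U(x),
$$
so only the reverse inclusion requires work.

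The key auxiliary fact I would establish first is that the polyhedron $\Pcal_U$ is constant along $B$-orbits, i.e. $\Pcal_U(by)=\Pcal_U(y)$ for every $b\in B$ and every $y$. This rests on two elementary observations. First, $\Pcal_T(ty)=\Pcal_T(y)$ for $t\in T$, because the $T$-action only rescales the weight components of $\tilde m$ and of $v$, leaving the weight sets $\Wt_T(\tilde m)$ and $\Wt_T(v)$ unchanged, hence leaving $\conv(\Wt_T(\tilde m))+\cone(\Wt_T(v))$ unchanged. Second, $U$ is a group normalized by $T$: writing $b=u_0t_0$ with $u_0\in U$ and $t_0\in T$, the substitution $u\mapsto uu_0$ absorbs $u_0$ into the index, and the identity $u\,t_0=t_0\,(t_0\inv u\,t_0)$ moves $t_0$ to the left across $U$; combined with the first observation this reindexes $\bigcap_{u\in U}\Pcal_T(u\,by)$ to $\bigcap_{u\in U}\Pcal_T(u\,y)=\Pcal_U(y)$. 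In particular $\Pcal_U$ is constant on the orbit $Bx$, equal everywhere to $\Pcal_U(x)$.

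For the reverse inclusion I would use the ``equality on a dense open subset'' half of Theorem~\ref{th:FGS}. The orbit $Bx$ is locally closed, hence open in its closure $X$; so the nonempty Zariski open subset $\Omega\subset X$ on which equality holds must meet $Bx$, by irreducibility of $X$. Choosing any $y_0\in\Omega\cap Bx$ then yields
$$
\Ccal(\overline{Bx})=\Xgot^*(T)^+_\Q\cap -\Pcal_U(y_0)=\Xgot^*(T)^+_\Q\cap -\Pcal_U(x),
$$
where the last equality is exactly the $B$-invariance of $\Pcal_U$ established above, since $y_0\in Bx$. This gives the reverse inclusion and hence the claimed equality.

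The main obstacle is precisely the gap between Theorem~\ref{th:FGS}, which guarantees equality only at \emph{generic} points of $X$, and the corollary, which asserts it at the prescribed point $x$. The mechanism that closes this gap is special to the case $X=\overline{Bx}$: the quantity $\Pcal_U$ is a $B$-invariant of the point, and the orbit $Bx$ is open in its closure, so a generic point (where equality is known) can be found inside $Bx$, where $\Pcal_U$ already coincides with $\Pcal_U(x)$. Everything else is a direct invocation of Theorem~\ref{th:FGS}.
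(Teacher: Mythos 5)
Your proof is correct and takes essentially the same route as the paper's own: apply Theorem~\ref{th:FGS} to $X=\overline{Bx}$ and then use the constancy of $\Pcal_U$ along the orbit $Bx$ (together with the fact that $Bx$ is open in its closure) to transfer the generic equality to the given point $x$. The only difference is one of detail: the paper asserts $\Pcal_U(y)=\Pcal_U(x)$ for $y\in\Omega\cap Bx$ without proof, whereas you verify this $B$-invariance explicitly via $B=UT$, the normalization of $U$ by $T$, and the $T$-invariance of $\Pcal_T$ — a correct and worthwhile elaboration.
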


\begin{proof}Thanks to the previous Theorem we have $\Pcal(\overline{Bx})=\Xgot^*(T)^+_\Q\cap -\Pcal_U(y)$ for $y$ belonging in a Zariski dense open subset 
$\Omega\subset \overline{Bx}$. The result holds since $\Pcal_U(y)=\Pcal_U(x)$ for any $y\in \Omega\cap Bx$.
\end{proof}

\begin{definition}\label{def:P-T-X} When working with an irreducible $T$-stable subvariety $X\subset V\times\Pbb E$, we define the polyhedron
$\Ccal_T(X)=\{\lambda\in \Xgot^*(T)_\Q\;:\; \exists n\geq 1,\, n\lambda\in\Xgot^*(T)\mbox{ and } \C_{n\lambda}\subset R_{n}(X)\}$.
\end{definition}

Corollary~\ref{coro:Bx}, applied to the case where the reductive group is abelian, says that the polyhedron $\Ccal_T(\overline{Tx})$, which is  
attached to the $T$-variety $\overline{Tx}\subset V\times\Pbb E$, is equal to $\Xgot^*(T)_\Q\cap-\Pcal_T(x)$. 
The identity of Corollary~\ref{coro:Bx} is then equivalent to 
\begin{equation} \label{equation:coro-Bx}
\Ccal(\overline{Bx})=\Xgot^*(T)^+_\Q\cap \bigcap_{u\in U}\Ccal_T\left(\overline{Tux}\right).
\end{equation}

\section{Semistability in a non-compact K\"{a}hler framework}
\label{sec:semistability-kahler}

In the next chapters, $G$ is a connected complex reductive Lie group, and $K$ is a maximal compact subgroup of $G$.

Throughout the paper, by ``K\"ahler Hamiltonian $G$-manifold'', we mean a complex manifold $N$ equipped with 
an action of $G$ by holomorphic transformations and a $K$-invariant K\"ahler $2$-form $\Omega_N$. 
We suppose furthermore that the $K$-action on $(N,\Omega_N)$ is Hamiltonian. The moment map $\Phi_N: N\to\kgot^*$ 
satisfies the relations 
$$
d\langle \Phi_N, \lambda\rangle\vert_n= \Omega\vert_n(\lambda\cdot n ,-),\qquad \forall (\lambda,n)\in\kgot\times N, 
$$ 
where $\lambda\cdot n=\frac{d}{dt}\vert_0 e^{t\lambda}n$.

In the following sections, we'll look at questions relative to GIT in the framework of non-compact K\"ahler Hamiltonian $G$-manifolds. 
One of the main difference with Section~\ref{sec:HKKN-algebraic} is that we are working with K\"ahler $2$-forms that are not necessarily integral. For example, 
a coadjoint orbit $K\mu$ is a K\"ahler Hamiltonian $G$-manifold for any $\mu\in\tgot^*_{0}$. Even if $K\mu$ is always a projective variety, the 
K\"ahler $2$-form $\Omega_\mu$ we have on it is integral  only if $\mu\in\Lambda^*$.

\subsection{$\Phi$-semistability}\label{sec:phi-semistability}

Let $(M,\Omega)$ be a compact K\"{a}hler Hamiltonian $G$-manifold, with moment map $\Phi_M: M\to \kgot^*$. 
Let $V$ be a $G$-module, that is equipped with a $K$-invariant Hermitian structure $\langle .,.\rangle_V$. The moment map 
$\Phi_V: V\to \kgot^*$ relative to the symplectic structure $\Omega_V:= - {\rm Im}(\langle .,.\rangle_V)$ is defined by the relations 
$\langle\Phi_V(v),\lambda\rangle=\tfrac{i}{2}\langle \lambda v,v\rangle_V$.

\begin{definition}
For any $r>0$, we consider the moment map $\Phi_r: V\times M\to \kgot^*$ defined by the relations : 
$\Phi_r(v,m)=r\,\Phi_V(v)+\Phi_M(m)$,\  $\forall (v,m)\in V\times M$. 
When $r=1$, the moment map $\Phi_r$ is denoted by $\Phi$.
\end{definition}

\begin{definition}
\begin{itemize}
\item $(v,m)\in V\times M$ is $\Phi_r$-semistable if $\overline{G(v,m)}\cap \{\Phi_r=0\}\neq \emptyset$.
\item $(v,m)\in V\times M$ is $\Phi_r$-unstable if $\overline{G(v,m)}\cap \{\Phi_r=0\}= \emptyset$.
\item $(V\times M)^{\Phi_r-ss}$ is the subset of $\Phi_r$-semistable elements. 
\end{itemize}
\end{definition}

For any $r>0$, the dilatation $\delta_r:(v,m)\mapsto (\sqrt{r}v,m)$ on $V\times M$ is $G$-equivariant and 
satisfies $\Phi\circ \delta_r=\Phi_r$. Thus, 
$\delta_r\left((V\times M)^{\Phi_r-ss}\right)$ is equal to $(V\times M)^{\Phi-ss}$.

The following fact is proved in Section~\ref{sec:numerical-invariant-phi}.

\begin{proposition}
$(V\times M)^{\Phi_r-ss}$ is equal to $(V\times M)^{\Phi-ss}$ for any $r>0$. 
\end{proposition}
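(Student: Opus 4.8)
The plan is to deduce the statement from the numerical-invariant characterization of semistability recalled in the introduction (A.~Teleman's theorem on energy-complete Hamiltonian manifolds). Since $\Phi_r$ is, for every $r>0$, a moment map making $V\times M$ energy complete, a point $x=(v,m)$ is $\Phi_r$-semistable if and only if $\mathbf{M}_{\Phi_r}(x)\le 0$, where $\mathbf{M}_{\Phi_r}(x)=\sup_{\lambda\neq 0}\varpi_{\Phi_r}(x,\lambda)/\Vert\lambda\Vert$. It therefore suffices to prove that $\varpi_{\Phi_r}(x,\lambda)$ does not depend on $r$ for any fixed $x$ and $\lambda\in\kgot$; the equality $(V\times M)^{\Phi_r-ss}=(V\times M)^{\Phi-ss}$ follows at once.

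First I would fix $\lambda\in\kgot-\{0\}$ and examine the flow $s\mapsto \exp(is\lambda)\cdot(v,m)=(\exp(is\lambda)v,\exp(is\lambda)m)$. Because $M$ is compact, its component always converges, so the limit $x_\infty:=\lim_{s\to+\infty}\exp(is\lambda)x$ exists in $V\times M$ if and only if $v_\infty:=\lim_{s\to+\infty}\exp(is\lambda)v$ exists in $V$ --- a condition on $v$ alone, hence independent of $r$. When this limit does not exist one has $\varpi_{\Phi_r}(x,\lambda)=-\infty$ for every $r$, and there is nothing to prove.

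The key point is the value at the limit. When $x_\infty=(v_\infty,m_\infty)$ exists it is fixed by the whole flow $\exp(is\lambda)$, so $\lambda\cdot v_\infty=0$; consequently the linear moment map contributes nothing in the direction $\lambda$, namely $\langle\Phi_V(v_\infty),\lambda\rangle=\tfrac{i}{2}\langle\lambda v_\infty,v_\infty\rangle_V=0$. By continuity of $\Phi_r$ and the relation $\Phi_r=r\,\Phi_V+\Phi_M$ one then gets
\[
\varpi_{\Phi_r}(x,\lambda)=\langle \Phi_r(x_\infty),\lambda\rangle
= r\,\langle\Phi_V(v_\infty),\lambda\rangle+\langle\Phi_M(m_\infty),\lambda\rangle
=\langle\Phi_M(m_\infty),\lambda\rangle,
\]
which is manifestly independent of $r$. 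Taking the supremum over $\lambda$ shows that $\mathbf{M}_{\Phi_r}(x)$ itself is independent of $r>0$, and in particular its sign is; the characterization above then yields the claimed equality of semistable loci.

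The main obstacle I foresee is foundational rather than computational: one must check that the numerical invariant $\varpi_{\Phi_r}$ is well defined in this non-compact, possibly non-integral setting, and that $\varpi_{\Phi_r}(x,\lambda)$ genuinely equals the value $\langle\Phi_r(x_\infty),\lambda\rangle$ at the $\lambda$-fixed limit (equivalently, that the flow converges and the weight-at-infinity formula holds). These are precisely the facts established in Section~\ref{sec:numerical-invariant-phi}; granting them, the vanishing $\langle\Phi_V(v_\infty),\lambda\rangle=0$ on $\lambda$-fixed vectors is what decouples the invariant from the scaling parameter $r$ and drives the whole argument. As a consistency check, this is compatible with the dilatation identity $\Phi\circ\delta_r=\Phi_r$ noted above, which gives $\mathbf{M}_{\Phi_r}(x)=\mathbf{M}_{\Phi}(\delta_r x)$: our computation simply says that $\mathbf{M}_{\Phi}$ is constant along the scaling $v\mapsto \sqrt r\,v$.
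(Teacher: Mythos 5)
Your proposal is correct and follows essentially the same route as the paper: the paper's own argument (Lemma~\ref{lem:varphi-finite}, Remark~\ref{rem:Phi-r} and Theorem~\ref{theo:teleman}) establishes precisely that $\varpi_{\Phi_r}(x,\lambda)$ is independent of $r$ --- the convergence of the flow is a condition on $v$ alone, and at a $\lambda$-fixed limit the $V$-contribution $\langle\Phi_V(v_\infty),\lambda\rangle$ vanishes, leaving $\langle\Phi_M(m_\lambda),\lambda\rangle$ --- and then concludes $\mathbf{M}_{\Phi_r}=\mathbf{M}_{\Phi}$ and applies Teleman's characterization of semistability, exactly as you do.
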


The last result shows that $(V\times M)^{\Phi-ss}$ is stable under the dilatations $\delta_r, r>0$.

\subsection{Kemp-Ness Theorem in $V\times \Pbb E$}

We work with two $G$-modules $E$ and $V$, both equipped with $K$-invariant Hermitian structures $\langle .,.\rangle_E$ and $\langle .,.\rangle_V$. 
The definition of {\em relative semistability} was given in Section~\ref{sec:defss}.

In the next section, we characterize relative semistability in terms of the function $\Psi: V\times(E-\{0\})\to \R$ that is defined by the relation
\begin{equation}\label{eq:psi-relative}
\Psi(v,w)=\tfrac{1}{4}\|v\|^2 + \log(\|w\|).
\end{equation}

\subsubsection{A.D. King's lemma}

The following lemma was proved by King \cite{King94} when $\dim V=1$. 

\begin{lemma}\label{lem:king-proof}
\begin{enumerate}
\item \label{lem:king-proof-ass1} Let $\Xgot$ be any affine subvariety of $V\times E$ disjoint from
$V\times\{0\}$. Then the function $\exp(\Psi)$ restricted to $\Xgot$ is proper (and thus achieves its infimum).
\item Let $\Ocal$ be a $G$-orbit in $V\times E$ disjoint from $V\times\{0\}$. Then $\Ocal$ is closed if and only if the restriction of $\Psi$ to $\Ocal$ achieves its infimum.
\end{enumerate}
\end{lemma}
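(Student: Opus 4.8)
The plan is to show that $\exp(\Psi)=e^{\|v\|^2/4}\|w\|$ has compact sublevel sets on $\Xgot$. Each sublevel set $S_c:=\{(v,w)\in\Xgot:e^{\|v\|^2/4}\|w\|\le c\}$ is closed in $V\times E$ (continuity of $\exp(\Psi)$ together with $\Xgot$ being Zariski-closed, hence closed), so it suffices to bound it. The crucial algebraic input is the disjointness from $V\times\{0\}$: writing $w_1,\dots,w_N$ for the linear coordinates on $E$, these have no common zero on $\Xgot$, so the Nullstellensatz provides $g_1,\dots,g_N\in\C[\Xgot]$ with $\sum_i g_iw_i=1$ on $\Xgot$. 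Lifting the $g_i$ to polynomials on $V\times E$ and applying Cauchy--Schwarz gives $1\le\|g(v,w)\|\,\|w\|$ with $\|g(v,w)\|\le C(1+\|v\|+\|w\|)^d$, i.e.\ the polynomial lower bound $\|w\|\ge C^{-1}(1+\|v\|+\|w\|)^{-d}$. On $S_c$ one has $\|w\|\le c$ (since $e^{\|v\|^2/4}\ge 1$), so the bound becomes $\|w\|\ge c'(1+\|v\|)^{-d}$, and combined with $e^{\|v\|^2/4}\|w\|\le c$ it yields $e^{\|v\|^2/4}\le c''(1+\|v\|)^d$. As an exponential dominates every polynomial, $\|v\|$ is bounded on $S_c$, and then $\|w\|\le c$ bounds the $E$-component as well. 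Thus $S_c$ is compact, $\exp(\Psi)$ is proper, and it attains its (positive) infimum; the same holds for $\Psi$ by monotonicity of $\exp$.

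\textbf{Part (ii), the easy direction.} If $\Ocal$ is closed, it is a closed affine subvariety of $V\times E$ disjoint from $V\times\{0\}$, so part (i) applies verbatim and $\Psi|_\Ocal$ attains its infimum.

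\textbf{Part (ii), the converse.} Suppose $\Psi|_\Ocal$ attains its minimum at $x_0=(v_0,w_0)$. First I would note that this forces $\overline\Ocal\cap(V\times\{0\})=\emptyset$: near any point of $V\times\{0\}$ the term $\log\|w\|\to-\infty$ while $\tfrac14\|v\|^2$ stays bounded, so $\Psi$ would be unbounded below on $\Ocal$, contradicting that a minimum is attained. Hence $\overline\Ocal$ is a closed affine variety disjoint from $V\times\{0\}$, part (i) makes $\exp(\Psi)$ proper on $\overline\Ocal$, and by density $\inf_{\overline\Ocal}\Psi=\inf_\Ocal\Psi=\Psi(x_0)$, so $x_0$ is a global minimum of $\Psi$ on $\overline\Ocal$. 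The key analytic tool is the geodesic convexity of $\Psi$: for $\lambda\in\kgot$ the endomorphism $i\lambda$ acts on $V$ and $E$ by a Hermitian operator, so $\|e^{ti\lambda}v_0\|^2$ and $\|e^{ti\lambda}w_0\|^2$ are sums $\sum e^{2t\nu_j}|\cdot|^2$; consequently $t\mapsto\tfrac14\|e^{ti\lambda}v_0\|^2$ and $t\mapsto\log\|e^{ti\lambda}w_0\|$ are convex, whence $t\mapsto\Psi(e^{ti\lambda}x_0)$ is convex. Moreover the first-order minimality of $x_0$ is exactly the vanishing of the moment map $\Phi(x_0)=0$, which makes $t=0$ a critical point of every such geodesic, hence by convexity a global minimizer. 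Arguing by contradiction, if $\Ocal$ is not closed I would pick $y\in\overline\Ocal\setminus\Ocal$ and, via the polar decomposition $g=k\exp(i\mu)$ and compactness of $K$, reduce to a sequence $\exp(i\mu_n)x_0\to y$. Boundedness of $\{\mu_n\}$ would force $y\in\Ocal$, so $\|\mu_n\|\to\infty$; writing $\eta_n=\mu_n/\|\mu_n\|\to\eta$ with $\|\eta\|=1$ and comparing the convex functions $s\mapsto\Psi(e^{si\eta_n}x_0)$ (each non-decreasing on $[0,\infty)$ as it is minimized at $s=0$) shows $t\mapsto\Psi(e^{ti\eta}x_0)$ is bounded above by $\Psi(y)$; a convex, non-decreasing, bounded function is constant, so $\Psi(e^{ti\eta}x_0)=\Psi(x_0)$ for all $t$. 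Inspecting the explicit form of $\Psi$, this constancy forces all relevant eigenvalues of $i\eta$ on $v_0$ and $w_0$ to vanish, i.e.\ $e^{ti\eta}$ fixes $x_0$.

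\textbf{Main obstacle.} The delicate step is to convert ``the limiting direction $\eta$ stabilises $x_0$'' into a genuine contradiction with $\exp(i\mu_n)x_0\to y\notin\Ocal$, because $\eta_n\neq\eta$ and $s_n=\|\mu_n\|\to\infty$ make $\lim_n\exp(is_n\eta_n)x_0$ an indeterminate $\infty\cdot 0$ limit. I expect to resolve this by refining the geodesic analysis as in \cite{GRS21}: decompose $\mu_n$ into a component along the stabiliser of $x_0$ (acting trivially) and a transverse component, and use geodesic convexity on the symmetric space to prove the transverse part stays bounded. This returns us to the bounded case, gives $y\in\Ocal$, and yields the contradiction, so $\Ocal=\overline\Ocal$ is closed.
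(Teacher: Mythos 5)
Your part (i) and the forward direction of part (ii) are correct and essentially identical to the paper's argument: the paper also produces an algebraic certificate (a polynomial $P=\sum_i P_i(v)Q_i(w)$ equal to $1$ on $\Xgot$ and vanishing on $V\times\{0\}$, with the $Q_i$ homogeneous of positive degree — your Nullstellensatz identity $\sum_i g_iw_i=1$ is exactly such a certificate) and then runs the same exponential-beats-polynomial bound on sublevel sets.

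The converse of part (ii) is where a genuine gap lies, and you have named it yourself. Establishing that the limiting direction $\eta=\lim_n\mu_n/\|\mu_n\|$ fixes $x_0$ produces no contradiction: directions in the stabilizer are precisely those carrying no destabilizing information, so the argument simply terminates there. Your proposed repair — split $\mu_n$ into a stabilizer component and a transverse component and prove the transverse part bounded — is not a routine refinement: the Cartan factor $\exp(i\mu_n)$ is not multiplicative with respect to such a splitting, and boundedness of the transverse part is essentially equivalent to the assertion $y\in\Ocal$ that you are trying to prove, so as stated the repair is circular. This difficulty is the analytic heart of the Kempf existence theorem; the self-contained analytic treatments (Chen--Sun \cite{ChenSun}, \cite[Chapter~10]{GRS21}, used later in this paper in Section~\ref{sec:existence-lambda-x}) need gradient-flow and Lojasiewicz machinery precisely to control such divergent sequences. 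The paper sidesteps the issue entirely at this point: it invokes the affine Hilbert--Mumford theorem (Theorem~\ref{th:HM}, via \cite[Theorem~6.9]{PV94}) to produce a one-parameter subgroup $\lambda$ with $\lim_{t\to\infty}\lambda(t)x$ existing outside $\Ocal$, conjugates it so that its restriction to $S^1$ lands in $K$ (replacing the minimizer $x$ by $kx$ with $k\in K$, harmless since $\Psi$ is $K$-invariant), and then runs exactly the weight-space/convexity computation you already wrote for $\eta$: minimality forces the one-parameter subgroup to fix the minimizer, directly contradicting that its limit leaves the orbit. So your proof can be completed by replacing the divergent-sequence extraction with this Hilbert--Mumford input, for which you already have all the remaining ingredients; without some such input (or the much heavier analysis of \cite{GRS21}), the argument as proposed does not close.
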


\begin{proof}
For \ref{lem:king-proof-ass1}, 
  it is sufficient to prove that the set $\Xgot\cap \{\Psi\leq R\}$ is
  bounded for any real $R$.

  Let $P(v,w)=\sum_{i=1}^N P_i(v)Q_i(w)$ be a polynomial equal to $1$
  on $\Xgot$ and that vanishes on $V\times\{0\}$.  We can suppose
  that each polynomial $Q_i(w)$ is homogeneous of degree $k_i\geq 1$:
  there exists $C\geq 0$, such that
  $|Q_i(w)|\leq C \|w\|^{k_i},\forall w\in V$, for all
  $i\in \{1,\ldots, N\}$.

  The condition $\Psi(v,w)\leq R$ means that
  $\|w\|\leq e^R e^{-\frac{\|v\|^2}{4}}\leq e^R$. If
  $(v,w)\in \Xgot\cap \{\Psi\leq R\}$ we have
  \begin{align*}
    1=P(v,w)&\leq \sum_{j=1}^N |P_i(v)|\, |Q_i(w)|\\
            &\leq C\sum_{j=1}^N |P_i(v)| e^{k_i R} e^{-\frac{k_i}{4}\|v\|^2}\leq C'\left(\sum_{j=1}^N |P_i(v)| \right) e^{-\frac{1}{4}\|v\|^2},
  \end{align*}
  with $C'=Ce^{\sup(k_i) R}$. The inequalities $\|w\|\leq e^R$ and
  $e^{\frac{1}{4}\|v\|^2}\leq C'\left(\sum_{j=1}^N |P_i(v)| \right)$
  shows that $\Xgot\cap \{\Psi\leq R\}$ is bounded in $V\times E$.

\bigskip  Let us prove the second point. If the orbit $\Ocal$ is closed, it
  is an affine subvariety disjoint from $V\times\{0\}$. From the
  first point, we see that the function $\Psi$ restricted to $\Ocal$
  achieves its infimum.

  Suppose now that the restriction of $\Psi$ to $\Ocal$ achieves its
  infimum at $x\in \Ocal$. If $\Ocal$ is not closed, the
  Hilbert-Mumford criterium tells us that there exists a $1$-parameter
  subgroup $\lambda: \C^*\to G$ such that the limit
  $\lim_{t\to \infty} \lambda(t) x$ exists but does not belong to
  $\Ocal$. Let $g\in G$ such that $\lambda_0:= {\rm Ad} (g)(\lambda)$, when restricted to $S^1\subset \C^*$, 
  takes value in $K$. There exists
  $(k,p)\in K\times P({\lambda_0})$ such that $g=pk$. We see then that
  the restriction of $\Psi$ to $\Ocal$ achieves its infimum at $y=kx$,
  and that the limit $\lim_{t\to \infty} \lambda_0(t) y$ exists but does
  not belong to $\Ocal$.  If $x=(v,w)$, we get
  $\lambda_0(t) v=\sum_{k\leq 0} t^k v_k$,
  $\lambda_0(t) w=\sum_{\ell\leq 0} t^\ell w_\ell$ and
$$
\Psi( \lambda_0(t) x)=\frac{1}{4}\sum_{k\leq 0} |t|^{2k} \|v_k\|^2+
\log\left(\sum_{\ell\leq 0} |t|^{2\ell} \|w_\ell\|^2\right),\quad
\forall t\in\C^*.
$$
The condition $\Psi( \lambda_0(t) x)\geq \Psi(x), \forall t\in\C^*$
imposes that $\|v_k\|=\|w_\ell\|=0$ if $k,\ell\leq -1$, so
$\lambda_0(t) x = x, \forall t\in\C^*$.  It contradicts the hypothesis
that $\lim_{t\to \infty} \lambda_0(t) x$ does not belong to $\Ocal$.
\end{proof}

\begin{proposition}\label{prop:kempf-ness-relatif}
Let $(v,m)\in V\times \Pbb E$. The following conditions are equivalent:
\begin{enumerate}
\item  \label{prop:kempf-ness-relatif-ass1}$(v,m)\in V\times \Pbb E$ is relatively semistable.
\item \label{prop:kempf-ness-relatif-ass2} 
The function $\Psi$ restricted to $\overline{G(v,\tilde m)}$ achieves its infimum.
\item \label{prop:kempf-ness-relatif-ass3} 
The function $\Psi$ is bounded from below when restricted to $G(v,\tilde m)$.
\end{enumerate}
\end{proposition}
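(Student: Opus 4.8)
The plan is to close the cycle of implications
$(\ref{prop:kempf-ness-relatif-ass1})\Rightarrow(\ref{prop:kempf-ness-relatif-ass2})\Rightarrow(\ref{prop:kempf-ness-relatif-ass3})\Rightarrow(\ref{prop:kempf-ness-relatif-ass1})$, so that each link uses exactly one previously established tool. For $(\ref{prop:kempf-ness-relatif-ass1})\Rightarrow(\ref{prop:kempf-ness-relatif-ass2})$ I would first translate relative semistability via Lemma-Definition~\ref{def:us}: it says precisely that the affine variety $\Xgot:=\overline{G(v,\tilde m)}\subset V\times E$ is disjoint from $V\times\{0\}$. This is exactly the hypothesis of Lemma~\ref{lem:king-proof}(\ref{lem:king-proof-ass1}), which gives that $\exp(\Psi)$ is proper on $\Xgot$ and hence attains its infimum there. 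Since $\exp$ is strictly increasing, $\Psi|_{\Xgot}$ attains its infimum as well, which is assertion~(\ref{prop:kempf-ness-relatif-ass2}).

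The step $(\ref{prop:kempf-ness-relatif-ass2})\Rightarrow(\ref{prop:kempf-ness-relatif-ass3})$ is immediate: if $\Psi$ attains its infimum on $\overline{G(v,\tilde m)}$ then that infimum is a finite real number that bounds $\Psi$ from below on the whole closure, in particular on the sub-orbit $G(v,\tilde m)$. The real content is $(\ref{prop:kempf-ness-relatif-ass3})\Rightarrow(\ref{prop:kempf-ness-relatif-ass1})$, which I would prove by contraposition. Assume $(v,m)$ is relatively unstable. Then the Hilbert--Mumford Theorem~\ref{th:HM} produces $\tau\in\Xgot_{*}(G)$ such that $v'=\lim_{t\to\infty}\tau(t)v$ exists in $V$ and $\lim_{t\to\infty}\tau(t)\tilde m=0$. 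Each point $\tau(t)(v,\tilde m)$ lies in the orbit $G(v,\tilde m)$ and, since $G$ acts by invertible linear maps, $\tau(t)\tilde m\neq 0$ for finite $t$, so $\Psi$ is defined along this ray. Because the two limits are genuine (regular, hence continuous) limits, the Hermitian norms pass through them: $\|\tau(t)v\|\to\|v'\|$ stays bounded while $\|\tau(t)\tilde m\|\to 0$, so $\log\|\tau(t)\tilde m\|\to-\infty$. Consequently $\Psi(\tau(t)(v,\tilde m))=\tfrac14\|\tau(t)v\|^2+\log\|\tau(t)\tilde m\|\to-\infty$, showing $\Psi$ is not bounded below on $G(v,\tilde m)$, i.e. (\ref{prop:kempf-ness-relatif-ass3}) fails.

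The main subtlety to watch, and where I would place the weight of the argument, is that condition (\ref{prop:kempf-ness-relatif-ass3}) is a statement about the \emph{orbit}, whereas semistability is a statement about its \emph{closure}; the bridge between them in the hard direction is Theorem~\ref{th:HM}, which manufactures an explicit one-parameter degeneration \emph{inside} the orbit whose endpoint sits on $V\times\{0\}$, and it is along this path that the $\log\|w\|$-term of $\Psi$ escapes to $-\infty$. I would emphasize that no diagonalization of $\tau$ nor conjugation of its image into $K$ is needed here, since the computation only invokes continuity of the norm and the already-existing algebraic limits; this keeps the proof free of any hypothesis on $\tau$ beyond the two limit conditions. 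The only other point requiring a line of care is the forward direction's appeal to King's lemma, where one must note that $\Xgot$ is genuinely affine and closed in $V\times E$ so that Lemma~\ref{lem:king-proof}(\ref{lem:king-proof-ass1}) applies verbatim.
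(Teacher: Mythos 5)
Your proof is correct, and the forward implications $(\ref{prop:kempf-ness-relatif-ass1})\Rightarrow(\ref{prop:kempf-ness-relatif-ass2})\Rightarrow(\ref{prop:kempf-ness-relatif-ass3})$ coincide exactly with the paper's (King's Lemma~\ref{lem:king-proof}, then triviality). Where you diverge is the key step $(\ref{prop:kempf-ness-relatif-ass3})\Rightarrow(\ref{prop:kempf-ness-relatif-ass1})$: you argue by contraposition, invoking the Hilbert--Mumford Theorem~\ref{th:HM} to produce a one-parameter subgroup $\tau$ along which $\tfrac14\|\tau(t)v\|^2$ stays bounded while $\log\|\tau(t)\tilde m\|\to-\infty$, so $\Psi\to-\infty$ along the ray. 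The paper instead gives a direct two-line argument with no algebraic input at all: if $\Psi\geq c$ on the orbit, then $\|gv\|^2\geq 4c-4\log\|g\tilde m\|$, so any sequence $g_n(v,\tilde m)$ with $g_n\tilde m\to 0$ forces $\|g_nv\|\to\infty$, and hence no point of $V\times\{0\}$ can lie in $\overline{G(v,\tilde m)}$. Both are valid and non-circular (Theorem~\ref{th:HM} is established independently, earlier in the paper), but they buy different things: your route makes the destabilization explicit along a single $1$-PS, in the spirit of Section~\ref{sec:HKKN-algebraic}, at the cost of importing a strictly stronger theorem; the paper's route is elementary, uses only continuity and the definition of $\Psi$, and---more importantly for this paper---is the argument that survives in the non-algebraic K\"ahler setting of the later sections, where no Hilbert--Mumford theorem is available.
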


\begin{proof}
  Implication \ref{prop:kempf-ness-relatif-ass1} $\Longrightarrow$ \ref{prop:kempf-ness-relatif-ass2} is a consequence of
  the first point of Lemma~\ref{lem:king-proof} applied to
  $\Xgot=\overline{G(v,\tilde m)}$, and Implication~\ref{prop:kempf-ness-relatif-ass2}
  $\Longrightarrow$~\ref{prop:kempf-ness-relatif-ass3} is immediate.

  Implication~\ref{prop:kempf-ness-relatif-ass3} $\Longrightarrow$~\ref{prop:kempf-ness-relatif-ass1} 
  is easy. Suppose that there exists $c\in\R$ such that
  $\Psi(gv,g\tilde m)\geq c,\forall g\in G$. In other words,
  $\|gv\|^2\geq c- \log(\|g\tilde m\|^2),\quad \forall g\in G$.  It shows
  that if a sequence $g_n\tilde m$ tends to $0$, then $\|g_nv\|^2$ tends to
  $+\infty$. Hence, $\overline{G(v,\tilde m)}\cap V\times\{0\}=\emptyset $.
\end{proof}

\subsubsection{Characterization of semistability \`a la Kempf-Ness}
We take the following convention for the moment map $\Phi_V: V\to \kgot^*$, and $\Phi_{\Pbb E }: \Pbb E \to \kgot^*$:
$$
\langle\Phi_V(v),\lambda\rangle=\tfrac{i}{2}\langle\lambda v,v\rangle_V\quad \mathrm{and}\quad 
\langle\Phi_{\Pbb E }(m),\lambda\rangle= i\,\frac{\langle \lambda
  \tilde m,\tilde m\rangle_E}{\langle \tilde m,\tilde m\rangle_E},\quad \forall \lambda\in\kgot.
$$

The moment map $\Phi:V\times \Pbb E\to \kgot^*$ is defined by the relation
$$
\Phi(v,m)= \Phi_V(v) + \Phi_{\Pbb E }(m).
$$

The relation between the moment map $\Phi$ and the function $\Psi$
defined in (\ref{eq:psi-relative}) is given by the following
computation: $\forall\lambda\in\kgot$, $\forall (v,m)\in V\times \Pbb E$, we have
\begin{align}\label{eq:psi-relative-1}
\frac{d}{dt}\Psi\Big(e^{it\lambda}\cdot (v,\tilde m)\Big)&= \, \, \langle\Phi\Big(e^{it\lambda}\cdot(v,m)\Big), \lambda \rangle.\\
\frac{d^2}{dt^2}\Psi\Big(e^{it\lambda}\cdot (v,\tilde m)\Big)&= \|\lambda\cdot(e^{it\lambda}v)\|^2_V+ \|\lambda\cdot(e^{it\lambda}m)\|^2_{FS}\ \geq 0,
\label{eq:psi-relative-2}
\end{align}
where $\|-\|_{FS}$ is the Fubini-Study metric on $\Pbb E$.

In Section~\ref{sec:kempf-ness-functions}, we  explain the link between $\Psi$ and Kempf-Ness functions.

\medskip

\medskip

We obtain the following characterization of semistable elements in terms of the moment map.

\begin{theorem}\label{theo:kempf-ness-relatif}
Let $(v,m)\in V\times \Pbb E$. The following conditions are equivalent:
\begin{enumerate}
\item  \label{theo:kempf-ness-relatif-ass1} 
$(v,m)\in V\times \Pbb E$ is relatively semistable.
\item \label{theo:kempf-ness-relatif-ass2}  
The function $\Psi$ restricted to $\overline{G(v,\tilde m)}$ achieves its infimum.
\item \label{theo:kempf-ness-relatif-ass3}  
The function $\Psi$ is bounded from below when restricted to
$G(v,\tilde m)$.
\item \label{theo:kempf-ness-relatif-ass4} 
$(v,m)\in V\times \Pbb E$ is  $\Phi$-semistable.
\end{enumerate}
\end{theorem}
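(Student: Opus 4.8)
Since Proposition~\ref{prop:kempf-ness-relatif} already establishes the equivalences \ref{theo:kempf-ness-relatif-ass1}$\iff$\ref{theo:kempf-ness-relatif-ass2}$\iff$\ref{theo:kempf-ness-relatif-ass3}, the plan is to splice in condition \ref{theo:kempf-ness-relatif-ass4} by proving \ref{theo:kempf-ness-relatif-ass2}$\Rightarrow$\ref{theo:kempf-ness-relatif-ass4} and \ref{theo:kempf-ness-relatif-ass4}$\Rightarrow$\ref{theo:kempf-ness-relatif-ass1}. The whole argument rests on the two infinitesimal relations (\ref{eq:psi-relative-1}) and (\ref{eq:psi-relative-2}): along each curve $t\mapsto e^{it\lambda}\cdot z$ the function $\Psi$ is convex, and its derivative at $t=0$ equals $\langle\Phi(\bar z),\lambda\rangle$, where $z\mapsto\bar z$ denotes the projection $V\times(E-\{0\})\to V\times\Pbb E$. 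The conceptual heart, which I would isolate as a preliminary claim, is the Kempf--Ness principle that a zero of the moment map forces the orbit to be polystable.

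First I would prove the key claim: \emph{if $z=(a,\tilde b)\in V\times(E-\{0\})$ has $\Phi(\bar z)=0$, then $\bar z$ is relatively semistable}. Indeed, $\Phi(\bar z)=0$ together with (\ref{eq:psi-relative-1}) shows that $t\mapsto\Psi(e^{it\lambda}z)$ has vanishing derivative at $t=0$ for every $\lambda\in\kgot$; since $\Psi$ is $K$-invariant, $z$ is a critical point of $\Psi$ restricted to the orbit $Gz$. To upgrade this to a global minimum I would use the Cartan decomposition $G=K\exp(i\kgot)$: any orbit point is $gz=k\exp(i\xi)z$ with $k\in K$ and $\xi\in\kgot$, so $K$-invariance gives $\Psi(gz)=\Psi(\exp(i\xi)z)$; the function $s\mapsto\Psi(\exp(is\xi)z)$ is convex by (\ref{eq:psi-relative-2}) and has derivative $\langle\Phi(\bar z),\xi\rangle=0$ at $s=0$, hence is nondecreasing, giving $\Psi(gz)\geq\Psi(z)$. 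Thus $\Psi$ is bounded below on $Gz$, and Proposition~\ref{prop:kempf-ness-relatif} (\ref{theo:kempf-ness-relatif-ass3}$\Rightarrow$\ref{theo:kempf-ness-relatif-ass1}) yields that $\bar z$ is relatively semistable.

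The implication \ref{theo:kempf-ness-relatif-ass2}$\Rightarrow$\ref{theo:kempf-ness-relatif-ass4} is then immediate. If $\Psi$ attains its infimum on $\overline{G(v,\tilde m)}$ at a point $y_0$, then $y_0$ lies in $V\times(E-\{0\})$ (otherwise the infimum would be $-\infty$, since $\Psi(v,w)=\tfrac14\|v\|^2+\log\|w\|\to-\infty$ as $w\to 0$), and $y_0$ minimizes $\Psi$ over the whole orbit $Gy_0\subset\overline{G(v,\tilde m)}$. Hence $y_0$ is a critical point, and (\ref{eq:psi-relative-1}) forces $\Phi(\overline{y}_0)=0$. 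As $\overline{y}_0\in\overline{G(v,m)}$, the point $(v,m)$ is $\Phi$-semistable.

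For \ref{theo:kempf-ness-relatif-ass4}$\Rightarrow$\ref{theo:kempf-ness-relatif-ass1} I would argue by contraposition. Assume $(v,m)$ is relatively unstable and, for contradiction, $\Phi$-semistable, so there is $(v',m')\in\overline{G(v,m)}$ with $\Phi(v',m')=0$. By the key claim, $(v',m')$ is relatively semistable; but the relatively semistable locus is open and $G$-stable (Lemma-Definition~\ref{def:us}), so the orbit $G(v,m)$, which accumulates at $(v',m')$, must already meet it, forcing $(v,m)$ to be relatively semistable --- a contradiction. I expect the main obstacle to be the key claim, and precisely the step promoting a critical point of $\Psi$ on the orbit to a global minimum: it is here that the convexity (\ref{eq:psi-relative-2}) and the Cartan decomposition do the real work, encoding the fact that zeros of the moment map sit on closed, polystable orbits. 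The only other point requiring care is the transition from a zero of $\Phi$ lying merely in the orbit closure to relative semistability of the original point, which is handled softly by the openness of the semistable locus rather than by a delicate analysis of orbit-closure scalings.
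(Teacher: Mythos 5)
Your proof is correct, and for the hard direction it takes a genuinely different route from the paper. The equivalences (i)$\Leftrightarrow$(ii)$\Leftrightarrow$(iii) and the implication (ii)$\Rightarrow$(iv) coincide with the paper's argument (the infimum point is a critical point of $\Psi$ along the curves $t\mapsto e^{it\lambda}$, so (\ref{eq:psi-relative-1}) forces a zero of $\Phi$ in the orbit closure). The difference is how the loop is closed: the paper proves (iv)$\Rightarrow$(iii) by invoking Proposition~\ref{prop:phi-minimal-orbit}, i.e.\ the general theory of Kempf--Ness functions on $V\times M$ developed in Section~\ref{sec:kempf-ness-functions}, which rests on the negative gradient flow of $\tfrac12\|\Phi\|^2$ and the Lojasiewicz inequality (a forward reference in the paper's exposition). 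You instead prove (iv)$\Rightarrow$(i) directly: your key claim --- a zero of the moment map is a global minimum of $\Psi$ on its $G$-orbit, via the Cartan decomposition $G=K\exp(i\kgot)$, left $K$-invariance of $\Psi$, and convexity (\ref{eq:psi-relative-2}) combined with the vanishing derivative from (\ref{eq:psi-relative-1}) --- shows that the point $(v',m')\in\overline{G(v,m)}\cap\Phi^{-1}(0)$ is relatively semistable by Proposition~\ref{prop:kempf-ness-relatif}; then the openness (Zariski, hence also in the classical topology in which the accumulation takes place) and $G$-stability of the semistable locus from Lemma-Definition~\ref{def:us} transfer semistability from the accumulation point back to $(v,m)$. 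This buys a self-contained, elementary proof of the theorem with no dependence on the analytic machinery of Sections~\ref{sec:squared}--\ref{sec:kempf-ness-functions}; what it gives up is uniformity: your openness argument exploits the algebraic structure of $V\times\Pbb E$, whereas the paper's route through Proposition~\ref{prop:phi-minimal-orbit} is the one that survives verbatim in the general K\"ahler setting $V\times M$, where that machinery is needed anyway.
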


\medskip

\begin{proof}
  Equivalences~\ref{theo:kempf-ness-relatif-ass1} $\Longleftrightarrow$~\ref{theo:kempf-ness-relatif-ass2}  
  $\Longleftrightarrow$~\ref{theo:kempf-ness-relatif-ass3}   are proved in Proposition~\ref{prop:kempf-ness-relatif}.  Let us prove Implication~\ref{theo:kempf-ness-relatif-ass2}  
  $\Longrightarrow$~\ref{theo:kempf-ness-relatif-ass4}. Suppose that there exists
  $(v_0,\tilde m_0)\in \overline{G(v,\tilde m)}\subset V\times E-\{0\}$ such that
  $\Psi(x)\geq \Psi(v_0,\tilde m_0)$, $\forall x\in
  \overline{G(v,\tilde m)}$. In
  particular, we have
  $\Psi(g(v_0,\tilde m_0))\geq \Psi(v_0,\tilde m_0),\forall g\in G$, and then
  $\frac{d}{dt}|_{t=0}\Psi(e^{it\lambda}(v_0,\tilde m_0))=0$,
  $\forall \lambda\in\kgot$.  Thanks to (\ref{eq:psi-relative-1}), it
  shows that $(v_0,m_0)\in \overline{G(v,m)}\cap \Phi^{-1}(0)$,
  i.e. $(v,m)$ is $\Phi$-semistable.

  The last implication~\ref{theo:kempf-ness-relatif-ass4}  $\Longrightarrow$~\ref{theo:kempf-ness-relatif-ass3}  is proved
  in Section~\ref{sec:kempf-ness-functions} (see Proposition~\ref{prop:phi-minimal-orbit}).
\end{proof}

\subsection{The moment map squared}\label{sec:squared}

Let us recall how the stability behaviour under the $G$-action is determined by the negative gradient flow lines of the moment map squared,
\begin{align}
\begin{split}
f :  V\times M & \longrightarrow  \R,\label{eq:squared}\\ 
x&\longmapsto  \tfrac{1}{2} \|\Phi(x)\|^2.
\end{split}
\end{align}

Using the identification $\kgot\simeq \kgot^*$ given by our rational invariant scalar product on $\kgot$ (see Section {\bf Notation}), we see that 
the gradient of $f$ is given by 
\begin{equation}\label{eq:gradient-f}
\nabla f(x)=J(\Phi(x)\cdot x),\ \forall x\in V\times M,
\end{equation}
 where $J$ is the complex structure on the tangent space of  $V\times M$. 
Hence, $x\in V\times M$ is a critical point of $f$ if and only if $\Phi(x)\cdot x=0$.

The negative gradient flow line of $f$ through $x\in V\times M$ is the solution $x(t), t\geq 0$ of the differential equation
\begin{equation}\label{eq:flow-f}
\frac{d}{dt}x(t) = - J\Big(\Phi(x(t))\cdot x(t)\Big),\quad {\rm and} \quad x(0) = x.
\end{equation}

\begin{lemma}
The moment map $\Phi$ is proper if and only if $\C[V]^G=\C$.
\end{lemma}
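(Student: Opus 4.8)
The plan is to reduce the statement to a Kempf--Ness analysis of the zero fibre of the linear moment map $\Phi_V$. First I would use that $M$ is compact, so $\Phi_M(M)$ is bounded in $\kgot^*$: writing $\Phi=\Phi_V+\Phi_M$, for compact $C\subset\kgot^*$ one has $\Phi^{-1}(C)\subset\{(v,m):\Phi_V(v)\in C-\Phi_M(M)\}$, which gives $\Phi_V$ proper $\Rightarrow\Phi$ proper; conversely, fixing $m_0\in M$, the inclusion $\Phi_V^{-1}(C)\times\{m_0\}\subset\Phi^{-1}(C+\Phi_M(m_0))$ gives the reverse implication. So $\Phi$ is proper if and only if $\Phi_V:V\to\kgot^*$ is proper. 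Next, from $\langle\Phi_V(v),\lambda\rangle=\tfrac{i}{2}\langle\lambda v,v\rangle_V$ one reads off that $\Phi_V$ is homogeneous of degree two, $\Phi_V(tv)=t^2\Phi_V(v)$ for $t\geq 0$. Using compactness of the unit sphere of $V$, I would then show $\Phi_V$ is proper if and only if $\Phi_V^{-1}(0)=\{0\}$: if $\Phi_V(v_0)=0$ with $v_0\neq 0$ the whole ray $\R_{\geq 0}v_0$ lies in $\Phi_V^{-1}(0)$, ruining properness, whereas if $\Phi_V$ is nonzero on $\{\|v\|=1\}$ then $\|\Phi_V(v)\|\geq c\|v\|^2$ with $c=\min_{\|v\|=1}\|\Phi_V(v)\|>0$. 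Thus everything reduces to $\Phi_V^{-1}(0)=\{0\}\iff\C[V]^G=\C$.

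To understand $\Phi_V^{-1}(0)$ I would apply the affine instance of the machinery of this section, taking $E=\C$ with trivial $G$-action (Basic Example (ii)), so that $V\times\Pbb E=V$, $\Phi_{\Pbb E}=0$, hence $\Phi=\Phi_V$, and $\Psi(v,\tilde m)=\tfrac14\|v\|^2+\log\|\tilde m\|$ reduces on a fixed orbit to $\tfrac14\|\cdot\|^2$ up to a constant. The convexity estimate \eqref{eq:psi-relative-2} shows that $t\mapsto\Psi(e^{it\lambda}\cdot(v,\tilde m))$ is convex, so any critical point of $\Psi$ along $G(v,\tilde m)$ is a global minimum; by \eqref{eq:psi-relative-1} a point $x$ with $\Phi_V(x)=0$ is exactly such a critical point, whence $\Psi$ attains its infimum on the orbit and, by Lemma~\ref{lem:king-proof}(2), the orbit is closed. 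Conversely, every closed orbit (automatically disjoint from $V\times\{0\}$ since $\tilde m\neq 0$) carries a point minimising $\Psi$, and \eqref{eq:psi-relative-1} forces $\Phi_V$ to vanish there. Therefore $\Phi_V^{-1}(0)$ is precisely the set of minimal vectors, it meets every closed $G$-orbit, and each of its points sits on a closed orbit; in particular $\Phi_V^{-1}(0)=\{0\}$ if and only if $\{0\}$ is the only closed $G$-orbit in $V$.

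Finally I would carry out the standard GIT translation: $\{0\}$ is the unique closed orbit if and only if $\C[V]^G=\C$. If $\C[V]^G\neq\C$, pick a homogeneous invariant $f$ of positive degree that is nonconstant and a point $v$ with $f(v)\neq 0$; since $f$ is constant on $\overline{Gv}$ and $f(0)=0$, we get $0\notin\overline{Gv}$, so the unique closed orbit inside $\overline{Gv}$ differs from $\{0\}$. The converse is immediate, as every positive-degree invariant vanishes at $0$. Chaining the three reductions proves the lemma. The hard part is the second step, namely identifying the zero fibre of the linear moment map with the set of minimal vectors and thus with the closed orbits; this is the classical Kempf--Ness theorem, and the only genuinely delicate point is the passage from criticality to a \emph{global} minimum, which here is supplied by the convexity \eqref{eq:psi-relative-2} together with the Cartan decomposition $G=K\exp(i\kgot)$ already exploited in the proof of Lemma~\ref{lem:king-proof}(2).
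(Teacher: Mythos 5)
Your proof is correct, and it takes a more self-contained route than the paper does. The paper's argument has the same skeleton for its first step --- since $M$ is compact, $\Phi$ is proper if and only if $\Phi_V$ is proper --- but then simply cites Lemma 5.2 of \cite{pep09} for the three-way equivalence: $\Phi_V$ proper $\iff$ $\Phi_V^{-1}(0)=\{0\}$ $\iff$ $\C[V]^G=\C$. You instead prove that equivalence from scratch: the first half by the degree-two homogeneity of $\Phi_V$ together with compactness of the unit sphere, and the second half by running the affine Kempf--Ness correspondence between zeros of $\Phi_V$ and closed $G$-orbits, using exactly the tools already set up in this section --- Lemma~\ref{lem:king-proof} specialized to $E=\C$ with trivial action, the first-variation formula (\ref{eq:psi-relative-1}), and the convexity (\ref{eq:psi-relative-2}), with the Cartan decomposition $G=K\exp(i\kgot)$ and the $K$-invariance of $\Psi$ supplying the passage from criticality to a global minimum, which you correctly identify as the one delicate point. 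What the paper's route buys is brevity; what yours buys is a proof internal to the paper, with no external black box, and it makes transparent that the cited lemma is nothing more than the classical Kempf--Ness theorem for linear actions combined with an elementary homogeneity argument. The two places where you are terse --- the existence and uniqueness of the closed orbit inside each orbit closure, and the concluding ``converse is immediate'' (if $\{0\}$ is the only closed orbit, then $0$ lies in every orbit closure, so every positive-degree homogeneous invariant vanishes identically) --- are standard facts about reductive group actions on affine varieties, already invoked elsewhere in the paper (e.g.\ in the proof of Theorem~\ref{th:HM}), so they do not constitute gaps.
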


\begin{proof}
  Since $M$ is compact, the map $\Phi$ is proper if and only if the
  moment map $\Phi_V: V\to \kgot^*$ is proper.  Now we use that the
  following statements are equivalent: (a) $\Phi_V$ is proper, (b)
  $\Phi_V^{-1}(0)=\{0\}$ and (c) $\C[V]^G=\C$ (see Lemma 5.2 in \cite{pep09}).
\end{proof}

\medskip

As we will not assume that $\C[V]^G=\C$,  the moment map $\Phi$ is not necessarily proper. But we have the following useful counterpart.

\begin{proposition}\label{prop:phi-proper-orbit}
Let $x\in V\times M$. The moment map $\Phi$ is proper when restricted to $\overline{Gx}$.
\end{proposition}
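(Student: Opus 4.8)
The plan is to use the compactness of $M$ to move the properness question onto the linear factor $V$, and then to establish the core fact that the quadratic moment map $\Phi_V$ is proper when restricted to a single orbit closure in $V$.

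Write $x=(v_0,m_0)$ and let $p\colon V\times M\to V$ be the projection, which is continuous and $G$-equivariant for the diagonal action. Since $M$ is compact, a closed subset of $V\times M$ is compact precisely when its image under $p$ is bounded, and $\Phi_M(M)$ is a compact subset of $\kgot^*$. Given a compact $C\subset\kgot^*$, any $(v,m)\in\overline{Gx}$ with $\Phi(v,m)\in C$ then satisfies
$$
\Phi_V(v)=\Phi(v,m)-\Phi_M(m)\in C-\Phi_M(M)=:C',
$$
a compact set, while $v\in p(\overline{Gx})\subseteq\overline{p(Gx)}=\overline{Gv_0}$. Thus $\Phi^{-1}(C)\cap\overline{Gx}$ is closed, and it is compact as soon as its $p$-image, contained in $\{v\in\overline{Gv_0}:\Phi_V(v)\in C'\}$, is bounded. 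This reduces the statement to showing that $\Phi_V$ is proper on the orbit closure $\overline{Gv_0}$.

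To prove this I would argue by contradiction, taking $v_n\in\overline{Gv_0}$ with $\|v_n\|\to\infty$ and $\|\Phi_V(v_n)\|\le C$, and setting $u_n=v_n/\|v_n\|\to u$ after extraction, with $\|u\|=1$. Two homogeneity features then combine. Since $\Phi_V$ is homogeneous of degree $2$, we have $\|\Phi_V(u_n)\|=\|v_n\|^{-2}\|\Phi_V(v_n)\|\to 0$, so $\Phi_V(u)=0$; and since each homogeneous generator $p_i$ of $\C[V]^G$, of degree $d_i\ge 1$, is $G$-invariant and continuous, hence constant on $\overline{Gv_0}$, we get $p_i(u_n)=\|v_n\|^{-d_i}p_i(v_0)\to 0$, so $p_i(u)=0$; thus $u$ lies in the nullcone. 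The vanishing $\Phi_V(u)=0$ means, by the first-order condition $\tfrac{d}{dt}\|e^{it\lambda}u\|^2=4\langle\Phi_V(e^{it\lambda}u),\lambda\rangle$ together with the convexity $\tfrac{d^2}{dt^2}\|e^{it\lambda}u\|^2\ge 0$, that $u$ is a global minimizer of $g\mapsto\|gu\|^2$ on $Gu$; applying Lemma~\ref{lem:king-proof}\,(ii) with $E=\C$ carrying the trivial action, so that $\Psi(gu,1)=\tfrac14\|gu\|^2$ attains its infimum, shows that $Gu$ is closed. But the only closed orbit contained in the nullcone is $\{0\}$, forcing $u=0$ and contradicting $\|u\|=1$. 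This proves that $\Phi_V$ is proper on $\overline{Gv_0}$, hence the proposition.

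The main obstacle is exactly that $\overline{Gv_0}$ is not stable under scaling, so the usual proof that $\Phi_V$ is proper when $\C[V]^G=\C$ — which relies on the cone structure of $V$ and on degree-$2$ homogeneity to deduce $\|\Phi_V(v)\|\ge c\|v\|^2$ — cannot be applied verbatim on the orbit closure. The way around it is to rescale an escaping sequence to the unit sphere and exploit two homogeneous quantities simultaneously, namely $\Phi_V$ (degree $2$) and the positive-degree generators of $\C[V]^G$ (constant along $\overline{Gv_0}$): this forces the rescaled limit into both $\Phi_V^{-1}(0)$ and the nullcone, where the Kempf--Ness closedness criterion supplied by Lemma~\ref{lem:king-proof}\,(ii) leaves only the origin.
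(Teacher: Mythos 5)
Your proof is correct, and its first half is exactly the paper's reduction: compactness of $M$ and boundedness of $\Phi_M$ reduce properness of $\Phi$ on $\overline{Gx}$ to properness of $\Phi_V$ on the orbit closure $\overline{Gv_0}\subset V$. The difference lies in how that core fact is handled. The paper disposes of it in one line by citing Sjamaar \cite[Lemma~4.10]{Sjamaar98}, whereas you prove it from scratch: you rescale an escaping sequence to the unit sphere and play two homogeneities against each other, so that the degree-$2$ homogeneity of $\Phi_V$ forces the limit $u$ into $\Phi_V^{-1}(0)$, while the positive-degree homogeneous invariants (constant on $\overline{Gv_0}$) force $u$ into the nullcone; the Kempf--Ness argument together with Lemma~\ref{lem:king-proof}\,(ii) then shows $Gu$ is closed, which is incompatible with $u\neq 0$ lying in the nullcone. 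What your route buys is self-containedness: apart from two standard facts --- that the nullcone equals $\{w: 0\in\overline{Gw}\}$ (separation of closed orbits by invariants), and the Cartan decomposition $G=K\exp(i\kgot)$ with $K$-invariance of the norm, which you use implicitly when upgrading ``minimizer along each path $t\mapsto e^{it\lambda}u$'' to ``minimizer on all of $Gu$'' --- everything is internal to the paper, and in effect you have reproved Sjamaar's lemma using the paper's own Lemma~\ref{lem:king-proof}. Two small remarks: the implicit Cartan step deserves a sentence, since first-order vanishing plus convexity along the paths $e^{it\lambda}$ alone only controls $\exp(i\kgot)u$, not all of $Gu$; and your final detour through King's lemma is avoidable, because once you know $\Phi_V(u)=0$ (hence $\|gu\|\geq\|u\|=1$ for all $g\in G$) and $u$ lies in the nullcone (hence $0\in\overline{Gu}$, so $\inf_{g\in G}\|gu\|=0$), these two statements contradict each other directly.
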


\begin{proof}
  Let $x=(v,m)$, $C=\sup\|\Phi_{M}\|$ and $R\geq 0$.  The set
  $\overline{Gx}\cap \{\|\Phi\|\leq R\}$ is contained in
$$
\Big(\overline{Gv}\cap \big\{\|\Phi_V\|\leq R+C\big\}\Big)\times M.
$$
In \cite[Lemma~4.10]{Sjamaar98}, Sjamaar proved that the restriction
of $\Phi_V$ to the affine variety $\overline{Gv}$ is a proper map. It
shows that $\overline{Gx}\cap \{\|\Phi\|\leq R\}$ is compact for every
$R\geq 0$.
\end{proof}

\medskip

The following facts are well-known when one works with a compact manifold, or when the moment map is proper \cite{Lerman05}. 
In \cite{Sjamaar98}, Sjamaar proved it for manifolds of the type $V\times\Ocal$, where $\Ocal$ is a coadjoint orbit. 

\begin{proposition}\label{prop:flow-moment-map-square}
Let $x\in V\times M$.
\begin{enumerate}
\item The negative flow $x(t)$ is defined for any $t\in [0,+\infty[$, and  $\{x(t),\ t\geq 0\}$ is contained in the orbit $Gx$.
\item There exists a ball $B\subset V$ such that $\{x(t),\ t\geq 0\}\subset B\times M$.
\item The limit of the negative gradient flow line,  $x_{\infty}:=\lim_{t\to+\infty} x(t)$, exists and is contained in the critical set of $f$.
\end{enumerate}
\end{proposition}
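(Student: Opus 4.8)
The plan is to treat the three assertions in order, using the monotonicity of $f$ along the flow as the backbone and Proposition~\ref{prop:phi-proper-orbit} to supply the needed compactness.

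For (i), I would first observe that the vector field driving \eqref{eq:flow-f} is tangent to the $G$-orbits. Indeed, by \eqref{eq:gradient-f} the negative gradient at a point $y$ equals $-J(\Phi(y)\cdot y)$; since the $G$-action is holomorphic, the action map $\ggot\to T_yN$ is complex linear, so $J(\Phi(y)\cdot y)=(i\Phi(y))\cdot y$ lies in $\ggot\cdot y=T_y(Gy)$. Hence, as long as the solution exists, it remains in the single orbit $Gx$. Short-time existence is the usual Cauchy--Lipschitz statement for the smooth vector field $-\nabla f$. To upgrade to all $t\geq 0$, I would use the a priori bound coming from monotonicity: along the flow $\frac{d}{dt}f(x(t))=-\|\nabla f(x(t))\|^2\leq 0$, so $\|\Phi(x(t))\|\leq\|\Phi(x)\|=:R$ throughout. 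By Proposition~\ref{prop:phi-proper-orbit}, $\Phi$ is proper on $\overline{Gx}$, hence the compact set $\Kcal:=\overline{Gx}\cap\{\|\Phi\|\leq R\}$ contains the entire trajectory; a trajectory confined to a compact set cannot escape to infinity in finite time, which yields global existence.

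For (ii), I would simply project the previous confinement to $V$: since $x(t)\in\Kcal$ for all $t$ and $\Kcal$ is compact, its image under the projection $V\times M\to V$ is bounded, so it is contained in some ball $B\subset V$, and therefore $\{x(t),\ t\geq 0\}\subset B\times M$.

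For (iii), the convergence of the flow, I would invoke the {\L}ojasiewicz gradient inequality. The crucial input is that $f=\tfrac12\|\Phi\|^2$ is real analytic: $\Phi_V$ is a quadratic polynomial and $\Phi_M$ is real analytic on the (real-analytic) K\"ahler manifold $M$, so $f$ is real analytic on $V\times M$. Since the trajectory lives in the compact set $\Kcal$, the monotone bounded quantity $f(x(t))$ converges to some $f_\infty\geq 0$, and the $\omega$-limit set is a non-empty compact subset of $\Kcal$ on which $\nabla f$ vanishes (otherwise $f$ would drop below $f_\infty$), i.e.\ a set of critical points. Fixing $p$ in this limit set, the {\L}ojasiewicz inequality furnishes a neighborhood $U$ of $p$ and constants $C>0$, $\theta\in(0,\tfrac12]$ with $\|\nabla f(y)\|\geq C\,|f(y)-f_\infty|^{1-\theta}$ on $U$. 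A direct computation then gives, while $x(t)\in U$,
\[
\frac{d}{dt}\big(f(x(t))-f_\infty\big)^{\theta}
=-\theta\,(f-f_\infty)^{\theta-1}\|\nabla f\|^2\leq-\theta C\,\|\dot x(t)\|,
\]
so the arc-length $\int\|\dot x(t)\|\,dt$ traversed in $U$ is dominated by $(f-f_\infty)^{\theta}$. The standard {\L}ojasiewicz--Simon argument---once the trajectory enters a small enough neighborhood of $p$ it can never accumulate the length needed to leave $U$---then shows the flow line has finite total length, whence $x(t)$ is Cauchy and converges to a limit $x_\infty$; continuity of $\nabla f$ forces $\nabla f(x_\infty)=0$, so $x_\infty$ lies in the critical set of $f$. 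The main obstacle is precisely this assertion (iii): everything preceding it reduces to monotonicity of $f$ together with the properness already granted by Proposition~\ref{prop:phi-proper-orbit}, whereas convergence of the negative gradient flow genuinely requires the analytic structure via the {\L}ojasiewicz inequality, along with the care needed to trap the trajectory in the neighborhood where that inequality holds.
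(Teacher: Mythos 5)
Your overall architecture coincides with the paper's: monotonicity of $f$ along the flow plus Sjamaar's properness (Proposition~\ref{prop:phi-proper-orbit}) traps the trajectory in the compact set $\overline{Gx}\cap\{\|\Phi\|\leq\|\Phi(x)\|\}$, which yields global existence and (ii), and (iii) is obtained from a Lojasiewicz gradient inequality. However, two of your justifications have genuine gaps.

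First, in (i), the step ``the vector field is tangent to the $G$-orbits, hence the trajectory remains in $Gx$'' is not a formal implication. The orbit $Gx$ is only an \emph{immersed}, locally closed submanifold of $V\times M$; tangency of a vector field along such a submanifold makes the set of times $t$ with $x(t)\in Gx$ open, but the closedness half of the open-closed argument breaks down precisely because $Gx$ need not be closed. Repairing this requires either the Stefan--Sussmann orbit theorem for the singular distribution $y\mapsto\ggot\cdot y$, or the device the paper actually uses: lift the flow to the group by solving $g(t)^{-1}g'(t)=i\Phi(x(t))$, $g(0)=e$, on $G$ (equation~(\ref{eq:g(t)})) and check directly that $g(t)x(t)=x$, so that $x(t)=g(t)^{-1}x\in Gx$ by construction. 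This lifting also packages the compact-confinement and global-existence argument in one stroke, and the same auxiliary ODE is reused later in the paper (Remark~\ref{rem:g(t)-lambda}, Theorem~\ref{theo:chen-sun}), so it is not an incidental trick.

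Second, and more seriously, in (iii) your ``crucial input'' --- that $f=\tfrac12\|\Phi\|^2$ is real analytic because $\Phi_M$ is real analytic on the real-analytic Kähler manifold $M$ --- is unjustified and in general false in this paper's setting. The form $\Omega_M$ is only assumed to be a \emph{smooth} $K$-invariant Kähler form (the paper stresses it need not be integral, and it may be perturbed by $i\partial\bar\partial\varphi$ for any smooth invariant $\varphi$), so $\Phi_M$, hence $f$, is merely smooth; for general smooth functions the Lojasiewicz inequality, and convergence of the gradient flow, can fail. The inequality does hold for $\|\Phi\|^2$, but for a moment-map-specific reason: by the Marle--Guillemin--Sternberg local normal form, $\Phi$ is locally intertwined by a smooth diffeomorphism with a real-analytic model moment map, and the Lojasiewicz inequality is preserved under such smooth local equivalences (and under comparable changes of metric). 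This is Lerman's theorem, and it is exactly why the paper proves (iii) by citing \cite{Lerman05} and \cite[Chapter~3]{GRS21} rather than by appealing to analyticity of $f$. Once that input is supplied, your finite-length Lojasiewicz--Simon argument for convergence goes through as written.
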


\begin{proof}
  Let $t\in [0,\epsilon[ \mapsto x(t)$ be a solution of
  $(\ref{eq:flow-f})$. Let $t\mapsto g(t)\in G$ be the solution of the
  differential equation
  \begin{equation}\label{eq:g(t)}
    g(t)^{-1}g'(t)= \iu \Phi(x(t))\quad \quad {\rm and}\quad g(0)=e,
  \end{equation}
  that is defined on an interval $[0,\epsilon'[ \subset
  [0,\epsilon[$. Now it is easy to check that $g(t)x(t)=x$,
  $\forall t\in[0,\epsilon'[$. In other words, $x(t)$ belongs to the
  compact set $\overline{Gx}\cap \{y\,:\,\Vert\Phi(y)\Vert\leq \Vert\Phi(x)\Vert\}$ for any
  $t\in[0,\epsilon'[$. This shows that the maximal solutions of
  (\ref{eq:flow-f}) and (\ref{eq:g(t)}) are defined on $[0,+\infty[$.

  The first two points are proved and, the last point follows from the
  first two points and the Lojasiewicz gradient inequality 
  (see \cite{Lerman05} or \cite[Chapter~3]{GRS21}).
\end{proof}

The following property, which is due to Kirwan and Ness in the compact setting, is proved in Proposition~\ref{prop:phi-minimal-orbit}.

\begin{proposition}\label{prop:phi-minimal-orbit-1}
Let $x\in V\times M$. Then $\|\Phi(x_\infty)\|=\inf_{g\in G}\|\Phi(gx)\|$.
\end{proposition}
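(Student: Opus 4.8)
The plan is to split the statement into the two inequalities, the inequality $\geq$ being soft and the inequality $\leq$ carrying all the content. For the easy direction, recall from Proposition~\ref{prop:flow-moment-map-square} that the whole trajectory $\{x(t):t\geq0\}$ lies in the orbit $Gx$, so that $x_\infty\in\overline{Gx}$. Since $Gx$ is dense in $\overline{Gx}$ and $\Phi$ is continuous, $\inf_{g\in G}\|\Phi(gx)\|=\inf_{y\in\overline{Gx}}\|\Phi(y)\|$, and this last infimum is $\leq\|\Phi(x_\infty)\|$ because $x_\infty\in\overline{Gx}$. Hence $\|\Phi(x_\infty)\|\geq\inf_{g\in G}\|\Phi(gx)\|$. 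By Proposition~\ref{prop:phi-proper-orbit} the restriction $\Phi|_{\overline{Gx}}$ is proper, so this infimum is in fact attained at a point of $\overline{Gx}$.

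For the reverse inequality I would set $\beta:=\Phi(x_\infty)$; by Proposition~\ref{prop:flow-moment-map-square} the point $x_\infty$ is critical for $f$, so $\beta\cdot x_\infty=0$ (see the discussion after \eqref{eq:gradient-f}), and it suffices to prove $\|\Phi(gx)\|\geq\|\beta\|$ for every $g\in G$. The essential tool is the convexity, along the geodesic rays $t\mapsto e^{it\xi}\cdot z$ of $G/K$, of the Kempf--Ness functions: exactly as in \eqref{eq:psi-relative-2}, the Kähler identity gives $\tfrac{d}{dt}\langle\Phi(e^{it\xi}z),\xi\rangle=\|\xi\cdot e^{it\xi}z\|^2\geq0$ for every $z$ and every $\xi\in\kgot$. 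Consequently the \emph{maximal weight} $w(z,\xi):=\lim_{t\to+\infty}\langle\Phi(e^{it\xi}z),\xi\rangle\in\R\cup\{+\infty\}$ is well defined, and for $g=e$ the monotonicity together with Cauchy--Schwarz already yields $\|\Phi(z)\|\geq -w(z,\xi)/\|\xi\|$. The heart of the matter is to upgrade this to the moment--weight inequality
\[
\|\Phi(gx)\|\ \geq\ \frac{-\,w(x,\xi)}{\|\xi\|}\qquad\text{for all }g\in G,\ 0\neq\xi\in\kgot .
\]

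Granting this, I would apply it to $\xi=-\beta$, obtaining $\|\Phi(gx)\|\geq -w(x,-\beta)/\|\beta\|$; the proof then reduces to the identification $w(x,-\beta)=-\|\beta\|^2$, i.e.\ to the fact that the negative gradient flow \eqref{eq:flow-f} is asymptotic to the ray $t\mapsto e^{-it\beta}\cdot x$ (this is visible from $g(t)^{-1}g'(t)=i\Phi(x(t))\to i\beta$, whence $x(t)\sim e^{-it\beta}x$) and that $\beta$ is the optimal destabilizing direction for $x$. Together these give $\|\Phi(gx)\|\geq\|\beta\|^2/\|\beta\|=\|\beta\|=\|\Phi(x_\infty)\|$, which is the inequality $\leq$.

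The main obstacle will be the moment--weight inequality combined with the identification of $\beta$ as the optimal direction: neither the monotonicity of $w(\cdot,\xi)$ nor the mere geodesic convexity of the Kempf--Ness function delivers them directly, and because $\Phi$ need not be proper one cannot fall back on compactness of the ambient manifold. I would circumvent this by running the whole argument on the orbit closure, where Proposition~\ref{prop:phi-proper-orbit} restores properness of $\Phi$ and where King's Lemma~\ref{lem:king-proof} controls the behaviour at infinity of the relevant norm functions; the asymptotic identification of the flow's direction with $\beta$ follows the analysis of Chen--Sun, while the optimality and uniqueness of $\beta$ mirror Kempf's argument. This is exactly the package to be assembled in Proposition~\ref{prop:phi-minimal-orbit}.
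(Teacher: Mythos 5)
Your decomposition is reasonable: the easy inequality is correct, and your moment--weight inequality is available in the paper — in its notation it reads $\|\Phi(gx)\|\geq \varpi_{\Phi}(x,\lambda)/\|\lambda\|$ for all $g\in G$, $\lambda\neq 0$, i.e. $\inf_g\|\Phi(gx)\|\geq \mathbf{M}_{\Phi}(x)$, which is Proposition~\ref{prop:M-G-invariant} and is proved by a soft $G$-invariance argument independent of the present statement. The genuine gap is your key identification $w(x,-\beta)=-\|\beta\|^2$, i.e. $\varpi_{\Phi}(x,\beta)=\|\beta\|^2$, for $\beta:=\Phi(x_\infty)$. This is \emph{false} in general. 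What is true (Theorem~\ref{theo:chen-sun}) is that the Cartan part $\xi(t)$ of $g(t)=\exp(i\xi(t))u(t)$ satisfies $\xi(t)/t\to\xi_\infty$ with $\varpi_{\Phi}(x,\xi_\infty)=\|\Phi(x_\infty)\|^2$ and only $\Phi(x_\infty)\in K\xi_\infty$: the compact factor $u(t)$ produces a genuine $K$-conjugation, so in general $\xi_\infty=\lambda_x\neq\Phi(x_\infty)$. Your heuristic ``$g(t)^{-1}g'(t)\to i\beta$, whence $x(t)\sim e^{-it\beta}x$'' is exactly where this is lost: if $u(t)\to u_\infty$ one only gets $\Phi(x_\infty)=\mathrm{Ad}(u_\infty^{-1})\xi_\infty$. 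Concretely, let $G=SL(2,\C)$ act diagonally on $\Pbb^1\times\Pbb^1$ with K\"ahler forms of areas $a>b>0$, so $\Phi=a\mu_1+b\mu_2$ with $\mu_i\in S^2$. Every point is unstable, the critical points are those with $\mu_1=\pm\mu_2$, and for a non-critical $x$ one computes $\lambda_x=(a-b)\mu_1(0)$; on the other hand the negative gradient flow preserves the plane spanned by $\mu_1,\mu_2$ and conserves $a\theta_1+b\theta_2$ (angles on the corresponding great circle), which forces $\Phi(x_\infty)=(a-b)\mu_1(\infty)$ with $\mu_1(\infty)\neq\mu_1(0)$. For such $x$ one has $\varpi_{\Phi}(x,\beta)=-(a+b)\|\beta\|<0$, so your chain of inequalities breaks; neither properness on $\overline{Gx}$ nor King's Lemma (which concerns $V\times\Pbb E$) repairs this.

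The repair is to run your argument with $\lambda_x$ in place of $\beta$: then $\inf_g\|\Phi(gx)\|\geq\mathbf{M}_{\Phi}(x)=\|\lambda_x\|=\|\Phi(x_\infty)\|$, using only that $\lambda_x$ and $\Phi(x_\infty)$ have the same norm. But this invokes the full Chen--Sun/Kempf package (Theorems~\ref{theo:maximal-destabilizing-1}, \ref{theo:maximal-destabilizing-2}, \ref{theo:chen-sun}), and you must watch for circularity: the paper's proof of Theorem~\ref{theo:chen-sun} quotes \eqref{eq:M-phi}, whose right-hand equality is the very statement you are proving; one can check that only the inequality $\mathbf{M}_{\Phi}\leq\inf_g\|\Phi(g\,\cdot)\|$ together with your easy direction is actually used there, so the route can be made non-circular, but it is far heavier than necessary. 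The paper's own proof (Proposition~\ref{prop:phi-minimal-orbit}) avoids optimal destabilizing vectors entirely: by Theorem~\ref{theo:kempf-ness-kahler}~\ref{theo:kempf-ness-kahler-ass4}, any two negative gradient flow lines of the Kempf--Ness function $\Psi_x$ stay at bounded distance and their $\Psi_x$-values differ by a bounded amount, so by assertion~\ref{theo:kempf-ness-kahler-ass7} the asymptotic slope $\lim_{t\to\infty}\Psi_x(\theta(t))/t=-\|\Phi(y_\infty)\|^2$ is the same for every flow line; hence $\|\Phi(y_\infty)\|=\|\Phi(x_\infty)\|$ for all $y\in Gx$, and the monotonicity of $\|\Phi\|$ along the flow gives $\|\Phi(y)\|\geq\|\Phi(y_\infty)\|=\|\Phi(x_\infty)\|$.
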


The previous result permits to see that 
$$
(V\times M)^{\Phi-ss}=\left\{x\in V\times M,\ \Phi(x_\infty)=0\right\}.
$$

\begin{remark}\label{rem:g(t)-lambda}
Suppose that $\lambda\in\kgot$ satisfies $\lambda\cdot x=0$. Hence, $\Phi(x)\in\kgot^*_\lambda$, and the 
solution $t\mapsto g(t)$  of the differential equation (\ref{eq:g(t)}) belongs to the subgroup $G_\lambda$. In particular,
if $x\in(V\times M)^{\Phi-ss}\cap (V\times M)^\lambda$, there exists a sequence $\ell_n\in G_\lambda$ such that $\lim_{n\to\infty} \Phi(\ell_nx)=0$.
\end{remark}

\subsection{Kempf-Ness functions}\label{sec:kempf-ness-functions}

We equip $G$ with the unique left invariant Riemannian metric which agrees with the
inner product on $\ggot=T_eG$ defined by $\langle \xi_1+ \iu\eta_1,
\xi_2+ \iu\eta_2\rangle_\ggot=\langle \xi_1,
\xi_2\rangle_\kgot+\langle \eta_1, \eta_2\rangle_\kgot$, for any
$\xi_1,\xi_2,\eta_1,\eta_2$ in $\kgot$.

Consider the homogeneous space $\mathbb{X}=G/K$ : it is a complete Riemannian manifold with non-positive sectional curvature. 
We denote by $\pi: G\to \mathbb{X}=G/K$ the projection. Recall that the geodesics on $\mathbb{X}$ are 
of the form $t\mapsto \pi(g\exp(it\lambda))$, with $g\in G$ and $\lambda\in \kgot$.

Let $(N,\Omega_N,\Phi_N)$ be a K\"{a}hler Hamiltonian $G$-manifold. In this section, we introduce the Kempf-Ness 
function $\Psi_n : \mathbb{X}\to \R$ associated to an element $n\in N$.

The following fact is proved in \cite{Mun00} (see also \cite[Chapter~4]{GRS21}).

\begin{proposition}\label{prop:psi-n} Let $n\in N$.
\begin{enumerate}
\item There exists a unique smooth function $\Psi_n:G\to\R$ satisfying $\Psi_n(gk)=\Psi_n(g)$, $\Psi_n(e)=0$, and 
\begin{equation}\label{eq:psi-n-definition}
\frac{d}{dt}\Psi_n(ge^{it\lambda})=-\langle \Phi_N(e^{-it\lambda}g^{-1}n),\lambda\rangle
\end{equation}
for all $(g,k,\lambda,t)\in G\times K\times\kgot\times\R$.
\item For any $ g,h\in G$, we have $$\Psi_{h^{-1}n}(g)=\Psi_n(hg)-\Psi_n(h).$$
\end{enumerate}
\end{proposition}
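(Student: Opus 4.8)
The plan is to establish part~(1) by realising $\Psi_n$ as a potential of a canonically defined $1$-form on $G/K$, and then to read off part~(2) from the uniqueness in~(1). Throughout I use the Cartan decomposition $G\simeq i\kgot\times K$, $(\lambda,k)\mapsto\exp(i\lambda)k$: it shows that $\lambda\mapsto\exp(i\lambda)K$ is a diffeomorphism $i\kgot\to G/K$, so that $G/K$ is contractible, and (as already recalled) that the geodesics through $gK$ are the curves $t\mapsto g\exp(it\lambda)K$ with $\lambda\in\kgot$.

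Uniqueness is immediate and is what makes the rest cheap. If $\Psi_n^{1},\Psi_n^{2}$ both satisfy the three conditions, their difference $D$ is right $K$-invariant, satisfies $D(e)=0$, and by the $g=e$ case of \eqref{eq:psi-n-definition} has $\frac{d}{dt}D(\exp(it\lambda))=0$ for every $\lambda\in\kgot$. Hence $D(\exp(i\lambda))=D(e)=0$; since $D$ descends to $G/K$ and every point of $G/K$ is $\exp(i\lambda)K$, we conclude $D\equiv 0$.

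For existence I would not integrate along geodesics directly, as path-independence is precisely the difficulty; instead I produce $\Psi_n$ as a potential. In the left trivialisation $T_gG\simeq\ggot$ define a $1$-form by $(\sigma_n)_g\big((dL_g)(\xi+i\lambda)\big)=-\langle\Phi_N(g^{-1}n),\lambda\rangle$ for $\xi,\lambda\in\kgot$; by construction a smooth $\Psi_n$ with $d\Psi_n=\sigma_n$ is exactly a solution of the right $K$-invariance and of \eqref{eq:psi-n-definition} for all $g$. The form $\sigma_n$ is smooth, annihilates the vertical distribution $(dL_g)\kgot$ of $\pi\colon G\to G/K$, and is right $K$-invariant: from $(dR_k)(dL_g)X=(dL_{gk})\mathrm{Ad}(k^{-1})X$, the $K$-equivariance of $\Phi_N$ and the $\mathrm{Ad}$-invariance of the pairing give $R_k^*\sigma_n=\sigma_n$. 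Thus $\sigma_n$ is basic and descends to a $1$-form $\bar\sigma_n$ on $G/K$. The one genuinely analytic step is the closedness of $\bar\sigma_n$, and it is exactly where the K\"ahler/moment-map structure is used. As $\sigma_n$ is basic so is $d\sigma_n$, so it suffices to evaluate $d\sigma_n(X,Y)$ on the left-invariant fields generated by purely imaginary $i\lambda_X,i\lambda_Y\in i\kgot$; there $\sigma_n([X,Y])=0$ because $[i\kgot,i\kgot]\subset\kgot$, while differentiating $g\mapsto-\langle\Phi_N(g^{-1}n),\lambda_Y\rangle$ along $X$, using $(i\lambda_X)\cdot y=J(\lambda_X\cdot y)$ and the moment map relation $d\langle\Phi_N,\lambda_Y\rangle(w)=\Omega_N(\lambda_Y\cdot y,w)$ at $y=g^{-1}n$, gives $X(\sigma_n(Y))=\Omega_N(\lambda_Y\cdot y,\,J(\lambda_X\cdot y))$ and symmetrically $Y(\sigma_n(X))=\Omega_N(\lambda_X\cdot y,\,J(\lambda_Y\cdot y))$. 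Since $\Omega_N$ is compatible with $J$, the form $(a,b)\mapsto\Omega_N(a,Jb)$ is symmetric, so these two terms coincide and $d\sigma_n(X,Y)=0$. Because $G/K$ is contractible, $\bar\sigma_n=d\bar\psi$; then $\Psi_n=\pi^*\bar\psi$, normalised by $\Psi_n(e)=0$, has all the required properties.

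Finally, part~(2) is a corollary of uniqueness. Fix $h\in G$. Both $g\mapsto\Psi_{h^{-1}n}(g)$ and $g\mapsto\Psi_n(hg)-\Psi_n(h)$ are right $K$-invariant and vanish at $g=e$, and both satisfy \eqref{eq:psi-n-definition} for the point $h^{-1}n$: for the second this is the general-$g$ derivative formula of part~(1) applied at $hg$, giving $\frac{d}{dt}\Psi_n\big(hg\,e^{it\lambda}\big)=-\langle\Phi_N(e^{-it\lambda}g^{-1}h^{-1}n),\lambda\rangle$. By the uniqueness just proved the two functions agree, which is the claimed identity. The main obstacle is thus concentrated in the closedness of $\bar\sigma_n$; everything else is formal once the Cartan decomposition and the uniqueness statement are in place.
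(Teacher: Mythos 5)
Your proof is correct. Note that the paper does not prove this proposition itself; it quotes it from Mundet i Riera \cite{Mun00} and \cite[Chapter~4]{GRS21}. Your argument --- uniqueness via the Cartan decomposition, existence by exhibiting $\Psi_n$ as a potential of the basic $1$-form $\sigma_n$ on the contractible space $G/K$, with closedness reduced to horizontal left-invariant fields and then following from $(i\lambda)\cdot y=J(\lambda\cdot y)$, the moment-map identity, and the symmetry of $\Omega_N(\cdot,J\cdot)$, and part~(2) read off from uniqueness --- is essentially the standard construction carried out in those references, and all the steps (basicity of $\sigma_n$ via $K$-equivariance of $\Phi_N$, reduction of $d\sigma_n$ to the $i\kgot$-directions, contractibility of $G/K$) are correctly justified.
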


The function $\Psi_n:G\to\R$ is called the lifted Kempf-Ness function based at $n$. It is $K$-invariant and hence descends to a function 
$\Psi_{n}:\mathbb{X}\to\R$ denoted by the same symbol and called the Kempf-Ness function.

Let's recall the classic examples of Kempf-Ness functions.

\begin{example} \label{example:psi-v}Let $V$ be a $G$-module equipped with a $K$-invariant Hermitian metric $\langle.,.\rangle_V$. 
To any $v\in V$, the Kempf-Ness function $\Psi_v: G\to \R$ is defined by 
$\Psi_v(g)=\tfrac{1}{4}\|g^{-1}v\|_V^2 -\tfrac{1}{4}\|v\|_V^2$.
\end{example}

\begin{example} Let $E$ be a $G$-module equipped with a $K$-invariant Hermitian metric $\langle.,.\rangle_E$. 
The projective space $\Pbb E $ is equipped with the Fubini-Study symplectic form 
$\Omega_{\Pbb E }$, and the moment map $\Phi_{\Pbb E }: \Pbb E \to \kgot^*$ is defined by 
$$
\langle\Phi_{\Pbb E }([y]),\lambda \rangle= i\,\frac{\langle \lambda y,y\rangle_E}{\langle y,y\rangle_E}.
$$
The Kempf-Ness function associated to $[y]\in \Pbb E $ is defined by the relation
$$
\Psi_{[y]}(g)=\log(\| g^{-1}y\|)-\log(\| y\|).
$$
\end{example}

\begin{example}\label{example:psi-mu} Let $K\mu$ be the coadjoint orbit of $K$ passing through $\mu\in\tgot^*_{0,+}$. 
The map $k\mapsto k\mu$ defines an isomorphism $K/K_\mu\simeq K\mu$  where $K_\mu=\{k, k\mu=\mu\}$ 
is the stabilizer subgroup. Let $B^-\subset G$ be the Borel subgroup with Lie algebra equal to $\tgot\oplus \sum_{\alpha<0}\ggot_\alpha$. 
We denote by $\phi_\mu : G\to K\mu$ the map $G\to G/B^{-}\simeq K/T\to K/K_\mu\simeq K\mu$. 

The Kempf-Ness function $\Psi_\mu:G\to\R$ associated to $\mu\in K\mu$ is characterized by the relations
\begin{equation}\label{eq:psi-mu-definition}
\Psi_\mu(gk)=\Psi_\mu(g),\quad \Psi_\mu(e)=0,\quad {\rm and}\quad  \frac{d}{dt}\Psi_\mu(ge^{it\lambda})
=-\langle \phi_\mu(e^{-it\lambda}g^{-1}),\lambda\rangle.
\end{equation}
for all $(g,k,\lambda,t)\in G\times K\times\kgot\times\R$.

Notice that for any $\mu_0,\mu_1\in\tgot^*_{0,+}$, and any $r_0,r_1\geq 0$, we have  
\begin{equation}\label{eq:psi-mu-barycentre}
\Psi_{\mu}=r_1\Psi_{\mu_1}+r_0\Psi_{\mu_0},
\end{equation} 
for $\mu=r_1\mu_1+r_0\mu_0\in\tgot^*_{0,+}$.
\end{example}

\begin{example} Suppose that $\mu$ is a dominant weight. 
Let $V_\mu$ be the irreducible representation with highest weight $\mu$. 
We equip $V_\mu$ with a $K$-invariant Hermitian structure. 
Let $v_\mu\in V_\mu$ be a vector of norm $1$, such that $b\, v_\mu=\chi_\mu(b)v_\mu,\ \forall b\in B$. 
Here the Kempf-Ness function $\Psi_\mu$, characterized by (\ref{eq:psi-mu-definition}), is defined by 
$\Psi_\mu(g)=-\log(\|g^{-1}v_\mu\|)$.
\end{example}

We return to the case of the K\"ahler Hamiltonian  $G$-manifold $N=V\times M$, with $M$ compact. 
In the next theorem, we summarize the properties satisfied by the Kempf-Ness functions $\Psi_x: \mathbb{X}\to \R$ 
associated to $x\in V\times M$. The proof given in \cite[Theorem~4.3]{GRS21} is done for compact manifolds. 
We explain below how to adapt the arguments for 
$V\times M$. We denote by $d_{\mathbb{X}}$ the Riemannian distance on $\mathbb{X}$. Recall that $f= \tfrac{1}{2} \|\Phi\|^2$.

\begin{theorem}\label{theo:kempf-ness-kahler}Let $x\in V\times M$.
\begin{enumerate}
\item \label{theo:kempf-ness-kahler-ass1}
The Kempf-Ness function $\Psi_x:\mathbb{X}\to\R$ is convex along geodesics.
\item \label{theo:kempf-ness-kahler-ass2} 
The map $\varphi_x: G\to V\times M$, defined  by $\varphi_x(g)=g^{-1}x$, intertwines the gradient vector field $\nabla \Psi_x \in {\rm Vect}(G)$
and the gradient vector field $\nabla f \in {\rm Vect}(V\times M)$ : 
\begin{equation}\label{eq:psi-x-theo}
\T\varphi_x\vert_g\left(\nabla \Psi_x\vert_g\right)=\nabla f\vert_{\varphi_x(g)}.
\end{equation}
\item\label{theo:kempf-ness-kahler-ass3} 
Let $g\in G$. There exists a negative gradient flow line $\theta:\R^{\geq 0}\to \mathbb{X}$ of $\Psi_x$ such that $\theta(0)=\pi(g)$.
\item \label{theo:kempf-ness-kahler-ass4} 
Let $\theta_0,\theta_1:\R^{\geq 0}\to \mathbb{X}$ be negative gradient flow lines of the Kempf-Ness function $\Psi_x$:
\begin{enumerate}
\item The function $t\geq 0\mapsto d_{\mathbb{X}}(\theta_1(t),\theta_0(t))$ is non-increasing.
\item The function $t\geq 0\mapsto \Psi_x(\theta_0(t))-\Psi_x(\theta_1(t))$ is bounded.
\end{enumerate}
\item \label{theo:kempf-ness-kahler-ass5} 
Every negative gradient flow line $\theta:\R^{\geq 0}\to \mathbb{X}$ of $\Psi_x$ satisfies 
\begin{equation}\label{eq:psi-x-flow}
\lim_{t\to\infty}\Psi_x(\theta(t))=\inf_{\mathbb{X}}\Psi_x.
\end{equation}
\item \label{theo:kempf-ness-kahler-ass6}
$\Psi_x$ is bounded from below if and only if $\Phi(x_\infty)=0$.
\item \label{theo:kempf-ness-kahler-ass7} 
The limit $\lim_{t\to+\infty}\frac{\Psi_x(\theta(t))}{t}$
exists and does not depend on the negative gradient flow line $\theta$.
\end{enumerate}
\end{theorem}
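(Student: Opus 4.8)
The plan is to reduce every assertion to the negative gradient flow of $f=\tfrac12\|\Phi\|^2$ on $V\times M$, whose existence, boundedness and convergence are guaranteed by Proposition~\ref{prop:flow-moment-map-square}, and to exploit that $\mathbb{X}=G/K$ is a Hadamard manifold on which each $\Psi_x$ is convex. I would first dispatch (i) and (ii) by direct computation. Differentiating \eqref{eq:psi-n-definition} a second time along $t\mapsto ge^{it\lambda}$ and inserting the defining relation of the moment map yields
\[
\frac{d^2}{dt^2}\Psi_x(ge^{it\lambda})=\|\lambda\cdot(e^{-it\lambda}g^{-1}x)\|^2\ \ge\ 0,
\]
so $\Psi_x$ is convex along the geodesics $t\mapsto\pi(ge^{it\lambda})$, which is (i). For (ii), the $K$-invariance of $\Psi_x$ kills the $\kgot$-directions, while \eqref{eq:psi-n-definition} identifies $\nabla\Psi_x|_g$ with the left translate of $-i\,\Phi(g^{-1}x)\in i\kgot$; since $\T\varphi_x|_g$ carries the left translate of $i\lambda$ to $-J(\lambda\cdot g^{-1}x)$, it sends $\nabla\Psi_x|_g$ to $J(\Phi(g^{-1}x)\cdot g^{-1}x)=\nabla f|_{\varphi_x(g)}$ by \eqref{eq:gradient-f}.

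Part (iii) then follows from (ii) together with Proposition~\ref{prop:flow-moment-map-square}: starting the $f$-flow at $g^{-1}x$ produces, exactly as in the proof of that proposition, a curve $h(t)\in G$ defined for all $t\ge0$ with $h(t)^{-1}h'(t)=i\Phi(x(t))$ and $x(t)=(gh(t))^{-1}x$, so that $\theta(t)=\pi(gh(t))$ is the desired flow line through $\pi(g)$. Part (iv) is where non-compactness first intervenes. Assertion (a) is the classical contraction property of gradient flows of geodesically convex functions on a Hadamard manifold: the convexity from (i) forces $\tfrac{d}{dt}\,d_{\mathbb{X}}(\theta_0(t),\theta_1(t))^2\le0$. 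For (b) I cannot invoke global Lipschitzness, as $\Psi_x$ is not globally Lipschitz; instead I observe that along each flow line $\|\nabla\Psi_x|_{\theta_i(t)}\|=\|\Phi(x_i(t))\|$ is non-increasing (because $\|\Phi\|$ decreases along the $f$-flow), hence bounded by some $L$. Feeding the subgradient inequality for the convex $\Psi_x$ along the geodesic joining $\theta_1(t)$ to $\theta_0(t)$ into (a) gives $|\Psi_x(\theta_0(t))-\Psi_x(\theta_1(t))|\le L\,d_{\mathbb{X}}(\theta_0(0),\theta_1(0))$.

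For (v) I would argue purely with convexity and Hadamard geometry: fixing $p\in\mathbb{X}$ and setting $c(t)=\tfrac12 d_{\mathbb{X}}(\theta(t),p)^2$, the subgradient inequality yields $c'(t)\le\Psi_x(p)-\Psi_x(\theta(t))$; were $\lim_t\Psi_x(\theta(t))>\Psi_x(p)$, this would force $c(t)\to-\infty$, which is impossible, so $\lim_t\Psi_x(\theta(t))\le\Psi_x(p)$ for every $p$, i.e.\ the limit equals $\inf_{\mathbb{X}}\Psi_x$. For (vii), differentiating along the flow gives $\tfrac{d}{dt}\Psi_x(\theta(t))=-\|\nabla\Psi_x|_{\theta(t)}\|^2=-\|\Phi(x(t))\|^2$; this is monotone in $t$ and converges to $-\|\Phi(x_\infty)\|^2$ as $x(t)\to x_\infty$ (Proposition~\ref{prop:flow-moment-map-square}), and a monotone derivative with finite limit forces $\Psi_x(\theta(t))/t\to-\|\Phi(x_\infty)\|^2$. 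Independence of the flow line is then immediate from (iv)(b), since a bounded difference divided by $t$ tends to $0$.

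Finally (vi) combines these. If $\Psi_x$ is bounded below then $\Psi_x(\theta(t))/t\to0$, so (vii) gives $\|\Phi(x_\infty)\|=0$. The converse is the main obstacle. By (v), applied to the flow line through $\pi(e)$, one has $\inf_{\mathbb{X}}\Psi_x=-\int_0^\infty\|\Phi(x(t))\|^2\,dt$, so boundedness below is equivalent to the finiteness of this integral; and here neither properness of $\Phi$ nor any global Lipschitz estimate is at hand, which is precisely the difference from the compact case. The plan is to extract the needed decay rate from the Łojasiewicz gradient inequality for $f$ at the critical point $x_\infty$, where $f(x_\infty)=\tfrac12\|\Phi(x_\infty)\|^2=0$ — the same inequality already invoked for convergence in Proposition~\ref{prop:flow-moment-map-square}. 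It gives $\tfrac{d}{dt}f(x(t))=-\|\nabla f(x(t))\|^2\le -c\,f(x(t))^{2-2\eta}$ for some $\eta\in(0,\tfrac12]$, whence $f(x(t))$ decays fast enough that $\int_0^\infty f(x(t))\,dt<\infty$; as $\|\Phi(x(t))\|^2=2f(x(t))$, this furnishes the lower bound and completes (vi). (Alternatively this implication may be read off from Proposition~\ref{prop:phi-minimal-orbit-1} and the Kempf--Ness characterization of $\Phi$-semistability.)
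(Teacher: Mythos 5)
Your proposal is correct, and its skeleton coincides with the paper's: reduce everything to the negative gradient flow of $f=\tfrac12\|\Phi\|^2$ via the intertwining property (ii), invoke Proposition~\ref{prop:flow-moment-map-square} for existence and convergence of that flow, and use the Lojasiewicz inequality for the hard direction of (vi). Within that skeleton you take genuinely different routes at two places. For (iv) and (v) the paper imports the contraction estimate and the two-sided inequality (\ref{eq:psi-theta-1-2}) from \cite{GRS21} and deduces (v) by showing the limit $\lim_t\Psi_x(\theta(t))$ is independent of the flow line; you instead argue self-containedly in Hadamard geometry: (iv)(a) is the standard monotonicity of gradient flows of geodesically convex functions, (iv)(b) follows from the subgradient inequality plus the non-increasing gradient norm $\|\nabla\Psi_x|_{\theta(t)}\|=\|\Phi(x(t))\|$, and (v) follows from the F\'ejer-type estimate $\frac{d}{dt}\tfrac12 d_{\mathbb{X}}(\theta(t),p)^2\le\Psi_x(p)-\Psi_x(\theta(t))$, which handles a single flow line at a time and does not even need (iv). This buys a cleaner, more self-contained treatment; the paper's route buys brevity by citation. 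In (vi) the difference is cosmetic but your version is actually safer: the paper asserts exponential decay $\|\Phi(g(t)^{-1}x)\|\le ae^{-bt}$ from Lojasiewicz, which strictly speaking only holds when the Lojasiewicz exponent is $\tfrac12$; your weaker conclusion (polynomial decay of $f(x(t))$ fast enough to be integrable) is what the inequality delivers for a general exponent, and it suffices since $\inf_{\mathbb{X}}\Psi_x=-\int_0^\infty\|\Phi(x(t))\|^2\,dt$.

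One caveat: the parenthetical alternative you offer for the converse of (vi) — reading it off from Proposition~\ref{prop:phi-minimal-orbit-1} and the Kempf--Ness characterization of $\Phi$-semistability — is circular in the paper's logical order, since Proposition~\ref{prop:phi-minimal-orbit} (hence \ref{prop:phi-minimal-orbit-1} and Theorem~\ref{theo:kempf-ness-relatif}(iv)$\Rightarrow$(iii)) is itself deduced from assertions (vi) and (vii) of the present theorem. Your main Lojasiewicz argument is non-circular and complete, so this does not affect the validity of the proposal, but the parenthetical should be dropped.
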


\begin{remark}
Note that the Kempf-Ness function $\Psi_x$ is not globally Lipschitz when the $V$-component of $x$ is not fixed by the $G$-action.
\end{remark}

\begin{proof}
  The geodesics on $\mathbb{X}$ are of the form
  $\gamma(t)=\pi(g\exp(it\lambda))$, with $g\in G$ and
  $\lambda\in \kgot$. Thanks to (\ref{eq:psi-n-definition}), we see
  that
  $\frac{d^2}{dt^2}\vert_0\Psi_x(\gamma(t))=\|\lambda\cdot(g^{-1}x)\|^2\geq
  0$, where $\|-\|^2$ denotes the Riemannian metric on $V\times
  M$. The first point follows.

  The second point follows from (\ref{eq:gradient-f}) and
  (\ref{eq:psi-n-definition}).

  Let $g\in G$. Let us explain the structure of the negative gradient
  flow line $\theta:\R^{\geq 0}\to \mathbb{X}$ of $\Psi_x$ satisfying
  $\theta(0)=\pi(g)$. Set $y=g^{-1}x\in V\times M$. The negative
  gradient flow line $y(t)$ of $f$ through $y$ is $y(t)=h(t)^{-1}y$
  where $h:[0,+\infty[\to G$ is the solution of the differential
  equation : $h(t)^{-1}h'(t)= i \Phi(y(t)),\ \forall t\geq 0$, and
  $h(0)=e$. Thanks to (\ref{eq:psi-x-theo}), we see that
  $t\geq 0\mapsto \pi(g h(t))$ is the negative gradient flow line of
  $\Psi_x$ passing through $\pi(g)$.

  The first point of Assertion~\ref{theo:kempf-ness-kahler-ass4} is proven in Lemma A.2 of
  \cite{GRS21}. Consider the solutions $g_0,g_1:\R^{\geq 0}\to G$ of
  the differential equation
  $g_\ell^{-1}\frac{d}{dt}g_\ell= \iu \Phi(g^{-1}_\ell x)$ such that
  $\theta_0 = \pi\circ g_0$ and $\theta_1 = \pi\circ g_1$.  In the
  proof of the part {\em iv)} of Theorem 4.3 in \cite{GRS21}, the
  authors obtain the following inequalities:
  \begin{equation}\label{eq:psi-theta-1-2}
    C\|\Phi(g_1^{-1}(t)x)\|\geq \Psi_x(\theta_1(t))-\Psi_x(\theta_0(t))\geq - C \|\Phi(g_0^{-1}(t)x)\|,
  \end{equation}
  with $C=\sup_{t\geq 0}
  d_{\mathbb{X}}(\theta_1(t),\theta_0(t))$. Since the functions
  $t\mapsto \|\Phi(g_\ell^{-1}(t)x)\|$ are non-increasing, the second point
 of \ref{theo:kempf-ness-kahler-ass4} is proved.

  Thanks to (\ref{eq:psi-theta-1-2}), we see that
  $\Psi_x\circ\, \theta_1$ is bounded from below if and only if
  $\Psi_x\circ\, \theta_0$ is bounded from below. 
  Thus,
  $\lim_{t\to\infty}\Psi_x\circ \theta_1(t)=-\infty$ if and only if
  $\lim_{t\to\infty}\Psi_x\circ \theta_0(t)=-\infty$.

  By definition of the Kempf-Ness function, we have
  \begin{equation}\label{eq:psi-phi}
    \frac{d}{dt}\Psi_x\left(\theta_\ell(t)\right)=-\|\Phi(g_\ell(t)^{-1}x)\|^2,\quad \forall t\geq 0.
  \end{equation}
  Thus, $\lim_{t\to\infty} \|\Phi(g^{-1}_\ell(t)x)\|=0$ if the
  non-increasing function $\Psi_x\circ \,\theta_\ell$ is bounded from
  below. Identity (\ref{eq:psi-theta-1-2}) shows that
  $\lim_{t\to\infty}\Psi_x\circ
  \theta_1(t)=\lim_{t\to\infty}\Psi_x\circ \theta_0(t)\in \R$ when the
  functions $\Psi_x\circ\, \theta_\ell$ are bounded from below.

  We have checked that 
  $\lim_{t\to+\infty}\Psi_x(\theta(t))\in \R\cup \{-\infty\}$ does not
  depend on the negative gradient flow line $\theta$. As a result, we
  get (\ref{eq:psi-x-flow}).

  Let $\theta=\pi\circ g$ be the negative gradient flow line of
  $\Psi_x$ such that $g(0)=e$. We have already explained that
  $\Phi(x_\infty)=\lim_{t\to\infty} \Phi(g(t)^{-1}x)=0$ if the
  function $\Psi_x\circ \theta$ is bounded from below. Reciprocally,
  suppose that $\Phi(x_\infty)=0$. By the Lojasiewicz gradient
  inequality, we know that there exists $t_o>0$, and $a,b >0$ such
  that $\|\Phi(g(t)^{-1}x)\|\leq a e^{-bt}$ for $t\geq t_o$. Thus,
  $\frac{d}{dt}\left(\Psi_x\circ
    \theta\right)=-\|\Phi(g(t)^{-1}x)\|^2\geq -a e^{-bt}$ for
  $t\geq t_o$. This proves that
  $\lim_{t\to\infty}\Psi_x\circ \theta(t)=\inf_{\mathbb{X}}\Psi_x$ is
  finite. Thus, $\Psi_x$ is bounded from below. Assertion~\ref{theo:kempf-ness-kahler-ass6} is
  proved.

  Let $\theta=\pi\circ g$ be a negative gradient flow line of
  $\Psi_x$. Let $y=g(0)^{-1}x$. Identity (\ref{eq:psi-phi}) gives that
  $\lim_{t\to\infty}\frac{d}{dt}\Psi_x\left(\theta(t)\right)=-\|\Phi(y_\infty)\|^2$. This
  implies that
  $\lim_{t\to+\infty}\frac{\Psi_x(\theta(t))}{t}=-\|\Phi(y_\infty)\|^2$.
  The fact that the limit
  $\lim_{t\to+\infty}\frac{\Psi_x(\theta(t))}{t}$ does not depend on
  the negative gradient flow line $\theta$ is a consequence of \ref{theo:kempf-ness-kahler-ass4}.
\end{proof}

\medskip

The following important result is a consequence of Theorem~\ref{theo:kempf-ness-kahler}.

\begin{proposition}\label{prop:phi-minimal-orbit}
Let $x\in V\times M$.
\begin{enumerate}
\item $\|\Phi(x_\infty)\|=\inf_{g\in G}\|\Phi(gx)\|$.
\item $\Psi_x$ is bounded from below if and only if $x$ is $\Phi$-semistable.
\end{enumerate}
\end{proposition}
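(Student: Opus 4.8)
The plan is to derive both assertions from Theorem~\ref{theo:kempf-ness-kahler} together with the properness of $\Phi$ on orbit closures (Proposition~\ref{prop:phi-proper-orbit}), the key structural fact being that the \emph{limiting norm} $\|\Phi(x_\infty)\|$ is constant along the $G$-orbit of $x$. Once assertion (i) is in hand, assertion (ii) is almost immediate: part~\ref{theo:kempf-ness-kahler-ass6} of Theorem~\ref{theo:kempf-ness-kahler} already tells us that $\Psi_x$ is bounded from below if and only if $\Phi(x_\infty)=0$, so it remains only to match the condition $\Phi(x_\infty)=0$ with $\Phi$-semistability, and this is what (i) will deliver once combined with properness on the orbit closure.

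For assertion (i), write $x(t)=g(t)^{-1}x$ for the negative gradient flow of $f$ through $x$, which exists for all $t\ge 0$, stays in $Gx$, and converges to $x_\infty$ by Proposition~\ref{prop:flow-moment-map-square}. The inequality $\|\Phi(x_\infty)\|\ge\inf_{g\in G}\|\Phi(gx)\|$ is immediate, since each $x(t)$ lies in $Gx$ and $\Phi$ is continuous. For the reverse inequality I would first record, using the convexity of $\Psi_x$ along geodesics (part~\ref{theo:kempf-ness-kahler-ass1} of Theorem~\ref{theo:kempf-ness-kahler}), that along any negative gradient flow line the gradient norm is non-increasing; concretely the function $t\mapsto\|\Phi(g(t)^{-1}x)\|$ decreases, a fact already used in the proof of the theorem. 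Applying this to the flow through an arbitrary orbit point $gx$ gives $\|\Phi((gx)_\infty)\|\le\|\Phi(gx)\|$.

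It then suffices to show $\|\Phi((gx)_\infty)\|=\|\Phi(x_\infty)\|$ for every $g\in G$. Here I would invoke the slope identity established inside the proof of Theorem~\ref{theo:kempf-ness-kahler}: for a negative gradient flow line $\theta=\pi\circ g_\star$ of $\Psi_x$ with $g_\star(0)=g_0$, one has $\lim_{t\to\infty}\Psi_x(\theta(t))/t=-\|\Phi((g_0^{-1}x)_\infty)\|^2$. Since part~\ref{theo:kempf-ness-kahler-ass7} asserts this limit is independent of the chosen flow line, hence of the base point $g_0$, comparing $g_0=e$ with arbitrary $g_0$ yields $\|\Phi((g_0^{-1}x)_\infty)\|=\|\Phi(x_\infty)\|$; letting $g=g_0^{-1}$ range over $G$ gives the claim. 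Combining the three estimates proves $\|\Phi(x_\infty)\|\le\|\Phi(gx)\|$ for all $g$, and hence assertion (i). This orbit-constancy step, rather than any single inequality, is where the real content of the Kempf-Ness machinery is used.

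For assertion (ii), it remains to show that $\inf_{g}\|\Phi(gx)\|=0$ if and only if $x$ is $\Phi$-semistable. If $x$ is $\Phi$-semistable there is $y\in\overline{Gx}\cap\Phi^{-1}(0)$ and a sequence $g_nx\to y$, so $\|\Phi(g_nx)\|\to 0$ and the infimum is $0$. Conversely, if the infimum is $0$, pick $g_n$ with $\|\Phi(g_nx)\|\to 0$; here the non-properness of $\Phi$ is precisely the obstacle, which I would circumvent using Proposition~\ref{prop:phi-proper-orbit}: the points $g_nx$ eventually lie in the compact set $\overline{Gx}\cap\{\|\Phi\|\le 1\}$, so a subsequence converges to some $y\in\overline{Gx}$ with $\Phi(y)=0$, giving $\Phi$-semistability. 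By assertion (i) this is equivalent to $\|\Phi(x_\infty)\|=0$, and chaining with part~\ref{theo:kempf-ness-kahler-ass6} of Theorem~\ref{theo:kempf-ness-kahler} completes the proof. The main obstacle throughout is the lack of global properness of $\Phi$; it is overcome by working orbit by orbit, where Proposition~\ref{prop:phi-proper-orbit} restores compactness of sublevel sets.
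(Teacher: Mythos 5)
Your proof is correct and takes essentially the same route as the paper's: the key step for (i) is identical --- monotonicity of $t\mapsto\|\Phi(x(t))\|$ along the negative gradient flow, combined with the fact from part~\ref{theo:kempf-ness-kahler-ass7} of Theorem~\ref{theo:kempf-ness-kahler} that the slope $\lim_{t\to\infty}\Psi_x(\theta(t))/t=-\|\Phi(y_\infty)\|^2$ is independent of the flow line, forcing $\|\Phi(y_\infty)\|=\|\Phi(x_\infty)\|$ for all $y\in Gx$ --- and (ii) then follows by chaining with part~\ref{theo:kempf-ness-kahler-ass6}. The only (harmless) detour is your appeal to Proposition~\ref{prop:phi-proper-orbit} to extract a semistability witness from $\inf_{g\in G}\|\Phi(gx)\|=0$; this is redundant, since assertion (i) already provides the witness $x_\infty\in\overline{Gx}\cap\Phi^{-1}(0)$ directly.
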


\begin{proof}
 In  Assertion~\ref{theo:kempf-ness-kahler-ass7} of Theorem~\ref{theo:kempf-ness-kahler}, we have
  proved that $\|\Phi(y)\|\geq \|\Phi(y_\infty)\|=\|\Phi(x_\infty)\|$
  for any $y\in Gx$. The first point follows and it proves that $x$ is
  $\Phi$-semistable iff $\Phi(x_\infty)=0$. Thus, the second point is
  a consequence of Assertion~\ref{theo:kempf-ness-kahler-ass6} of Theorem~\ref{theo:kempf-ness-kahler}.
\end{proof}

\subsection{Numerical invariant for $\Phi$-semistability}\label{sec:numerical-invariant-phi}

In this section, we introduce a $G$-invariant function $\mathbf{M}_{\Phi}: V\times M\to \R\cup\{-\infty\}$ that will characterize $\Phi$-semistability. 

First, we define a map $\varpi_V: V\times\kgot\to\R$, which is the
analytical counterpart of (\ref{eq:defm}) with $V$ in place of $E$. 
Any $\lambda\in\kgot$ defines an orthogonal decomposition $V=\oplus_a V_{\lambda,a}$, where 
$\lambda v= i a v, \forall v\in V_{\lambda,a}$. Any $v\in V$ admits a decomposition
$v=\sum_a v_a$, with $v_a\in V_{\lambda,a}$, and we define 
\begin{equation} \label{eq:varpi-E}
  \varpi_V(v,\lambda):=-\max\{a\,:\, v_a\neq 0\},\quad \forall v\in V-\{0\},
\end{equation}
and $\varpi_V(0,\lambda):=0$.

Let $x\in V\times M$. For any $\lambda\in\kgot$, the function $t\mapsto \langle\Phi(e^{-it\lambda}x),\lambda\rangle$ 
is non-increasing, so we can define
$$
\varpi_{\Phi}(x,\lambda) := \lim_{t\to+\infty} \langle\Phi(e^{-it\lambda}x),\lambda\rangle \in \R\cup\{-\infty\}.
$$
We first notice that $\varpi_{\Phi}(x,\lambda)\leq \langle\Phi(x),\lambda\rangle\leq \| \Phi(x)\|\, \|\lambda\|$.

If $x=(v,m)$ then $e^{-it\lambda}x=(e^{-it\lambda}v,e^{-it\lambda}m)$. Since the trajectory $e^{-it\lambda}m$ is 
the negative gradient flow of the Morse function $\langle \Phi_M,\lambda\rangle$ on the compact manifold $M$, 
it converges to a point $m_\lambda\in M^\lambda$.

\begin{lemma}\label{lem:varphi-finite} Let $x=(v,m)$. 
\begin{enumerate} 
\item $\varpi_{\Phi}(x,\lambda)= -\infty$ if and only if $\varpi_V(v,\lambda)<0$.
\item Suppose that $\varpi_{\Phi}(x,\lambda)\neq -\infty$.  Then, the trajectory $e^{-it\lambda}x$ converges when $t\to +\infty$, and 
$\varpi_{\Phi}(x,\lambda)=\langle\Phi_M(m_\lambda),\lambda\rangle$.
\item Suppose that $\varpi_{\Phi}(x,\lambda)=-\infty$. Then there are $a,c >0$ such that 
$\|e^{-it\lambda}v\| \underset{+\infty}{\sim}c\, e^{at}$.
\end{enumerate}
\end{lemma}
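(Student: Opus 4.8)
The plan is to reduce everything to an explicit computation of the $V$-component, since the $M$-component is harmless. Writing $\Phi = \Phi_V + \Phi_M$, I would first record that the curve $t \mapsto e^{-it\lambda}m$ is the negative gradient flow of the Morse function $\langle\Phi_M,\lambda\rangle$ on the \emph{compact} manifold $M$, so (as already noted before the statement) it converges to $m_\lambda \in M^\lambda$ and the term $\langle\Phi_M(e^{-it\lambda}m),\lambda\rangle$ stays bounded, tending to $\langle\Phi_M(m_\lambda),\lambda\rangle$. Thus all divergence and all the interesting asymptotics are confined to the $V$-term.

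The central computation is to evaluate $\langle\Phi_V(e^{-it\lambda}v),\lambda\rangle$ on the weight decomposition. I would first observe that $-i\lambda$ acts on $V_{\lambda,a}$ as multiplication by the real scalar $a$ (since $\lambda v_a = ia v_a$), whence $e^{-it\lambda}v = \sum_a e^{ta}v_a$. Substituting this into $\langle\Phi_V(w),\lambda\rangle = \tfrac{i}{2}\langle\lambda w, w\rangle_V$ and using the orthogonality of the decomposition yields the clean formula
\begin{equation}
\langle\Phi_V(e^{-it\lambda}v),\lambda\rangle = -\tfrac{1}{2}\sum_a a\, e^{2ta}\,\|v_a\|^2 .
\end{equation}
Setting $a_{\max} = \max\{a : v_a \neq 0\}$ (so that $\varpi_V(v,\lambda) = -a_{\max}$ when $v \neq 0$), the sign of $a_{\max}$ dictates the limit: if $a_{\max} > 0$ the leading term $-\tfrac{1}{2}a_{\max}e^{2ta_{\max}}\|v_{a_{\max}}\|^2$ forces divergence to $-\infty$, whereas if $a_{\max} \leq 0$ every surviving exponential either decays or is multiplied by the factor $a=0$, so the sum tends to $0$.

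Assertion (i) then follows immediately: combining the two parts, $\varpi_{\Phi}(x,\lambda) = -\infty$ exactly when the $V$-term diverges, i.e.\ exactly when $a_{\max} > 0$, i.e.\ exactly when $\varpi_V(v,\lambda) < 0$. For assertion (ii), when $\varpi_V(v,\lambda) \geq 0$ the displayed $V$-term tends to $0$, so $\varpi_{\Phi}(x,\lambda) = \langle\Phi_M(m_\lambda),\lambda\rangle$; moreover $e^{-it\lambda}v = \sum_a e^{ta}v_a \to v_0$, the component in the $\lambda$-fixed subspace $V_{\lambda,0}$, because every remaining weight is strictly negative, so together with $e^{-it\lambda}m \to m_\lambda$ the full trajectory $e^{-it\lambda}x$ converges to $(v_0, m_\lambda)$. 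For assertion (iii), when $\varpi_{\Phi}(x,\lambda) = -\infty$ we have $a_{\max} > 0$, and $\|e^{-it\lambda}v\|^2 = \sum_a e^{2ta}\|v_a\|^2$ is dominated by its top term, giving $\|e^{-it\lambda}v\| \sim \|v_{a_{\max}}\|\, e^{ta_{\max}}$; one then takes $a = a_{\max}$ and $c = \|v_{a_{\max}}\|$.

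I do not anticipate a genuine obstacle: the content is the bookkeeping of signs in the weight decomposition, together with the observation that compactness of $M$ confines all divergence to the $V$-factor. The only point that needs a little care is that the convergence in (ii) is asserted for the \emph{trajectory} in $V \times M$, not merely for the scalar $\langle\Phi,\lambda\rangle$; this is handled by reading off the limit $v_0$ of the vector $e^{-it\lambda}v$ directly from its weight components and invoking the already-established convergence $e^{-it\lambda}m \to m_\lambda$ on the compact factor.
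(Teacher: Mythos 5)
Your proof is correct and follows essentially the same route as the paper's: the same weight decomposition $e^{-it\lambda}v=\sum_a e^{ta}v_a$, the same formula $\langle\Phi_V(e^{-it\lambda}v),\lambda\rangle=-\tfrac{1}{2}\sum_a a\,e^{2ta}\|v_a\|^2$, and the same case split on the sign of $a_{\max}$, with the compactness of $M$ confining all divergence to the $V$-factor. Your explicit identification of the limit $(v_0,m_\lambda)$ in assertion (ii) is a slightly more detailed bookkeeping than the paper provides, but the argument is the same.
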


\begin{proof}
  Let us write $v=\sum_a v_a$ with $\lambda v_a= i a v_a$,
  $\forall a$. We have $e^{-it\lambda}v=\sum_{a} e^{ta} v_a$ and
  $\langle\Phi_V(e^{-it\lambda}v),\lambda\rangle= \frac{-1}{2}\sum_a a
  e^{2ta}\|v_a\|^2$.  Two situations may arise: 
  \begin{itemize}
  \item if $\varpi_V(v,\lambda)<0$, then  $\lim_{t\to+\infty}\langle\Phi_V(e^{-it\lambda}v),\lambda\rangle=-\infty$ 
  and $\exists a,c >0$ such that
  $\|e^{-it\lambda}v\| \underset{+\infty}{\sim}c\, e^{at}$.
  \item if $\varpi_V(v,\lambda)\geq 0$, then $\lim_{t\to+\infty}\langle\Phi_V(e^{-it\lambda}v),\lambda\rangle=0$ 
  and the trajectory $e^{-it\lambda}v$ converge in $V$ when
  $t\to +\infty$.
  \end{itemize}
  There considerations proves all the points of the lemma.
\end{proof}

\begin{remark} \label{rem:Phi-r}
Two facts follow from the preceding lemma. First, the quantity $\varpi_{\Phi_r}(x,\lambda)=\varpi_{\Phi}(\delta_r(x),\lambda)$ defined 
with the moment map $\Phi_r$ (see Section~\ref{sec:phi-semistability}) does not depend on $r>0$. Second, $\varpi_{\Phi}(x,\lambda)$ 
is finite if and only if the trajectory $e^{-it\lambda}x$ converge when $t\to +\infty$: the Hamiltonian K\"ahler $G$-manifolds that satisfy 
this property are called ``energy complete'' in \cite{Teleman04}.
\end{remark}

\begin{remark} \label{rem:varpi-phi-infty}
We know from the previous Lemma that $\varpi_{\Phi}(x,\lambda)= -\infty$, $\forall \lambda\neq 0$, if and only if $\varpi_{V}(v,\lambda)<0$, 
$\forall \lambda\neq 0$. This is the case when $v\in V$ is $G$-{\em stable}, i.e. the orbit $Gv$ is closed and the stabilizer subgroup $G_v=\{g\in G, gv=v\}$ is finite.
\end{remark}

We associate  the parabolic subgroup $P(\lambda):=$ $\{g\in G, \lim_{t\to\infty}e^{-it\lambda}g e^{it\lambda} \ {\rm exists}\}$ to any $\lambda\in\kgot$.
Notice that, when $p\in P(\lambda)$, the limit $\lim_{t\to\infty}e^{-it\lambda} p e^{it\lambda}$ belongs to the stabilizer subgroup $G_\lambda=\{g\in G, g\lambda=\lambda\}$. 
Recall that $G=P(\lambda) K$, so that $\forall g\in G$, $\exists k\in K$ such that $gk^{-1}\in P(\lambda)$.

\begin{lemma}\label{lem:varphi-G-invariant} Let $\lambda\in\kgot$ and let $(g,k)\in G\times K$ such that $gk^{-1}\in P(\lambda)$. Then 
$\varpi_{\Phi}(gx,\lambda)=\varpi_{\Phi}(x,k^{-1}\lambda)$, \ $\forall x\in V\times M$.
\end{lemma}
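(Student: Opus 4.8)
The plan is to factor the statement into a $K$-equivariance and a $P(\lambda)$-invariance and then combine them. Writing $p:=gk^{-1}\in P(\lambda)$, so that $g=pk$, it suffices to establish the two identities
\begin{equation*}
\varpi_{\Phi}(kx,\lambda)=\varpi_{\Phi}(x,k^{-1}\lambda)\ \ (k\in K)
\quad\text{and}\quad
\varpi_{\Phi}(px,\lambda)=\varpi_{\Phi}(x,\lambda)\ \ (p\in P(\lambda)),
\end{equation*}
since applying the second to the point $kx$ and then the first yields $\varpi_{\Phi}(gx,\lambda)=\varpi_{\Phi}(pkx,\lambda)=\varpi_{\Phi}(kx,\lambda)=\varpi_{\Phi}(x,k^{-1}\lambda)$.

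The $K$-equivariance is immediate. Since $e^{-it\lambda}k=k\,e^{-it(k^{-1}\lambda)}$ and the moment map is $K$-equivariant, i.e. $\Phi(ky)=\mathrm{Ad}(k)\Phi(y)$ under the invariant identification $\kgot^*\simeq\kgot$, I get $\langle\Phi(e^{-it\lambda}kx),\lambda\rangle=\langle\mathrm{Ad}(k)\Phi(e^{-it(k^{-1}\lambda)}x),\lambda\rangle=\langle\Phi(e^{-it(k^{-1}\lambda)}x),k^{-1}\lambda\rangle$ for every $t$; letting $t\to+\infty$ gives the first identity.

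The invariance under $P(\lambda)$ is the substantive point. Fix $p\in P(\lambda)$ and set $p_t:=e^{-it\lambda}pe^{it\lambda}$, so that $e^{-it\lambda}px=p_t\,(e^{-it\lambda}x)$ and $\lim_{t\to+\infty}p_t=\ell$ for some $\ell\in G_\lambda$. I distinguish the two cases of Lemma~\ref{lem:varphi-finite}. If $\varpi_{\Phi}(x,\lambda)$ is finite, then $e^{-it\lambda}x$ converges to a $\lambda$-fixed point $x_\lambda:=\lim_{t\to+\infty}e^{-it\lambda}x$, hence $e^{-it\lambda}px\to\ell x_\lambda$, and by continuity of $\Phi$ the limit $\varpi_{\Phi}(px,\lambda)=\langle\Phi(\ell x_\lambda),\lambda\rangle$ is again finite, while $\varpi_{\Phi}(x,\lambda)=\langle\Phi(x_\lambda),\lambda\rangle$. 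It remains to see that $\langle\Phi,\lambda\rangle$ takes the same value at $x_\lambda$ and $\ell x_\lambda$: the moment map relation shows that $d\langle\Phi,\lambda\rangle$ vanishes on the fixed set $(V\times M)^\lambda=V^\lambda\times M^\lambda$, where $\lambda\cdot(-)=0$, so $\langle\Phi,\lambda\rangle$ is constant on each connected component; and since $G_\lambda$ is connected, a path from $e$ to $\ell$ in $G_\lambda$ carries $x_\lambda$ to $\ell x_\lambda$ while remaining inside $(V\times M)^\lambda$ (because $\mathrm{Ad}(G_\lambda)\lambda=\lambda$), placing them in the same component. If instead $\varpi_{\Phi}(x,\lambda)=-\infty$, then $\varpi_V(v,\lambda)<0$, and I argue at the level of the weight decomposition $V=\oplus_a V_{\lambda,a}$: using the Levi decomposition $p=u\ell$ with $u$ in the unipotent radical $U(\lambda)$ and $\ell\in G_\lambda$, the factor $\ell$ preserves each $V_{\lambda,a}$ while $u$ only adds strictly lower weights, so the top weight of $pv$ equals that of $v$; thus $\varpi_V(pv,\lambda)=\varpi_V(v,\lambda)<0$ and $\varpi_{\Phi}(px,\lambda)=-\infty$ as well.

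The main obstacle is the finite case: the equality $\varpi_{\Phi}(px,\lambda)=\varpi_{\Phi}(x,\lambda)$ is not visible from convergence of the trajectory alone, and genuinely requires the observation that $\langle\Phi,\lambda\rangle$ is locally constant on the $\lambda$-fixed locus, together with the connectedness of the centralizer $G_\lambda$ (a path in $G_\lambda$ fixes $\lambda$ and hence moves $\lambda$-fixed points to $\lambda$-fixed points). Everything else reduces to a routine passage to the limit.
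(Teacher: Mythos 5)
Your proof is correct and follows essentially the same route as the paper's: both rest on the conjugated flow $p_t:=e^{-it\lambda}pe^{it\lambda}\to \ell\in G_\lambda$, the resulting relation between the limit points of the two trajectories, and the local constancy of $\langle\Phi,\lambda\rangle$ on the $\lambda$-fixed locus (the paper just treats the $K$-part and the $P(\lambda)$-part in one stroke via $e^{-it\lambda}gx=p_t\,e^{-it\lambda}kx$ rather than splitting them into two identities). The only minor variations are that you settle the $-\infty$ case by a weight-space/Levi-decomposition argument where the paper uses the equivalence of convergence of the trajectories $e^{-it\lambda}gx$, $e^{-it\lambda}kx$ and $e^{-itk^{-1}\lambda}x$, and that you make explicit the connectedness of $G_\lambda$, which the paper leaves implicit in its appeal to connected components of the fixed-point set.
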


\begin{proof}
  Let $p=gk^{-1}\in P(\lambda)$ and
  $g_\lambda:=\lim_{t\to\infty}e^{-it\lambda} p e^{it\lambda} \in
  G_\lambda$.  Since
  $e^{-it\lambda}gx=(e^{-it\lambda} p e^{it\lambda})e^{-it\lambda} kx$,
  we see that the following statements are equivalent: {\em (a)}
  $e^{-it\lambda}gx$ converge  when $t\to +\infty$, {\em (b)}
  $e^{-it\lambda}kx$ converge  when $t\to +\infty$ and {\em (c)}
  $e^{-itk^{-1}\lambda}x$ converge when $t\to +\infty$. So, we have first proved that $\varpi_{\Phi}(gx,\lambda)=-\infty$ if
  and only if $\varpi_{\Phi}(x,k^{-1}\lambda)=-\infty$. 
  
  Suppose now that $\varpi_{\Phi}(gx,\lambda)$ is finite. As 
  $(gm)_\lambda= g_\lambda (km)_\lambda$, we obtain
  \begin{align*}
    \varpi_{\Phi}(gx,\lambda)=\langle\Phi_{M}((gm)_\lambda),\lambda\rangle &\stackrel{(1)}{=}\langle\Phi_{M}((km)_\lambda),\lambda\rangle\\
                                                                           & \stackrel{(2)}{=}\langle\Phi_{M}(m_{k^{-1}\lambda}),k^{-1}\lambda\rangle=\varpi_{\Phi}(x,k^{-1}\lambda).
  \end{align*}
  The equality $(1)$ is due to the fact that
  $n\in M^\lambda\mapsto \langle\Phi_{M}(n),\lambda\rangle$ is
  constant on the connected component of $M^\lambda$ containing
  $(km)_\lambda$. And $(2)$ comes from the relation
  $\langle\Phi_{M}(e^{-it\lambda}km),\lambda\rangle=\langle\Phi_{M}(e^{-itk^{-1}\lambda}m),k^{-1}\lambda\rangle$.
\end{proof}

\begin{definition}
For any $x\in V\times M$, we consider 
$$
\mathbf{M}_{\Phi}(x)=\sup_{\lambda\neq 0}\frac{\varpi_{\Phi}(x,\lambda)}{\|\lambda\|}.
$$
\end{definition}

In Section~\ref{sec:M-phi-rel}, we verify that the function $\mathbf{M}_{\Phi}$ coincides with the function $\mathbf{M}_{rel}$ 
defined in Section~\ref{sec:HKKN-stratification}, when we work with the $G$-variety $V\times \Pbb E$.

Since $\varpi_{\Phi}(x,\lambda)\leq \langle\Phi(x),\lambda\rangle\leq \| \Phi(x)\|\, \|\lambda\|$, we see that 
$\mathbf{M}_{\Phi}(x)\leq \|\Phi(x)\|$. We have a more precise minoration.

\begin{proposition} \label{prop:M-G-invariant}
The function $\mathbf{M}_{\Phi}$ is $G$-invariant, and $\forall x\in V\times M$ we have 
\begin{equation}\label{eq:M-phi}
\mathbf{M}_{\Phi}(x)\leq \inf_{g\in G}\|\Phi(gx)\| = \|\Phi(x_\infty)\|.
\end{equation}
\end{proposition}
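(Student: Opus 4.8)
The plan is to reduce everything to two ingredients already available: the elementary majoration $\mathbf{M}_{\Phi}(x)\leq\|\Phi(x)\|$ recorded just above the statement, and the equivariance of the numerical invariant established in Lemma~\ref{lem:varphi-G-invariant}. The equality $\inf_{g\in G}\|\Phi(gx)\|=\|\Phi(x_\infty)\|$ is exactly Proposition~\ref{prop:phi-minimal-orbit-1}, so the genuine content is the first inequality of \eqref{eq:M-phi} together with the $G$-invariance of $\mathbf{M}_{\Phi}$.

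First I would prove the $G$-invariance. Fix $g\in G$ and $\lambda\in\kgot-\{0\}$. Using the decomposition $G=P(\lambda)K$, choose $k\in K$ with $gk^{-1}\in P(\lambda)$; Lemma~\ref{lem:varphi-G-invariant} then gives $\varpi_{\Phi}(gx,\lambda)=\varpi_{\Phi}(x,k^{-1}\lambda)$. Since the scalar product on $\kgot$ is $K$-invariant, we have $\|k^{-1}\lambda\|=\|\lambda\|$, whence
$$
\frac{\varpi_{\Phi}(gx,\lambda)}{\|\lambda\|}=\frac{\varpi_{\Phi}(x,k^{-1}\lambda)}{\|k^{-1}\lambda\|}\leq \mathbf{M}_{\Phi}(x).
$$
Taking the supremum over $\lambda\neq 0$ yields $\mathbf{M}_{\Phi}(gx)\leq\mathbf{M}_{\Phi}(x)$. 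Applying this same inequality to the pair $(g^{-1},gx)$ in place of $(g,x)$ gives the reverse inequality $\mathbf{M}_{\Phi}(x)\leq\mathbf{M}_{\Phi}(gx)$, so that $\mathbf{M}_{\Phi}(gx)=\mathbf{M}_{\Phi}(x)$ for every $g\in G$.

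Next, combining this $G$-invariance with the majoration $\mathbf{M}_{\Phi}\leq\|\Phi\|$, for every $g\in G$ one obtains
$$
\mathbf{M}_{\Phi}(x)=\mathbf{M}_{\Phi}(gx)\leq\|\Phi(gx)\|.
$$
Taking the infimum over $g\in G$ produces $\mathbf{M}_{\Phi}(x)\leq\inf_{g\in G}\|\Phi(gx)\|$, and Proposition~\ref{prop:phi-minimal-orbit-1} identifies this infimum with $\|\Phi(x_\infty)\|$, which is the content of \eqref{eq:M-phi}.

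I do not expect a real obstacle here: the argument is essentially formal once Lemma~\ref{lem:varphi-G-invariant} is in hand. The only point demanding a little care is the supremum manipulation in the $G$-invariance step; rather than trying to set up a bijection between the indexing variables $\lambda$ and $k^{-1}\lambda$ (where $k$ itself depends on $\lambda$), it is cleaner to bound each individual ratio $\varpi_{\Phi}(gx,\lambda)/\|\lambda\|$ by $\mathbf{M}_{\Phi}(x)$, deduce one inequality between the suprema, and then recover equality by the symmetric substitution $g\mapsto g^{-1}$, $x\mapsto gx$.
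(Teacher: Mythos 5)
Your argument is correct and coincides with the paper's own proof: both derive $\mathbf{M}_{\Phi}(gx)\leq\mathbf{M}_{\Phi}(x)$ by bounding each ratio $\varpi_{\Phi}(gx,\lambda)/\|\lambda\|$ via Lemma~\ref{lem:varphi-G-invariant} and the $K$-invariance of the norm, recover equality by symmetry, and then deduce \eqref{eq:M-phi} from $\mathbf{M}_{\Phi}\leq\|\Phi\|$ together with Proposition~\ref{prop:phi-minimal-orbit-1}. No gaps.
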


\begin{proof}
  Let $x\in V\times M$ and $g\in G$. From Lemma~\ref{lem:varphi-G-invariant}, it is immediate that for any
  $\lambda\in\kgot-\{0\}$, there exists $k \in K$ such that
  $\frac{\varpi_{\Phi}(gx,\lambda)}{\|\lambda\|}=\frac{\varpi_{\Phi}(x,k^{-1}
    \lambda)}{\| k^{-1}\lambda\|}\leq \mathbf{M}_{\Phi}(x)$.  This proves
  that $\mathbf{M}_{\Phi}(gx)\leq \mathbf{M}_{\Phi}(x)$ for any
  $(x,g)$. Hence, by symmetry we must have
  $\mathbf{M}_{\Phi}(gx)=\mathbf{M}_{\Phi}(x)$.

  Inequality (\ref{eq:M-phi}) is a consequence of the $G$-invariance
  of $\mathbf{M}_{\Phi}$, and the relation
  $\mathbf{M}_{\Phi}(x)\leq \|\Phi(x)\|$.
\end{proof}

\medskip

Thanks to the previous proposition, we see that $\mathbf{M}_{\Phi}(x)\leq 0$ if $x$ is $\Phi$-semistable. 
A. Teleman has proved that the converse statement is true when the K\"ahler Hamiltonian $G$-manifold 
is  ``energy complete'' \cite{Teleman04}. We have checked in Remark~\ref{rem:Phi-r}, that the manifold $V\times M$  
is energy complete when $M$ is compact. Hence, we have the following result.

\begin{theorem}[A. Teleman]\label{theo:teleman}
$x\in V\times M$ is $\Phi$-semistable if and only if $\mathbf{M}_{\Phi}(x)\leq 0$.
\end{theorem}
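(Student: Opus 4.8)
The plan is to dispatch the easy implication immediately and then concentrate all effort on the converse. For the forward implication I would simply combine the second assertion of Proposition~\ref{prop:phi-minimal-orbit} (which says $x$ is $\Phi$-semistable if and only if $\Phi(x_\infty)=0$) with the estimate $\mathbf{M}_{\Phi}(x)\le\|\Phi(x_\infty)\|$ of Proposition~\ref{prop:M-G-invariant}: if $x$ is $\Phi$-semistable then $\Phi(x_\infty)=0$, hence $\mathbf{M}_{\Phi}(x)\le 0$. So the entire content lies in proving that $\mathbf{M}_{\Phi}(x)\le 0$ implies $\Phi$-semistability.

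For the converse I would work through the Kempf--Ness function $\Psi_x:\mathbb{X}\to\R$ rather than through $\Phi$ directly, using the criterion of the second assertion of Proposition~\ref{prop:phi-minimal-orbit}: it suffices to show that $\mathbf{M}_{\Phi}(x)\le 0$ forces $\Psi_x$ to be bounded below. The first step is to translate $\mathbf{M}_{\Phi}(x)$ into the asymptotic slopes of $\Psi_x$ along geodesic rays. Along the ray $t\mapsto\pi(e^{it\lambda})$ issued from $\pi(e)$, the defining relation \eqref{eq:psi-n-definition} gives $\frac{d}{dt}\Psi_x(e^{it\lambda})=-\langle\Phi(e^{-it\lambda}x),\lambda\rangle$, whose limit as $t\to+\infty$ is $-\varpi_{\Phi}(x,\lambda)$ by the very definition of $\varpi_{\Phi}$. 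Since $\Psi_x$ is convex along geodesics by assertion~\ref{theo:kempf-ness-kahler-ass1} of Theorem~\ref{theo:kempf-ness-kahler}, this difference quotient is monotone, so the asymptotic slope of $\Psi_x$ along the arc-length reparametrisation of this ray equals $-\varpi_{\Phi}(x,\lambda)/\|\lambda\|$. As every geodesic ray from $\pi(e)$ has this form, taking the infimum over $\lambda\ne 0$ identifies the minimal asymptotic slope of $\Psi_x$ with $-\mathbf{M}_{\Phi}(x)$; and since for a convex function on the Hadamard manifold $\mathbb{X}$ the recession behaviour is independent of the base point, the condition $\mathbf{M}_{\Phi}(x)\le 0$ is exactly the statement that $\Psi_x$ has non-negative asymptotic slope along every geodesic ray.

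The crux, and the step I expect to be hardest, is that non-negativity of all asymptotic geodesic slopes does \emph{not} by itself force a convex function to be bounded below: the infimum may escape to infinity with slope tending to $0$ from below, i.e. a sub-linear (logarithmic) decay, which convexity alone cannot exclude. This is precisely the gap bridged by A. Teleman's theorem for ``energy complete'' Hamiltonian manifolds. Here I would invoke that $V\times M$ is energy complete, as established in Remark~\ref{rem:Phi-r}, where finiteness of $\varpi_{\Phi}(x,\lambda)$ was shown equivalent to convergence of the trajectory $e^{-it\lambda}x$; this supplies the extra control needed to rule out the sub-linear escape and thereby upgrade ``all slopes $\ge 0$'' to ``$\Psi_x$ bounded below''. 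The second assertion of Proposition~\ref{prop:phi-minimal-orbit} then yields $\Phi$-semistability.

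As a self-contained alternative to the general boundedness theorem, I would argue the converse by contraposition: if $x$ is $\Phi$-unstable then $\beta:=\Phi(x_\infty)\ne 0$, and because $x_\infty$ is a critical point of $f$ it is fixed by the flow $e^{-it\beta}$, whence $\varpi_{\Phi}(x_\infty,\beta)=\langle\Phi(x_\infty),\beta\rangle=\|\beta\|^2$ and thus $\mathbf{M}_{\Phi}(x_\infty)\ge\|\beta\|>0$. The remaining obstacle is to transport this strict positivity from $x_\infty\in\overline{Gx}$ back to $x$: since $\mathbf{M}_{\Phi}$ is only $G$-invariant (Proposition~\ref{prop:M-G-invariant}) and not visibly continuous, one cannot merely pass to the limit, and it is again energy-completeness (or the finer optimal-destabilizing-vector analysis of the later sections) that closes the gap.
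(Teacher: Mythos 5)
Your proposal follows essentially the same route as the paper: the forward implication comes from the bound $\mathbf{M}_{\Phi}(x)\leq\|\Phi(x_\infty)\|$ of Proposition~\ref{prop:M-G-invariant} together with the characterization of semistability by $\Phi(x_\infty)=0$, and the converse is delegated --- exactly as the paper does --- to Teleman's theorem for energy complete K\"ahler Hamiltonian manifolds, with Remark~\ref{rem:Phi-r} certifying that $V\times M$ is energy complete. Your sketched ``self-contained alternative'' (contraposition via $\mathbf{M}_{\Phi}(x_\infty)\geq\|\Phi(x_\infty)\|>0$, with the acknowledged problem of transporting positivity from $x_\infty$ back to $x$) is precisely the gap the paper later closes with the Chen--Sun argument of Theorem~\ref{theo:chen-sun}, which shows $\mathbf{M}_{\Phi}(x)=\|\Phi(x_\infty)\|$ for unstable $x$ without using Theorem~\ref{theo:teleman}.
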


Thanks to Remark~\ref{rem:Phi-r}, we know that $\mathbf{M}_{\Phi_r}=\mathbf{M}_{\Phi}$ for any $r>0$. 
Theorem~\ref{theo:teleman} shows that 
$(V\times M)^{\Phi_r-ss}$ is equal to $(V\times M)^{\Phi-ss}$ for any $r>0$.

\subsection{Optimal destabilizing vector}\label{sec:numerical invariant-phi}

Let us recall the following important result that refines Theorem~\ref{theo:teleman}, and which is obtained by  
Bruasse-Teleman \cite{BT05} in the context of ``energy complete" K\"ahler Hamiltonian $G$-manifolds.
 
 \begin{theorem}[Bruasse-Teleman]\label{theo:maximal-destabilizing-1} Assume that $x\in V\times M$ is $\Phi$-unstable. 
 Then there exists a unique $\lambda_x\in \kgot-\{0\}$ such that 
 \begin{equation}\label{eq:lambda-x}
 \mathbf{M}_{\Phi}(x)=\|\lambda_x\|=\frac{\varpi_{\Phi}(x,\lambda_x)}{\|\lambda_x\|}.
 \end{equation}
\end{theorem}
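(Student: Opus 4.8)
The plan is to obtain existence by a soft compactness argument and uniqueness by Kempf's convexity trick, the genuine work being hidden in one superadditivity inequality.

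\medskip

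\noindent\emph{Existence.} First I would reduce the supremum defining $\mathbf{M}_{\Phi}(x)$ to the unit sphere of $\kgot$. The map $\varpi_{\Phi}(x,\cdot)$ is positively homogeneous of degree one: reparametrising $t\mapsto t/c$ in $\varpi_{\Phi}(x,c\lambda)=\lim_{t}\langle\Phi(e^{-itc\lambda}x),c\lambda\rangle$ gives $\varpi_{\Phi}(x,c\lambda)=c\,\varpi_{\Phi}(x,\lambda)$ for $c>0$, so $\mathbf{M}_{\Phi}(x)=\sup_{\|\lambda\|=1}\varpi_{\Phi}(x,\lambda)$. The crucial point is that $\lambda\mapsto\varpi_{\Phi}(x,\lambda)$ is upper semi-continuous: since $t\mapsto\langle\Phi(e^{-it\lambda}x),\lambda\rangle$ is non-increasing, $\varpi_{\Phi}(x,\lambda)=\inf_{t\geq 0}\langle\Phi(e^{-it\lambda}x),\lambda\rangle$ is an infimum of jointly continuous functions of $\lambda$, hence u.s.c. (this needs only the compactness of $M$ and smoothness, not properness in the $V$-direction). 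By Theorem~\ref{theo:teleman} the instability of $x$ gives $\mathbf{M}_{\Phi}(x)>0$, and by Proposition~\ref{prop:M-G-invariant} it is finite ($\leq\|\Phi(x_\infty)\|$). A u.s.c. function attains its supremum on the compact unit sphere, so there is $\lambda_0$ with $\|\lambda_0\|=1$ and $\varpi_{\Phi}(x,\lambda_0)=\mathbf{M}_{\Phi}(x)$. Setting $\lambda_x:=\mathbf{M}_{\Phi}(x)\,\lambda_0$ and invoking homogeneity yields $\|\lambda_x\|=\mathbf{M}_{\Phi}(x)$ and $\varpi_{\Phi}(x,\lambda_x)=\mathbf{M}_{\Phi}(x)^2$, which is exactly (\ref{eq:lambda-x}).

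\medskip

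\noindent\emph{Uniqueness.} Let $\lambda_1,\lambda_2$ both satisfy (\ref{eq:lambda-x}); writing $m=\mathbf{M}_{\Phi}(x)$ this forces $\|\lambda_1\|=\|\lambda_2\|=m$ and $\varpi_{\Phi}(x,\lambda_i)=m^2$. Following Kempf, I would test the midpoint $\lambda=\tfrac12(\lambda_1+\lambda_2)$. The Euclidean norm is strictly convex, so $\|\lambda\|\leq m$, with equality if and only if $\lambda_1=\lambda_2$. The key ingredient is the superadditivity inequality
\begin{equation*}
\varpi_{\Phi}(x,\lambda_1+\lambda_2)\geq\varpi_{\Phi}(x,\lambda_1)+\varpi_{\Phi}(x,\lambda_2),
\end{equation*}
which, with homogeneity, gives $\varpi_{\Phi}(x,\lambda)\geq m^2$. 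If $\lambda_1\neq\lambda_2$ then $\|\lambda\|<m$, whence $\frac{\varpi_{\Phi}(x,\lambda)}{\|\lambda\|}\geq\frac{m^2}{\|\lambda\|}>m=\mathbf{M}_{\Phi}(x)$, contradicting that $\mathbf{M}_{\Phi}(x)$ is a supremum. Hence $\lambda_1=\lambda_2$.

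\medskip

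\noindent\emph{The main obstacle} is precisely this superadditivity of $\varpi_{\Phi}(x,\cdot)$, equivalently the concavity of the maximal–weight function $\lambda\mapsto\varpi_{\Phi}(x,\lambda)$ on $\kgot$. In the algebraic torus case it is elementary, since maximal weights add linearly (cf.\ the proof of Proposition~\ref{prop:ssG=T}); but here $\lambda_1$ and $\lambda_2$ need not commute, so they lie in no common maximal torus and the segment joining them is not a geodesic of $\mathbb{X}=G/K$. Consequently the inequality cannot be read off directly from the geodesic convexity of the Kempf–Ness function $\Psi_x$ (Theorem~\ref{theo:kempf-ness-kahler}(\ref{theo:kempf-ness-kahler-ass1})). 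I would establish it by the Kähler-geometric analysis of the negative gradient flow in the spirit of Chen–Sun: reduce $\varpi_{\Phi}(x,\lambda)$ via Lemma~\ref{lem:varphi-finite} to the value $\langle\Phi_M(m_\lambda),\lambda\rangle$ on the compact factor plus the $\{0,-\infty\}$ contribution of the $V$-factor, and exploit the conjugation behaviour recorded in Lemma~\ref{lem:varphi-G-invariant}, to prove concavity of the maximal weight. This convexity is the self-contained analytic input the section advertises; once it is available, the two arguments above go through unchanged, and the vector $\lambda_x$ produced here is the one whose link with $\Phi(x_\infty)$ is the subject of the companion Theorem~\ref{theo:maximal-destabilizing-2}.
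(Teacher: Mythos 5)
Your existence argument is fine: positive homogeneity, upper semicontinuity of $\lambda\mapsto\varpi_{\Phi}(x,\lambda)$ (an infimum over $t\geq 0$ of continuous functions), and Teleman's criterion (Theorem~\ref{theo:teleman}) do produce a maximizer on the unit sphere. This is exactly the ``easy route'' the paper itself sketches just before Section~\ref{sec:existence-lambda-x}; the paper's own proof instead extracts $\lambda_x$ as the asymptotic direction $\lim_{t\to\infty}\xi(t)/t$ of the negative gradient flow (Theorem~\ref{theo:chen-sun}, after Chen--Sun), which is longer but avoids citing Teleman's theorem and simultaneously proves $\Phi(x_\infty)\in K\lambda_x$, i.e.\ Theorem~\ref{theo:maximal-destabilizing-2}.

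The uniqueness part has a genuine gap. The ``key ingredient'' you isolate --- superadditivity $\varpi_{\Phi}(x,\lambda_1+\lambda_2)\geq\varpi_{\Phi}(x,\lambda_1)+\varpi_{\Phi}(x,\lambda_2)$ for arbitrary, possibly non-commuting $\lambda_1,\lambda_2\in\kgot$, equivalently concavity of $\varpi_{\Phi}(x,\cdot)$ on $\kgot$ --- is false, so no flow analysis ``in the spirit of Chen--Sun'' can deliver it. Take $K=SU(2)$, $V=\{0\}$, and $M=K\mu_0$ a coadjoint orbit, i.e.\ a sphere of radius $r$ in $\kgot^*\simeq\kgot\simeq\R^3$ with moment map the inclusion, and let $x=\mu_0$. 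Then $\varpi_{\Phi}(x,\lambda)=r\|\lambda\|$ if $\lambda\in\R^{>0}\mu_0$ (in which case $e^{-it\lambda}$ fixes $x$), and $\varpi_{\Phi}(x,\lambda)=-r\|\lambda\|$ otherwise (the flow pushes $x$ to the minimum of $\langle\Phi,\lambda\rangle$). With $\lambda_1=\mu_0$ and $\lambda_2=\epsilon v$, where $v\perp\mu_0$, $\|v\|=1$ and $0<\epsilon<r$, one has $[\lambda_1,\lambda_2]\neq 0$ and
$$
\varpi_{\Phi}(x,\lambda_1+\lambda_2)=-r\sqrt{r^2+\epsilon^2}\ <\ 0\ <\ r^2-r\epsilon=\varpi_{\Phi}(x,\lambda_1)+\varpi_{\Phi}(x,\lambda_2).
$$
Note that this $x$ is $\Phi$-unstable and $\lambda_1$ is precisely its optimal destabilizing vector, so the failure is not caused by degenerate data; positivity of one of the maximal weights does not rescue the inequality.

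What replaces concavity in the paper is Kempf's conjugation trick, and it changes the shape of the argument. Superadditivity is only proved, and only true, for \emph{commuting} arguments (Lemma~\ref{lem:varphi-C-property}); to use it one must first replace $\lambda_1,\lambda_2$ by commuting representatives $\mu_i=p_i\lambda_i$ with $p_i\in P(\lambda_i)$ (Lemma~\ref{lem:g-ell}). This forces one to leave $\kgot$: the invariant $\varpi_{\Phi}$ must be extended to a $G$-invariant function $\varpi^{\C}_{\Phi}$ on the elliptic elements $\ggot_{ell}$, the norm replaced by $\sqrt{\delta}$, and the invariance $\varpi^{\C}_{\Phi}(x,p\mu)=\varpi^{\C}_{\Phi}(x,\mu)$ for $p\in P(\mu)$ is what guarantees that optimality survives the conjugation. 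Your norm-convexity computation then runs (with $\mu_1+\mu_2$ and $\sqrt{\delta}$ in place of the midpoint and $\|\cdot\|$) and yields $\mu_1=\mu_2$ --- but this is still not $\lambda_1=\lambda_2$: one needs the further structural step that $p_1\lambda_1=p_2\lambda_2$ forces $P(\lambda_1)=P(\lambda_2)$, and that two elements of $\kgot$ conjugate under $P(\lambda_1)$ coincide (Theorem D.4 in \cite[Appendix~D]{GRS21}). So the missing idea is not a concavity statement on $\kgot$, which cannot hold, but the passage to commuting representatives inside the parabolics; that is where the actual proof does its work.
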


In the compact algebraic framework, the existence of such $\lambda_x$ was conjectured by J. Tits and was proven by 
G. Kempf \cite{Kem78} and G. Rousseau \cite{Rou78}.  See Section~\ref{sec:HKKN-algebraic} for a proof in the non-compact algebraic framework.

If one uses Theorem~\ref{theo:teleman},  the {\em existence} in Theorem~\ref{theo:maximal-destabilizing-1} is the easy part: 
it follows from the fact that for any sequence 
$\lambda_n$ converging to $\lambda_\infty$, we have $\varpi_{\Phi}(x,\lambda_\infty)\geq \limsup \varpi_{\Phi}(x,\lambda_n)$. 
For the sake of completeness, 
we give a proof of Theorem~\ref{theo:maximal-destabilizing-1}  in the next sections.

\medskip

Our main contribution here is the following fact, which is proved in Section~\ref{sec:existence-lambda-x}. 

\begin{theorem}\label{theo:maximal-destabilizing-2} If $x\in V\times M$ is $\Phi$-unstable, then $\Phi(x_\infty)\in K\lambda_x$.
\end{theorem}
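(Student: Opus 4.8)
The plan is to identify $\beta:=\Phi(x_\infty)\in\kgot^*\simeq\kgot$ with the optimal destabilizing vector of a suitable point of the orbit $Gx$, and then transport this identification back to $x$. Since $x$ is $\Phi$-unstable, Proposition~\ref{prop:phi-minimal-orbit} gives $\beta\neq 0$, and $x_\infty$ is a critical point of $f$, so $\beta\cdot x_\infty=0$. First I would record that at the critical point $x_\infty$ one has $\lambda_{x_\infty}=\beta$: the relation $\beta\cdot x_\infty=0$ forces $e^{-\iu t\beta}x_\infty=x_\infty$ for all $t$, whence $\varpi_{\Phi}(x_\infty,\beta)=\langle\Phi(x_\infty),\beta\rangle=\|\beta\|^2$ and $\frac{\varpi_{\Phi}(x_\infty,\beta)}{\|\beta\|}=\|\beta\|$. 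Combined with the bound $\mathbf{M}_{\Phi}(x_\infty)\le\|\Phi(x_\infty)\|=\|\beta\|$ of Proposition~\ref{prop:M-G-invariant}, this gives $\mathbf{M}_{\Phi}(x_\infty)=\|\beta\|$ and shows that $\beta$ attains the supremum defining $\mathbf{M}_{\Phi}(x_\infty)$; by the uniqueness in Theorem~\ref{theo:maximal-destabilizing-1}, $\lambda_{x_\infty}=\beta$.

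Next I would establish a $K$-equivariance of the optimal vector along a $G$-orbit: for every $z\in V\times M$ and every $h\in G$ one has $\lambda_{hz}\in K\lambda_z$. Using $G=P(\lambda_{hz})K$, choose $k\in K$ with $hk^{-1}\in P(\lambda_{hz})$. Lemma~\ref{lem:varphi-G-invariant} then gives $\varpi_{\Phi}(hz,\lambda_{hz})=\varpi_{\Phi}(z,k^{-1}\lambda_{hz})$, and since $\|k^{-1}\lambda_{hz}\|=\|\lambda_{hz}\|=\mathbf{M}_{\Phi}(hz)=\mathbf{M}_{\Phi}(z)$ (Proposition~\ref{prop:M-G-invariant}), the vector $k^{-1}\lambda_{hz}$ attains $\mathbf{M}_{\Phi}(z)$. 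Uniqueness in Theorem~\ref{theo:maximal-destabilizing-1} forces $k^{-1}\lambda_{hz}=\lambda_z$, i.e. $\lambda_{hz}=k\lambda_z k^{-1}\in K\lambda_z$.

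The main step, and the hardest, is to produce a point $y\in Gx$ with $\lambda_y=\beta$. For this I would analyze the gradient flow of $f$: writing $x(t)=g(t)^{-1}x$ with $g(t)^{-1}g'(t)=\iu\,\Phi(x(t))$ as in (\ref{eq:g(t)}), the Łojasiewicz inequality (used in Proposition~\ref{prop:flow-moment-map-square} and Theorem~\ref{theo:kempf-ness-kahler}) gives $x(t)\to x_\infty$ and $\|\Phi(x(t))-\beta\|\le Ce^{-bt}$. Setting $h(t):=g(t)\exp(-\iu t\beta)$, a direct computation yields $h(t)^{-1}h'(t)=\iu\,\mathrm{Ad}(\exp(\iu t\beta))\bigl(\Phi(x(t))-\beta\bigr)$, of norm $\le Ce^{-bt}$; hence $h(t)$ converges to some $h_\infty\in G$, and $y:=h_\infty^{-1}x\in Gx$ satisfies $x(t)=\exp(-\iu t\beta)\bigl(h(t)^{-1}x\bigr)$. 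The delicate point is to deduce $\lim_{t\to\infty}\exp(-\iu t\beta)y=x_\infty$: decomposing the $V$-component into $\beta$-weight spaces, the convergence of $x(t)$ forces $y$ to have no strictly positive $\beta$-weight component and controls the decaying error $\delta(t):=h(t)^{-1}x-y$ on each weight space, exactly cancelling the exponential expansion of $\exp(-\iu t\beta)$; since $x_\infty$ is $\beta$-fixed, the limit is $x_\infty$. Granting this, $\varpi_{\Phi}(y,\beta)=\lim_{t\to\infty}\langle\Phi(\exp(-\iu t\beta)y),\beta\rangle=\langle\Phi(x_\infty),\beta\rangle=\|\beta\|^2$, so $\frac{\varpi_{\Phi}(y,\beta)}{\|\beta\|}=\|\beta\|$. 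As $\overline{Gy}=\overline{Gx}$, Proposition~\ref{prop:phi-minimal-orbit} gives $\|\Phi(y_\infty)\|=\|\beta\|$, hence $\mathbf{M}_{\Phi}(y)\le\|\beta\|$ by Proposition~\ref{prop:M-G-invariant}; thus $\mathbf{M}_{\Phi}(y)=\|\beta\|$ and $\beta$ attains it, so $\lambda_y=\beta$ by Theorem~\ref{theo:maximal-destabilizing-1}.

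Finally, writing $x=h_\infty y$ and applying the equivariance of the second paragraph with $z=y$ and $h=h_\infty$, I obtain $\lambda_x\in K\lambda_y=K\beta$, which is equivalent to $\Phi(x_\infty)=\beta\in K\lambda_x$, as claimed. I expect the analytic estimate of the third paragraph—balancing the exponential (Łojasiewicz) decay of $\Phi(x(t))-\beta$ against the exponential expansion of the non-compact flow $\exp(-\iu t\beta)$ on $V$—to be the principal obstacle; the remaining arguments reduce to the uniqueness in Theorem~\ref{theo:maximal-destabilizing-1} together with the invariance statements already established.
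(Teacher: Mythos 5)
Your steps 1, 2 and 4 are correct: the computation $\lambda_{x_\infty}=\Phi(x_\infty)$ at the critical point, the equivariance $\lambda_{hz}\in K\lambda_z$ along a $G$-orbit (this is exactly Lemma~\ref{lem:lambda-x-G-action} of the paper), and the final reduction all work, and together they correctly isolate the crux: one must produce a point $y$ in the orbit $Gx$ itself (not merely in $\overline{Gx}$, where $x_\infty$ lives) with $\lambda_y=\Phi(x_\infty)$. The gap is in step 3. From $h(t)^{-1}h'(t)=\iu\,\mathrm{Ad}(\exp(\iu t\beta))\bigl(\Phi(x(t))-\beta\bigr)$ you conclude that this has norm $\le Ce^{-bt}$, as if $\mathrm{Ad}(\exp(\iu t\beta))$ were an isometry. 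It is not: $\exp(\iu t\beta)$ lies in the non-compact part of $G$, and $\mathrm{Ad}(\exp(\iu t\beta))=\exp(t\,\mathrm{ad}(\iu\beta))$ expands the weight spaces of $\mathrm{ad}(\iu\beta)$ at a rate $e^{at}$, with $a>0$ fixed by the weights of $\beta$. Since $\Phi(x(t))-\beta$ has no reason to lie in the centralizer $\kgot_\beta$, the honest bound is $Ce^{(a-b)t}$, which is useless unless the \L ojasiewicz rate $b$ beats $a$ --- and $b$ is not under your control (worse, \L ojasiewicz in general yields only polynomial decay $t^{-\delta}$ when the exponent differs from $1/2$). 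So the convergence of $h(t)$, hence the existence of $y=h_\infty^{-1}x$, is unjustified; and the subsequent claim that the weight-space decomposition ``exactly cancels'' the expansion of $\exp(-\iu t\beta)$ is asserted, not proved, and runs into the same obstruction.

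This is not a repairable technicality but precisely the reason the paper's proof (Theorem~\ref{theo:chen-sun}, following Chen--Sun and \cite[Chapter~10]{GRS21}) is structured differently. There one writes $g(t)=\exp(\iu\xi(t))u(t)$ with $u(t)\in K$ and proves only the much weaker statement that $\xi(t)/t$ converges to some $\xi_\infty$, with sublinear error: the flow line $\gamma(t)=\pi(g(t))$ stays within distance $Ct^{1-\epsilon}$ of the geodesic ray $r\mapsto\pi(\exp(\iu r\xi_\infty))$, but is never shown to stay at bounded distance from it --- which is what convergence of your $h(t)$ would amount to. The identities $\mathbf{M}_{\Phi}(x)=\varpi_{\Phi}(x,\xi_\infty)/\|\xi_\infty\|=\|\xi_\infty\|$ are then extracted from the asymptotics of the Kempf--Ness function along $\gamma$ versus the ray, and $\Phi(x_\infty)\in K\xi_\infty$ from the final steps of \cite[Chapter~10]{GRS21}; uniqueness (Theorem~\ref{theo:maximal-destabilizing-1}) then identifies $\xi_\infty=\lambda_x$, exactly as in your steps 1--2--4. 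To complete your proof you would have to replace step 3 by this geodesic-comparison argument, or by an equivalent control of the flow that does not presuppose convergence of $g(t)\exp(-\iu t\beta)$.
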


\medskip

The next lemma explains the behavior of the map $x\mapsto \lambda_x$ relatively to the $G$-action 
and the dilatations $\delta_r$.

\begin{lemma}\label{lem:lambda-x-G-action}
Let  $x\in V\times M$ be a $\Phi$-unstable element.
\begin{enumerate}
\item We have $\lambda_{gx}=k\lambda_x$ for any $(g,k)\in G\times K$ such that $k^{-1}g\in P({\lambda_x})$.
\item We have $\lambda_{\delta_r(x)}=\lambda_x$ for any $r>0$.
\end{enumerate}
\end{lemma}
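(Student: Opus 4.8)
The plan is to reduce both statements to the \emph{uniqueness} clause of Theorem~\ref{theo:maximal-destabilizing-1}: since $\lambda_x$ is the unique nonzero vector of $\kgot$ satisfying $\mathbf{M}_{\Phi}(x)=\|\lambda_x\|=\varpi_{\Phi}(x,\lambda_x)/\|\lambda_x\|$, it suffices in each case to exhibit a candidate and check that it satisfies the three characterizing relations for the relevant point. I first note that $\Phi$-unstability is invariant under both operations: $\mathbf{M}_{\Phi}$ is $G$-invariant by Proposition~\ref{prop:M-G-invariant}, and $\mathbf{M}_{\Phi}\circ\delta_r=\mathbf{M}_{\Phi}$ by the computation carried out below, so $gx$ and $\delta_r(x)$ are again $\Phi$-unstable and $\lambda_{gx}$, $\lambda_{\delta_r(x)}$ are well defined.

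For (i) I would take $k\lambda_x$ as candidate for $\lambda_{gx}$. The norm is $K$-invariant, so $\|k\lambda_x\|=\|\lambda_x\|$, and $\mathbf{M}_{\Phi}(gx)=\mathbf{M}_{\Phi}(x)=\|\lambda_x\|$ by Proposition~\ref{prop:M-G-invariant}. The crucial point is the value of $\varpi_{\Phi}(gx,k\lambda_x)$. Here I would use the identity $P(k\lambda_x)=kP(\lambda_x)k\inv$ (which follows from $e^{-it(k\lambda_x)}=ke^{-it\lambda_x}k\inv$), so that the hypothesis $k\inv g\in P(\lambda_x)$ is equivalent to $gk\inv\in P(k\lambda_x)$. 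Applying Lemma~\ref{lem:varphi-G-invariant} to the pair $(g,k)$ and the vector $\mu=k\lambda_x$ then gives
\[
\varpi_{\Phi}(gx,k\lambda_x)=\varpi_{\Phi}(x,k\inv(k\lambda_x))=\varpi_{\Phi}(x,\lambda_x)=\|\lambda_x\|^2.
\]
Dividing by $\|k\lambda_x\|=\|\lambda_x\|$ shows that $k\lambda_x$ realizes all three defining relations for $gx$, whence $\lambda_{gx}=k\lambda_x$ by uniqueness.

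For (ii) the key observation is Remark~\ref{rem:Phi-r}: $\varpi_{\Phi}(\delta_r(x),\lambda)=\varpi_{\Phi_r}(x,\lambda)$ is independent of $r>0$, hence equals its value at $r=1$, namely $\varpi_{\Phi}(x,\lambda)$, for every $\lambda\in\kgot$. Taking the supremum over $\lambda\neq0$ of $\varpi_{\Phi}(\delta_r(x),\lambda)/\|\lambda\|$ gives $\mathbf{M}_{\Phi}(\delta_r(x))=\mathbf{M}_{\Phi}(x)=\|\lambda_x\|$. Consequently $\lambda_x$ itself satisfies $\varpi_{\Phi}(\delta_r(x),\lambda_x)/\|\lambda_x\|=\varpi_{\Phi}(x,\lambda_x)/\|\lambda_x\|=\|\lambda_x\|=\mathbf{M}_{\Phi}(\delta_r(x))$, so uniqueness yields $\lambda_{\delta_r(x)}=\lambda_x$.

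Once the uniqueness part of Theorem~\ref{theo:maximal-destabilizing-1} is available, the argument is essentially bookkeeping. The only point requiring genuine care is the parabolic identity $P(k\lambda_x)=kP(\lambda_x)k\inv$ together with the correct translation of the hypothesis $k\inv g\in P(\lambda_x)$ into the condition $gk\inv\in P(\mu)$ needed to invoke Lemma~\ref{lem:varphi-G-invariant}; beyond this I do not anticipate any serious obstacle.
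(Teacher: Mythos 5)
Your proposal is correct and follows essentially the same route as the paper: both parts reduce to the uniqueness clause of Theorem~\ref{theo:maximal-destabilizing-1}, with (i) handled by translating $k^{-1}g\in P(\lambda_x)$ into $gk^{-1}\in P(k\lambda_x)$ so that Lemma~\ref{lem:varphi-G-invariant} gives $\varpi_{\Phi}(gx,k\lambda_x)=\varpi_{\Phi}(x,\lambda_x)$, and (ii) handled via the $r$-independence of $\varpi_{\Phi_r}$ from Remark~\ref{rem:Phi-r}. This matches the paper's argument in both structure and detail.
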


\begin{proof}
  Let $(k,p)\in K\times P({\lambda_x})$ such that $g=kp$. Then $g=p'k$
  with $p'=kpk^{-1}\in P(k\lambda_x)$.  Thanks to Lemma
 ~\ref{lem:varphi-G-invariant}, we have
$$
\frac{\varpi_{\Phi}(gx,k\lambda_x)}{\|k\lambda_x\|}=\frac{\varpi_{\Phi}(p'kx,k\lambda_x)}{\|k\lambda_x\|}=\frac{\varpi_{\Phi}(x,\lambda_x)}{\|\lambda_x\|}={\bf
  M}_{\Phi}(x)=\mathbf{M}_{\Phi}(gx).
$$
This proves that $k\lambda_x$ is the optimal destabilizing vector of
$gx$.

The second point is a consequence of the relations
$\varpi_{\Phi}(x,\lambda)=\varpi_{\Phi}(\delta_r(x),\lambda), \forall
r>0$ (see Lemma~\ref{rem:Phi-r}).
\end{proof}

\subsubsection{Existence of an optimal destabilizing vector}\label{sec:existence-lambda-x}

The existence of an optimal destabilizing vector satisfying the condition of Theorem~\ref{theo:maximal-destabilizing-2} 
follows from a result essentially due
to Chen-Sun \cite[Theorems~4.4 and~4.5]{ChenSun}.  A detailed
proof of this ``Generalized Kempf Existence Theorem" is given in
\cite[Chapter 10]{GRS21} when the K\"ahler manifold is compact. 

\begin{theorem}\label{theo:chen-sun} Assume that $x\in V\times M$ is $\Phi$-unstable, so that $\Phi(x_\infty)\neq 0$.

Let $x(t)$ be the negative gradient flow of $f$ passing through $x$. Let $g:\R^{\geq 0}\to G$ be the solution of (\ref{eq:g(t)}) so that $x(t)=g^{-1}(t)x$. Defines the curves 
$\xi:\R^{\geq 0}\to \kgot$ and $u:\R^{\geq 0}\to K$ by $g(t)=:\exp(i\xi(t))u(t)$.

Then the limit
$$
\xi_\infty:=\lim_{t\to +\infty}\frac{\xi(t)}{t}
$$
exits and satisfies 
$$
\mathbf{M}_{\Phi}(x)=\frac{\varpi_{\Phi}(x,\xi_\infty)}{\|\xi_\infty\|}=\|\xi_\infty\|\quad {\rm and}\quad \Phi(x_\infty)\in K\xi_\infty.
$$
\end{theorem}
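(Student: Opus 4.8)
The plan is to adapt the Chen--Sun argument developed in \cite[Chapter~10]{GRS21} for compact manifolds, by transporting the negative gradient flow of $f$ to the nonpositively curved symmetric space $\mathbb{X}=G/K$ and studying its behaviour at infinity. Since $u(t)\in K$, the projected curve is $\theta(t):=\pi(g(t))=\pi(\exp(i\xi(t)))$, and by assertion~\ref{theo:kempf-ness-kahler-ass2} of Theorem~\ref{theo:kempf-ness-kahler} it is precisely the negative gradient flow line of the Kempf--Ness function $\Psi_x$ through $\pi(e)$. The non-compactness of $V$ is not an obstruction at this stage: by Proposition~\ref{prop:flow-moment-map-square} the flow stays inside a fixed set $B\times M$ and inside $\overline{Gx}$, on which $\Phi$ is proper by Proposition~\ref{prop:phi-proper-orbit}; thus all the limits below are taken in compact sets, so the compact-case estimates apply.

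First I would prove the existence of $\xi_\infty$ together with $\|\xi_\infty\|=c$, where $c:=\|\Phi(x_\infty)\|>0$ since $x$ is $\Phi$-unstable. Along $\theta$ one has $\|\dot\theta(t)\|=\|\Phi(x(t))\|$, a non-increasing quantity converging to $c>0$, so $\theta$ leaves every bounded set and $d_{\mathbb{X}}(\pi(e),\theta(t))=\|\xi(t)\|\to\infty$. Convexity of $\Psi_x$ along geodesics (assertion~\ref{theo:kempf-ness-kahler-ass1}) together with the $\mathrm{CAT}(0)$ geometry forces the escaping flow line to be asymptotic to a unique geodesic ray and to become radial at infinity; hence $\hat\xi:=\lim_{t}\xi(t)/\|\xi(t)\|$ exists and $\|\xi(t)\|\sim ct$ (the arc length $s(t)=\int_0^t\|\Phi(x(\tau))\|\,d\tau\sim ct$ agreeing asymptotically with the distance). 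Consequently $\xi_\infty=\lim_{t}\xi(t)/t=c\,\hat\xi$ exists with $\|\xi_\infty\|=c$.

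Next come the numerical identities. Along the geodesic $t\mapsto\pi(\exp(it\xi_\infty))$ the defining relation \eqref{eq:psi-n-definition} gives $\tfrac{d}{dt}\Psi_x(\pi(\exp(it\xi_\infty)))=-\langle\Phi(e^{-it\xi_\infty}x),\xi_\infty\rangle\to-\varpi_{\Phi}(x,\xi_\infty)$, and by convexity this equals $\lim_t\Psi_x(\pi(\exp(it\xi_\infty)))/t$. The analytic heart of the theorem, supplied by the Chen--Sun analysis, is that this asymptotic slope coincides with the descent rate of the gradient flow $\lim_t\Psi_x(\theta(t))/t=-c^2$, the value computed in assertion~\ref{theo:kempf-ness-kahler-ass7}; equivalently, the flow line being radial at infinity aligns $\Phi(x(t))$ with $\xi_\infty$ in the limit. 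This yields $\varpi_{\Phi}(x,\xi_\infty)=c^2=\|\xi_\infty\|^2$, whence $\varpi_{\Phi}(x,\xi_\infty)/\|\xi_\infty\|=\|\xi_\infty\|=c$. Since Proposition~\ref{prop:M-G-invariant} gives $\mathbf{M}_{\Phi}(x)\le\|\Phi(x_\infty)\|=c$ while $\xi_\infty$ attains the value $c$ in the supremum defining $\mathbf{M}_{\Phi}$, all the asserted equalities follow.

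Finally, for $\Phi(x_\infty)\in K\xi_\infty$ I would compare the construction at $x$ with its value at the critical point $x_\infty$. Writing $\beta:=\Phi(x_\infty)$, the flow through $x_\infty$ is stationary and solves $g(t)=\exp(it\beta)$, so the construction applied to $x_\infty$ returns $\xi_\infty(x_\infty)=\beta$ directly. The map $y\mapsto\xi_\infty(y)$ is $G$-equivariant in the sense of Lemma~\ref{lem:lambda-x-G-action}, so $\xi_\infty(x(t))\in K\xi_\infty(x)$ for every $t$; a joint upper semicontinuity property of $(y,\lambda)\mapsto\varpi_{\Phi}(y,\lambda)$ together with uniqueness of the optimal vector (Theorem~\ref{theo:maximal-destabilizing-1}, whose uniqueness part follows Kempf \cite{Kem78}) shows that $\xi_\infty(x(t))\to\xi_\infty(x_\infty)=\beta$. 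As $K\xi_\infty(x)$ is closed, $\beta\in K\xi_\infty(x)=K\xi_\infty$. The main obstacle, and the reason the compact proof does not transfer verbatim, is precisely the slope-matching of the third step: because $\Psi_x$ fails to be globally Lipschitz when the $V$-component of $x$ is not $G$-fixed, one cannot compare the geodesic ray and the asymptotic flow line by a naive Lipschitz estimate, and must instead control the polar parts $u(t)$ in the Cartan decomposition $g(t)=\exp(i\xi(t))u(t)$ to show that the flow becomes radial and that $\mathrm{Ad}(u(t))\Phi(x(t))$ converges to the ray direction.
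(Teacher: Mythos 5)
Your skeleton matches the paper's: transport the flow to $\mathbb{X}=G/K$ via Theorem~\ref{theo:kempf-ness-kahler}, extract $\xi_\infty$, prove the slope identity $\varpi_{\Phi}(x,\xi_\infty)=\|\Phi(x_\infty)\|^2$, and close with the sandwich $\frac{\varpi_{\Phi}(x,\xi_\infty)}{\|\xi_\infty\|}\leq \mathbf{M}_{\Phi}(x)\leq\|\Phi(x_\infty)\|$ from Proposition~\ref{prop:M-G-invariant}. But there is a genuine gap exactly at the point you yourself call the ``analytic heart'': you never prove the slope-matching $\varpi_{\Phi}(x,\xi_\infty)=\|\Phi(x_\infty)\|^2$; you assert it is ``supplied by the Chen--Sun analysis'' while simultaneously (and correctly) observing that the compact-case argument breaks down because $\Psi_x$ is not globally Lipschitz. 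Your proposed remedy --- ``control the polar parts $u(t)$\dots\ show that $\mathrm{Ad}(u(t))\Phi(x(t))$ converges to the ray direction'' --- restates the desired conclusion rather than proving it. The paper's fix is concrete and different: split $\Psi_x=\Psi_v+\Psi_m$; the factor $\Psi_m$ \emph{is} globally Lipschitz with constant $\sup_{g}\|\Phi_M(gm)\|$ (compactness of $M$), so the Chen--Sun distance estimate $d_{\mathbb{X}}(\gamma_\infty(r),\gamma(r))\leq C r^{1-\epsilon}$ controls $|\Psi_m(\gamma(r))-\Psi_m(\gamma_\infty(r))|/r$; and the $V$-contribution $\bigl(\|g(r)^{-1}v\|^2-\|e^{-ir\xi_\infty}v\|^2\bigr)/4r$ tends to $0$ because both curves are bounded --- the first since the gradient flow converges, the second because the convexity bound $|\Psi_x(\gamma_\infty(r))|\leq C'r$ of Lemma~\ref{lem:psi-x-gamma} rules out the exponential-growth alternative of Lemma~\ref{lem:varphi-finite}. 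Without some such argument your chain of equalities has no support.

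A second, related problem is that your first step claims too much too early: you assert that convexity plus $\mathrm{CAT}(0)$ geometry ``forces'' the flow to become radial, so that $\|\xi(t)\|\sim ct$ and $\|\xi_\infty\|=c:=\|\Phi(x_\infty)\|$. No soft nonpositive-curvature argument gives this; general results of that type yield at best an asymptotic ray, not that the escape rate equals the limiting speed. In the paper the existence of $\xi_\infty$ comes from the quantitative Chen--Sun estimates $\|\xi(t)/t-\xi(t')/t'\|\leq C/t^{\epsilon}$ (importable because their proof uses only convergence of the flow with $\Phi(x_\infty)\neq 0$), the geometric input gives only the inequality $\|\xi_\infty\|\leq\|\Phi(x_\infty)\|$ via $\|\xi(t)\|/t\leq\frac1t\int_0^t\|\Phi(x(s))\|\,ds$, and the equality $\|\xi_\infty\|=\|\Phi(x_\infty)\|$ is forced only at the end by the sandwich with $\mathbf{M}_{\Phi}$; taking it as an input, as you do, makes your later derivation circular. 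On the positive side, your treatment of $\Phi(x_\infty)\in K\xi_\infty$ (stationary flow at $x_\infty$ gives $\xi_\infty(x_\infty)=\Phi(x_\infty)$, then equivariance, upper semicontinuity of $\varpi_{\Phi}$ as an infimum of continuous functions, and Kempf-type uniqueness) is a genuinely different route from the paper, which imports steps 8--9 of \cite[Chapter~10]{GRS21}; it looks workable, but note it creates a forward dependence on the uniqueness result of Section~\ref{sec:unique-lambda-x} (harmless, since that proof does not rely on Theorem~\ref{theo:chen-sun}).
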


{\em Proof:} We take the steps of the proof given in \cite[Chapter~10]{GRS21} and adapt it to our non-compact framework. 
Let $\gamma(t)$ denote the image of $g(t)$ in $\mathbb{X}=G/K$:
$$
\gamma(t)=\pi(g(t))=\pi(\exp(i\xi(t))).
$$
The geodesic connecting $\gamma(0)$ and $\gamma(t)$ is given by 
$$
\gamma_t(r)=\pi\left(\exp(i\,r\,\tfrac{\xi(t)}{t})\right),\quad 0\leq r\leq t.
$$ 
For $t>0$, define the function $\rho_t :[0,t]\to \R$ by $\rho_t(r) := d_{\mathbb{X}}(\gamma_t(r),\gamma(r))$. 
In the first five steps of \cite[Chapter~10]{GRS21}, the authors show that 
there exist $C>0$ and $\epsilon\in ]0,1[$ such that
\begin{equation}\label{eq:five-step}
\rho_t(r)\leq C \, r^{1-\epsilon}\quad {\rm and}\quad \left| \frac{\xi(t)}{t}-\frac{\xi(t')}{t'}\right|\leq \frac{C}{t^\epsilon}, 
\end{equation}
for all $t'\geq t\geq 1$ and $t\geq r\geq 0$. Their proof of (\ref{eq:five-step}) does not require the manifold to be compact, 
but only use that the negative gradient flow $x(t)$ converge to 
$x_\infty$ with $\Phi(x_\infty)\neq 0$. So, at this stage, we have proved that the 
limit $\xi_\infty:=\lim_{t\to +\infty}\frac{\xi(t)}{t}$ exits.

Define the geodesic $\gamma_\infty: [0,+\infty[\to \mathbb{X}$ by 
$$
\gamma_\infty(r)=\pi\left(\exp(i\,r\,\xi_\infty)\right)=\lim_{t\to \infty} \gamma_t(r).
$$
Taking the limit $t\to \infty$ in (\ref{eq:five-step}), we obtain 
\begin{equation}\label{eq:distance-gamma-r}
d_{\mathbb{X}}(\gamma_\infty(r),\gamma(r))\leq C \, r^{1-\epsilon},\quad\forall r\geq 0.
\end{equation}
We can now prove the next steps 6 $\&$ 7. Since the function $\Psi_x: \mathbb{X}\to \R$ is not globally Lipschitz, 
we slightly modify the arguments of \cite{GRS21}.

\begin{lemma}\label{lem:psi-x-gamma}
There exists $C'>0$ such that $|\Psi_x(\gamma_\infty(r))|\leq C' r,\ \forall r\geq 1$.
\end{lemma}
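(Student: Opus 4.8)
The plan is to exploit the product structure $N=V\times M$ together with the convexity of Kempf--Ness functions along geodesics. First I would record the additivity $\Psi_x=\Psi_v+\Psi_m$: since $\Phi=\Phi_V+\Phi_M$ and both $\Psi_v$ (the Kempf--Ness function of $V$) and $\Psi_m$ (that of $M$) satisfy the defining conditions of Proposition~\ref{prop:psi-n} with respect to their own moment maps, the sum $\Psi_v+\Psi_m$ is right $K$-invariant, vanishes at $e$, and has the derivative prescribed by $\Phi$; by uniqueness it equals $\Psi_x$. It then suffices to bound each summand along $\gamma_\infty$.

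The $M$-part is immediate. Because $M$ is compact, $C_M:=\sup_M\|\Phi_M\|<\infty$, and the defining relation of Proposition~\ref{prop:psi-n} gives $\bigl|\tfrac{d}{dt}\Psi_m(ge^{it\lambda})\bigr|\le C_M\|\lambda\|$, so $\Psi_m$ is globally Lipschitz on $\mathbb{X}$ with constant $C_M$. Since $\gamma_\infty(r)=\pi(\exp(ir\xi_\infty))$ is a geodesic issued from $\pi(e)$ with speed $\|\xi_\infty\|$, we have $d_{\mathbb{X}}(\gamma_\infty(r),\pi(e))=r\|\xi_\infty\|$, whence
$$
|\Psi_m(\gamma_\infty(r))|\le C_M\, r\,\|\xi_\infty\|,\qquad\forall r\ge 0.
$$

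The $V$-part is the heart of the matter, and here lies the main obstacle: a priori $\Psi_v(\gamma_\infty(r))=\tfrac14\|e^{-ir\xi_\infty}v\|^2-\tfrac14\|v\|^2$ could grow \emph{exponentially} if $\xi_\infty$ had a positive weight on $v$ (i.e.\ if $\varpi_V(v,\xi_\infty)<0$), which would defeat any linear bound. I would rule this out by a convexity-and-limit argument. Consider, for each $t>0$, the geodesic $\gamma_t(r)=\pi\bigl(\exp(ir\,\xi(t)/t)\bigr)$, $0\le r\le t$, joining $\gamma_t(0)=\pi(e)$ to $\gamma_t(t)=\gamma(t)$. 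By Example~\ref{example:psi-v} (or the general Kempf--Ness convexity of Theorem~\ref{theo:kempf-ness-kahler}), $\Psi_v$ is convex along $\gamma_t$, so the chord inequality gives $\Psi_v(\gamma_t(r))\le\frac{r}{t}\,\Psi_v(\gamma(t))$, using $\Psi_v(\gamma_t(0))=\Psi_v(e)=0$. Now by Proposition~\ref{prop:flow-moment-map-square} the flow stays in $B\times M$ for some ball $B\subset V$, so $\|g(t)^{-1}v\|\le R$ for all $t$ and hence $\Psi_v(\gamma(t))=\tfrac14\|g(t)^{-1}v\|^2-\tfrac14\|v\|^2\le \tfrac14R^2=:R'$ uniformly. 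Fixing $r$ and letting $t\to\infty$, continuity of $\Psi_v$ and $\gamma_t(r)\to\gamma_\infty(r)$ yield
$$
\Psi_v(\gamma_\infty(r))\le \lim_{t\to\infty}\tfrac{r}{t}R'=0 .
$$
Combined with the trivial lower bound $\Psi_v(\gamma_\infty(r))\ge-\tfrac14\|v\|^2$, this shows $\Psi_v\circ\gamma_\infty$ is in fact \emph{bounded}, with $|\Psi_v(\gamma_\infty(r))|\le\tfrac14\|v\|^2$.

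Adding the two estimates gives $|\Psi_x(\gamma_\infty(r))|\le \tfrac14\|v\|^2+C_M\|\xi_\infty\|\,r\le C'r$ for $r\ge1$, with $C'=\tfrac14\|v\|^2+C_M\|\xi_\infty\|$, which is the claim. I would emphasise that the only nonroutine input is the convexity/limit step controlling $\Psi_v$, and that it is precisely this step that implicitly establishes $\varpi_V(v,\xi_\infty)\ge0$; the distance estimate \eqref{eq:distance-gamma-r} is not needed for this lemma, only the boundedness of the flow in the $V$-direction.
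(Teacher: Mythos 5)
Your proof is correct, but it implements the convexity idea differently from the paper. The paper works with the full Kempf--Ness function: setting $F_t(r)=\Psi_x(\gamma_t(r))$, convexity on $[0,t]$ traps $F_t(r)-F_t(0)$ between $r\,F_t'(0)=-r\langle\Phi(x),\xi(t)/t\rangle$ and $r\,F_t'(t)=-r\langle\Phi(e^{-i\xi(t)}x),\xi(t)/t\rangle$, and both endpoint derivatives are bounded in absolute value by $\|\Phi(x)\|\,\|\xi(t)/t\|$ because $\|\Phi(x(t))\|$ is non-increasing along the negative gradient flow; letting $t\to\infty$ then gives the two-sided linear bound in one stroke, without ever using the product structure of $V\times M$. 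You instead split $\Psi_x=\Psi_v+\Psi_m$ (legitimate, by the uniqueness in Proposition~\ref{prop:psi-n}), dispose of the compact factor by the global Lipschitz bound, and control the $V$-factor by the chord inequality combined with the boundedness of the flow in the $V$-direction from Proposition~\ref{prop:flow-moment-map-square}, plus the trivial lower bound $\Psi_v\geq-\tfrac14\|v\|^2$. What your route buys is a strictly stronger conclusion on the $V$-part: $\Psi_v\circ\gamma_\infty$ is \emph{bounded}, equivalently the curve $r\mapsto e^{-ir\xi_\infty}v$ is bounded (so $\varpi_V(v,\xi_\infty)\geq 0$), a fact the paper extracts only afterwards, in the proof of the lemma that follows, as a consequence of Lemma~\ref{lem:psi-x-gamma} together with the Lipschitz estimate for $\Psi_m$ (see the argument for \eqref{eq:limite-0}). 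What the paper's route buys is economy and generality: it invokes neither the additive splitting nor Proposition~\ref{prop:flow-moment-map-square}, only the monotonicity of $\|\Phi\|$ along the flow, so it transfers verbatim to settings without the $V\times M$ product structure. Both proofs share the essential mechanism: convexity of Kempf--Ness functions along the geodesics $\gamma_t$, endpoint control uniform in $t$, and passage to the limit $\gamma_t(r)\to\gamma_\infty(r)$.
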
 

\begin{proof}
  The function $F_t(r):=\Psi_x(\gamma_t(r)), r\in [0,t]$ is convex,
  hence
$$
-r\langle \Phi(e^{-i\,\xi_t}x),\tfrac{\xi_t}{t}\rangle = r F'_t(t)\geq
F_t(r)-F_t(0)\geq r\,F'_t(0)=- r \langle
\Phi(x),\tfrac{\xi_t}{t}\rangle
$$
Since
$\|\Phi(e^{-i\,\xi_t}x)\|=\|\Phi(x(t))\| \leq \|\Phi(x)\|,\forall
t\geq 0$, we obtain
$\|\Psi_x(\gamma_t(r))\|\leq C' r, \forall 1\leq r\leq t$ with
$C'=\|\Phi(x)\| \sup_{t\geq 1}\|\tfrac{\xi_t}{t}\|$.  We prove our
lemma by taking the limit $t\to\infty$ in the previous inequality.
\end{proof}

\begin{lemma}
We have $\varpi_{\Phi}(x,\xi_\infty)=\|\Phi(x_\infty)\|^2$ and 
\begin{equation}\label{eq:M-xi-x-infty}
\mathbf{M}_{\Phi}(x)=\frac{\varpi_{\Phi}(x,\xi_\infty)}{\|\xi_\infty\|}=\|\xi_\infty\|=\|\Phi(x_\infty)\|.
\end{equation}
\end{lemma}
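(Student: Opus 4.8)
The plan is to compute, in two different ways, the asymptotic slope of the Kempf-Ness function $\Psi_x$ along two curves in $\mathbb{X}=G/K$ issuing from $\pi(e)$: the geodesic ray $\gamma_\infty(r)=\pi(\exp(ir\xi_\infty))$ and the gradient trajectory $\gamma(t)=\pi(g(t))$. Both start at $\pi(e)$ (since $g(0)=e$) and $\Psi_x(\pi(e))=0$. First I record the elementary identity $\|\nabla\Psi_x|_{\pi(g)}\|=\|\Phi(g^{-1}x)\|$: by \eqref{eq:psi-n-definition} the differential of $\Psi_x$ at $\pi(g)$ sends a unit direction $\lambda\in\kgot$ to $-\langle\Phi(g^{-1}x),\lambda\rangle$, whose sup over $\lambda$ is $\|\Phi(g^{-1}x)\|$.

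Along the ray, integrating \eqref{eq:psi-n-definition} gives $\Psi_x(\gamma_\infty(r))=-\int_0^r\langle\Phi(e^{-is\xi_\infty}x),\xi_\infty\rangle\,ds$. The integrand is non-increasing with limit $\varpi_\Phi(x,\xi_\infty)$, so a Ces\`aro argument yields $\Psi_x(\gamma_\infty(r))/r\to-\varpi_\Phi(x,\xi_\infty)$; by Lemma~\ref{lem:psi-x-gamma} this limit is finite, hence $\varpi_\Phi(x,\xi_\infty)$ is finite and, by Lemma~\ref{lem:varphi-finite}, the trajectory $e^{-ir\xi_\infty}x$ converges as $r\to\infty$ (its $M$-component converges by compactness, its $V$-component because $\varpi_V(v,\xi_\infty)\ge0$). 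Along the flow, \eqref{eq:psi-phi} gives $\tfrac{d}{dt}\Psi_x(\gamma(t))=-\|\Phi(x(t))\|^2\to-\|\Phi(x_\infty)\|^2$, so $\Psi_x(\gamma(t))/t\to-\|\Phi(x_\infty)\|^2$ (this is assertion~\ref{theo:kempf-ness-kahler-ass7} of Theorem~\ref{theo:kempf-ness-kahler}).

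The crux is to show the two slopes agree, i.e. $\tfrac1r\big(\Psi_x(\gamma(r))-\Psi_x(\gamma_\infty(r))\big)\to0$; here the non-Lipschitz nature of $\Psi_x$ is the real obstacle, so I cannot simply multiply the distance bound \eqref{eq:distance-gamma-r} by a global Lipschitz constant. Instead I use geodesic convexity (assertion~\ref{theo:kempf-ness-kahler-ass1} of Theorem~\ref{theo:kempf-ness-kahler}): evaluating along the minimizing geodesic between $\gamma(r)$ and $\gamma_\infty(r)$ from each endpoint gives, with $\ell=d_{\mathbb{X}}(\gamma(r),\gamma_\infty(r))$,
\[
-\ell\,\|\nabla\Psi_x|_{\gamma_\infty(r)}\|\le\Psi_x(\gamma(r))-\Psi_x(\gamma_\infty(r))\le \ell\,\|\nabla\Psi_x|_{\gamma(r)}\|.
\]
Now $\|\nabla\Psi_x|_{\gamma(r)}\|=\|\Phi(x(r))\|\le\|\Phi(x)\|$ is bounded, and $\|\nabla\Psi_x|_{\gamma_\infty(r)}\|=\|\Phi(e^{-ir\xi_\infty}x)\|$ is bounded because $e^{-ir\xi_\infty}x$ converges; since $\ell\le Cr^{1-\epsilon}$ by \eqref{eq:distance-gamma-r}, dividing by $r$ forces both sides to $0$. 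Comparing the two slope limits then gives $\varpi_\Phi(x,\xi_\infty)=\|\Phi(x_\infty)\|^2$, the first assertion.

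It remains to prove the chain in \eqref{eq:M-xi-x-infty}. A length comparison gives the one inequality I still need: $\|\xi(t)\|=d_{\mathbb{X}}(\pi(e),\gamma(t))$ is at most the length $\int_0^t\|\Phi(x(s))\|\,ds$ of $\gamma|_{[0,t]}$, so dividing by $t$ and using $\|\Phi(x(s))\|\downarrow\|\Phi(x_\infty)\|$ yields $\|\xi_\infty\|\le\|\Phi(x_\infty)\|$. Combining this with $\varpi_\Phi(x,\xi_\infty)=\|\Phi(x_\infty)\|^2$ and Proposition~\ref{prop:M-G-invariant}, I obtain
\[
\mathbf{M}_\Phi(x)\ge\frac{\varpi_\Phi(x,\xi_\infty)}{\|\xi_\infty\|}=\frac{\|\Phi(x_\infty)\|^2}{\|\xi_\infty\|}\ge\|\Phi(x_\infty)\|\ge\mathbf{M}_\Phi(x),
\]
where the first inequality is the definition of $\mathbf{M}_\Phi$ as a supremum and the middle inequality is exactly $\|\xi_\infty\|\le\|\Phi(x_\infty)\|$. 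Hence every inequality is an equality; in particular $\|\xi_\infty\|=\|\Phi(x_\infty)\|$ and $\mathbf{M}_\Phi(x)=\dfrac{\varpi_\Phi(x,\xi_\infty)}{\|\xi_\infty\|}=\|\xi_\infty\|=\|\Phi(x_\infty)\|$, which is \eqref{eq:M-xi-x-infty}.
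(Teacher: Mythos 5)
Your proof is correct, and its skeleton matches the paper's: compute the asymptotic slope of $\Psi_x$ along the geodesic ray $\gamma_\infty$ (getting $-\varpi_\Phi(x,\xi_\infty)$) and along the flow trajectory $\gamma$ (getting $-\|\Phi(x_\infty)\|^2$), show the two slopes coincide, and close the chain (\ref{eq:M-xi-x-infty}) with the length comparison $\|\xi_\infty\|\le\|\Phi(x_\infty)\|$ and Proposition~\ref{prop:M-G-invariant}. Where you genuinely diverge is the crux step $\frac1r\bigl(\Psi_x(\gamma(r))-\Psi_x(\gamma_\infty(r))\bigr)\to0$. The paper splits $\Psi_x=\Psi_v+\Psi_m$, applies the global Lipschitz bound (constant $\sup_g\|\Phi_M(gm)\|$) only to the compact factor $\Psi_m$ together with (\ref{eq:distance-gamma-r}), and disposes of the $V$-part by observing that both curves $g(r)^{-1}v$ and $e^{-ir\xi_\infty}v$ are bounded, so that $\|g(r)^{-1}v\|^2-\|e^{-ir\xi_\infty}v\|^2$ is bounded and dies after division by $r$. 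You instead exploit geodesic convexity of $\Psi_x$ (assertion~\ref{theo:kempf-ness-kahler-ass1} of Theorem~\ref{theo:kempf-ness-kahler}) along the minimizing geodesic joining $\gamma_\infty(r)$ to $\gamma(r)$, which bounds the difference by $d_{\mathbb{X}}(\gamma(r),\gamma_\infty(r))$ times the gradient norms $\|\Phi(x(r))\|$ and $\|\Phi(e^{-ir\xi_\infty}x)\|$ at the two endpoints; both are bounded (the first by monotonicity of $\|\Phi\|$ along the flow, the second because you have already established convergence of $e^{-ir\xi_\infty}x$ via Lemma~\ref{lem:psi-x-gamma} and Lemma~\ref{lem:varphi-finite}). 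This is the same "local Lipschitz via endpoint gradients" device the paper uses in (\ref{eq:psi-theta-1-2}) for two gradient flow lines, transplanted here; it avoids the product decomposition entirely and so is the more intrinsic argument, at the cost of having to secure convergence of the ray trajectory slightly earlier in the argument than the paper does. Both routes rest on the same two inputs, (\ref{eq:distance-gamma-r}) and Lemma~\ref{lem:psi-x-gamma}, and both are sound.
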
 

\begin{proof}
  We have
  $\Psi_x(\gamma_\infty(r))=-\int_0^{r}\langle
  \Phi(e^{-i\,s\,\xi_\infty}x),\xi_\infty\rangle ds$, so
  $\lim_{r\to\infty}\langle
  \Phi(e^{-i\,r\,\xi_\infty}x),\xi_\infty\rangle$
  $=:\varpi_{\Phi}(x,\xi_\infty)$ is equal to
  $-\lim_{r\to\infty}\frac{\Psi_x(\gamma_\infty(r))}{r}$.

  First we compute
  $\lim_{r\to\infty}\frac{\Psi_x(\gamma(r))}{r}=-\lim_{r\to\infty}\frac{1}{r}\int_{0}^r
  \|\Phi(x(t))\|^2 = -\|\Phi(x_\infty)\|^2$. Next we show that
  \begin{equation}\label{eq:limite-0}
    \lim_{r\to\infty}\frac{\Psi_x(\gamma(r))-\Psi_x(\gamma_\infty(r))}{r}=0.
  \end{equation} 
  Let $x=(v,m)\in V\times M$. Since $\Psi_x=\Psi_v+\Psi_m$, with
  $\Psi_v$ defined in Example~\ref{example:psi-v}, we have
$$
\frac{\Psi_x(\gamma(r))-\Psi_x(\gamma_\infty(r))}{r}=\frac{\Psi_m(\gamma(r))-\Psi_m(\gamma_\infty(r))}{r}+\frac{\|g(r)^{-1}v\|^2-\|e^{-ir\xi_\infty}v\|^2}{4r}
$$
The Kempf–Ness function $\Psi_m$ is globally Lipschitz continuous with
Lipschitz constant $L := \sup_{g\in G} \|\Phi_M(gm)\|$. Hence, it
follows from (\ref{eq:distance-gamma-r}) that
$$
\left|\frac{\Psi_m(\gamma(r))-\Psi_m(\gamma_\infty(r))}{r}\right|\leq
\frac{C\, L}{r^\epsilon}, \quad {\rm for\ all}\quad r> 0.
$$

The curve $r\geq 0\mapsto g(r)^{-1}v\in V$ is bounded since the limit
$\lim_{r\to\infty}g(r)^{-1}v\in V$ exists. Thanks to Lemma
\ref{lem:psi-x-gamma}, we see that the curve
$r\geq 0\mapsto e^{-ir\xi_\infty}v\in V$ is also bounded (see Remark
\ref{lem:varphi-finite}). Finally, we have proved (\ref{eq:limite-0}),
and it shows that $\varpi_{\Phi}(x,\xi_\infty)=\|\Phi(x_\infty)\|^2$.

Thanks to Proposition~\ref{prop:M-G-invariant}, we have
$$
\frac{\|\Phi(x_\infty)\|^2}{\|\xi_\infty\|}=
\frac{\varpi_{\Phi}(x,\xi_\infty)}{\|\xi_\infty\|}\leq {\bf
  M}_{\Phi}(x)\leq \|\Phi(x_\infty)\|.
$$
The identities (\ref{eq:M-xi-x-infty}) will then follow from the
inequality $\|\xi_\infty\|\leq\|\Phi(x_\infty)\|$. By definition of
$\xi_t$, we have
\begin{eqnarray*}
  \left\Vert\frac{\xi_t}{t}\right\Vert &=& \frac{d_{\mathbb{X}}(\gamma(0),\gamma(t))}{t}\\
                                       &\leq & \frac{1}{t}\int_0^t \|\gamma'(s)\| ds= \frac{1}{t}\int_{0}^t \|\Phi(x(s))\| ds
\end{eqnarray*}
If we take the limit $t\to\infty$, we obtain
$\|\xi_\infty\|\leq\|\Phi(x_\infty)\|$.

It remains to prove that $\Phi(x_\infty)\in K\xi_\infty$. The proof
that is given in the final steps 8 $\&$ 9 of \cite[Chapter 10]{GRS21}
works here as they do not use the compactness of the manifold. 
\end{proof}

\subsubsection{Uniqueness of the optimal destabilizing vector}\label{sec:unique-lambda-x}

In the compact setting, the unicity of the optimal destabilizing vector can be proved in the projective framework 
using the convexity properties of Kempf-Ness functions (see \cite[Section 5.4]{Woodward11} or \cite[Theorem 10.2]{GRS21}). 
This type of proof could be adapted to our non-compact setting, but we prefer to give a proof close to that 
given by Kempf \cite{Kem78} in the compact algebraic setting (see Theorem~\ref{th:Kempf}). 

For any $\mu\in \ggot$, we denote by $[\mu]_\kgot$ its $\kgot$-part. Recall that the quadratic map 
$\delta:\ggot\to\R$ defined by the relation
$\delta(\mu):= \| [\mu]_\kgot\|^2-\|[i\mu]_\kgot\|^2$ is $G$-invariant. 

Here are some classical properties of the subset $\ggot_{ell}:=\{g\lambda;\  (g,\lambda)\in G\times \kgot\}$ 
formed by the elliptic elements of the Lie algebra $\ggot$. 
\begin{lemma}\label{lem:g-ell}
\begin{enumerate}
\item $\mu\in\ggot$ belongs to $\ggot_{ell}$ iff the subgroup $\overline{\{exp(t\mu), t\in\R\}}\subset G$ is compact.
\item If $\mu\in \ggot_{ell}$, we have $\delta(\mu)\geq 0$, and $\delta(\mu)=0$ iff $\mu=0$.
\item Let $\mu_1,\mu_2\in  \ggot_{ell}$ such that $[\mu_1,\mu_2]=0$. Then $\mu_1+\mu_2\in\ggot_{ell}$ and 
$\sqrt{\delta(\mu_1+\mu_2)}\geq \sqrt{\delta(\mu_1)}+\sqrt{\delta(\mu_2)}$ with equality iff $\mu_1=0$ or $\mu_2\in \R^{\geq 0}\mu_1$.
\item Let $\mu_1,\mu_2\in  \ggot_{ell}$. There are $p_i\in P({\mu_i})$ such that the elements $\mu_i':=p_i\mu_i$ satisfy $[\mu_1',\mu_2']=0$.
\end{enumerate}
\end{lemma}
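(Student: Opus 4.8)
The plan is to prove all four assertions by reducing to the positive-definite situation on $\kgot$, where $\delta$ coincides with $\|\cdot\|^2$. For (i), if $\mu=g\lambda$ with $\lambda\in\kgot$ then $\exp(t\mu)=g\exp(t\lambda)g\inv$, and $\overline{\{\exp(t\lambda):t\in\R\}}$ is a torus inside the compact group $K$, so its $g$-conjugate is compact. Conversely, if $H:=\overline{\{\exp(t\mu):t\in\R\}}$ is compact, it is a compact connected abelian subgroup of $G$, hence lies in a maximal compact subgroup; since all of these are conjugate to $K$, some $g\in G$ satisfies $gHg\inv\subseteq K$, whence $g\mu g\inv\in\kgot$ and $\mu\in\ggot_{ell}$. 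Assertion (ii) is then immediate from the $G$-invariance of $\delta$: writing $\mu=g\lambda$ with $\lambda\in\kgot$ gives $\delta(\mu)=\delta(\lambda)=\|\lambda\|^2\geq 0$, with equality exactly when $\lambda=0$, i.e. when $\mu=0$.

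The heart of the argument is (iii), where the key point is that two commuting elliptic elements can be conjugated into $\kgot$ simultaneously. By (i), each $\overline{\{\exp(t\mu_j):t\in\R\}}$ is a compact torus $T_j$; since $[\mu_1,\mu_2]=0$ the two one-parameter groups commute elementwise, so $T_1$ and $T_2$ commute and $T_1T_2$ is a compact connected abelian subgroup, i.e. a torus, whose Lie algebra contains $\mu_1$, $\mu_2$ and $\mu_1+\mu_2$. Conjugating $T_1T_2$ into $K$ by some $g\in G$ places all three elements in $\tgot_0:=\Lie(T_0)\subset\kgot$ for a maximal torus $T_0\subseteq K$; in particular $\mu_1+\mu_2\in\ggot_{ell}$. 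Setting $\eta_j:=g\mu_j\in\tgot_0$, the $G$-invariance of $\delta$ together with $\delta|_{\kgot}=\|\cdot\|^2$ yields $\sqrt{\delta(\mu_j)}=\|\eta_j\|$ and $\sqrt{\delta(\mu_1+\mu_2)}=\|\eta_1+\eta_2\|$. The assertion of (iii), including its equality clause, then follows from the triangle inequality for the Euclidean norm $\|\cdot\|$ on $\tgot_0$ applied to $\eta_1,\eta_2$: equality holds exactly when $\eta_1$ and $\eta_2$ are non-negatively proportional, that is when $\mu_1=0$ or $\mu_2\in\R^{\geq 0}\mu_1$.

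For (iv) I would not assume commutativity at the outset. By the standard fact that two parabolic subgroups share a maximal torus (cf.\ \cite[Corollary~28.3]{redbook}), $P(\mu_1)\cap P(\mu_2)$ contains a maximal torus $T'$ of $G$, which is then a maximal torus of each $P(\mu_j)$. Since $\mu_j$ is elliptic, it is central in the Levi $G^{\mu_j}=G_{\mu_j}$, hence lies in $\zgot(\Lie(G^{\mu_j}))$ and therefore in the Lie algebra of every maximal torus of $G^{\mu_j}$. As the maximal tori of $P(\mu_j)$ are $P(\mu_j)$-conjugate to those of its Levi, there exists $p_j\in P(\mu_j)$ with $\mu_j':=p_j\mu_j\in\Lie(T')=:\tgot'$. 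Because $\tgot'$ is abelian, $[\mu_1',\mu_2']=0$, which is exactly what is claimed.

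The main obstacle is concentrated in (iii): establishing that commuting elliptic elements generate a compact (hence toral) subgroup and so admit a \emph{simultaneous} conjugation into $\kgot$. Once this is in place, both the ellipticity of $\mu_1+\mu_2$ and the norm comparison are formal consequences of working inside the positive-definite space $(\tgot_0,\|\cdot\|)$. For (iv), the only nontrivial inputs are the existence of a common maximal torus in $P(\mu_1)\cap P(\mu_2)$ and the fact that an elliptic element, being central in its Levi, can be moved into the Lie algebra of any maximal torus of its parabolic by an element of that parabolic.
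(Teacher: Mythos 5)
Your proposal is correct, and for parts (i), (ii) and (iv) it follows the paper's own proof almost verbatim: (i) via conjugacy of maximal compact subgroups, (ii) via $G$-invariance of $\delta$, and (iv) via a common maximal torus $T'\subset P(\mu_1)\cap P(\mu_2)$ together with conjugacy of the maximal tori of each $P(\mu_i)$ (you even supply the step the paper leaves implicit, namely that $\mu_j$, being central in $\ggot^{\mu_j}$, lies in the Lie algebra of every maximal torus of the Levi). Where you genuinely diverge is in the key step of (iii), the simultaneous conjugation of two commuting elliptic elements into $\kgot$. The paper proceeds sequentially through centralizers: it writes $\mu_1=g_1\lambda_1$ with $\lambda_1\in\kgot$, observes that $\tilde\mu_2:=g_1^{-1}\mu_2$ lies in $\ggot_{\lambda_1}$ and is elliptic in the reductive group $G_{\lambda_1}$ (by point (i)), and then conjugates $\tilde\mu_2$ into $\kgot_{\lambda_1}$ by some $g_2\in G_{\lambda_1}$, so that $g=g_1g_2$ works for both. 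You instead form the compact connected abelian subgroup $T_1T_2$ generated by the closures of the two commuting one-parameter groups and conjugate it into $K$ in one stroke. Both arguments rest on the same input (conjugacy of maximal compact subgroups, via point (i)); yours is slightly more conceptual and avoids invoking the reductive structure of $G_{\lambda_1}$ and the fact that $K_{\lambda_1}$ is a maximal compact subgroup of it, while the paper's version keeps everything inside the group-theoretic framework of centralizers that it reuses elsewhere (e.g.\ in the uniqueness argument of Section~3.6.2). Once the commuting pair sits in $\kgot$, your reduction of the inequality and its equality clause to the Euclidean triangle inequality is exactly what the paper intends.

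One point worth flagging explicitly: the inequality in the printed statement of (iii), $\sqrt{\delta(\mu_1+\mu_2)}\geq \sqrt{\delta(\mu_1)}+\sqrt{\delta(\mu_2)}$, has the wrong sense — it is a typo for $\leq$. The equality clause ($\mu_1=0$ or $\mu_2\in\R^{\geq 0}\mu_1$) is precisely the equality case of the triangle inequality, and the uniqueness argument in Section~3.6.2 derives the reverse inequality $\geq$ from the moment-map estimates and then invokes (iii) to force equality. Your proof establishes the correct ($\leq$) version, so it proves the intended statement; you should just note the sign discrepancy rather than claim to have proved the inequality as literally written.
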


\begin{proof}
  The first point follows from the fact that all maximal compact
  subgroups of $G$ are conjugate to $K$.  The second point is due to
  the $G$-invariance of $\delta$. Let $\mu_1,\mu_2\in \ggot_{ell}$
  such that $[\mu_1,\mu_2]=0$. Let $(g_1,\lambda_1)\in G\times\kgot$
  such that $\mu_1=g_1\lambda_1$. Then $\mu_2=g_1\tilde{\mu}_2$ with
  $\tilde{\mu}_2$ belonging to the Lie algebra $\ggot_{\lambda_1}$ of
  the reductive subgroup $G_{\lambda_1}$. Thanks to the first point,
  we see that
  $\tilde{\mu}_2\in \ggot_{\lambda_1}\cap
  \ggot_{ell}=(\ggot_{\lambda_1})_{ell}$: hence, there exists
  $(g_2,\lambda_2)\in G_{\lambda_1}\times\kgot_{\lambda_1}$ such that
  $\tilde{\mu}_2=g_2\lambda_2$. Finally, if one takes $g=g_1g_2$, we
  have $\mu_i=g\lambda_i$ and then
  $\mu_1+\mu_2=g(\lambda_1+\lambda_2)\in \ggot_{ell}$. The third point
  stems from this and the $G$-invariance of $\delta$. The intersection
  $P({\mu_1})\cap P({\mu_2})$ of the parabolic subgroups associated to
  $\mu_1,\mu_2\in \ggot_{ell}$ contains a maximal torus $T_1$ of
  $G$. Since all the maximal torus of a parabolic subgroup $P({\mu_i})$
  are conjugated, there exists $p_i\in P({\mu_i})$ such that
  $\mu_i':=p_i\mu_i\in {\rm Lie}(T_1)$. Hence, $[\mu_1',\mu_2']=0$. 
\end{proof}

\medskip

Following the approach in \cite[Chapter~5]{GRS21} (see also \cite[Section~2]{Teleman04}), we extend the map $\varpi_{\Phi}$ to a $G$-invariant map 
$\varpi_{\Phi}^\C: V\times M\times \ggot_{ell}\to \R \cup\{-\infty\}$. To do so, we need the following result.

\begin{lemma}\label{lem:trajectoire-mu} Let $(v,m,\mu)\in V\times M\times \ggot_{ell}$.
\begin{enumerate}
\item The trajectory $e^{-it\mu}m$ has a limit, denoted $m_\mu$, when $t\to+\infty$. 
\item Either $\lim_{t\to+\infty} \langle\Phi_V(e^{-it\mu}v),[\mu]_\kgot\rangle=0$ and 
the trajectory $e^{-it\mu}v$ has a limit when $t\to+\infty$, or $\lim_{t\to+\infty} \langle\Phi_V(e^{-it\mu}v),[\mu]_\kgot\rangle=-\infty$ 
(exponentially fast).
\end{enumerate}
\end{lemma}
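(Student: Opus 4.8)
Since $\mu\in\ggot_{ell}$, I write $\mu=g\lambda$ with $g\in G$, $\lambda\in\kgot$, so that $e^{-it\mu}m=g\bigl(e^{-it\lambda}(g^{-1}m)\bigr)$. For $\lambda\in\kgot$ the curve $t\mapsto e^{-it\lambda}(g^{-1}m)$ is the negative gradient flow of $\langle\Phi_M,\lambda\rangle$ on the \emph{compact} manifold $M$, hence converges to some $(g^{-1}m)_\lambda\in M^\lambda$, exactly as recalled just before Lemma~\ref{lem:varphi-finite}. By continuity of the $G$-action, $e^{-it\mu}m\to m_\mu:=g\,(g^{-1}m)_\lambda$, which proves the first point.

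\textbf{Point (2): the key identity.} Everything will be reduced to the scalar function $N(t):=\tfrac14\|e^{-it\mu}v\|_V^2$. The first step I would carry out is to prove the identity
\begin{equation}\label{eq:Nprime}
\frac{d}{dt}N(t)=-\,\langle\Phi_V(e^{-it\mu}v),[\mu]_\kgot\rangle,\qquad\forall t\in\R .
\end{equation}
To do so, write $\mu=\xi_1+i\xi_2$ with $\xi_1,\xi_2\in\kgot$, so that $[\mu]_\kgot=\xi_1$, and set $w=e^{-it\mu}v$, so $\tfrac{d}{dt}w=-i\,\mu\cdot w$. Differentiating gives $N'(t)=\tfrac12\,\mathrm{Re}\langle -i\,\mu\cdot w,w\rangle_V$; using the complex-linearity of the $\ggot$-action on $V$ (so $\mu\cdot w=\xi_1\cdot w+i\,\xi_2\cdot w$) together with the skew-Hermiticity of $\xi_1\cdot$ and $\xi_2\cdot$ (which makes $\langle\xi_j\cdot w,w\rangle_V$ purely imaginary), a one-line computation reduces this to $N'(t)=\tfrac12\,\mathrm{Im}\langle\xi_1\cdot w,w\rangle_V=-\langle\Phi_V(w),\xi_1\rangle$, which is exactly \eqref{eq:Nprime}.

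\textbf{Point (2): asymptotics.} Because $\mu$ is elliptic, $\mu\cdot$ acts on $V$ by an operator conjugate, by the action of $g$, to the skew-Hermitian operator $\lambda\cdot$; it is therefore diagonalizable with purely imaginary eigenvalues $\{ia:a\in\R\}$. Writing $v=\sum_a v_a$ along the corresponding eigenspaces gives $e^{-it\mu}v=\sum_a e^{ta}v_a$, so $N(t)=\tfrac14\sum_{a,b}e^{(a+b)t}\langle v_a,v_b\rangle_V=\sum_c C_c e^{ct}$ is a finite sum of real exponentials, the coefficients $C_c$ being real since $N$ is real-valued and the $e^{ct}$ are linearly independent. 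Set $a_{\max}=\max\{a:v_a\neq0\}$; the largest exponent is $2a_{\max}$, with coefficient $C_{2a_{\max}}=\tfrac14\|v_{a_{\max}}\|_V^2>0$. If $a_{\max}\leq0$ then every $c\leq0$, so $e^{-it\mu}v\to\sum_{a=0}v_a$ converges while $N'(t)=\sum_c c\,C_c e^{ct}\to0$; by \eqref{eq:Nprime} this gives $\langle\Phi_V(e^{-it\mu}v),[\mu]_\kgot\rangle\to0$. If $a_{\max}>0$ then $N'(t)\sim 2a_{\max}C_{2a_{\max}}e^{2a_{\max}t}\to+\infty$ exponentially, so by \eqref{eq:Nprime} the bracket tends to $-\infty$ exponentially fast, while $\|e^{-it\mu}v\|\to\infty$ so the trajectory diverges. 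This is the required dichotomy.

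\textbf{Main obstacle.} The only subtle ingredient is \eqref{eq:Nprime}: it is precisely what accommodates the appearance of $[\mu]_\kgot$ rather than $\mu$ itself, since the eigenspaces of $\mu\cdot$ fail to be $\langle\,,\,\rangle_V$-orthogonal when $\xi_2\neq0$, so one cannot argue directly as in the proof of Lemma~\ref{lem:varphi-finite}. Differentiating the norm-square sidesteps this entirely, and after that the statement becomes elementary asymptotics of an exponential sum. A minor point to check carefully is the diagonalizability of $\mu\cdot$, which is exactly where ellipticity (through Lemma~\ref{lem:g-ell}) enters.
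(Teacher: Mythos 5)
Your proof is correct and follows essentially the same route as the paper's: both rest on the eigen-expansion $e^{-it\mu}v=\sum_a e^{ta}v_a$ (valid by ellipticity, non-orthogonal in general) together with the exponential-sum formula $\langle\Phi_V(e^{-it\mu}v),[\mu]_\kgot\rangle=-\tfrac14\sum_{a,b}(a+b)e^{t(a+b)}\langle v_a,v_b\rangle_V$, followed by inspection of the dominant exponent determined by $a_{\max}$. The only difference is cosmetic: you reach this formula as $-\frac{d}{dt}\bigl(\tfrac14\|e^{-it\mu}v\|^2\bigr)$ using skew-Hermiticity of the $\kgot$-action, whereas the paper computes the pairing term-by-term via the identity $\langle[\mu]_\kgot\cdot v_a,v_b\rangle_V=i\tfrac{a+b}{2}\langle v_a,v_b\rangle_V$; the resulting displays coincide.
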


\begin{proof}
  Let $(g,\lambda)\in G\times\kgot$ such that $\mu=g\lambda$. So the
  trajectory $e^{-it\mu}m= ge^{-it\lambda}g^{-1}m$ converges to
  $m_\mu:=g(g^{-1}m)_{\lambda}$ (see Section~\ref{sec:numerical-invariant-phi}).

  For every $v\in V$, we have\footnote{The decomposition $v=\sum_{a} v_a$ is not necessarily orthogonal.} $e^{-it\mu}v=\sum_{a\in\R} e^{ta} v_a$
  where $v_a\in\{w\in V, \mu \cdot w= i a w\}$.  So,
  $\langle\Phi_V(e^{-it\mu}v),[\mu]_\kgot\rangle= \frac{i}{2}
  \sum_{a,b} e^{t(a+b)} \langle [\mu]_\kgot\cdot v_a, v_b\rangle_V$. A
  direct computation gives that
  $\langle [\mu]_\kgot\cdot v_a, v_b\rangle_V= i\frac{a+b}{2}\langle
  v_a, v_b\rangle_V$. We finally obtain
  \begin{equation}\label{eq:lem-trajectoire-mu}
    \langle\Phi_V(e^{-it\mu}v),[\mu]_\kgot\rangle= \frac{-1}{4} \sum_{a,b} (a+b)e^{t(a+b)}\langle v_a, v_b\rangle_V.
  \end{equation}
  Let $a_0=\max\{a, v_a\neq 0\}$. Thanks to
  (\ref{eq:lem-trajectoire-mu}), we have two options:
  \begin{itemize}
  \item[-] If $a_0>0$, then $\exists c>0$ such that
    $\langle\Phi_V(e^{-it\mu}v),[\mu]_\kgot\rangle\underset{+\infty}{\sim}-c\,
    e^{2a_0t}$.
  \item[-] If $a_0\leq 0$, then
    $\lim_{t\to+\infty}
    \langle\Phi_V(e^{-it\mu}v),[\mu]_\kgot\rangle=0$ and the
    trajectory $e^{-it\mu}v$ has a limit when $t\to+\infty$.
  \end{itemize}
  The second point of Lemma~\ref{lem:trajectoire-mu} is settled.
\end{proof}

\medskip

We can now define the map
$$
\varpi_{\Phi}^\C(x,\mu)=\lim_{t\to+\infty}\langle\Phi(e^{-it\mu}x),[\mu]_\kgot\rangle \in \R\cup\{-\infty\},\qquad \forall (x,\mu)\in V\times M\times \ggot_{ell}.
$$
Let us summarize some of its properties. The next Lemma is the analytical analog of Lemma~\ref{lem:minv}.

\begin{lemma} \label{lem:varphi-C-property}Let $x\in V\times M$.
\begin{enumerate}
\item $\varpi_{\Phi}^\C(gx,g\mu)=\varpi_{\Phi}^\C(x,\mu)$ for every $g\in G$.
\item $\varpi_{\Phi}^\C(x,p\mu)=\varpi_{\Phi}^\C(x,\mu)$ if $p\in P(\mu)$.
\item $\varpi_{\Phi}^\C(x,\mu_1+\mu_2)\geq \varpi_{\Phi}^\C(x,\mu_1)+\varpi_{\Phi}^\C(x,\mu_2)$ if $[\mu_1,\mu_2]=0$.
\item $\varpi_{\Phi}^\C(x,-\mu)<0$ if $\varpi_{\Phi}^\C(x,\mu)>0$.
\end{enumerate}
\end{lemma}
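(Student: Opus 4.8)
The four assertions all rest on a single computation about the behaviour of $\Phi$ along fixed points, which I would isolate first.

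\medskip
\noindent\textbf{Key claim.} \emph{For every $\mu\in\ggot_{ell}$, every $z\in V\times M$ with $\mu\cdot z=0$, and every $h\in G$, one has}
\[
\langle\Phi(hz),[h\mu]_\kgot\rangle=\langle\Phi(z),[\mu]_\kgot\rangle .
\]
The plan is to show that the function $h\mapsto F(h):=\langle\Phi(hz),[h\mu]_\kgot\rangle$ is locally constant, hence constant since $G$ is connected. Writing $w=hz$ and $\nu=h\mu$, I note that $\nu\cdot w=h(\mu\cdot z)=0$, so it suffices to prove $\frac{d}{ds}\big|_0\langle\Phi(e^{s\zeta}w),[e^{s\zeta}\nu]_\kgot\rangle=0$ for every $\zeta\in\ggot$ whenever $\nu\cdot w=0$. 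Decomposing $\zeta=\alpha+i\beta$ and $\nu=a+ib$ with $\alpha,\beta,a,b\in\kgot$ (so $[\nu]_\kgot=a$), the derivative splits into $\langle d\Phi_w(\zeta\cdot w),a\rangle$ and $\langle\Phi(w),[[\zeta,\nu]]_\kgot\rangle$. Using the moment map relation $\langle d\Phi_w(U),\xi\rangle=\Omega_w(\xi\cdot w,U)$, the infinitesimal $K$-equivariance $\langle\Phi(w),[\xi_1,\xi_2]\rangle=\Omega_w(\xi_1\cdot w,\xi_2\cdot w)$, holomorphy of the action ($(i\beta)\cdot w=J(\beta\cdot w)$), and the Kähler identity relating $\Omega_w(\cdot,J\cdot)$ to the Riemannian metric, the antisymmetric terms cancel and the remainder reduces to an expression that vanishes precisely because the fixed-point condition reads $a\cdot w=-J(b\cdot w)$. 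This cancellation is the technical heart of the lemma and the step I expect to require the most care.

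\medskip
Granting the Key claim, assertion (i) follows. If $\varpi_{\Phi}^\C(x,\mu)\neq-\infty$, Lemma~\ref{lem:trajectoire-mu} guarantees that $x_\mu:=\lim_{t\to\infty}e^{-it\mu}x$ exists, that $\mu\cdot x_\mu=0$, and that the $V$-contribution to the pairing tends to $0$; by continuity $\varpi_{\Phi}^\C(x,\mu)=\langle\Phi(x_\mu),[\mu]_\kgot\rangle$. Since $e^{-it(g\mu)}(gx)=g\,e^{-it\mu}x\to g x_\mu$, the same argument gives $\varpi_{\Phi}^\C(gx,g\mu)=\langle\Phi(gx_\mu),[g\mu]_\kgot\rangle$, and the Key claim (with $z=x_\mu$, $h=g$) identifies the two quantities. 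When $\varpi_{\Phi}^\C(x,\mu)=-\infty$, Lemma~\ref{lem:trajectoire-mu} says the trajectory $e^{-it\mu}x$ diverges, hence so does $g\,e^{-it\mu}x$, and $\varpi_{\Phi}^\C(gx,g\mu)=-\infty$ too. In particular, taking $\mu=g_0\lambda$ with $\lambda\in\kgot$ and recalling that $\varpi_{\Phi}^\C(\cdot,\lambda)=\varpi_{\Phi}(\cdot,\lambda)$ on $\kgot$, assertion (i) yields the reduction formula
\[
\varpi_{\Phi}^\C(x,g_0\lambda)=\varpi_{\Phi}(g_0^{-1}x,\lambda),\qquad \lambda\in\kgot,\ g_0\in G .
\]

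\medskip
The remaining points I would deduce from this reduction formula together with the already-proved real (torus) case. For (ii), write $\mu=g_0\lambda$; since $P(\mu)=g_0P(\lambda)g_0^{-1}$, any $p\in P(\mu)$ gives $q:=g_0^{-1}p^{-1}g_0\in P(\lambda)$, and the reduction formula turns $\varpi_{\Phi}^\C(x,p\mu)=\varpi_{\Phi}(g_0^{-1}p^{-1}x,\lambda)=\varpi_{\Phi}(q\,g_0^{-1}x,\lambda)$, which equals $\varpi_{\Phi}(g_0^{-1}x,\lambda)=\varpi_{\Phi}^\C(x,\mu)$ by Lemma~\ref{lem:varphi-G-invariant} (applied with $k=e$). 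For (iii), Lemma~\ref{lem:g-ell} lets me write $\mu_i=g\lambda_i$ with a common $g$ and commuting $\lambda_1,\lambda_2\in\kgot$, so by the reduction formula everything transports to $y=g^{-1}x$ and it suffices to prove $\varpi_{\Phi}(y,\lambda_1+\lambda_2)\geq\varpi_{\Phi}(y,\lambda_1)+\varpi_{\Phi}(y,\lambda_2)$. Here I would use $\varpi_{\Phi}(y,\lambda)=-\lim_{t\to\infty}t^{-1}\Psi_y(\pi(e^{it\lambda}))$ (from $\frac{d}{dt}\Psi_y(e^{it\lambda})=-\langle\Phi(e^{-it\lambda}y),\lambda\rangle$ and the convexity in Theorem~\ref{theo:kempf-ness-kahler}): the map $(s,t)\mapsto\Psi_y(\pi(e^{i(s\lambda_1+t\lambda_2)}))$ is a finite convex function on the flat totally geodesic plane spanned by the commuting directions, and subadditivity of its recession function applied to $(1,0)+(0,1)=(1,1)$ is exactly the claimed inequality (the case where one term is $-\infty$ being automatic). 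Finally, for (iv) the reduction formula brings us to $\lambda\in\kgot$: with $\phi(t)=\langle\Phi(e^{-it\lambda}y),\lambda\rangle$ non-increasing, one has $\varpi_{\Phi}(y,\lambda)=\inf_t\phi(t)$ and $\varpi_{\Phi}(y,-\lambda)=-\sup_t\phi(t)$, so $\varpi_{\Phi}(y,-\lambda)\leq-\varpi_{\Phi}(y,\lambda)<0$ as soon as $\varpi_{\Phi}(y,\lambda)>0$.
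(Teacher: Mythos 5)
Your proof is correct, and for assertions (i), (ii) and (iv) it follows essentially the paper's route: reduce to $\lambda\in\kgot$ via the identity $\varpi_{\Phi}^\C(x,g_0\lambda)=\varpi_{\Phi}(g_0^{-1}x,\lambda)$, deduce (ii) from (i) together with Lemma~\ref{lem:varphi-G-invariant}, and get (iv) by comparing the limits at $\pm\infty$ of the non-increasing function $t\mapsto\langle\Phi(e^{-it\lambda}y),\lambda\rangle$. Two points differ and are worth recording. First, your ``Key claim'' is exactly the fact the paper uses — the constancy of $g\mapsto\langle\Phi(gz),[g\mu]_\kgot\rangle$ on the fixed-point set of $\mu$ — but the paper simply cites Lemma~5.8 of \cite{GRS21} for it, whereas you sketch the direct infinitesimal computation; your sketch has the right structure (equivariance cancels the $\kgot$-directions, the K\"ahler identity together with $a\cdot w=-J(b\cdot w)$ cancels the $i\kgot$-directions), and although you leave it as the one unverified step, it is precisely the content of the cited lemma, so nothing is missing in substance. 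Second, for (iii) the paper argues directly on the moment map: it writes $\langle\Phi(e^{-is\lambda_1}e^{-it\lambda_2}y),\lambda_1+\lambda_2\rangle=F_1(s,t)+F_2(s,t)$, bounds each $F_\ell$ from below by $\varpi_{\Phi}(y,\lambda_\ell)$ using monotonicity in one variable and the fact that $e^{-it\lambda_2}\in P(\lambda_1)$, and then lets $s=t\to\infty$. Your argument — convexity of $\Psi_y$ restricted to the totally geodesic flat spanned by the commuting directions, plus subadditivity of the recession function of a finite convex function on $\R^2$ — is a genuinely different and valid packaging of the same phenomenon: it imports the convexity statement of Theorem~\ref{theo:kempf-ness-kahler} and a piece of convex analysis, so it is slightly less elementary, but it makes the mechanism behind the superadditivity transparent and handles the $-\infty$ cases automatically. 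Both routes are sound.
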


\begin{proof}
  Identity (\ref{eq:lem-trajectoire-mu}) shows that
  $\lim_{t\to+\infty}
  \langle\Phi_V(e^{-itg\mu}gv),[g\mu]_\kgot\rangle=-\infty$ if and
  only if
  \break $\lim_{t\to+\infty}
  \langle\Phi_V(e^{-it\mu}v),[\mu]_\kgot\rangle=-\infty$. Hence,
  $\varpi_{\Phi}^\C(gx,g\mu)=-\infty$ if and only if
  $\varpi_{\Phi}^\C(x,\mu)=-\infty$.  Suppose now that
  $\varpi_{\Phi}^\C(x,\mu)$ and $\varpi_{\Phi}^\C(gx,g\mu)$ are
  finite: then we have
  $\varpi_{\Phi}^\C(x,\mu)=\langle\Phi_{M}(m_\mu),
  [\mu]_{\kgot}\rangle$ and
  $\varpi_{\Phi}^\C(gx,g\mu)=\langle\Phi_{M}((gm)_{g\mu}),
  [g\mu]_{\kgot}\rangle=\langle\Phi_{M}(gm_{\mu}),
  [g\mu]_{\kgot}\rangle$. The first point follows then from the fact
  that the map
  $g\in G\mapsto \langle\Phi_{M}(g m_0), [g\mu]_{\kgot}\rangle$ is
  constant for any $m_0\in M^\mu$ (see \cite[Lemma 5.8]{GRS21}). The second point is a consequence of the first point
  and Lemma~\ref{lem:varphi-G-invariant}.

  Let us prove the third point. Let $\mu_1,\mu_2\in \ggot_{ell}$ such
  that $[\mu_1,\mu_2]=0$. Thanks to the third point of Lemma~\ref{lem:g-ell}, there are $g\in G$ and
  $\lambda_1,\lambda_2\in\kgot$ such that $\mu_i=g\lambda_i$. 
Thus, we have to show that
  $\varpi_{\Phi}(y,\lambda_1+\lambda_2)\geq
  \varpi_{\Phi}(y,\lambda_1)+\varpi_{\Phi}(y,\lambda_2)$ for
  $y=g^{-1}x$. Consider the function
$$
F(s,t)=
\langle\Phi(e^{-is\lambda_1}e^{-it\lambda_2}y),\lambda_1+\lambda_2\rangle=\underbrace{\langle\Phi(e^{-is\lambda_1}e^{-it\lambda_2}y),\lambda_1\rangle}_{F_1(s,t)}+\underbrace{\langle\Phi(e^{-it\lambda_2}e^{-is\lambda_1}y),\lambda_2\rangle}_{F_2(s,t)}.
$$
Notice that
$e^{-is\lambda_1}e^{-it\lambda_2}=e^{-it\lambda_2}e^{-is\lambda_1}$,
since $[\lambda_1,\lambda_2]=0$. As the function $s\mapsto F_1(s,t)$
is non-increasing, we have
$$
F_1(s,t)\geq
\lim_{s\to+\infty}F_1(s,t)=\varpi_{\Phi}(e^{-it\lambda_2}y,\lambda_1)=\varpi_{\Phi}(y,\lambda_1),
$$
because $e^{-it\lambda_2}\in P({\lambda_1})$. Similarly, we have
$F_2(s,t)\geq
\lim_{t\to+\infty}F_2(s,t)=\varpi_{\Phi}(y,\lambda_2)$. So, we have
proved that
$F(t,t)=F_1(t,t)+ F_2(t,t)\geq
\varpi_{\Phi}(y,\lambda_1)+\varpi_{\Phi}(y,\lambda_2)$ for any
$t\in\R$. Since
$\lim_{t\to+\infty}F(t,t)=\varpi_{\Phi}(y,\lambda_1+\lambda_2)$, we
obtain the desired inequality.

For the last point, it is sufficient to do it for
$\mu=\lambda\in \kgot$. The function
$\langle\Phi(e^{-it\lambda}x),\lambda\rangle$ is non-increasing and by
taking its limits in $\pm\infty$, we obtain
$\lim_{t\to+\infty}\langle\Phi(e^{-it\lambda}x),\lambda\rangle=
\varpi_{\Phi}(x,\lambda)$ and
$\lim_{t\to-\infty}\langle\Phi(e^{-it\lambda}x),\lambda\rangle=-\varpi_{\Phi}(x,-\lambda)$. This
demonstrates that $\varpi_{\Phi}(x,-\lambda)<0$ if
$\varpi_{\Phi}(x,\lambda)>0$.
\end{proof}

\medskip

\begin{proposition}\label{prop:M-phi-etendu}
For any $x\in V\times M$, the quantity
$$
\mathbf{M}^{\C}_{\Phi}(x)=\sup_{\mu\in\ggot_{ell}-\{0\}}\frac{\varpi^\C_{\Phi}(x,\mu)}{\sqrt{\delta(\mu)}}.
$$
is equal to $\mathbf{M}_{\Phi}(x)$.
\end{proposition}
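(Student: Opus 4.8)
The plan is to prove the two inequalities separately, both of which reduce to the definitions together with the $G$-invariance already established in Lemma~\ref{lem:varphi-C-property} and Proposition~\ref{prop:M-G-invariant}. The whole content lies in the observation that $\varpi_{\Phi}^\C$ is a genuine $G$-invariant extension of $\varpi_{\Phi}$ and that $\delta$ extends $\|\cdot\|^2$.

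First I would record two elementary compatibilities between the real and the complexified data. For $\lambda\in\kgot$ one has $[\lambda]_\kgot=\lambda$ and $[i\lambda]_\kgot=0$, so that $\delta(\lambda)=\|\lambda\|^2$; and since the $\kgot$-part of $\lambda\in\kgot$ is $\lambda$ itself, the definition of $\varpi_{\Phi}^\C$ restricts on $\kgot$ to $\varpi_{\Phi}^\C(x,\lambda)=\varpi_{\Phi}(x,\lambda)$. Because $\kgot-\{0\}\subset\ggot_{ell}-\{0\}$, the supremum defining $\mathbf{M}^{\C}_{\Phi}(x)$ is taken over a larger index set on which the summand agrees with the one defining $\mathbf{M}_{\Phi}(x)$; hence $\mathbf{M}^{\C}_{\Phi}(x)\geq\mathbf{M}_{\Phi}(x)$.

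For the reverse inequality I would take an arbitrary $\mu\in\ggot_{ell}-\{0\}$ and write $\mu=g\lambda$ with $g\in G$ and $\lambda\in\kgot$, which is possible by the definition $\ggot_{ell}=G(\kgot)$, with $\lambda\neq 0$ since $\mu\neq 0$. Using the $G$-invariance $\varpi_{\Phi}^\C(x,g\lambda)=\varpi_{\Phi}^\C(g^{-1}x,\lambda)$ from Lemma~\ref{lem:varphi-C-property}(i), the $G$-invariance $\delta(g\lambda)=\delta(\lambda)=\|\lambda\|^2$, and the compatibility above applied at the point $g^{-1}x$, the ratio becomes
$$
\frac{\varpi_{\Phi}^\C(x,\mu)}{\sqrt{\delta(\mu)}}=\frac{\varpi_{\Phi}(g^{-1}x,\lambda)}{\|\lambda\|}\leq\mathbf{M}_{\Phi}(g^{-1}x).
$$
Since $\mathbf{M}_{\Phi}$ is $G$-invariant by Proposition~\ref{prop:M-G-invariant}, we have $\mathbf{M}_{\Phi}(g^{-1}x)=\mathbf{M}_{\Phi}(x)$, and taking the supremum over $\mu\in\ggot_{ell}-\{0\}$ yields $\mathbf{M}^{\C}_{\Phi}(x)\leq\mathbf{M}_{\Phi}(x)$. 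Combined with the first step this gives the claimed equality.

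I do not expect a serious obstacle: the argument is a conjugation bookkeeping built entirely on invariance statements proved earlier. The only point deserving care is to make sure that $\varpi_{\Phi}^\C$ reduces to $\varpi_{\Phi}$ after the conjugation $\mu=g\lambda$ \emph{including the} $-\infty$ \emph{case}, so that the displayed equality of ratios holds literally in $\R\cup\{-\infty\}$; this is exactly what the equivalences in Lemma~\ref{lem:varphi-C-property} and Lemma~\ref{lem:varphi-G-invariant} guarantee.
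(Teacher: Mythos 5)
Your proof is correct and follows essentially the same route as the paper's: the lower bound $\mathbf{M}^{\C}_{\Phi}(x)\geq\mathbf{M}_{\Phi}(x)$ because $\varpi^\C_{\Phi}$ and $\delta$ extend $\varpi_{\Phi}$ and $\|\cdot\|^2$ on $\kgot$, and the upper bound by writing $\mu=g\lambda$ and invoking the $G$-invariance of $\varpi^\C_{\Phi}$ (Lemma~\ref{lem:varphi-C-property}) together with that of $\mathbf{M}_{\Phi}$ (Proposition~\ref{prop:M-G-invariant}). Your extra care about the $-\infty$ case is a sensible (if brief) addition that the paper leaves implicit.
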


\begin{proof}
  We have $\mathbf{M}^{\C}_{\Phi}(x)\geq \mathbf{M}_{\Phi}(x)$, because the
  map
  $\mu\in\ggot_{ell}-\{0\}\mapsto
  \frac{\varpi^\C_{\Phi}(x,\mu)}{\sqrt{\delta(\mu)}}$ is an extension
  of the map
  $\lambda\in\kgot-\{0\}\mapsto
  \frac{\varpi_{\Phi}(x,\lambda)}{\|\lambda\|}$. On the other hand,
  every $\mu\in\ggot_{ell}$ can be written $\mu=g\lambda$ and
  $\frac{\varpi^\C_{\Phi}(x,\mu)}{\sqrt{\delta(\mu)}}=\frac{\varpi_{\Phi}(g^{-1}x,\lambda)}{\|\lambda\|}\leq
  \mathbf{M}_{\Phi}(g^{-1}x)=\mathbf{M}_{\Phi}(x)$.  This shows that
  $\mathbf{M}^{\C}_{\Phi}(x)\leq \mathbf{M}_{\Phi}(x)$.
\end{proof}

\bigskip

 We now have all the tools we need to show the uniqueness of the optimal 
destabilizing vector. Let us consider an unstable element $x\in V\times M$ and suppose that $\lambda_1,\lambda_2\in \kgot-\{0\}$ satisfy 
$$
\mathbf{M}_{\Phi}(x)=\frac{\varpi_{\Phi}(x,\lambda_1)}{\|\lambda_1\|}=\|\lambda_1\|=\|\lambda_2\|=\frac{\varpi_{\Phi}(x,\lambda_2)}{\|\lambda_2\|}.
$$

The last point of Lemma~\ref{lem:g-ell} tells us that there are $p_i\in P({\lambda_i})$ such that the elements $\mu_i:=p_i\lambda_i$ commute. 
Thanks to the second point of Lemma~\ref{lem:varphi-C-property}, we have 
$$
\mathbf{M}^\C_{\Phi}(x)=\frac{\varpi^\C_{\Phi}(x,\mu_1)}{\sqrt{\delta(\mu_1)}}=\frac{\varpi^\C_{\Phi}(x,\mu_2)}{\sqrt{\delta(\mu_2)}}.
$$
As $\varpi^\C_{\Phi}(x,\mu_1)>0$ and $\varpi^\C_{\Phi}(x,\mu_2)>0$ we have $\mu_1\neq -\mu_2$.  Since $[\mu_1,\mu_2]=0$, 
we have $\varpi_{\Phi}^\C(x,\mu_1+\mu_2)\geq \varpi_{\Phi}^\C(x,\mu_1)+\varpi_{\Phi}^\C(x,\mu_2)$ (see Lemma~\ref{lem:varphi-C-property}). 
We then obtain 
$$
\mathbf{M}^\C_{\Phi}(x)\geq\frac{\varpi^\C_{\Phi}(x,\mu_1+\mu_2)}{\sqrt{\delta(\mu_1+\mu_2)}}\geq \frac{\varpi^\C_{\Phi}(x,\mu_1)+
\varpi^\C_{\Phi}(x,\mu_2)}{\sqrt{\delta(\mu_1+\mu_2)}}
=\frac{\sqrt{\delta(\mu_1)}+\sqrt{\delta(\mu_2)}}{\sqrt{\delta(\mu_1+\mu_2)}}\mathbf{M}^\C_{\Phi}(x), 
$$
i.e. $\sqrt{\delta(\mu_1+\mu_2)}\geq \sqrt{\delta(\mu_1)}+\sqrt{\delta(\mu_2)}$. The third part of Lemma~\ref{lem:g-ell}, 
together with $\delta(\mu_1)=\delta(\mu_2)$, leads to $\mu_1=\mu_2$. The relation $p_1\lambda_1=p_2\lambda_2$ 
implies first that $P({\lambda_1})=P({\lambda_2})$, and then that $\exists p\in P(\lambda_1), \lambda_2=p\lambda_1$. 
Finally, this last relation gives that $\lambda_1=\lambda_2$ (see Theorem D.4 in \cite[Appendix~D]{GRS21}).

\subsection{$\mathbf{M}_{\Phi}=\mathbf{M}_{rel}$ on $V\times \Pbb E$}\label{sec:M-phi-rel}

We start with a first result.

\begin{proposition}
Let $\mu\in\ggot_{ell}$ be a non-zero rational vector. Then $\varpi^\C_{\Phi}(x,\mu)=\varpi_{rel}(x,\mu)$, $\forall x\in V\times \Pbb E$.
\end{proposition}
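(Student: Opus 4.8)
The plan is to reduce to the case $\mu\in\kgot$ using the $G$-invariance of both sides, and then to match the two definitions directly by a weight-space computation on $E$ and $V$. Since $\mu$ is rational elliptic, $\mu\in\ggot_{ell,\Q}=G(\tgot_{0,\Q})$, so I can write $\mu=g\lambda$ with $g\in G$ and $\lambda\in\tgot_{0,\Q}\subset\kgot$. On the analytic side, Lemma~\ref{lem:varphi-C-property}(i) gives $\varpi^\C_\Phi(x,\mu)=\varpi^\C_\Phi(g^{-1}x,\lambda)$. On the algebraic side, the rational $1$-PS attached to $\mu$ is $g$-conjugate to the one attached to $\lambda$, and Lemma~\ref{lem:minv}(i) is norm-preserving, hence holds for the \emph{unnormalized} invariant, giving $\varpi_{rel}(x,\mu)=\varpi_{rel}(g^{-1}x,\lambda)$. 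Thus it suffices to treat $\mu=\lambda\in\tgot_{0,\Q}$, where moreover $[\lambda]_\kgot=\lambda$.

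Fix $\lambda\in\tgot_{0,\Q}$ and let $\tau\in\Xgot_*(T)_\Q$ be the associated rational $1$-PS, characterized by $\tau(e^z)=\exp(-iz\lambda)$ (Section \textbf{Notation}); in particular $\tau(e^t)=e^{-it\lambda}$ for $t\in\R$. For a $T$-weight $\chi$, a vector $w\in E_\chi$ satisfies $\lambda w=i\langle\chi,\lambda\rangle w$ and $\tau(s)w=s^{\langle\chi,\lambda\rangle}w$, so the algebraic $\tau$-weight spaces coincide with the $\lambda$-eigenspaces under the correspondence $n\leftrightarrow\langle\chi,\lambda\rangle$, and likewise on $V$. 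Writing $x=(v,m)$ with $\tilde m=\sum_a\tilde m_a$ and $v=\sum_a v_a$ (where $\lambda$ acts by $ia$), one has $e^{-it\lambda}v=\sum_a e^{at}v_a$ and $\tau(s)v=\sum_a s^a v_a$. Both converge, respectively as $t\to+\infty$ and as $s\to\infty$ in $\C^*$, exactly when every $a$ with $v_a\neq0$ satisfies $a\leq0$, i.e. when $\varpi_V(v,\lambda)\geq0$; the algebraic regular limit over $\C^*$ is governed by the same combinatorial condition, independently of the direction of approach. By Lemma~\ref{lem:varphi-finite}, $\varpi^\C_\Phi(x,\lambda)=-\infty$ precisely when $\varpi_V(v,\lambda)<0$, which is exactly when $\lim_{s\to\infty}\tau(s)v$ fails to exist, i.e. when $\varpi_{rel}(x,\lambda)=-\infty$. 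So the two invariants are $-\infty$ in the same cases.

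It remains to match the finite values, so assume $\varpi_V(v,\lambda)\geq0$. By Lemma~\ref{lem:varphi-finite}(ii), $\varpi^\C_\Phi(x,\lambda)=\langle\Phi_{\Pbb E}(m_\lambda),\lambda\rangle$ with $m_\lambda=\lim_{t\to+\infty}e^{-it\lambda}m$. Since $e^{-it\lambda}\tilde m=\sum_a e^{at}\tilde m_a$, in $\Pbb E$ this converges to $m_\lambda=[\tilde m_{a_{\max}}]$, where $a_{\max}=\max\{a:\tilde m_a\neq0\}$. The Fubini--Study moment map then yields
\[
\langle\Phi_{\Pbb E}(m_\lambda),\lambda\rangle
= i\,\frac{\langle\lambda\,\tilde m_{a_{\max}},\tilde m_{a_{\max}}\rangle_E}{\langle\tilde m_{a_{\max}},\tilde m_{a_{\max}}\rangle_E}
= i\cdot i\,a_{\max}=-a_{\max},
\]
while by \eqref{eq:defm} one has $\varpi_{rel}(x,\lambda)=\varpi_E(m,\tau)=-\max\{a:\tilde m_a\neq0\}=-a_{\max}$. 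Hence the finite values agree, which together with the first paragraph proves the proposition.

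The only genuinely delicate point I expect is the bookkeeping of sign and direction conventions relating the algebraic limit $\lim_{s\to\infty}\tau(s)$ over $\C^*$ to the real analytic flow $e^{-it\lambda}$ as $t\to+\infty$; the identity $\tau(e^z)=\exp(-iz\lambda)$ from the Notation section is what pins this down, and the observation that the existence of the algebraic regular limit reduces to the purely combinatorial condition ``all weights $\leq0$'' is what lets the one-parameter real flow detect it faithfully.
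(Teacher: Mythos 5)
Your proposal is correct and follows essentially the same route as the paper's proof: reduce to $\lambda\in\kgot$ via the $G$-invariance of both invariants (Lemma~\ref{lem:varphi-C-property} and the norm-preserving form of Lemma~\ref{lem:minv}), check that both sides are $-\infty$ exactly when $\varpi_V(v,\lambda)<0$, and match the finite values through the weight decomposition. The only difference is that you spell out the dictionary between $\tau$-weights and $\lambda$-eigenvalues and the Fubini--Study computation $\langle\Phi_{\Pbb E}(m_\lambda),\lambda\rangle=-a_{\max}$, which the paper states without detail.
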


\begin{proof}Let $(\lambda,g)\in \kgot_\Q\times G$ such that $\mu=g\lambda$. Since $\varpi_{rel}(x,\mu)=\varpi_{rel}(g^{-1}x,\lambda)$ 
and $\varpi^\C_{\Phi}(x,\mu)=\varpi^\C_{\Phi}(g^{-1}x,\lambda)$, we need to prove that 
$\varpi_{rel}(g^{-1}x,\lambda)=\varpi^\C_{\Phi}(g^{-1}x,\lambda)$.

Let $(v,w)\in V\times E$ such that $g^{-1}x=(v,[w])$.
We have $e^{-it\lambda}v=\sum_{a}e^{ta}v_a$ where $\varpi_V(v,\lambda)=- \max\{a, v_a\neq 0\}$. 
Hence, the limit $\lim_{t\to\infty}e^{-it\lambda}v$ exists if and only if 
$\varpi_V(v,\lambda)\geq 0$. Thus, 
$\varpi_{rel}(g^{-1}x,\lambda)=\varpi^\C_{\Phi}(g^{-1}x,\lambda)=-\infty$ if $\varpi_V(v,\lambda)<0$.  

In the other case, if $\varpi_V(v,\lambda)\geq 0$, we have 
$$
\varpi^\C_{\Phi}(g^{-1}x,\lambda)=\lim_{t\to\infty}\langle\Phi_{\Pbb E }(e^{-it\lambda}[w]), \lambda\rangle= \varpi_E(w,\lambda)=\varpi_{rel}(g^{-1}x,\lambda).
$$
\end{proof}

The preceding result gives the following inequality:
\begin{equation}\label{eq:M-rel-phi}
\mathbf{M}_{\Phi}(x)=\mathbf{M}^\C_{\Phi}(x)=\sup_{\mu\in\ggot_{ell}-\{0\}}\frac{\varpi^\C_{\Phi}(x,\mu)}{\sqrt{\delta(\mu)}}\geq 
\sup_{\stackrel{\mu\in\ggot_{ell}-\{0\}}{\mu\ rational}}\frac{\varpi_{rel}(x,\mu)}{\sqrt{\delta(\mu)}}=: \mathbf{M}_{rel}(x).
\end{equation}

Thanks to Remarks~\ref{rem:G-stable} and \ref{rem:varpi-phi-infty}, we know already that, for $x=(v,[w])$, 
the following statements are equivalent: 
$\mathbf{M}_{\Phi}(x)=-\infty$, $\mathbf{M}_{rel}(x)=-\infty$, and $v\in V$ is $G$-{\em stable}.

\medskip

We consider now the case of $x\in V\times \Pbb E$ such that $\mathbf{M}_{\Phi}(x)\neq -\infty$. 
The equality $\mathbf{M}_{\Phi}(x)=\mathbf{M}_{rel}(x)$ is a consequence of (\ref{eq:M-rel-phi}) and the following lemma.

\begin{lemma}
Let $\mu\in\ggot_{ell}-\{0\}$ such that $\varpi^\C_{\Phi}(x,\mu)\neq -\infty$. There exists a sequence of {\em rational} elements
$\mu_j\in\ggot_{ell}-\{0\}$ such that $\mu=\lim_{j\to\infty}\mu_j$ and $\varpi^\C_{\Phi}(x,\mu)=\lim_{j\to\infty}\varpi_{rel}(x,\mu_j)$.
\end{lemma}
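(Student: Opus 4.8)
The plan is to reduce the statement, by $G$-invariance, to an approximation problem inside a maximal torus, where $\varpi^\C_{\Phi}(x,\cdot)$ becomes an explicit piecewise-linear function, and then to produce the rational approximants inside a rational polyhedral cone. First I would write $\mu=g\lambda$ with $\lambda\in\kgot$, and then $\lambda=\mathrm{Ad}(k)\eta$ with $k\in K$ and $\eta\in\tgot_0$. Setting $g'=gk$ and $y=(g')^{-1}x$, the first point of Lemma~\ref{lem:varphi-C-property} gives $\varpi^\C_{\Phi}(x,\mu)=\varpi^\C_{\Phi}(y,\eta)$, while the conjugation invariance of $\varpi_{rel}$ (Lemma~\ref{lem:minv}) gives $\varpi_{rel}(x,\mathrm{Ad}(g')\zeta)=\varpi_{rel}(y,\zeta)$ for every rational elliptic $\mathrm{Ad}(g')\zeta$. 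Hence it suffices to produce rational $\eta_j\in\tgot_{0,\Q}\setminus\{0\}$ with $\eta_j\to\eta$ and $\varpi^\C_{\Phi}(y,\eta)=\lim_j\varpi^\C_{\Phi}(y,\eta_j)$: the elements $\mu_j:=\mathrm{Ad}(g')\eta_j\in\ggot_{ell,\Q}$ then satisfy $\mu_j\to\mu$, and $\varpi^\C_{\Phi}(y,\eta_j)=\varpi_{rel}(y,\eta_j)=\varpi_{rel}(x,\mu_j)$ by the Proposition opening this subsection, so they do the job.

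Next I would make $\varpi^\C_{\Phi}(y,\cdot)$ explicit on $\tgot_0$. Writing $y=(v',[w'])$ and decomposing $V$ and $E$ into $T_0$-weight spaces, an element $\zeta\in\tgot_0$ acts on the weight-$\chi$ space by $i\langle\chi_0,\zeta\rangle$. Consequently $\varpi^\C_{\Phi}(y,\zeta)=-\infty$ exactly when $\langle\chi_0,\zeta\rangle>0$ for some $\chi\in\Wt_{T_0}(v')$, and otherwise $\varpi^\C_{\Phi}(y,\zeta)=-\max_{\chi\in\Wt_{T_0}(w')}\langle\chi_0,\zeta\rangle$, by the description of $m_\lambda$ and $\varpi_V$ recalled before Lemma~\ref{lem:varphi-finite}. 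The finiteness hypothesis $\varpi^\C_{\Phi}(y,\eta)\neq-\infty$ thus says precisely that $\eta$ lies in the rational polyhedral cone $C:=\{\zeta\in\tgot_0:\langle\chi_0,\zeta\rangle\leq 0\ \forall\chi\in\Wt_{T_0}(v')\}$, and on $C$ the map $\zeta\mapsto\varpi^\C_{\Phi}(y,\zeta)=-\max_{\chi\in\Wt_{T_0}(w')}\langle\chi_0,\zeta\rangle$ is continuous, being minus a maximum of finitely many rational linear forms.

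Finally, since $C$ is cut out by finitely many rational inequalities, the decomposition theorem for rational polyhedra \cite{BG:poly} gives $C=\cone(u_1,\ldots,u_r)$ with $u_i\in\tgot_{0,\Q}$. Writing $\eta=\sum_i t_i u_i$ with $t_i\geq 0$ and replacing each $t_i$ by rationals $t_i^{(j)}\geq 0$ with $t_i^{(j)}\to t_i$, I obtain $\eta_j:=\sum_i t_i^{(j)}u_i\in C\cap\tgot_{0,\Q}$ with $\eta_j\to\eta$, and $\eta_j\neq 0$ for $j$ large since $\eta\neq 0$. Each $\eta_j\in C$ keeps the $V$-part finite, and the continuity of $\varpi^\C_{\Phi}(y,\cdot)$ on $C$ yields $\varpi^\C_{\Phi}(y,\eta_j)\to\varpi^\C_{\Phi}(y,\eta)$, which is all that remains.

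The step I expect to be the main obstacle is the very last one, and specifically the requirement that the approximants be chosen \emph{inside} the cone $C$ rather than merely close to $\eta$: an unconstrained rational perturbation crossing a facet $\langle\chi_0,\cdot\rangle=0$ with $\chi\in\Wt_{T_0}(v')$ would push the $V$-part, and hence $\varpi^\C_{\Phi}$, to $-\infty$ and wreck the convergence. Controlling this is exactly what the rational generator description of $C$ provides, which is why the reduction to a torus (so that the instability locus of the $V$-coordinate becomes a genuine rational polyhedral cone) is the crux of the argument.
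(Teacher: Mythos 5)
Your proof is correct and follows essentially the same route as the paper's: reduce by $G$-invariance (and $K$-conjugation) to an element of $\tgot_0$, observe that finiteness of $\varpi^\C_{\Phi}$ confines $\eta$ to the rational polyhedral cone cut out by the $T_0$-weights of the $V$-component, approximate by rational elements inside that cone, and conclude by continuity of the piecewise-linear $E$-part. The only difference is that you justify the density of rational points in the cone via the decomposition theorem, a detail the paper's proof simply takes for granted when it picks the sequence $\lambda_j\in C_v$.
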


\begin{proof}Let $\tgot_0$ be a maximal abelian subalgebra of $\kgot$. Let $(\lambda,g)\in \tgot_0\times G$ such that $\mu=g\lambda$, and let 
$(v,w)\in V\times E$ such that $g^{-1}x=(v,[w])$. We have $\varpi_V(v,\lambda)\geq 0$, and 
$$
\varpi^\C_{\Phi}(x,\mu)=\varpi^\C_{\Phi}(g^{-1}x,\lambda)=\langle\Phi_{\Pbb E }([w]_\lambda),\lambda\rangle=\varpi_E(w,\lambda).
$$
There exists a finite set $\Rgot_v\subset\tgot^*_0$ such that $e^{-it\xi}v=\sum_{\alpha\in\Rgot_v}e^{t\langle\alpha,\xi\rangle}v_\alpha$, for all 
$(\xi,t)\in \tgot_0\times\R$. We see then that $\forall \xi_0\in\tgot_0$, $\varpi_V(v,\xi_0)\geq 0$ iff $\xi_0$ belongs to the rational cone 
$C_v:=\{\xi\in\tgot_0, \langle\alpha,\xi\rangle\leq 0,\forall \alpha\in\Rgot_v\}$. Consider now a sequence $\lambda_j\in C_v$ 
of rational elements converging to $\lambda$, and let $\mu_j=g\lambda_j$. We can conclude that the sequence 
$\varpi^\C_{\Phi}(x,\mu_j)=\langle\Phi_{\Pbb E }([w]_{\lambda_j}),\lambda_j\rangle=\varpi_E(w,\lambda_j)=\varpi_{rel}(x,\mu_j)$ converges to 
$\varpi^\C_{\Phi}(x,\mu)$, because the map  $\xi\in\tgot_0\mapsto \varpi_E(w,\xi)$ is continuous. 
\end{proof}

Given an unstable element $x\in  V\times\Pbb E$, we denote, in Section \ref{subsec:the-general-case}, 
by $\Lambda(x)\subset  \Xgot_{*}(G)_\Q\simeq \ggot_{ell,\Q}$ the set of optimal destabilizing $1$-parameter subgroups for $x$.

\begin{lemma}\label{lem:unicity-lambda-x}
If $x\in  V\times\Pbb E$ is unstable, $\Lambda(x)\cap \kgot$ is equal $\{\lambda_x\}$.
\end{lemma}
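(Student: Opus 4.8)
The plan is to identify the unique $\kgot$-element of the algebraic optimal class $\Lambda(x)$ with the analytic optimal destabilizing vector $\lambda_x$, by checking that it satisfies the characterization \eqref{eq:lambda-x} and then invoking the uniqueness of $\lambda_x$. Concretely, I would take an arbitrary $\tau\in\Lambda(x)\cap\kgot$, prove $\tau=\lambda_x$, and combine this with a non-emptiness argument to obtain the equality $\Lambda(x)\cap\kgot=\{\lambda_x\}$.

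First I would translate the algebraic optimality of $\tau$ into the analytic language. By definition of $\Lambda(x)$, such a $\tau$ is rational and satisfies $\mathbf{M}_{rel}(x)=\bar\varpi_{rel}(x,\tau)=\|\tau\|$, that is $\varpi_{rel}(x,\tau)=\|\tau\|^2$. The two dictionaries of Section~\ref{sec:M-phi-rel} then apply: since $\tau$ is a nonzero rational elliptic element, the opening Proposition of that section gives $\varpi^\C_\Phi(x,\tau)=\varpi_{rel}(x,\tau)$, and since $[\tau]_\kgot=\tau$ one has $\varpi^\C_\Phi(x,\tau)=\varpi_\Phi(x,\tau)$. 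Combined with the norm compatibility $\delta(\tau)=\|\tau\|^2$ (valid because $[i\tau]_\kgot=0$ for $\tau\in\kgot$) and the global identity $\mathbf{M}_\Phi(x)=\mathbf{M}_{rel}(x)$ proved in Section~\ref{sec:M-phi-rel} (applicable since $x$ is unstable, so $\mathbf{M}_{rel}(x)>0>-\infty$), this rewrites the displayed relation as $\mathbf{M}_\Phi(x)=\|\tau\|=\varpi_\Phi(x,\tau)/\|\tau\|$, which is precisely \eqref{eq:lambda-x}. The uniqueness clause of Theorem~\ref{theo:maximal-destabilizing-1} then forces $\tau=\lambda_x$, proving $\Lambda(x)\cap\kgot\subseteq\{\lambda_x\}$.

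For the reverse inclusion I would use Lemma~\ref{lem:unicity-tau-x}, which exhibits an element $\tau_x\in\kgot$ lying in $\Lambda(x)$; the previous paragraph applied to $\tau_x$ identifies $\tau_x=\lambda_x$, so the intersection is non-empty and equals $\{\lambda_x\}$. I expect no deep obstacle here, since all the substantial input — the existence and uniqueness of $\lambda_x$, and the equalities $\mathbf{M}_\Phi=\mathbf{M}_{rel}$ and $\varpi^\C_\Phi=\varpi_{rel}$ on rational elliptic elements — is already available. The only point demanding genuine care is the bookkeeping of the competing norm conventions (the $1$-PS norm $\sqrt{\delta(\cdot)}$, the $\kgot$-inner-product norm, and the length entering $\mathbf{M}_{rel}$) together with the verification that $\varpi^\C_\Phi$ restricts to $\varpi_\Phi$ on $\kgot$, so that \eqref{eq:lambda-x} is matched exactly and not merely up to positive proportionality.
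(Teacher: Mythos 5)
Your proof is correct and follows essentially the same route as the paper's: both translate the algebraic optimality condition into the analytic characterization \eqref{eq:lambda-x} via the identities $\mathbf{M}_{rel}=\mathbf{M}_{\Phi}$ and $\varpi_{rel}=\varpi^\C_{\Phi}=\varpi_{\Phi}$ on rational elements of $\kgot$ from Section~\ref{sec:M-phi-rel}, invoke the uniqueness of Theorem~\ref{theo:maximal-destabilizing-1} to conclude equality with $\lambda_x$, and use Lemma~\ref{lem:unicity-tau-x} to guarantee the intersection is non-empty. The only cosmetic difference is that you apply the analytic uniqueness directly to an arbitrary element of $\Lambda(x)\cap\kgot$, so you need only the existence part of Lemma~\ref{lem:unicity-tau-x}, whereas the paper first uses that lemma's uniqueness clause to pin down the element.
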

\begin{proof}By definition $\tau \in \Lambda(x)$ if and only if $\mathbf{M}_{rel}(x)=\Vert \tau\Vert=\frac{\varpi_{rel}(x,\tau)}{\Vert \tau\Vert}$. We 
have checked in Lemma \ref{lem:unicity-tau-x} that there exists a unique $\tau_x\in \kgot_\Q$ contained in $\Lambda(x)$. Since 
$\mathbf{M}_{rel}=\mathbf{M}_{\Phi}$, we see that $\tau_x$ satisfies $\mathbf{M}_{\Phi}(x)=\Vert \tau_x\Vert=\frac{\varpi_{\Phi}(x,\tau_x)}{\Vert \tau_x\Vert}$.
Thanks to Theorem \ref{theo:maximal-destabilizing-1} , we can conclude that $\tau_x$ is equal to $\lambda_x$.
\end{proof}

\section{HKKN-stratifications in the K\"{a}hler framework}\label{sec:KN-stratification}

Consider the K\"ahler Hamiltonian $G$-manifold $V\times M$ where $M$ is compact. 

\subsection{Critical set}

The critical set $\mathrm{Crit}(f)$ of $f=\frac{1}{2}\|\Phi\|^2$ is characterized by the relation : $x\in\mathrm{Crit}(f) \Longleftrightarrow \Phi(x)\cdot x=0$.
Let $\tgot^*_{0,+}\subset \tgot^*_0$ be a Weyl chamber.

\begin{definition}
Let $\Bcal:=\Phi(\mathrm{Crit}(f))\cap \tgot_{0,+}^*$, so that 
$$
\mathrm{Crit}(f)=\bigcup_{\beta\in\Bcal} \Ccal_\beta\quad {\rm with}\quad \Ccal_\beta:=K\Big((V^{\beta}\times M^\beta)\cap\Phi^{-1}(\beta)\Big).
$$
\end{definition}

\begin{lemma}\label{lem:crit-f}
The set $\Bcal$ is finite.
\end{lemma}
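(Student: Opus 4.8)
The plan is to reduce the statement to the maximal torus $T_0$ and to the nearest-point description already used in the algebraic part, exploiting that $M$ is compact. First I would unwind the definitions: if $\beta\in\Bcal$ there is $x=(v,m)\in\mathrm{Crit}(f)$ with $\Phi(x)=\beta\in\tgot^*_{0,+}$, and the criticality relation $\Phi(x)\cdot x=0$ reads (after the identification $\kgot^*\simeq\kgot$) as $\beta\cdot x=0$; hence $x$ is fixed by the subtorus $T_\beta:=\overline{\exp(\R\beta)}\subseteq T_0$, so that $v\in V^{\beta}$ and $m\in M^{\beta}$. We may assume $\beta\neq 0$, the value $0$ contributing at most one element to $\Bcal$. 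Since $x$ is critical we have $x_\infty=x$, and (for $\beta\neq0$) $x$ is $\Phi$-unstable; Proposition~\ref{prop:phi-minimal-orbit-1} and Theorem~\ref{theo:maximal-destabilizing-2} then give $\|\beta\|=\|\Phi(x_\infty)\|=\min_{g\in G}\|\Phi(gx)\|=\mathbf{M}_{\Phi}(x)$ and $\beta\in K\lambda_x$, so $\beta$ is the dominant representative of the optimal destabilizing vector $\lambda_x$.

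Next I would pass to the torus. Writing $\Phi_{T_0}=\mathrm{pr}_{\tgot^*_0}\circ\Phi$ and using $v\in V^{\beta}$, $m\in M^{\beta}$ together with $\beta\in\tgot^*_0$, a direct computation gives $\Phi_{T_0}(x)=\beta$. The analytic analogue of Proposition~\ref{prop:ssG=T}, obtained by applying the distance formula \eqref{eq:distsup} to $\varpi_{\Phi}$ restricted to $\tgot_0$ exactly as in the abelian algebraic case, identifies the $T_0$-invariant $\mathbf{M}^{T_0}_{\Phi}(gx)$ with $d\big(0,\Pcal_{T_0}(gx)\big)$ and the optimal $T_0$-vector with the orthogonal projection of $0$ onto $\Pcal_{T_0}(gx)$, where $\Pcal_{T_0}(y)$ denotes the $T_0$-moment polyhedron of $\overline{T_0 y}$. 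Because every one-parameter direction of $G$ is $K$-conjugate into $\tgot_0$, one has $\mathbf{M}_{\Phi}(x)=\sup_{g}\mathbf{M}^{T_0}_{\Phi}(gx)$ (via Lemma~\ref{lem:varphi-G-invariant}), so $\beta$ is, up to the Weyl group, the nearest point to $0$ of $\Pcal_{T_0}(gx)$ for a suitable $g$.

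Finally I would extract finiteness from the combinatorics of $\Pcal_{T_0}$. Let $\Delta\subset\tgot^*_0$ be the finite set of nonzero $T_0$-weights of $V$, and let $S:=\Phi_{T_0}(M^{T_0})$, which is finite since $M$ is compact (the map $\langle\Phi_M,\xi\rangle$ is locally constant on the finitely many components of $M^{T_0}$ for $\xi\in\tgot_0$). Using Atiyah--Guillemin--Sternberg convexity for the compact factor and Sjamaar's properness \cite{Sjamaar98} for the linear factor, each polyhedron $\Pcal_{T_0}(y)$ has the form $\conv(S')-\cone(\Delta')$ for some $S'\subseteq S$ and $\Delta'\subseteq\Delta$. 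There are only finitely many such pairs, hence finitely many distinct polyhedra, and each nonempty closed convex polyhedron has a unique point nearest to $0$. Therefore $\beta$ ranges over a finite set, and $\Bcal$ is finite.

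The main obstacle I expect is the middle step: carrying out the reduction from the non-abelian optimal destabilizing vector $\lambda_x$ to the abelian nearest-point picture in the present non-compact Kähler framework, that is, establishing $\mathbf{M}_{\Phi}=\sup_g\mathbf{M}^{T_0}_{\Phi}$ together with the identification of $\Pcal_{T_0}(y)$ as $\conv(S')-\cone(\Delta')$. This requires combining AGS convexity on $M$ with the properness of $\Phi_V$ on closed orbits, and checking that the limit defining $\varpi_{\Phi}$ computes the support function of $\Pcal_{T_0}(y)$; the compact analogue is classical, but the unbounded $V$-directions must be controlled as in Lemma~\ref{lem:varphi-finite}.
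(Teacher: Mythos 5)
Your overall strategy is viable and genuinely different from the paper's: it is essentially Kirwan's original finiteness argument \cite{Kir84a}, reducing each nonzero critical value $\beta$ to an optimal destabilizing vector (via Theorems~\ref{theo:maximal-destabilizing-1} and \ref{theo:maximal-destabilizing-2}, whose proofs in Section~\ref{sec:numerical invariant-phi} precede Lemma~\ref{lem:crit-f}, so there is no circularity there), then to a nearest-point problem for finitely many torus polyhedra. The paper does something much lighter: it stratifies $V\times M$ by infinitesimal orbit type, $V\times M=\bigcup_i K\{x:\kgot_x=\hgot_i\}$ with finitely many stabilizer algebras $\hgot_i$ (citing \cite{DK12}), observes that the $\hgot_i$-component of $\Phi$ is locally constant on $\{\kgot_x=\hgot_i\}$, and notes that a critical point $x$ satisfies $\Phi(x)\in\kgot_x=\hgot_i$, so each connected component of each stratum contributes at most one coadjoint orbit of critical values. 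No destabilizing vectors, no torus reduction, no convexity.

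The genuine gap in your write-up is exactly the step you flag as ``the main obstacle'': the identification of $\mathbf{M}^{T_0}_{\Phi}(y)$ with $d\bigl(0,\Pcal_{T_0}(y)\bigr)$ for a polyhedron of the universal form $\conv(S')-\cone(\Delta')$, $S'\subseteq S$, $\Delta'\subseteq\Delta$. You cannot get this by citing Atiyah--Guillemin--Sternberg plus Sjamaar: Atiyah's orbit-closure theorem \cite{Ati82} applies to the \emph{compact} K\"ahler factor $M$ alone, Sjamaar \cite{Sjamaar98} covers the affine factor alone, and the closure $\overline{T(v,m)}$ in the product is not the product of the closures, so neither result describes $\Phi_{T_0}\bigl(\overline{T(v,m)}\bigr)$. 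The statement you need for the product with a non-integral K\"ahler form is precisely what this paper proves later (Theorem~\ref{th:convexity}, abelian case), and that proof runs through Propositions~\ref{prop:HKKN-E-M-strata} and \ref{prop:X-ss-B-invariant}, which already use the finiteness of $\Bcal$ --- so invoking it here would be circular. The gap is closable, but by a different mechanism than the one you name: work with $\varpi_{\Phi}$ itself rather than with moment images of orbit closures. One has the splitting $\varpi_{\Phi}\bigl((v,m),\xi\bigr)=\varpi_{\Phi_V}(v,\xi)+\varpi_{\Phi_M}(m,\xi)$ for $\xi\in\tgot_0$; by Lemma~\ref{lem:varphi-finite} the first term equals $0$ on the cone $\{\xi:\langle\alpha,\xi\rangle\le 0,\ \forall\alpha\in\Wt_{T}(v)\}$ and $-\infty$ outside it, hence is determined by the subset $\Wt_T(v)\subseteq\Wt_T(V)$; the second term is the support-type function $\inf_{p\in\conv(\Phi_{T_0}(F))}\langle p,\xi\rangle$ of the Atiyah polytope of $\overline{Tm}\subset M$, whose vertex set $\Phi_{T_0}(F)$ lies in the finite set $S=\Phi_{T_0}(M^{T_0})$ --- and here Atiyah \emph{does} apply, since $M$ is compact K\"ahler. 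This yields finitely many possible functions $\varpi_{\Phi}(y,\cdot)\vert_{\tgot_0}$, hence via (\ref{eq:distsup}) finitely many possible optimal vectors, which is all your final paragraph needs. As written, however, the proposal asserts rather than proves its central reduction, and the references offered for it do not cover the situation at hand.
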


\begin{proof}
  If $\hgot\subset \kgot$ is a subalgebra, we define the locally
  closed submanifolds
  $(V\times M)_\hgot:=\{x\in V \times M, \kgot_x=\hgot\}$ and
  $(V\times M)_{[\hgot]}=K(V\times M)_\hgot$. There exists a finite
  number of subalgebra $\hgot_1,\ldots,\hgot_r$ such that
  $V\times M=\bigcup_{i=1}^r(V\times M)_{[\hgot_i]}$ (see \cite{DK12}). Let us analyze
  the set
$$
\Phi\left(\mathrm{Crit}(f)\bigcap (V\times M)_{[\hgot_i]}\right)=
K\Phi\left(\mathrm{Crit}(f)\bigcap (V\times M)_{\hgot_i}\right).
$$
Take a decomposition of the moment map
$\Phi=\Phi_{\hgot_i}+\Phi_{\qgot_i}$ relative to an orthogonal
splitting $\kgot=\hgot_i\oplus \qgot_i$. For any connected component
$\Zcal\subset (V\times M)_{\hgot_i}$, the function
$\Phi_{\hgot_i}\vert_{\Zcal}$ is constant, equal to
$\lambda_{i,\Zcal}$, and
$\mathrm{Crit}(f)\bigcap \Zcal \neq \emptyset$ iff
$\Phi_{\qgot_i}(z)=0, z\in\Zcal$ admits a solution. We have proven
that
$$
K\Phi\left(\mathrm{Crit}(f)\bigcap (V\times M)_{\hgot_i}\right)=\bigcup_{\Zcal} K\lambda_{i,\Zcal}
$$
where the union is on the connected components
$\Zcal\subset (V\times M)_{\hgot_i}$ such that
$\Phi_{\qgot_i}(z)=0, z\in\Zcal$ admits a solution. This proves that
$\Bcal$ is finite.
\end{proof}

\subsection{Strata $\Scal_{\langle\beta\rangle}$ when $\beta\neq 0$}

Let $\beta\in\Bcal-\{0\}$. In this section, we study the stratum
\begin{equation}\label{def:S-beta-phi}
\Scal_{\langle\beta\rangle}:=\{x\in V\times M, x_\infty\in \Ccal_\beta\}.
\end{equation}

First, we notice that $x\in  \Scal_{\langle\beta\rangle} \Longleftrightarrow \Phi(x_\infty)\in K\beta \Longleftrightarrow \lambda_x \in K\beta$.

\begin{remark}When working with the variety $V\times \Pbb E$, the condition $\lambda_x \in K\beta$ is equivalent to 
$\mathbf{M}_{rel}(x)=\Vert\beta\Vert$ and $\langle\beta\rangle\cap \Lambda(x)\neq\emptyset$ (see Lemma \ref{lem:unicity-lambda-x}).
This shows that the strata $\Scal_{\langle\beta\rangle}$ defined in (\ref{def:S-beta-phi}) is equal to the strata 
$(V\times \Pbb E)_{\langle\beta\rangle}$ defined in (\ref{eq:defSdtau}).
\end{remark}

Thanks to Lemma~\ref{lem:lambda-x-G-action}, we have a first result.

\begin{lemma}
The stratum $\Scal_{\langle\beta\rangle}$ is invariant under the $G$-action, and under the dilatations $\delta_r(v,m)=(\sqrt{r}v,m)$.
\end{lemma}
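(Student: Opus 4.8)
The plan is to reduce the statement to the characterization recorded in the remark just above, namely that $x\in\Scal_{\langle\beta\rangle}$ if and only if $\lambda_x\in K\beta$, and then to read both invariance properties off Lemma~\ref{lem:lambda-x-G-action}. First I would note that, because $\beta\neq 0$, every $x\in\Scal_{\langle\beta\rangle}$ is $\Phi$-unstable: by definition $\Phi(x_\infty)\in K\beta$, which is a nonzero coadjoint orbit, so $x_\infty\notin\Phi^{-1}(0)$ and hence $x\notin(V\times M)^{\Phi-ss}$. Consequently the optimal destabilizing vector $\lambda_x$ is defined at every point of the stratum (Theorem~\ref{theo:maximal-destabilizing-1}), and by Theorem~\ref{theo:maximal-destabilizing-2} it satisfies $\Phi(x_\infty)\in K\lambda_x$; since $\Phi(x_\infty)$ then lies in both $K$-orbits $K\beta$ and $K\lambda_x$, these orbits coincide, which justifies the equivalence $x\in\Scal_{\langle\beta\rangle}\Longleftrightarrow\lambda_x\in K\beta$.

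For the $G$-invariance I would fix $x\in\Scal_{\langle\beta\rangle}$ and an arbitrary $g\in G$. Using that $K$ acts transitively on the flag manifold $G/P(\lambda_x)$, equivalently the decomposition $G=KP(\lambda_x)$ recalled in Section~\ref{sec:numerical-invariant-phi}, I may choose $k\in K$ with $k^{-1}g\in P(\lambda_x)$. Lemma~\ref{lem:lambda-x-G-action}(i) then yields $\lambda_{gx}=k\lambda_x$. Since $\lambda_x\in K\beta$ and $K\beta$ is invariant under the left $K$-action, we obtain $\lambda_{gx}=k\lambda_x\in K\beta$, that is $gx\in\Scal_{\langle\beta\rangle}$. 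As $g$ was arbitrary, $\Scal_{\langle\beta\rangle}$ is stable under $G$.

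For the invariance under the dilatations I would simply invoke Lemma~\ref{lem:lambda-x-G-action}(ii): for every $x\in\Scal_{\langle\beta\rangle}$ and every $r>0$ one has $\lambda_{\delta_r(x)}=\lambda_x\in K\beta$, whence $\delta_r(x)\in\Scal_{\langle\beta\rangle}$.

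I do not anticipate a genuine obstacle: both assertions are immediate once the description of the stratum in terms of $\lambda_x$ is available, and the content is entirely concentrated in Lemma~\ref{lem:lambda-x-G-action}. The only point demanding a line of justification is that $\lambda_x$ is well-defined throughout $\Scal_{\langle\beta\rangle}$, which is precisely the $\Phi$-instability observed in the first step and follows from $\beta\neq 0$.
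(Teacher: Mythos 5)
Your proof is correct and follows exactly the paper's route: the paper records the equivalence $x\in\Scal_{\langle\beta\rangle}\Longleftrightarrow\lambda_x\in K\beta$ in the preceding remark and then deduces both invariance properties directly from Lemma~\ref{lem:lambda-x-G-action}, just as you do. Your additional justification that $\lambda_x$ is well-defined on the stratum (via $\beta\neq 0$ and Theorems~\ref{theo:maximal-destabilizing-1} and~\ref{theo:maximal-destabilizing-2}) is a welcome explicit step that the paper leaves implicit.
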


The next result precises the definition of $\Scal_{\langle\beta\rangle}$ in terms of
the moment map (see \cite[Section~6]{Kir84a} for the compact setting). 

\begin{proposition}\label{prop:S-beta}
A point $x\in V\times M$ lies in $\Scal_{\langle\beta\rangle}$ iff $\beta$ is the unique closest point to $0$ of 
$\Phi(\overline{Gx})\cap \tgot_{0,+}^*$.
\end{proposition}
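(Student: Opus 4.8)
The plan is to phrase everything in terms of the constant $m:=\inf_{g\in G}\|\Phi(gx)\|=\|\Phi(x_\infty)\|$ provided by Proposition~\ref{prop:phi-minimal-orbit-1}, and to identify the points of $\Phi(\overline{Gx})\cap\tgot_{0,+}^*$ nearest to $0$ with the optimal destabilizing vectors of the minimal critical points of $f$ contained in $\overline{Gx}$. First I record the preliminaries. Since $\Phi$ is $K$-equivariant and $\overline{Gx}$ is $G$-stable, $\Phi(\overline{Gx})$ is $K$-invariant, and by Proposition~\ref{prop:phi-proper-orbit} the restriction of $\Phi$ to $\overline{Gx}$ is proper, so $\Phi(\overline{Gx})$ is closed and meets $\tgot_{0,+}^*$ in a closed set. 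As $\|\cdot\|$ is $K$-invariant, the distance to $0$ is constant along $K$-orbits, and the minimal distance realized on $\Phi(\overline{Gx})$ equals $\inf_{y\in\overline{Gx}}\|\Phi(y)\|=m$ (the infimum over $\overline{Gx}$ coinciding with the one over the dense orbit $Gx$). Note that $\beta$ is \emph{a} nearest point: since $x_\infty\in\overline{Gx}$ and $\Phi(x_\infty)\in K\beta$, we get $K\beta\subseteq\Phi(\overline{Gx})$, hence $\beta\in\Phi(\overline{Gx})\cap\tgot_{0,+}^*$ with $\|\beta\|=\|\Phi(x_\infty)\|=m$.

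This already yields the implication ``$\beta$ is the unique nearest point $\Rightarrow x\in\Scal_{\langle\beta\rangle}$''. Indeed, the chamber representative $\beta'\in\tgot_{0,+}^*$ of the $K$-orbit $K\Phi(x_\infty)$ lies in $\Phi(\overline{Gx})\cap\tgot_{0,+}^*$ and has norm $\|\beta'\|=m$, so it is a nearest point; the uniqueness hypothesis forces $\beta'=\beta$, whence $\Phi(x_\infty)\in K\beta$, i.e.\ $x\in\Scal_{\langle\beta\rangle}$ by the equivalence $x\in\Scal_{\langle\beta\rangle}\iff\Phi(x_\infty)\in K\beta$ recorded before the statement.

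For the converse I assume $x\in\Scal_{\langle\beta\rangle}$ (so $\beta\neq0$ and $m=\|\beta\|>0$, in particular $x$ is $\Phi$-unstable) and must prove that $\beta$ is the \emph{unique} nearest point. Let $\eta\in\Phi(\overline{Gx})\cap\tgot_{0,+}^*$ with $\|\eta\|=m$, and choose $y\in\overline{Gx}$ with $\Phi(y)=\eta$. I first argue that $y$ is critical: by Proposition~\ref{prop:flow-moment-map-square} the negative gradient flow of $f$ through $y$ stays in $Gy\subseteq\overline{Gx}$, along which $\|\Phi\|\geq m$, while $\frac{d}{dt}f(y(t))=-\|\nabla f(y(t))\|^2\leq0$ and $f(y(0))=\tfrac12 m^2$ is already minimal; hence the flow is stationary and $\nabla f(y)=J(\Phi(y)\cdot y)=0$, i.e.\ $\Phi(y)\cdot y=0$. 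Under the identification $\kgot^*\simeq\kgot$ this means $e^{-it\eta}y=y$ for all $t$, so $\varpi_{\Phi}(y,\eta)=\langle\Phi(y),\eta\rangle=\|\eta\|^2$; together with $\mathbf{M}_{\Phi}(y)\leq\|\Phi(y_\infty)\|=\|\eta\|$ this gives $\mathbf{M}_{\Phi}(y)=\|\eta\|$, so $\eta=\lambda_y$ by the uniqueness in Theorem~\ref{theo:maximal-destabilizing-1}. Thus every nearest point is the optimal destabilizing vector of a minimal critical point of $\overline{Gx}$, and the claim reduces to showing that the minimum of $\|\Phi\|$ on $\overline{Gx}$ is attained on the single $K$-orbit $K\Phi(x_\infty)=K\beta$, equivalently $\eta=\beta$.

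This last reduction is the main obstacle: it is precisely the nearest-point/convexity phenomenon, the analytic counterpart of Proposition~\ref{prop:ssG=T}(ii) and Proposition~\ref{prop:stratproj}, asserting that $\beta$ is the orthogonal projection of $0$ onto the set $\Phi(\overline{Gx})\cap\tgot_{0,+}^*$, and is therefore unique. One route is to invoke the convexity theorem (Theorem~A), proved later and independently of this statement: the projection of $0$ onto a closed convex set is unique, giving $\eta=\beta$ at once. To keep the argument self-contained at this point, I would instead establish the frontier behaviour of the strata — the Kähler analog of Proposition~\ref{prop:Kirwan}(v) and Corollary~\ref{coro:open-strata}: since $Gx\subseteq\Scal_{\langle\beta\rangle}$ is dense in $\overline{Gx}$, every boundary point $y\in\overline{Gx}\setminus\Scal_{\langle\beta\rangle}$ must lie in a stratum $\Scal_{\langle\beta'\rangle}$ with $\|\beta'\|>\|\beta\|$; as our critical $y$ has $\|\eta\|=\|\beta\|$, it cannot be a boundary point, so $y\in\Scal_{\langle\beta\rangle}$ and $\eta=\Phi(y)\in K\beta$, forcing $\eta=\beta$. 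The delicate part is proving this strict increase of $\|\beta'\|$ across the frontier in the non-compact Kähler framework; I expect to obtain it by combining the uniqueness of the optimal destabilizing vector (Theorem~\ref{theo:maximal-destabilizing-1}) with the superadditivity of $\varpi_{\Phi}^{\C}$ (Lemma~\ref{lem:varphi-C-property}), mirroring the algebraic derivation of Corollary~\ref{coro:open-strata}.
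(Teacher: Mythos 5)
Your reduction is correct as far as it goes: the easy direction, the identification of the minimal distance with $m=\|\Phi(x_\infty)\|$, and the observation that any nearest point $\eta$ is attained at a critical point $y\in\overline{Gx}$ with $\lambda_y=\eta$ are all fine. But the step you isolate as ``the main obstacle'' --- that the minimum of $\|\Phi\|$ on $\overline{Gx}$ is attained on the single orbit $K\beta$ --- is the entire content of the proposition, and neither of your two proposed routes closes it. Invoking Theorem~A is circular: in this paper the convexity theorem is derived from the HKKN stratification of $V\times M$, whose construction (via Theorem~\ref{theo:RR-structure} and the frontier Lemma~\ref{lem:bar-S-beta}) explicitly uses Proposition~\ref{prop:S-beta}. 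The same objection applies to your ``self-contained'' alternative: the strict inequality $\|\beta'\|>\|\beta\|$ across the frontier of a stratum is Lemma~\ref{lem:bar-S-beta}, which is proved only after, and by means of, the description of $\Scal_{\beta'}$ as the preimage of $Z_{\beta'}^{ss}$, itself resting on Theorem~\ref{theo:RR-structure} and hence on the proposition you are trying to prove. You give no independent argument for it, only the expectation that the algebraic derivation of Corollary~\ref{coro:open-strata} will transfer.

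What the paper actually does here is different and bypasses the stratification entirely. It proves, in order: (a) for a critical point $x$ one has $\|\Phi(gx)\|\geq\|\Phi(x)\|$ for all $g\in G$, with equality iff $gx\in Kx$, by a monotonicity argument on $t\mapsto\langle\Phi(e^{-it\beta}px),\beta\rangle$ for $g=kp$, $p\in P(\beta)$; (b) two critical points in the same $G$-orbit lie in the same $K$-orbit; (c) for every $y\in Gx$ one has $y_\infty\in Kx_\infty$ --- this is the key step, obtained from the distance-decreasing property of negative gradient flow lines of the convex Kempf--Ness function on $\mathbb{X}=G/K$ (Theorem~\ref{theo:kempf-ness-kahler}, point~(iv)), which produces $g_1\in G$ with $x_\infty=g_1y_\infty$, after which (b) applies; and (d) a Lojasiewicz estimate $d(z,z_\infty)\leq \tfrac{C}{1-r}|f(z)-a|^{1-r}$ near the minimal level, showing that a minimizer $y\in\overline{Gx}$ is the limit of the points $z^k_\infty\in Kx_\infty$ for any sequence $z^k\in Gx$ converging to $y$, hence $y\in Kx_\infty$. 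This yields $\eta\in K\beta\cap\tgot^*_{0,+}=\{\beta\}$. Some version of steps (c) and (d) is what your argument is missing, and it cannot be replaced by material that sits downstream of the proposition.
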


\begin{remark}
In Section~\ref{sec:convexity}, we show that $\Delta(\overline{Gx}):=\Phi(\overline{Gx})\cap \tgot^*_{0,+}$ is a closed convex set. 
Knowing this, we see that $x\in V\times M$ is contained in $\Scal_{\langle\beta\rangle}$ 
iff  $\beta$ is the orthogonal projection of $0$ onto $\Delta(\overline{Gx})$.
\end{remark}

\medskip

The proof of Proposition~\ref{prop:S-beta},  which works like in the compact framework, is divided into several steps.

\medskip

\begin{lemma}\label{lem:S-beta-1}
Let $x\in V\times M$ and $\beta=\Phi(x)$.
\begin{enumerate}
\item Let $\xi\in \kgot$, such that $\Phi(e^{i\xi}x)=\beta$. Then $\xi\cdot x=0$.
\item Let $g\in G_\beta$ such that $\Phi(gx)=\beta$. Then $gx\in K_\beta x$.
\item If $x\in {\rm Crit}(f)$, we have : $\forall g\in G$ 
\begin{itemize}
\item $\|\Phi(gx)\|\geq \|\beta\|$, 
\item $\|\Phi(gx)\|= \|\beta\|$ if and only if $gx\in K x$.
\end{itemize}
\end{enumerate}
\end{lemma}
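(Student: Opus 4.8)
The plan is to establish the three assertions in order, taking geodesic convexity of Kempf--Ness functions as the only analytic input, and then bootstrapping (iii) from (i)--(ii) together with the optimal destabilizing vector. For part (i) I would look at the real function $h(t):=\langle\Phi(e^{it\xi}x),\xi\rangle$. The moment-map/K\"ahler identity underlying the convexity statement of Theorem~\ref{theo:kempf-ness-kahler}(\ref{theo:kempf-ness-kahler-ass1}) (the analogue of (\ref{eq:psi-relative-1})--(\ref{eq:psi-relative-2}) on $V\times M$) gives $h'(t)=\|\xi\cdot(e^{it\xi}x)\|^2\ge 0$, so $h$ is non-decreasing. The hypothesis $\Phi(e^{i\xi}x)=\beta=\Phi(x)$ forces $h(1)=h(0)$, hence $h$ is constant on $[0,1]$ and in particular $h'(0)=\|\xi\cdot x\|^2=0$, i.e. $\xi\cdot x=0$.

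For part (ii) I would invoke the Cartan decomposition of the reductive group $G_\beta$ (the complexification of $K_\beta$), writing $g=k\,e^{i\xi}$ with $k\in K_\beta$ and $\xi\in\kgot_\beta$. Equivariance of $\Phi$ and $k\beta=\beta$ give $\Phi(e^{i\xi}x)=k^{-1}\beta=\beta=\Phi(x)$, so part (i) applies and yields $\xi\cdot x=0$. Then $x$ is a fixed point of the flow $s\mapsto e^{s\xi}x$, and holomorphy of the $G$-action upgrades this to $e^{i\xi}x=x$; hence $gx=kx\in K_\beta x$.

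For part (iii), note first that $x\in\mathrm{Crit}(f)$ means $\nabla f(x)=J(\Phi(x)\cdot x)=0$, so the negative gradient flow through $x$ is stationary and $x_\infty=x$. By Proposition~\ref{prop:phi-minimal-orbit} this gives $\inf_{g\in G}\|\Phi(gx)\|=\|\Phi(x_\infty)\|=\|\beta\|$, which is the asserted inequality, while $gx\in Kx\Rightarrow\|\Phi(gx)\|=\|\beta\|$ is immediate from $K$-equivariance. For the converse, suppose $\|\Phi(gx)\|=\|\beta\|$. Then $gx$ realizes the orbit infimum of $\|\Phi\|$; since $\|\Phi\|$ is non-increasing along the negative gradient flow, which stays inside $Gx$ by Proposition~\ref{prop:flow-moment-map-square}, the flow through $gx$ must be stationary, i.e. $gx\in\mathrm{Crit}(f)$.

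The main obstacle is then to push $gx$ back into $Kx$, and the key device I would use is the identification $\lambda_y=\Phi(y)$ for any critical $y$ with $\Phi(y)=\beta'\neq 0$: indeed $\varpi_\Phi(y,\beta')=\langle\Phi(y),\beta'\rangle=\|\beta'\|^2$ since $\beta'\cdot y=0$, while $\mathbf{M}_\Phi(y)=\|\Phi(y_\infty)\|=\|\beta'\|$ by Theorems~\ref{theo:maximal-destabilizing-1}--\ref{theo:maximal-destabilizing-2}, so $\beta'$ realizes (\ref{eq:lambda-x}) and equals $\lambda_y$ by uniqueness. Applying this to $x$ gives $\lambda_x=\beta$, and Lemma~\ref{lem:lambda-x-G-action} produces $k\in K$ with $k^{-1}g\in P(\beta)$ and $\lambda_{gx}=k\beta$; applied to $gx$ it gives $\Phi(gx)=\lambda_{gx}=k\beta$. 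Writing $g=kp$ with $p\in P(\beta)$, we get a critical point $px=k^{-1}gx$ with $\Phi(px)=\beta$, hence $\beta\cdot(px)=0$ and $e^{-it\beta}(px)=px$ for all $t$. Decomposing $p=u\ell$ along $P(\beta)=U(\beta)G_\beta$, the contraction $\lim_{t\to\infty}e^{-it\beta}u\,e^{it\beta}=e$ together with $\ell\in G_\beta$ and $e^{-it\beta}x=x$ gives $\lim_{t\to\infty}e^{-it\beta}(px)=\ell x$; but the left-hand side is the constant $px$, so $px=\ell x$ with $\ell\in G_\beta$ and $\Phi(\ell x)=\beta$. Part (ii) then yields $\ell x\in K_\beta x$, whence $gx=k\,\ell x\in Kx$. (The case $\beta=0$ is the degenerate one $P(\beta)=G_\beta=G$, where no reduction is needed and part (ii) applies directly.) The two points I expect to require the most care are the identification $\lambda_x=\beta$ and the sign conventions in the parabolic contraction, which must be matched with the definition of $P(\beta)$ used in Section~\ref{subsec:the-general-case}.
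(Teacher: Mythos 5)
Parts (i) and (ii) of your proof coincide with the paper's argument (monotonicity of $h(t)=\langle\Phi(e^{it\xi}x),\xi\rangle$, then the Cartan decomposition $g=ke^{i\xi}$ of $G_\beta$). For part (iii) you take a genuinely different, but correct, route. The paper argues directly: writing $g=kp$ with $p\in P(\beta)$, it observes that $t\mapsto\langle\Phi(e^{-it\beta}px),\beta\rangle$ is non-increasing with limit $\langle\Phi(g_0x),\beta\rangle=\|\beta\|^2$ (where $g_0=\lim_{t\to\infty}e^{-it\beta}pe^{it\beta}\in G_\beta$, the value being computed from the local constancy of $\langle\Phi,\beta\rangle$ on $(V\times M)^\beta$), and then reads off both the inequality $\|\Phi(gx)\|\geq\|\beta\|$ via Cauchy--Schwarz and the equality case ($px=g_0x$, $\Phi(px)=\beta$, then part (ii)) from that single chain of inequalities. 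You instead get the inequality from Proposition~\ref{prop:phi-minimal-orbit} (since $x_\infty=x$ for critical $x$), and for the equality case you first upgrade $\|\Phi(gx)\|=\|\beta\|$ to $gx\in\mathrm{Crit}(f)$ via the flow, then identify $\lambda_y=\Phi(y)$ for critical unstable $y$ using Theorems~\ref{theo:maximal-destabilizing-1} and~\ref{theo:maximal-destabilizing-2} and Lemma~\ref{lem:lambda-x-G-action} to force $\Phi(px)=\beta$, and finally run the same parabolic contraction $px=\ell x$, $\ell\in G_\beta$. Your steps all check out (the sign convention $P(\beta)=\{g:\lim_{t\to\infty}e^{-it\beta}ge^{it\beta}\text{ exists}\}$ matches the paper's, and there is no circularity since all the results you invoke are established in Section~\ref{sec:semistability-kahler}). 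The trade-off: the paper's proof is self-contained and elementary, needing only the monotonicity identity; yours leans on the full optimal-destabilizing-vector machinery, but in exchange makes transparent the conceptual fact that $\lambda_y=\Phi(y)$ at a nonzero critical point, which is exactly the mechanism behind Theorem~\ref{theo:RR-structure} later on.
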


\begin{proof}
  The derivative of the function
  $h(t):=\langle\Phi(e^{it \xi} x),\xi\rangle$ is
  $h'(t):=\|\xi\cdot(e^{it \xi} x)\|^2$. Hence, the relation
  $\Phi(e^{i\xi}x)=\Phi(x)$ implies that $h(1)=h(0)$, and then
  $h'(t)=0,\forall t\in [0,1]$. The first point follows.

  For the second point, we write $g=k e^{i\xi}$ with $k\in K_\beta$
  and $\xi\in\kgot_\beta$. The identity $\Phi(gm)=\beta$ gives
  $\Phi(e^{i\xi}m)=\beta$, and we see that $gm=km$ thanks to the first
  point.

  When $\beta=0$, the third point is a consequence of the second
  point. Suppose now that $\beta\neq 0$, and write $g=k p$ with
  $k\in K$ and $p\in P(\beta)$. The function
  $f(t):=\langle\Phi(e^{-it\beta} px),\beta\rangle$ is non-increasing
  since $f'(t)=-\| \beta\cdot(e^{-it\beta}px)\|^2$. As
  $\beta\cdot x=0$, the curve
  $e^{-it\beta}px=e^{-it\beta}pe^{it\beta}x$ tends to $g_0 x$ when
  $t\to+\infty$, with
  $g_0=\lim_{t\to\infty}e^{-it\beta}pe^{it\beta}\in G_\beta$.  We have
  \begin{equation}\label{eq:lem-S-beta-1}
    \|\Phi(gx)\|=\|\Phi(px)\|\geq \frac{\langle\Phi(px),\beta\rangle}{\|\beta\|}= 
    \frac{f(0)}{\|\beta\|}\geq \frac{\lim_{t\to\infty}f(t)}{\|\beta\|}= \frac{\langle\Phi(g_0x),\beta\rangle}{\|\beta\|}\stackrel{(1)}{=}\|\beta\|.
  \end{equation}
  The equality $(1)$ is due to the fact that
  $y\in (V\times M)^\beta\mapsto \langle\Phi(y),\beta\rangle$ is
  constant on the connected component of $(V\times M)^\beta$
  containing $x$.

  Thanks to (\ref{eq:lem-S-beta-1}), we have
  $\|\Phi(gx)\|\geq \|\beta\|$ with equality iff $(i)$ $f'(t)=0$,
  $\forall t\geq 0$, and $(ii)$ $\Phi(px)=\beta$. Condition $(i)$
  means that $e^{-it\beta}px=px$, $\forall t\geq 0$, i.e. $px=g_0x$.
  With condition $(ii)$, we can conclude that $px=g_0x\in K_\beta x$,
  thanks to the second point. We have proved that
  $\|\Phi(gx)\|= \|\beta\|$ if and only if $gx\in K x$.
\end{proof}

\medskip

\begin{lemma}\label{lem:S-beta-2}
If $x,y\in {\rm Crit}(f)$ such that $y\in G x$. Then $y\in K x$.
\end{lemma}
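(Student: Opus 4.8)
The plan is to deduce this purely formally from the third part of Lemma~\ref{lem:S-beta-1}, applied symmetrically to $x$ and to $y$. Write $y=gx$ with $g\in G$, and set $\beta=\Phi(x)$. The essential observation is that Lemma~\ref{lem:S-beta-1}(iii) says a critical point of $f$ minimizes $\|\Phi\|$ along its entire $G$-orbit, with the equality case pinning down the $K$-orbit; so having two critical points in the same $G$-orbit will force both to realize the same minimal value, and then the rigidity of the equality case finishes the argument.

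First I would apply Lemma~\ref{lem:S-beta-1}(iii) to the critical point $x$. Since $y=gx$, this yields $\|\Phi(y)\|=\|\Phi(gx)\|\geq\|\beta\|=\|\Phi(x)\|$. Next I would apply the same statement to the critical point $y$, now with the group element $g^{-1}$: using $x=g^{-1}y$, it gives $\|\Phi(x)\|=\|\Phi(g^{-1}y)\|\geq\|\Phi(y)\|$. Combining these two inequalities forces equality throughout, so in particular $\|\Phi(gx)\|=\|\beta\|$.

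Finally I would invoke the equality case of Lemma~\ref{lem:S-beta-1}(iii) for $x$, which states precisely that $\|\Phi(gx)\|=\|\beta\|$ holds if and only if $gx\in Kx$. Hence $y=gx\in Kx$, which is the claim.

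I do not anticipate a genuine obstacle here: the lemma is a short formal corollary of Lemma~\ref{lem:S-beta-1}(iii). The only point deserving a moment's care is that the second application uses the group element $g^{-1}$, which is legitimate because Lemma~\ref{lem:S-beta-1}(iii) is quantified over all of $G$; and that one must apply the norm-comparison to both critical points separately in order to extract the equality $\|\Phi(gx)\|=\|\beta\|$ that triggers the rigidity statement.
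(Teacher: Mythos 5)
Your proof is correct and follows essentially the same route as the paper: both arguments reduce the claim to the equality case of Lemma~\ref{lem:S-beta-1}(iii). The only (harmless) difference is how the norm equality $\|\Phi(x)\|=\|\Phi(y)\|$ is obtained — the paper cites Proposition~\ref{prop:phi-minimal-orbit} together with $x=x_\infty$, $y=y_\infty$, whereas you get it by applying the inequality of Lemma~\ref{lem:S-beta-1}(iii) symmetrically to $x$ and to $y$ with $g^{-1}$, which is a slightly more self-contained way to reach the same point.
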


\begin{proof}
  If $x,y\in {\rm Crit}(f)$ with $\Ocal=Gx= G y$, we have $x=x_\infty$
  and $y=y_\infty$: then
  $\|\Phi(x)\|=\|\Phi(y)\|=\inf_{z\in\Ocal}\|\Phi(z)\|$ (see
  Proposition~\ref{prop:phi-minimal-orbit}). Let $g\in G$ such that
  $y=gx$.  The identity $\|\Phi(x)\|=\|\Phi(gx)\|$ implies that
  $gx\in Kx$ (see Lemma~\ref{lem:S-beta-1}).
\end{proof}

\medskip

\begin{lemma}\label{lem:S-beta-3}
For any $y\in Gx$, we have $y_\infty\in K x_\infty$. 
\end{lemma}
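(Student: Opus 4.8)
The plan is to realize the two negative gradient flow lines of $f=\tfrac12\|\Phi\|^2$ --- the one through $x$ and the one through $y=gx$ --- as negative gradient flow lines of \emph{one and the same} Kempf-Ness function $\Psi_x$ on the symmetric space $\mathbb{X}=G/K$, and then to use that such flow lines stay a bounded distance apart. By Proposition~\ref{prop:flow-moment-map-square} both $x_\infty$ and $y_\infty$ exist and are critical points of $f$, so by Lemma~\ref{lem:S-beta-2} it suffices to prove that $y_\infty$ and $x_\infty$ lie in the same $G$-orbit. Let $x(t)=h_0(t)^{-1}x$ and $y(t)=h_1(t)^{-1}y$ be the negative gradient flows of $f$ through $x$ and $y$, where $h_0,h_1$ solve~(\ref{eq:g(t)}). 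Using the intertwining relation of assertion~(\ref{theo:kempf-ness-kahler-ass2}) of Theorem~\ref{theo:kempf-ness-kahler} exactly as in the proof of its assertion~(\ref{theo:kempf-ness-kahler-ass3}), the curves $\theta_0(t)=\pi(h_0(t))$ and $\theta_1(t)=\pi(g^{-1}h_1(t))$ are both negative gradient flow lines of $\Psi_x$, starting respectively at $\pi(e)$ and $\pi(g^{-1})$.

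Next I would invoke the first part of assertion~(\ref{theo:kempf-ness-kahler-ass4}) of Theorem~\ref{theo:kempf-ness-kahler}: the function $t\mapsto d_{\mathbb{X}}(\theta_0(t),\theta_1(t))$ is non-increasing, hence bounded by $C:=d_{\mathbb{X}}(\pi(e),\pi(g^{-1}))$. By $G$-invariance of the metric this means $d_{\mathbb{X}}(\pi(e),\pi(c(t)))\leq C$ for $c(t):=h_0(t)^{-1}g^{-1}h_1(t)$. Writing $c(t)=k(t)\exp(i\eta(t))$ in the Cartan decomposition $G=K\exp(i\kgot)$, this bound reads $\|\eta(t)\|\leq C$, so that $\exp(i\eta(t))$ stays in the compact set $\{\exp(i\eta):\eta\in\kgot,\ \|\eta\|\leq C\}$.

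A short computation then gives $h_1(t)^{-1}=c(t)^{-1}h_0(t)^{-1}g^{-1}$, whence $y(t)=h_1(t)^{-1}gx=c(t)^{-1}x(t)=\exp(-i\eta(t))\,k(t)^{-1}\,x(t)$. Since $x(t)\to x_\infty$, while $k(t)^{-1}\in K$ and $\exp(-i\eta(t))$ both range in compact subsets of $G$, one can extract a sequence $t_n\to+\infty$ along which $k(t_n)^{-1}\to k_0\in K$ and $\exp(-i\eta(t_n))\to q\in G$. Passing to the limit yields $y_\infty=\lim_n y(t_n)=q\,k_0\,x_\infty\in G x_\infty$. As $x_\infty$ and $y_\infty$ are critical, Lemma~\ref{lem:S-beta-2} now gives $y_\infty\in K x_\infty$, as desired.

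The step I expect to be the main obstacle is the passage from ``bounded geodesic distance in $\mathbb{X}$'' to the usable factorization $y(t)=\exp(-i\eta(t))\,k(t)^{-1}\,x(t)$ with both group factors precompact: this requires the $G$-invariance of the metric together with the identity $d_{\mathbb{X}}(\pi(e),\pi(k\exp(i\eta)))=\|\eta\|$ coming from the Cartan decomposition, and it is precisely the non-compactness of $\mathbb{X}$ that turns the distance bound into precompactness of $\{c(t)^{-1}\}$. Everything else is bookkeeping with the two flows and an appeal to the already established Lemma~\ref{lem:S-beta-2}.
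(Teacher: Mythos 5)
Your proof is correct and takes essentially the same route as the paper's: both arguments realize the two gradient flow lines of $f$ as negative gradient flow lines of the single Kempf-Ness function $\Psi_x$ on $\mathbb{X}$, invoke the non-increasing distance property of Theorem~\ref{theo:kempf-ness-kahler}, translate the resulting distance bound into precompactness of the connecting group element $c(t)=h_0(t)^{-1}g^{-1}h_1(t)$ via the Cartan decomposition, extract a convergent subsequence to get $y_\infty\in Gx_\infty$, and finish with Lemma~\ref{lem:S-beta-2}. The only differences are cosmetic: the paper writes $x(t)$ as a group element times $y(t)$ rather than the reverse, and places the $K$-factor on the other side of the Cartan decomposition.
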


\begin{proof}
  Let $g_0\in G$ such that $y=g_0^{-1}x$. The negative gradient flow
  lines of $f$ through $y$ and $x$ are respectively $y(t)=g(t)^{-1}y$
  and $x(t)=h(t)^{-1}x$.  The curves
  $\theta_0,\theta_1:\R^{\geq 0}\to \mathbb{X}$ defined by the
  relation $\theta_0(t)=\pi(g_0g(t))$ and $\theta_1(t)=\pi(h(t))$ are
  negative gradient flow lines of the Kempf-Ness function $\Psi_x$.

  Let $\eta(t)\in \kgot$ and $k(t)\in K$ be the curves such that
  $g_0 g(t)=: h(t)\exp(i\eta(t))k(t)$. Then
  $\|\eta(t)\|=d_{\mathbb{X}}(\theta_0(t),\theta_1(t))$ where
  $d_{\mathbb{X}}$ is the Riemannian distance on $\mathbb{X}$. The
  fourth point of Theorem~\ref{theo:kempf-ness-kahler} says that
  $t\mapsto \|\eta(t)\|$ is non-increasing. Thus, there exists a
  sequence of positive real number $(t_n)$ such that
  $\lim_{n\to \infty} t_n=+\infty$ and
  $\lim_{n\to \infty}\exp(i\eta(t_n))k(t_n)= g_1\in G$. Since we have
  $x(t)=\exp(i\eta(t))k(t) y(t)$, we obtain $x_\infty=g_1 y_\infty$,
  i.e. $x_\infty\in G y_\infty$.  As $x_\infty$ and $y_\infty$ belongs
  to ${\rm Crit}(f)$, we can conclude that $x_\infty\in K y_\infty$
  thanks to the previous lemma.
\end{proof}

\medskip

The next lemma completes the proof of Proposition~\ref{prop:S-beta}.

\begin{lemma}\label{lem:S-beta-4}
Let $y\in\overline{Gx}$ such that $\|\Phi(y)\|=\inf_{g\in G}\|\Phi(gx)\|$. Then $y\in K x_\infty$. 
\end{lemma}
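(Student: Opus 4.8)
The plan is to show first that $y$ is a critical point of $f=\tfrac12\|\Phi\|^2$, and then to identify it with a point of $Kx_\infty$ through the negative gradient flow and an approximation argument. I would begin by recording that $y$ realizes the minimum of $f$ on the whole of $\overline{Gx}$: by Proposition~\ref{prop:phi-minimal-orbit} we have $\inf_{g\in G}\|\Phi(gx)\|=\|\Phi(x_\infty)\|$, and since $\Phi$ is proper on $\overline{Gx}$ (Proposition~\ref{prop:phi-proper-orbit}) this infimum is in fact the minimum of $\|\Phi\|$ over all of $\overline{Gx}$, which by hypothesis $y$ attains. The negative gradient flow $y(t)$ issued from $y$ stays in $Gy\subseteq\overline{Gx}$ by Proposition~\ref{prop:flow-moment-map-square}, so $f(y(t))\ge f(y)$ for every $t$; but $f$ is non-increasing along the flow, whence $f(y(t))\equiv f(y)$. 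Therefore $\tfrac{d}{dt}f(y(t))=-\|\nabla f(y(t))\|^2$ vanishes at $t=0$, so $\nabla f(y)=0$, $y\in\mathrm{Crit}(f)$ and $y=y_\infty$. In particular $\|\Phi(y)\|$ equals the common minimal value $\|\Phi(x_\infty)\|$.

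Next, using $y\in\overline{Gx}$ I would choose $g_n\in G$ with $g_nx\to y$. Each $g_nx$ lies in $Gx$, so Lemma~\ref{lem:S-beta-3} gives $(g_nx)_\infty\in Kx_\infty$. Since $K$ is compact, $Kx_\infty$ is closed, so it suffices to prove that $(g_nx)_\infty\to y$; granting this, $y\in Kx_\infty$ and we are done. (Alternatively, once $(g_nx)_\infty\to y$ is known one may deduce $y\in Gx_\infty$ and conclude by Lemma~\ref{lem:S-beta-1}(iii) or Lemma~\ref{lem:S-beta-2}, but the direct route through closedness of $Kx_\infty$ is cleaner.)

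The hard part is exactly the convergence $(g_nx)_\infty\to y$, because the endpoint map $z\mapsto z_\infty$ is not continuous in general; what saves us is that $y$ is a minimum. Here I would invoke the \L{}ojasiewicz gradient inequality for the real-analytic function $f$ near the compact critical set, exactly as in the compact framework (see \cite[Chapter~3]{GRS21} or \cite{Lerman05}). It produces a neighbourhood $U$ of $y$ together with constants $C>0$ and $\theta\in(0,\tfrac12]$ such that any negative gradient trajectory starting at $z\in U$ converges to a critical point $z_\infty$ with the length estimate $d(z,z_\infty)\le C\,|f(z)-f(z_\infty)|^{\theta}$, where $d$ denotes the Riemannian distance on $V\times M$. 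The trajectories issued from the $g_nx$ remain in $Gx\subseteq\overline{Gx}$, where $f\ge f(y)$, so by the \L{}ojasiewicz--Simon stability argument at a minimum they do not escape $U$ once their starting point is close enough to $y$ and their energy close enough to $f(y)$.

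For $n$ large we therefore have $g_nx\in U$, and since $f(g_nx)\to f(y)$ while $f\big((g_nx)_\infty\big)=f(y)$ is the minimal value (because $(g_nx)_\infty\in Kx_\infty$), the lengths $d\big(g_nx,(g_nx)_\infty\big)\le C\,|f(g_nx)-f(y)|^{\theta}$ tend to $0$. Combined with $g_nx\to y$ this forces $(g_nx)_\infty\to y$, and hence $y\in Kx_\infty$. The only points demanding genuine care are the uniformity of the \L{}ojasiewicz data on a single neighbourhood of $y$ and the confinement of the trajectories to $U$ until they converge; both are standard consequences of the \L{}ojasiewicz--Simon argument at a minimum, and this is the step where the compactness of $M$ enters only through the properness statement of Proposition~\ref{prop:phi-proper-orbit}, so the argument carries over verbatim from the compact setting.
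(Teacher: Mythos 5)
Your proof is correct and follows essentially the same route as the paper: approximate $y$ by points $z^k=g_kx\in Gx$, use Lemma~\ref{lem:S-beta-3} to place $z^k_\infty$ in $Kx_\infty$, and combine the \L{}ojasiewicz length estimate $d(z,z_\infty)\le \tfrac{C}{1-r}|f(z)-a|^{1-r}$ with the triangle inequality to conclude $z^k_\infty\to y$. The only difference is organizational: the paper establishes the uniform \L{}ojasiewicz data on the compact, flow-invariant sublevel set $\overline{Gx}\cap\{f\le a+\epsilon_o\}$, with $\epsilon_o$ chosen via the finiteness of critical values (Lemma~\ref{lem:crit-f}), which disposes of your ``confinement'' worry at the outset, and it does not need your preliminary observation that $y$ is a critical point.
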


\begin{proof}
  Let $a=\inf_{g\in G}f(gx)$, where $f=\frac{1}{2}\|\Phi\|^2$.  For
  any $\epsilon>0$, we consider the set
  $\Kcal_\epsilon:=\overline{Gx}\cap \{f\leq a+\epsilon\}$ which is
  compact by Proposition~\ref{prop:phi-proper-orbit}. Since $f$ has a
  finite number of critical values (see Lemma~\ref{lem:crit-f}), there
  exists $\epsilon_o>0$, such that
$$
\nabla f(z)=0\Longrightarrow f(z)=a,\quad \forall z\in
\Kcal_{\epsilon_o}.
$$
By the Lojasiewicz gradient inequality there exist constants $C>0$,
and $r\in ]0,1[$, such that $C\|\nabla f(z)\|\geq |f(z)-a|^{r}$ for
any $z\in \Kcal_{\epsilon_o}$. Denote by $d$ the distance on $V\times M$
defined by the Riemannian metric. The former inequality implies that
$d(z,z_\infty)\leq \frac{C}{1-r}|f(z)-a|^{1-r}$ for any
$z\in \Kcal_{\epsilon_o}$.

We consider now $y\in \overline{Gx}$ such that $f(y)=a$. Let $(z^k)$
be a sequence in $Gx\cap \Kcal_{\epsilon_o}$ that converges to $y$.
Thanks to Lemma~\ref{lem:S-beta-3}, we know that the sequence
$(z^k_{\infty})$ belongs to the orbit $Kx_\infty$. Finally,
$$
d(z^{k}_\infty, y)\leq d(z^{k}_\infty, z^k)+ d(z^{k}, y)\leq
\tfrac{C}{1-r}|f(z^k)-a|^{1-r} + d(z^k,y).
$$
We see that $\lim_{k\to\infty}d(z^{k}_\infty, y)=0$, i.e. the sequence
$(z^{k}_\infty)$ converges to $y$. This proves that $y\in K x_\infty$.
\end{proof}

\subsection{Substrata $\Scal_{\beta}$}\label{sec:decomposition-Y-beta}

For $\beta\in\Bcal-\{0\}$, we define
$$
\Scal_{\beta}:=\Big\{x \in V \times M, \  \lambda_x=\beta\Big\}\ \subset \Scal_{\langle\beta\rangle}.
$$

If $x\in \Scal_{\beta}$, then $\varpi_{\Phi}(x,\beta) := \lim_{t\to+\infty} \langle\Phi(e^{-it\beta} x),\beta\rangle$ is equal to $\|\beta\|^2$.
It implies that the limit $x':=\lim_{t\to+\infty} e^{-it\beta} x$ exists and belongs to $V^\beta \times M^\beta$ (see Lemma~\ref{lem:varphi-finite}). 

We see $V^\beta \times M^\beta$ as a K\"ahler Hamiltonian $G_\beta$-manifold with moment map $\Phi_\beta=\Phi-\beta: V^\beta \times M^\beta\to\kgot_\beta^*$. 

\medskip

The following result is the key point to understand the structure of the stratum $\Scal_{\langle\beta\rangle}$ when $\beta\neq 0$. It is due to 
Kirwan \cite{Kir84a} and Ness \cite{Ness84} in the projective case (see also the work of Hesselink \cite{Hes78,Hes79}, and Ramanan-Ramanathan \cite{RR84}).

\begin{theorem}\label{theo:RR-structure} Let $x\in V \times M$ and $\beta\in\Bcal-\{0\}$. Suppose that the limit $x':=\lim_{t\to+\infty} e^{-it\beta} x$ exists in $V^\beta \times M^\beta$. 
The following statements are equivalent:
\begin{enumerate}
\item \label{prop:RR-structure-ass1} 
$\beta$ is the unique closest point to $0$ of $\Phi(\overline{G_\beta x'})\cap \tgot_{0,+}^*$.
\item \label{prop:RR-structure-ass2} 
$x'$ is $\Phi_\beta$-semistable.
\item \label{prop:RR-structure-ass3} 
$\lambda_x=\beta$.
\end{enumerate}
All these conditions imply that $\lambda_{x'}=\beta$.
\end{theorem}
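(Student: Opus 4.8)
The plan is to introduce the auxiliary assertion $(\star):\lambda_{x'}=\beta$ (the content of the last sentence) and to prove $(1)\Longleftrightarrow(2)\Longleftrightarrow(\star)$ together with $(\star)\Rightarrow(3)$ directly, so that the only substantial step left is $(3)\Rightarrow(\star)$. First I record a normalization. Since $x'\in V^\beta\times M^\beta$ it is fixed by $\overline{\exp(\R\beta)}$, so $\Phi(x')\in\kgot_\beta^*$, and as $\langle\Phi,\beta\rangle$ has vanishing differential along $V^\beta\times M^\beta$ the number $c:=\langle\Phi(x'),\beta\rangle$ is constant on the connected component $\Zcal$ of $x'$, hence on $\overline{G_\beta x'}\subset\Zcal$. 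Each of $(1),(2),(3)$ and of $(\star)$ forces $c=\|\beta\|^2$: condition $(3)$ gives $c=\varpi_{\Phi}(x,\beta)=\|\beta\|^2$ by \eqref{eq:lambda-x} and Lemma~\ref{lem:varphi-finite}; condition $(\star)$ gives $c=\varpi_{\Phi}(x',\beta)=\|\beta\|^2$; condition $(2)$ yields $g_n\in G_\beta$ with $\Phi(g_nx')\to\beta$, whence $c=\|\beta\|^2$; and $(1)$ requires $\beta\in\Phi(\overline{G_\beta x'})$, again giving $c=\|\beta\|^2$. I work from here in the case $c=\|\beta\|^2$.

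On $\Zcal$ one then has $\Phi_\beta:=\Phi-\beta\in\kgot_\beta^*$ with $\langle\Phi_\beta,\beta\rangle=0$, so $\|\Phi\|^2=\|\beta\|^2+\|\Phi_\beta\|^2$ and $f=\tfrac12\|\beta\|^2+f_\beta$ with $f_\beta:=\tfrac12\|\Phi_\beta\|^2$; moreover $\Phi(x)\cdot x=\Phi_\beta(x)\cdot x$ there (as $\beta\cdot x=0$), so by \eqref{eq:gradient-f} the $G$-gradient of $f$ equals the $G_\beta$-gradient of $f_\beta$ along $\Zcal$. Hence the negative gradient flow of $f$ issued from $x'$ stays in $\Zcal$, $x'_\infty$ coincides with the $G_\beta$-limit, and $\Phi(x'_\infty)=\beta+\Phi_\beta(x'_\infty)$ with $\langle\Phi(x'_\infty),\beta\rangle=\|\beta\|^2$. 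Cauchy--Schwarz gives $\|\Phi(x'_\infty)\|\ge\|\beta\|$, with equality iff $\Phi(x'_\infty)=\beta$, i.e. iff $x'$ is $\Phi_\beta$-semistable (Proposition~\ref{prop:phi-minimal-orbit-1} for $G_\beta$). Since $\mathbf{M}_{\Phi}(x')=\|\Phi(x'_\infty)\|$ (Theorem~\ref{theo:chen-sun}, $x'$ being unstable) and $\frac{\varpi_{\Phi}(x',\beta)}{\|\beta\|}=\|\beta\|$, the uniqueness in Theorem~\ref{theo:maximal-destabilizing-1} turns this into the chain $(2)\Longleftrightarrow\mathbf{M}_{\Phi}(x')=\|\beta\|\Longleftrightarrow(\star)$.

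The equivalence $(1)\Longleftrightarrow(2)$ is then a projection argument: $\Phi(\overline{G_\beta x'})$ lies in the affine hyperplane $\{\eta:\langle\eta,\beta\rangle=\|\beta\|^2\}$, whose unique point closest to $0$ is $\beta$; thus every $\eta$ in this image has $\|\eta\|\ge\|\beta\|$ with equality only for $\eta=\beta$, so $\beta\in\Phi(\overline{G_\beta x'})$ — exactly the $\Phi_\beta$-semistability of $x'$, i.e. $(2)$ — is equivalent to $\beta$ being the unique closest point to $0$ of $\Phi(\overline{G_\beta x'})$, a fortiori of its intersection with $\tgot^*_{0,+}$ (which contains the dominant $\beta$), i.e. $(1)$. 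Finally $(\star)\Rightarrow(3)$ is immediate: $x'\in\overline{Gx}$ gives $\overline{Gx'}\subset\overline{Gx}$, hence $\mathbf{M}_{\Phi}(x')\ge\mathbf{M}_{\Phi}(x)$ since $\mathbf{M}_{\Phi}=\inf_{g}\|\Phi(g\,\cdot)\|$ on unstable points (Theorem~\ref{theo:chen-sun}); combined with $\mathbf{M}_{\Phi}(x')=\|\beta\|$ and $\varpi_{\Phi}(x,\beta)=\|\beta\|^2$ this forces $\mathbf{M}_{\Phi}(x)=\|\beta\|$ with $\beta$ realizing the supremum, so $\lambda_x=\beta$ by uniqueness.

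The remaining implication $(3)\Rightarrow(\star)$, equivalently $\mathbf{M}_{\Phi}(x')\le\|\beta\|$, is the crux and the main obstacle: the orbit-closure inclusion only yields the opposite inequality, and the point is that degenerating $x$ along its \emph{optimal} direction $\beta$ cannot increase the Mumford invariant. This is the analytic counterpart of Ness's theorem~\ref{th:Ness}, and I would prove it by the reduction used there (and in Theorem~\ref{th:Nessconv}): with $\tau'=\lambda_{x'}$, choose a maximal torus $T'\subset P(\beta)\cap P(\tau')$ and, using Lemmas~\ref{lem:g-ell} and~\ref{lem:varphi-C-property}, conjugate $\beta$ and $\tau'$ into $\mathrm{Lie}(T')$ by elements of their parabolics; these conjugations leave $\varpi^{\C}_{\Phi}$ and $\delta$ unchanged and carry $x'=\lim_{t}e^{-it\beta}x$ to the corresponding $T'$-degeneration, reducing the claim to the torus case. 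There the $T'$-moment image of the limit is the face of that of $x$ selected by $\beta$, and optimality of $\beta$ (i.e. $\lambda_x=\beta$) places the projection of $0$ precisely on that face, so the distance to $0$ — the $T'$-Mumford invariant — is unchanged, giving $\mathbf{M}^{T'}_{\Phi}(x')=\|\beta\|$ and hence $\tau'=\beta$ by uniqueness. The delicate point, and where I expect the real work, is running this face/degeneration analysis for $\varpi^{\C}_{\Phi}$ over a general compact K\"ahler $M$, where there is no weight lattice and one must argue with the continuous $T'$-moment image; as an alternative engine I would use the convexity of the Kempf--Ness functions (Theorem~\ref{theo:kempf-ness-kahler}), exploiting that $\Psi_{x'}$ arises as a limit of translates of the convex function $\Psi_x$ along the steepest geodesic ray (direction $-\beta$), so that its asymptotic slopes — hence $\mathbf{M}_{\Phi}(x')$ — cannot exceed those of $\Psi_x$. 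Once $(3)\Rightarrow(\star)$ is in hand, the statements $(1),(2),(3)$ are equivalent and each implies $\lambda_{x'}=\beta$, which completes the proof.
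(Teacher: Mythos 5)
Your reductions among the easy implications are correct and, in places, tidier than the paper's. The normalization step (each of (1), (2), (3), $(\star)$ forces $\langle\Phi(x'),\beta\rangle=\varpi_{\Phi}(x,\beta)=\|\beta\|^2$), the identification of the $f$-flow from $x'$ with the $G_\beta$-flow of $f_\beta$ on the component $\Zcal$ (so that $\mathbf{M}_{\Phi}(x')=\|\Phi(x'_\infty)\|$ and $\|\Phi(x'_\infty)\|^2=\|\beta\|^2+\|\Phi_\beta(x'_\infty)\|^2$, giving $(2)\Leftrightarrow(\star)$ via Theorem~\ref{theo:chen-sun} and the uniqueness in Theorem~\ref{theo:maximal-destabilizing-1}), the hyperplane-projection argument for $(1)\Leftrightarrow(2)$ (the paper instead routes $(2)\Rightarrow(1)$ through Proposition~\ref{prop:S-beta}, but your direct argument is equivalent), and the orbit-closure inequality chain for $(\star)\Rightarrow(3)$ all check out; the last is essentially the same chain $\|\beta\|\leq\mathbf{M}_\Phi(x)\leq\inf_g\|\Phi(gx)\|\leq\inf_g\|\Phi(gx')\|\leq\|\beta\|$ that the paper uses.

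The genuine gap is that $(3)\Rightarrow(\star)$ — which, given your equivalences, carries the entire content of the theorem — is never proved; you offer two unexecuted sketches, and neither runs as stated. For the Ness-style reduction, the key claim that ``the $T'$-moment image of the limit is the face of that of $x$ selected by $\beta$'' has no established analogue here: in the algebraic case (Theorem~\ref{th:Ness}) it rests on the finite weight decomposition $\Wt_T(\tilde m')=\{\chi\in\Wt_T(\tilde m):\langle\tau,\chi\rangle=\|\tau\|^2\}$, whereas for a general compact K\"ahler $M$ (non-integral form, no weight lattice) the needed convexity and face structure of $\Phi_{T_0}(\overline{Tx})$ is exactly what the paper proves only in Section~\ref{sec:convexity}, \emph{using} the stratification that this theorem underpins — so invoking it here risks circularity. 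Your Kempf--Ness alternative is indeed the germ of the paper's actual proof, but the slope comparison cannot be run on $\Psi_x$ directly: $x$ is unstable, so $\Psi_x$ is unbounded below, and controlling the asymptotic slopes of the limit function is precisely what fails without compensation. The paper makes it work in four steps you omit: (i) a preliminary monotonicity lemma showing $k_\infty^{-1}\beta=\beta$, so that writing destabilizing elements as $g_n=k_np_n$ with $p_n\in P(\beta)$ one gets $\Phi(p_nx)\to\beta$; (ii) the shift to $N=V\times M\times(K\beta)^\circ$, which converts the optimality $\lambda_x=\beta$ into honest $\Phi$-semistability of $\widehat{x}=(p_{n_o}x,\beta)$, so that $\Psi_{\widehat{x}}$ \emph{is} bounded below; (iii) the identity $\Psi_{e^{-it\beta}\widehat{x}}(g)=\Psi_{\widehat{x}}(e^{it\beta}g)+\int_0^t\langle\Phi_N(e^{-is\beta}\widehat{x}),\beta\rangle\,ds$, whose integral is nonnegative because the integrand is non-increasing with limit $0$ (this is where $\langle\Phi(x'),\beta\rangle=\|\beta\|^2$ enters), allowing the lower bound to pass to $\Psi_{\widehat{x}'}$ in the limit $t\to\infty$; and (iv) Remark~\ref{rem:g(t)-lambda}, which upgrades $\widehat{x}'\in N^{\Phi-ss}\cap N^\beta$ to destabilization within $G_\beta$, yielding the $\Phi_\beta$-semistability of $x'$. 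Without these — in particular the shifting trick of (ii) and the sign analysis of (iii) — your proposal stops exactly where the real work of the theorem begins.
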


In the sequence, we denote by $Z\subset V^\beta\times M^\beta$ the connected component containing $x'$. The function 
$z\in Z\mapsto \langle \Phi(z),\beta\rangle$ is constant, equal to $\langle \Phi(x'),\beta\rangle=\varpi_{\Phi}(x',\beta)=\varpi_{\Phi}(x,\beta)$.
\begin{proof}
Let us start by proving that \ref{prop:RR-structure-ass1}
$\Longrightarrow$ \ref{prop:RR-structure-ass2}.

Assuming~\ref{prop:RR-structure-ass1}, there is a sequence $\ell_n\in G_\beta$ such that
  $\beta=\lim_{t\to+\infty}\Phi(\ell_n x')$. Thus, \break
  $\lim_{t\to+\infty}\Phi_\beta(\ell_n x')=0$, i.e. $x'$ is
  $\Phi_\beta$-semistable.

\bigskip 
  Let us prove now that \ref{prop:RR-structure-ass2} $\Longrightarrow$
  \ref{prop:RR-structure-ass3}  and
    \ref{prop:RR-structure-ass1}.

  Assuming~\ref{prop:RR-structure-ass2}, $\beta$ belongs to
  $\Phi(\overline{G_\beta x'})\subset \Phi(Z)$. This implies that
  $\langle \Phi(z),\beta\rangle=\|\beta\|^2, \forall z\in Z$.  Hence,
$$
\|\beta\|=\frac{\varpi_{\Phi}(x,\beta)}{\|\beta\|}\leq {\bf
  M}_\Phi(x)\leq \inf_{g\in
  G}\|\Phi(gx)\|\stackrel{(1)}{\leq}\inf_{g\in G}\|\Phi(gx')\|
\stackrel{(2)}{\leq} \inf_{\ell\in G_\beta}\|\Phi(\ell x')\|\leq
\|\beta\|.
$$
The inequalities $(1)$ and $(2)$ follow from the inclusions
$\overline{G_\beta x'}\subset \overline{G x'}\subset
\overline{Gx}$. We obtain that
\begin{itemize}
\item
  $\frac{\varpi_{\Phi}(x,\beta)}{\|\beta\|}=\|\beta\|={\bf
    M}_\Phi(x)$. Hence, $\lambda_x=\beta$.
\item
  $\frac{\varpi_{\Phi}(x',\beta)}{\|\beta\|}=\|\beta\|=\inf_{g\in
    G}\|\Phi(gx')\|=\mathbf{M}_\Phi(x')$. Hence, $\lambda_{x'}=\beta$.
\end{itemize}

The last point shows that $x'\in \Scal_{\langle\beta\rangle}$, and so, by Proposition
\ref{prop:S-beta}, $\beta$ is the unique closest point to $0$ of
$\Phi(\overline{Gx'})\cap \tgot_{0,+}^*$. Since
$\beta\in \Phi(\overline{G_\beta x'})\subset \Phi(\overline{Gx'})$,
condition~\ref{prop:RR-structure-ass1} holds.

\bigskip
Finally, we prove that \ref{prop:RR-structure-ass3} $\Longrightarrow$ \ref{prop:RR-structure-ass2}. 

Suppose that $\lambda_x=\beta$. As $\Phi(x_\infty)\in K\beta$ there
exists a sequence $g_n\in G$ such that $\Phi(g_nx)$ converge to
$\beta$ as $n\to\infty$.  We write $g_n=k_n p_n$ with $k_n\in K$ and
$p_n\in P(\beta)$. We can assume that $(k_n)$ converges to $k_\infty$,
so that $\lim_{n\to\infty}\Phi(p_n x)=k_\infty^{-1}\beta$.

\begin{lemma}
We have $k_\infty^{-1}\beta=\beta$.
\end{lemma}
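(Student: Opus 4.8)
The plan is to exploit that the hypothesis $\lambda_x=\beta$ pins down the value $\varpi_{\Phi}(x,\beta)=\|\beta\|^2$, and then to combine the $P(\beta)$-invariance of $\varpi_{\Phi}(\,\cdot\,,\beta)$ with a Cauchy--Schwarz argument to conclude that $k_\infty^{-1}\beta$ and $\beta$ are forced to coincide.

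First I would record the equivariance input already used to produce $k_\infty^{-1}\beta$: since $\Phi$ is $K$-equivariant, $\Phi(g_nx)=\Phi(k_np_nx)=k_n\cdot\Phi(p_nx)$, so from $k_n\to k_\infty$ and $\Phi(g_nx)\to\beta$ we get $\Phi(p_nx)\to k_\infty^{-1}\beta$, and in particular $\|k_\infty^{-1}\beta\|=\|\beta\|$. Next comes the key point: because $p_n\in P(\beta)$, Lemma~\ref{lem:varphi-G-invariant} applied with $g=p_n$ and $k=e$ (the condition $p_ne^{-1}=p_n\in P(\beta)$ being satisfied) gives
$$
\varpi_{\Phi}(p_nx,\beta)=\varpi_{\Phi}(x,\beta).
$$
The hypothesis $\lambda_x=\beta$ means $\mathbf{M}_{\Phi}(x)=\|\beta\|=\dfrac{\varpi_{\Phi}(x,\beta)}{\|\beta\|}$, hence $\varpi_{\Phi}(x,\beta)=\|\beta\|^2$, and therefore $\varpi_{\Phi}(p_nx,\beta)=\|\beta\|^2$ for every $n$.

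Now I would use that $t\mapsto\langle\Phi(e^{-it\beta}p_nx),\beta\rangle$ is non-increasing, so its value at $t=0$ dominates its limit at $+\infty$:
$$
\langle\Phi(p_nx),\beta\rangle\ \geq\ \varpi_{\Phi}(p_nx,\beta)=\|\beta\|^2 .
$$
Passing to the limit $n\to\infty$ yields $\langle k_\infty^{-1}\beta,\beta\rangle\geq\|\beta\|^2$. On the other hand, Cauchy--Schwarz together with $\|k_\infty^{-1}\beta\|=\|\beta\|$ gives the reverse inequality $\langle k_\infty^{-1}\beta,\beta\rangle\leq\|k_\infty^{-1}\beta\|\,\|\beta\|=\|\beta\|^2$. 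Thus equality holds in Cauchy--Schwarz between two vectors of equal norm, which forces $k_\infty^{-1}\beta=\beta$.

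The only step that requires any care is the reduction $\varpi_{\Phi}(p_nx,\beta)=\varpi_{\Phi}(x,\beta)$, where one must check the applicability of Lemma~\ref{lem:varphi-G-invariant}; everything else is the monotonicity of $t\mapsto\langle\Phi(e^{-it\beta}\,\cdot\,),\beta\rangle$ and an elementary Cauchy--Schwarz equality case, so I do not anticipate a genuine obstacle.
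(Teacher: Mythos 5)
Your proof is correct and takes essentially the same approach as the paper's: both establish $\langle\Phi(p_nx),\beta\rangle\geq\|\beta\|^2$ by monotonicity of $t\mapsto\langle\Phi(e^{-it\beta}p_nx),\beta\rangle$ together with the $P(\beta)$-invariance of $\varpi_\Phi(\,\cdot\,,\beta)$ (the paper computes the limit inline via $\ell_n:=\lim_{t\to\infty}e^{-it\beta}p_ne^{it\beta}\in G_\beta$ and the local constancy of $\langle\Phi(\cdot),\beta\rangle$ on the fixed-point set, while you cite Lemma~\ref{lem:varphi-G-invariant}, whose proof is that very computation), and then conclude by the equality case of Cauchy--Schwarz. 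Your write-up merely makes explicit the upper bound $\langle k_\infty^{-1}\beta,\beta\rangle\leq\|k_\infty^{-1}\beta\|\,\|\beta\|=\|\beta\|^2$ that the paper leaves implicit.
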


\begin{proof}
  $\lim_{t\to\infty}e^{-it\beta}p_n x=\ell_n x'$ where
  $\ell_n:=\lim_{t\to\infty}e^{-it\beta}p_n e^{it\beta} \in
  G_\beta$. For all $t\geq 0$, we have
$$
\langle\Phi(p_n x),\beta\rangle\geq \langle\Phi(e^{-it\beta}p_n
x),\beta\rangle\geq \langle\Phi(\ell_n x'),\beta\rangle=
\langle\Phi(x'),\beta\rangle=\varpi_{\Phi}(x,\beta)=\|\beta\|^2.
$$
If we take the limit $n\to \infty$, we obtain
$\|\beta\|^2\geq \langle k_\infty^{-1}\beta,\beta\rangle\geq
\|\beta\|^2$. This shows that $k_\infty^{-1}\beta=\beta$.
\end{proof}

\medskip

We consider the K\"ahler Hamiltonian $G$-manifold
$N:= V\times M \times (K\beta)^\circ$ with moment map $\Phi_N$. Here,
$(K\beta)^\circ$ denotes the adjoint orbit $K\beta$ equipped with the
opposite symplectic structure: the complex structure is defined
through the identifications
$(K\beta)^\circ \simeq K/K_{\beta}\simeq G/P({-\beta})$.

We have shown that $\lim_{n\to\infty} \Phi_N(p_nx,\beta)=0$: this
implies that, for $n_o$ large enough, $\widehat{x}:=(p_{n_o} x,\beta)$
belongs to the open subset $N^{\Phi-ss}$. Let
$\widehat{x}'=\lim_{t\to+\infty}e^{-it\beta}\widehat{x} :=(\ell_{n_o}
x',\beta)\in N^\beta$. Consider the Kemp-Ness functions
$\Psi_{e^{-it\beta} \widehat{x}}$. We have
$$
\Psi_{e^{-it\beta} \widehat{x}}(g)=\Psi_{ \widehat{x}}(e^{it\beta}g)
-\Psi_{\widehat{x}}(e^{it\beta})=\Psi_{
  \widehat{x}}(e^{it\beta}g)+\int_{0}^t \langle \Phi_N(e^{-is\beta}
\widehat{x}),\beta\rangle ds.
$$
See Proposition~\ref{prop:psi-n}. As $\widehat{x}\in N^{\Phi-ss}$,
there exists $c\in \R$ such that
$\Psi_{ \widehat{x}}(e^{it\beta}g)\geq c$,
$\forall (t,g)\in \R\times G$. Since
$$
\lim_{t\to+\infty} \langle\Phi_N(e^{-is\beta}
\widehat{x}),\beta\rangle=\langle\Phi_N(
\widehat{x}'),\beta\rangle=\langle\Phi(l_{n_o}x'),\beta\rangle-\|\beta\|^2=0
$$
the integral
$\int_{0}^t \langle \Phi_N(e^{-is\beta} \widehat{x}),\beta\rangle ds$
is nonnegative $\forall t\geq 0$. We have proven that
$\Psi_{e^{-it\beta} \widehat{x}}(g)\geq c$,
$\forall (t,g)\in \R^{\geq 0}\times G$, so by taking the limit
$t\to \infty$, we obtain that $\Psi_{\widehat{x}'}$ is bounded from
below. Hence, $\widehat{x}'\in N^{\Phi-ss}\cap N^\beta$: this means
that there exists a sequence $h_n\in G_\beta$ such that
$\lim_{n\to\infty} \Phi_N(h_n\widehat{x}')=0$ (see Remark
\ref{rem:g(t)-lambda}).  We have proven that
$\lim_{n\to\infty} \Phi(h_n\ell_{n_o}x')=\beta$, i.e. $x'$ is
$\Phi_\beta$-semistable.
\end{proof}

\begin{remark}
In the compact K\"ahler setting, Theorem~\ref{theo:RR-structure} seems to be well known among experts. However, we can only find proof for this case in 
Theorem 7.2.2 and Theorem 7.1.7 of \cite{Woodward11}. Unfortunately, the arguments presented there are approximate and difficult to follow.
 
In any case, our method, which differs from the one proposed in \cite{Woodward11}, also works in the compact setting.
\end{remark}

\subsection{Decomposition of the stratum $\Scal_{\langle\beta\rangle}$ when $\beta\neq 0$}\label{sec:decomposition-S-beta}

Thanks to Lemma~\ref{lem:lambda-x-G-action}, the substratum $\Scal_{\beta}$ is stable under the action of the parabolic subgroup $P(\beta)$, and 
$g \Scal_{\beta} \cap \Scal_{\beta}\neq \emptyset$ only if $g\in P(\beta)$.

Since $G\Scal_{\beta}=\Scal_{\langle\beta\rangle}$, we see that the map $G\times \Scal_{\beta}\to \Scal_{\langle\beta\rangle}, (g,x)\mapsto gx$  induces a bijection
$$
G\times_{P({\beta})} \Scal_{\beta}\simeq \Scal_{\langle\beta\rangle}.
$$

Let $Z_\beta$ be the union of the connected components of $V^\beta\times M^\beta$ that intersect $\Phi^{-1}(\beta)$: 
it is a K\"ahler Hamiltonian $G_\beta$-manifold with moment map $\Phi_\beta=\Phi-\beta$.
We define the Bia\l ynicki-Birula's complex submanifold
$$
Y_\beta:=\Big\{x\in V\times M, \ \lim_{t\to+\infty} e^{-it\beta} x\in Z_\beta\Big\}.
$$
The projection $\pi_\beta: Y_\beta\to Z_\beta$, that sends $x\in Y_\beta$ to $\lim_{t\to+\infty} e^{-it\beta} x\in Z_\beta$, is an holomorphic map.

Let $Z_\beta^{^{ss}}\subset Z_\beta$ be the open subset formed by the $\Phi_\beta$-semistable elements. 
Theorem~\ref{theo:RR-structure} tells us that $\Scal_{\beta}$ is the open subset of 
$Y_\beta$ equal to the pullback $\pi_\beta^{-1}(Z_\beta^{^{ss}})$.

Finally, $G\times_{P(\beta)} \Scal_{\beta}$ has a canonical structure of a complex manifold, and a direct computation 
shows that the map $G\times_{P({\beta})} \Scal_{\beta}\to M, [g,x]\mapsto gx$ is an holomorphic embedding (see \cite[Section~6]{Kir84a}).  
These facts show that $\Scal_{\langle\beta\rangle}$ is a locally closed complex  submanifold of $V\times M$.

\subsection{HKKN stratifications}\label{sec:HKKN-stratification}

For any $x\in V\times M$, the element $x_\infty$ belongs to the critical set $\mathrm{Crit}(f)=\bigcup_{\beta\in\Bcal} \Ccal_\beta$ 
with $\Ccal_\beta:=K((V^{\beta}\times M^\beta)\cap\Phi^{-1}(\beta))$. Notice that $x_\infty\in \Ccal_\beta$ if and only if $x_\infty\in \Phi^{-1}(K\beta)$.

The aim of this section is the study of the HKKN stratification:
\begin{equation}\label{eq:stratification}
V\times M:=\bigcup_{\beta\in\Bcal} \Scal_{\langle\beta\rangle}\quad {\rm where}\quad \Scal_{\langle\beta\rangle}:=\{x\in V\times M, x_\infty\in \Phi^{-1}(K\beta)\}.
\end{equation}

\subsubsection{HKKN stratifications of $V\times M$}\label{sec:HKKN-stratification-0}

The stratum $\Scal_{\langle 0\rangle}$, which is the set of $\Phi$-semistable elements, is also denoted by \break
$(V\times M)^{\Phi-ss}:=\{x\in V\times M,\  \overline{Gx}\cap \Phi^{-1}(0)\neq \emptyset\}$.

\begin{lemma}\label{lem:bar-S-beta}
For any $\beta\in \Bcal$, we have 
$$
\overline{\Scal_{\langle\beta\rangle}}\setminus \Scal_{\langle\beta\rangle}\subset \bigcup_{\|\beta'\|>\|\beta\|}\Scal_{\langle\beta'\rangle}.
$$
\end{lemma}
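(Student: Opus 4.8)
The plan is to reduce everything to two facts already established for $\Phi$-unstable points: the identity $\mathbf{M}_{\Phi}(x)=\|\Phi(x_\infty)\|=\|\lambda_x\|$ (the final lemma of Section~\ref{sec:existence-lambda-x} together with Theorems~\ref{theo:maximal-destabilizing-1} and~\ref{theo:maximal-destabilizing-2}), and the equivalence $x\in\Scal_{\langle\beta\rangle}\iff\lambda_x\in K\beta$ recorded right after \eqref{def:S-beta-phi}. First I would dispose of the trivial case $\beta=0$: a point of $\overline{\Scal_{\langle0\rangle}}\setminus\Scal_{\langle0\rangle}$ is not $\Phi$-semistable, hence lies in some $\Scal_{\langle\beta'\rangle}$ with $\beta'\neq0$, so $\|\beta'\|>0=\|\beta\|$. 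From now on I assume $\beta\neq0$.

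The heart of the argument is an upper semicontinuity property of the numerical invariant. I would first observe that, since $t\mapsto\langle\Phi(e^{-it\lambda}x),\lambda\rangle$ is non-increasing, one has $\varpi_{\Phi}(x,\lambda)=\inf_{t\geq0}\langle\Phi(e^{-it\lambda}x),\lambda\rangle$, an infimum of functions that are jointly continuous in $(x,\lambda)$; consequently $\varpi_{\Phi}$ is jointly upper semicontinuous on $(V\times M)\times\kgot$. Now take $y\in\overline{\Scal_{\langle\beta\rangle}}\setminus\Scal_{\langle\beta\rangle}$ and a sequence $x_n\to y$ with $x_n\in\Scal_{\langle\beta\rangle}$. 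Each $x_n$ is unstable with $\lambda_{x_n}\in K\beta$ and $\varpi_{\Phi}(x_n,\lambda_{x_n})=\|\lambda_{x_n}\|^2=\|\beta\|^2$. Because $K\beta$ is compact, after passing to a subsequence $\lambda_{x_n}\to\mu\in K\beta$, so $\|\mu\|=\|\beta\|$. Upper semicontinuity then yields $\varpi_{\Phi}(y,\mu)\geq\limsup_n\varpi_{\Phi}(x_n,\lambda_{x_n})=\|\beta\|^2$, whence $\mathbf{M}_{\Phi}(y)\geq\varpi_{\Phi}(y,\mu)/\|\mu\|\geq\|\beta\|>0$. In particular $y$ is $\Phi$-unstable by Theorem~\ref{theo:teleman}, so $\mathbf{M}_{\Phi}(y)=\|\Phi(y_\infty)\|$.

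The one delicate point, and the step I expect to be the main obstacle, is to rule out the borderline equality $\mathbf{M}_{\Phi}(y)=\|\beta\|$: a priori $\Phi(y_\infty)$ could land in a coadjoint orbit $K\beta'$ distinct from $K\beta$ but of the same norm, which pure semicontinuity cannot exclude. Here I would exploit the uniqueness of the optimal destabilizing vector. If $\mathbf{M}_{\Phi}(y)=\|\beta\|$, then chaining the inequalities of the previous paragraph forces $\varpi_{\Phi}(y,\mu)/\|\mu\|=\mathbf{M}_{\Phi}(y)=\|\mu\|$, i.e. $\mu$ satisfies the defining relation \eqref{eq:lambda-x} of the optimal destabilizing vector for $y$. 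By the uniqueness in Theorem~\ref{theo:maximal-destabilizing-1} this gives $\lambda_y=\mu\in K\beta$, hence $y\in\Scal_{\langle\beta\rangle}$ by the equivalence above, contradicting $y\notin\Scal_{\langle\beta\rangle}$. Therefore $\mathbf{M}_{\Phi}(y)>\|\beta\|$.

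Finally, writing $\beta'\in\Bcal$ for the dominant representative with $\Phi(y_\infty)\in K\beta'$, I conclude $\|\beta'\|=\|\Phi(y_\infty)\|=\mathbf{M}_{\Phi}(y)>\|\beta\|$ and $y\in\Scal_{\langle\beta'\rangle}$, which is precisely the asserted inclusion. The only ingredients beyond the joint upper semicontinuity of $\varpi_{\Phi}$ are the compactness of the orbits $K\beta$ and the existence--uniqueness package for $\lambda_x$; no properness of $\Phi$ nor of any Bia\l ynicki--Birula map enters, which is why the argument transfers verbatim to the non-compact framework.
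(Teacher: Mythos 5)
Your proof is correct, and it reaches the lemma by a genuinely different route from the paper's. The paper stays inside the stratification machinery: using that $\overline{\Scal_{\langle\beta\rangle}}$ is closed and $G$-invariant, it transports a point of $\overline{\Scal_{\langle\beta\rangle}}\cap\Scal_{\langle\beta'\rangle}$ into $\Phi^{-1}(\beta')$ (first along the flow of $e^{-it\beta'}$ into $Z_{\beta'}^{ss}$, then along the negative gradient flow of $f$), writes an approximating sequence as $x_n=k_ny_n$ with $k_n\in K$ and $y_n\in\Scal_{\beta}$, extracts a limit of the $k_n$, and deduces $\|\beta'\|=\|\Phi(y)\|\geq\langle\Phi(y),\beta\rangle/\|\beta\|\geq\|\beta\|$ from the continuous minorant $\langle\Phi(\cdot),\beta\rangle\geq\varpi_{\Phi}(\cdot,\beta)=\|\beta\|^2$ valid on $\Scal_{\beta}$; the borderline case $\|\beta'\|=\|\beta\|$ is then excluded by Cauchy--Schwarz rigidity, which would force $\Phi(y)=\beta$, impossible since $\Phi(y)\in K\beta'$ and $K\beta'\neq K\beta$. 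You keep the boundary point fixed and instead move the optimal destabilizing vectors: compactness of the orbit $K\beta$ replaces compactness of $K$, joint upper semicontinuity of $\varpi_{\Phi}$ (the paper invokes the same semicontinuity, in the variable $\lambda$ alone, right after Theorem~\ref{theo:maximal-destabilizing-1}) replaces the continuous minorant, and Kempf-type uniqueness of $\lambda_y$ replaces the rigidity argument in the borderline case. Your route is shorter, avoids the substrata $\Scal_{\beta}$, the sets $Z_{\beta'}^{ss}$ and Theorem~\ref{theo:RR-structure} entirely, and isolates the sharper quantitative statement that $\mathbf{M}_{\Phi}(y)>\|\beta\|$ at every $y\in\overline{\Scal_{\langle\beta\rangle}}\setminus\Scal_{\langle\beta\rangle}$; the price is a heavier reliance on the analytic package, namely the existence--uniqueness Theorems~\ref{theo:maximal-destabilizing-1} and~\ref{theo:maximal-destabilizing-2} and the identity $\mathbf{M}_{\Phi}(y)=\|\Phi(y_\infty)\|$ coming from Theorem~\ref{theo:chen-sun}. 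One small caveat on your closing remark: properness is not entirely absent from your argument, since the existence of $y_\infty$ and that identity rest on Proposition~\ref{prop:flow-moment-map-square} and Theorem~\ref{theo:chen-sun}, which do use properness of $\Phi$ on orbit closures.
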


\begin{proof}
  Let $\beta\neq \beta'\in \Bcal$ such that
  $\overline{\Scal_{\langle\beta\rangle}}\cap \Scal_{\langle\beta'\rangle}\neq\emptyset$. Let us use
  that the set $\overline{\Scal_{\langle\beta\rangle}}$ is closed and
  $G$-invariant. As $\Scal_{\langle\beta'\rangle}=G\Scal_{\beta'}$, we have
  $\overline{\Scal_{\langle\beta\rangle}}\cap\Scal_{\beta'}\neq\emptyset$. Since
  the map $x\mapsto \lim_{t\to\infty}e^{-it\beta'}x$ sends
  $\Scal_{\beta'}$ to $Z_{\beta'}^{^{ss}}$, we see that
  $\overline{\Scal_{\langle\beta\rangle}}\cap Z_{\beta'}^{^{ss}}\neq\emptyset$. Since,
  for any $x\in Z_{\beta'}^{^{ss}}$ the limit $x_\infty$ of the
  negative gradient flow line of $f$ belongs to $\Phi^{-1}(\beta')$,
  we have $\overline{\Scal_{\langle\beta\rangle}}\cap
  \Phi^{-1}(\beta')\neq\emptyset$. So, there exists a sequence
  $x_n\in \Scal_{\langle\beta\rangle}$ converging to $x\in \Phi^{-1}(\beta')$. We
  write $x_n=k_n y_n$ with $k_n\in K$ and $y_n\in \Scal_{\beta}$,
  and we can assume that $k_n$ converge to $k_\infty$, so that
  $y:=\lim_{n\to\infty} y_n\in \Phi^{-1}(K\beta')$. For any $n$, and
  any $t\geq 0$, we have
$$
\|\Phi(y_n)\|\geq \frac{\langle\Phi(y_n),\beta\rangle}{\|\beta\|}\geq
\frac{\langle\Phi(e^{-it\beta}y_n),\beta\rangle}{\|\beta\|}\geq
\frac{\varpi(y_n,\beta)}{\|\beta\|}=\|\beta\|.
$$
By taking the limit $n\to\infty$, we obtain
$$
\|\beta'\|=\|\Phi(y)\|\geq
\frac{\langle\Phi(y),\beta\rangle}{\|\beta\|}\geq \|\beta\|.
$$
We obtain that $\|\beta'\|\geq \|\beta\|$, and that the equality
$\|\beta'\|=\|\beta\|$ would give that
$\|\beta\|=\|\Phi(y)\|=\frac{\langle\Phi(y),\beta\rangle}{\|\beta\|}$,
i.e. $\Phi(y)=\beta$. This last relationship is impossible because
$\beta\notin K\beta'$. We have finally shown that
$\|\beta'\|>\|\beta\|$ if
$\overline{\Scal_{\langle\beta\rangle}}\cap\Scal_{\langle\beta'\rangle}\neq\emptyset$.
\end{proof}

\medskip

The main properties of the HKKN stratification are set out  in the following proposition.

\begin{proposition}\label{prop:HKKN-E-M-strata}
Suppose that $M$ is a connected manifold. Let $\rho=\min\{\|\beta\|, \beta\in\Bcal\}$.
\begin{enumerate}
\item If $\|\beta\|>\rho$, then none connected component of $\Scal_{\langle\beta\rangle}$ is open.

\item There exists a unique $\beta_{o}\in \Bcal$ such that $\|\beta_{o}\|=\rho$. 

\item The stratum $\Scal_{\langle\beta_o\rangle}$ is an open subset of $V\times M$, {\em connected} and {\em dense}.
\end{enumerate}
\end{proposition}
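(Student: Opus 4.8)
The plan is to reduce the whole statement to a single structural input from Section~\ref{sec:decomposition-S-beta} — that each stratum is a locally closed \emph{complex} submanifold — combined with the closure relation of Lemma~\ref{lem:bar-S-beta}, and then to exploit connectedness of $V\times M$ (which holds since both $V$ and $M$ are connected). The decisive observation I would use is that a connected component of a stratum is open in $V\times M$ if and only if it has full dimension, and that every non-open component, being a complex submanifold, automatically has \emph{real} codimension at least $2$. This is exactly what allows connectedness, rather than irreducibility as in Corollary~\ref{coro:open-strata}, to do the work.

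First I would record the elementary properties of the $G$-invariant function $N(x):=\|\Phi(x_\infty)\|$. By Proposition~\ref{prop:phi-minimal-orbit-1} one has $N(x)=\inf_{g\in G}\|\Phi(gx)\|$, so $N$ is an infimum of continuous functions and hence upper semicontinuous; and since $N(x)^2=2f(x_\infty)$ with $x_\infty\in\mathrm{Crit}(f)$, Lemma~\ref{lem:crit-f} shows $N$ takes only the finitely many values $\{\|\beta\|:\beta\in\Bcal\}$, the smallest being $\rho$. Writing $r_1$ for the second smallest value, upper semicontinuity gives that $\Ucal:=\{N=\rho\}=\{N<r_1\}$ is open, and since $\Scal_{\langle\beta\rangle}\subset\{N=\|\beta\|\}$, the strata with $\|\beta\|=\rho$ partition $\Ucal$. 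The closure relation of Lemma~\ref{lem:bar-S-beta} then forces each of these strata to be closed in $\Ucal$, hence clopen there, and therefore open in $V\times M$.

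Next I would split the stratification by dimension. Let $O$ be the union of all full-dimensional (equivalently, open) connected components of the strata, and let $F=(V\times M)\setminus O$ be its complement, the union of the components of \emph{positive} codimension. Since each stratum is a locally closed complex submanifold with finitely many components (a consequence of the analytic structure: $(V\times M)^\beta$ has finitely many components and semistable loci are complements of proper analytic subsets), $F$ is closed and is contained in a finite union of complex submanifolds of complex codimension $\ge 1$. A standard general-position argument — a generic path joining two points of $O$ can be perturbed off a locally finite union of real-codimension-$\ge 2$ submanifolds — then shows $O$ is connected; and since $F$ is a closed set contained in a meagre union of proper submanifolds, the Baire property shows $F$ is nowhere dense, so $O$ is dense and in particular nonempty.

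Finally I would identify $O$. Being connected and equal to the disjoint union of the open parts of the various strata, $O$ must be contained in a single stratum $\Scal_{\langle\beta_o\rangle}$. Applying upper semicontinuity of $N$ at a point where $N=\rho$, approximated by points of the dense set $O$ (on which $N\equiv\|\beta_o\|$), yields $\|\beta_o\|=\rho$; the clopen-ness established above then shows $\Scal_{\langle\beta_o\rangle}$ is entirely open, so $\Scal_{\langle\beta_o\rangle}=O$ is open, connected, and dense, which is (iii). Any other $\beta$ with $\|\beta\|=\rho$ would give an open stratum $\Scal_{\langle\beta\rangle}\subset O=\Scal_{\langle\beta_o\rangle}$, impossible for distinct strata, yielding the uniqueness in (ii); and for $\|\beta\|>\rho$ a full-dimensional component would lie in $O$ and hence in $\Scal_{\langle\beta_o\rangle}$, again impossible, which gives (i). The main obstacle, and the step I would be most careful about, is the complex-submanifold/codimension input: it is precisely the complex (not merely real) structure that guarantees codimension $\ge 2$ and makes the connectedness argument go through, so I would make sure the finitely-many-components claim and the general-position perturbation are genuinely justified rather than assumed.
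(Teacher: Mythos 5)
Your argument reaches the same conclusion by a genuinely different route, and most of it is sound. The paper proceeds by a top-down induction: ordering $\Bcal$ by increasing norm, it removes the strata one at a time starting from the largest, using Lemma~\ref{lem:bar-S-beta} to see that at each stage $\Scal_{\langle\beta_k\rangle}$ is a \emph{closed} complex submanifold of the remaining connected open set $N_k$, and then invoking the classical fact (Lemma~\ref{lem:classic-connected}) that a proper closed complex submanifold of a connected complex manifold has no open component and connected complement; connectedness of what is left after all strata of norm $>\rho$ are removed then forces a single minimal stratum. You instead split the work in two: upper semicontinuity of $N(x)=\|\Phi(x_\infty)\|=\inf_{g}\|\Phi(gx)\|$ (Proposition~\ref{prop:phi-minimal-orbit-1}), the finiteness of $\Bcal$ (Lemma~\ref{lem:crit-f}) and Lemma~\ref{lem:bar-S-beta} show directly that every minimal-norm stratum is clopen in the open set $\{N=\rho\}$, hence open in $V\times M$ --- a clean observation the paper does not make explicitly --- and the complex structure is then used only once, to prove that the union $O$ of all open components is connected and dense. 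The final identification of $O$ with a single stratum, and the deduction of (i), (ii), (iii) from it, are all correct.

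The one step you must tighten is the connectedness of $O$. Your perturbation argument presupposes that the family of non-open components is locally finite, which you justify by asserting that each stratum has finitely many connected components; that assertion is not established in the paper and is not obvious (an open subset such as $Z_\beta^{ss}\subset Z_\beta$ can a priori have infinitely many components). Moreover, pushing a path off a \emph{non-closed} submanifold requires care, since the perturbed path may still meet its closure. Both issues disappear if you replace the perturbation by a dimension-theoretic statement: $F=(V\times M)\setminus O$ is a closed set contained in a finite union of locally closed complex submanifolds, each of complex codimension at least $1$, hence of topological dimension at most $\dim_\R(V\times M)-2$, and a closed subset of a connected $n$-manifold of dimension at most $n-2$ can neither have interior nor disconnect the manifold. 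With that substitution --- or by falling back on the paper's inductive peeling, which only ever removes closed submanifolds --- your proof is complete.
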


\begin{proof}
  The proof uses the following classical fact.
  \begin{lemma}\label{lem:classic-connected}
    Let $N$ be a connected complex manifold and let $X\subset N$ be a
    closed complex submanifold, distinct from $N$. Then, none
    connected component of $X$ is open, and the open subset
    $N\setminus X$ is connected.
  \end{lemma}

  Let us write
  $\Bcal=\{\beta_1,\ldots,\beta_p\}\bigcup\{\beta_{p+1}\cdots,\beta_\ell\}$
  with
$$
\rho=\|\beta_1\|=\cdots=\|\beta_p\|<\|\beta_{p+1}\|\leq \cdots\leq
\|\beta_\ell\|.
$$

We define a sequence of subsets $N_{k}\subset V\times M$,
$k\in\{1,\ldots,\ell\}$ by a decreasing recurrence:
$N_{\ell}= V\times M$ and $N_{k-1}=N_{k}\setminus \Scal_{\langle\beta_k\rangle}$ for
any $k\in\{2,\ldots,\ell\}$.

Thanks to Lemma~\ref{lem:bar-S-beta} and Lemma
\ref{lem:classic-connected}, we can prove the following facts by
top-down recurrence:
\begin{itemize}
\item $\Scal_{\langle\beta_\ell\rangle}$ is closed submanifold of $N_{\ell}$. Hence,
  none connected component of $\Scal_{\langle\beta_\ell\rangle}$ is open and
  $N_{\ell-1}=N_{\ell}\setminus\Scal_{\langle\beta_\ell\rangle}$ is a connected
  open subset of $V\times M$.
\item For $k\in \{p+1,\ldots,\ell\}$, $\Scal_{\langle\beta_k\rangle}$ is closed
  submanifold of $N_{k}$. Hence, none connected component of
  $\Scal_{\langle\beta_k\rangle}$ is open and
  $N_{k-1}=N_{k}\setminus \Scal_{\langle\beta_k\rangle}$ is a connected open subset
  of $V\times M$.
\end{itemize}
So the first point is settled. The open subset $N_p\subset V\times M$
is connected and satisfies
\begin{equation}\label{eq:union-N-p}
  N_p=\Scal_{\langle\beta_1\rangle}\bigcup\cdots\bigcup\Scal_{\langle\beta_p\rangle}
\end{equation}
For a subset $X$ of $N_p$, let us denote by
$\overline{X}^{N_p}:=\overline{X}\cap N_p$ the closure of $X$ in
$N_p$. Lemma~\ref{lem:bar-S-beta} shows that
$\overline{\Scal_{\langle\beta_i\rangle}}^{N_p}=\Scal_{\langle\beta_i\rangle}$,
i.e. $\Scal_{\langle\beta_i\rangle}$ is closed in $N_p$,
$\forall i\in \{1,\ldots,p\}$. As $N_p$ is connected, we must have
$p=1$ in (\ref{eq:union-N-p}), and then $\Scal_{\langle\beta_1\rangle}=N_p$ is a
connected open subset of $V\times M$. Finally, $\Scal_{\langle\beta_1\rangle}$ is
dense since its complement is equal to a union of (a finite number
of) complex submanifolds of dimension strictly less than $V\times M$.
\end{proof}

\subsubsection{HKKN stratifications for complex $G$-submanifolds}\label{sec:HKKN-stratification-2}

In this section, we suppose that $\Xcal$ is a $G$-invariant complex submanifold of $V\times M$, {\em closed} and {\em connected}.  
Hence, $\Xcal$ is a K\"ahler Hamiltonian $G$-manifold stable under the negative gradient flows of $f$. 
If $x\in \Xcal$, then $x_t\in \Xcal, \forall t\geq 0$, and the limit $x_\infty:=\lim_{t\to\infty} x_t$ belongs to $\Xcal$ since $\Xcal$ is closed.

 If we restrict the stratification (\ref{eq:stratification}) to $\Xcal$, we obtain 
\begin{equation}\label{eq:stratification-submanifold}
\Xcal:=\bigcup_{\beta\in\Bcal_\Xcal} \Xcal_{\langle\beta\rangle}\quad {\rm where}\quad \Xcal_{\langle\beta\rangle}:=
\Xcal\cap \Scal_{\langle\beta\rangle}=\{x\in \Xcal, x_\infty\in \Phi^{-1}(K\beta)\}.
\end{equation}
where $\Bcal_\Xcal:=\{\beta\in\Bcal,\ \Xcal^\beta\cap\Phi^{-1}(\beta)\neq\emptyset\}$.

Let us explain why $\Xcal_{\langle\beta\rangle}$ is a complex submanifold of $\Xcal$, for any $\beta\in\Bcal_\Xcal$. 
The intersection $Z_\beta\cap \Xcal$ is the union of the connected components of $\Xcal^\beta$ that intersect $\Phi^{-1}(\beta)$: 
it is a K\"ahler Hamiltonian $G_\beta$-manifold with moment map $\Phi_\beta=\Phi-\beta$. We define the Bia\l ynicki-Birula's 
complex submanifold of $\Xcal$:
$$
Y_\beta\cap \Xcal:=\Big\{x\in \Xcal, \ \lim_{t\to+\infty} e^{-it\beta} x\in Z_\beta\cap\Xcal\Big\}.
$$
The projection $\pi_\beta: Y_\beta\cap \Xcal\to Z_\beta\cap \Xcal$, that sends $x\in Y_\beta\cap \Xcal$ to $\lim_{t\to+\infty} e^{-it\beta} x$, is an holomorphic map.

Let $Z_\beta^{^{ss}}\cap \Xcal$ is the open subset of $Z_\beta\cap \Xcal$ formed by the $\Phi_\beta$-semistable elements. 
Let $\Scal_{\beta}\cap \Xcal$ be the open subset of 
$Y_\beta\cap \Xcal$ equal to the pullback $\pi_\beta^{-1}(Z_\beta^{^{ss}}\cap \Xcal)$.

Finally, $G\times_{P({\beta})} (\Scal_{\beta}\cap \Xcal)$ has a canonical structure of complex manifold, and the map 
$G\times_{P({\beta})} (\Scal_{\beta}\cap \Xcal)\to \Xcal, [g,x]\mapsto gx$ is an holomorphic embedding, 
with image equal to $\Xcal_{\langle\beta\rangle}\subset \Xcal$.

Since $\overline{\Xcal_{\langle\beta\rangle}}=\overline{\Scal_{\langle\beta\rangle}}\cap \Xcal$, 
the following result is a corollary of Lemma~\ref{lem:bar-S-beta}. 

\begin{lemma}\label{lem:bar-X-beta}
For any $\beta\in \Bcal_\Xcal$, we have 
$$
\overline{\Xcal_{\langle\beta\rangle}}\setminus \Xcal_{\langle\beta\rangle}\subset \bigcup_{\|\beta'\|>\|\beta\|}\Xcal_{\langle\beta'\rangle}.
$$
\end{lemma}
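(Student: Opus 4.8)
The plan is to obtain this as an immediate corollary of Lemma~\ref{lem:bar-S-beta}, using the relation $\overline{\Xcal_{\langle\beta\rangle}}=\overline{\Scal_{\langle\beta\rangle}}\cap \Xcal$ recorded just above the statement. First I would fix $\beta\in\Bcal_\Xcal$ and take an arbitrary point $x\in \overline{\Xcal_{\langle\beta\rangle}}\setminus \Xcal_{\langle\beta\rangle}$. By that relation, $x$ lies in $\overline{\Scal_{\langle\beta\rangle}}$ and in $\Xcal$. On the other hand, since $\Xcal_{\langle\beta\rangle}=\Xcal\cap\Scal_{\langle\beta\rangle}$ and $x\in\Xcal$, the hypothesis $x\notin \Xcal_{\langle\beta\rangle}$ forces $x\notin \Scal_{\langle\beta\rangle}$. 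Hence $x\in \overline{\Scal_{\langle\beta\rangle}}\setminus \Scal_{\langle\beta\rangle}$. (Note that only the easy inclusion $\overline{\Xcal_{\langle\beta\rangle}}\subseteq\overline{\Scal_{\langle\beta\rangle}}\cap\Xcal$ is actually needed here, and this half follows at once from $\Xcal_{\langle\beta\rangle}\subseteq\Scal_{\langle\beta\rangle}$ together with the closedness of $\Xcal$; so the argument does not rely on the full equality.)

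Next I would invoke Lemma~\ref{lem:bar-S-beta} for the ambient manifold $V\times M$: it supplies some $\beta'\in\Bcal$ with $\|\beta'\|>\|\beta\|$ such that $x\in \Scal_{\langle\beta'\rangle}$. Combining this with $x\in\Xcal$ gives $x\in \Xcal\cap\Scal_{\langle\beta'\rangle}=\Xcal_{\langle\beta'\rangle}$, which is exactly the asserted conclusion.

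The only point requiring care — and the sole, minor obstacle — is to check that $\beta'\in\Bcal_\Xcal$, so that $\Xcal_{\langle\beta'\rangle}$ is a genuine stratum of the decomposition \eqref{eq:stratification-submanifold}. This I would verify from the definition of $\Scal_{\langle\beta'\rangle}$: the point $x\in\Scal_{\langle\beta'\rangle}$ satisfies $x_\infty\in\Phi^{-1}(K\beta')$, and since $\Xcal$ is closed and stable under the negative gradient flow of $f$, we have $x_\infty\in\Xcal$. Writing $\Phi(x_\infty)=k\beta'$ with $k\in K$, the $K$-equivariance of the moment map gives $\Phi(k^{-1}x_\infty)=\beta'$, while the critical-point relation $\Phi(x_\infty)\cdot x_\infty=0$ yields $\beta'\cdot(k^{-1}x_\infty)=0$; by $K$-invariance of $\Xcal$ we conclude $k^{-1}x_\infty\in\Xcal^{\beta'}\cap\Phi^{-1}(\beta')$, so that $\Xcal^{\beta'}\cap\Phi^{-1}(\beta')\neq\emptyset$, i.e. $\beta'\in\Bcal_\Xcal$. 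This closes the argument.
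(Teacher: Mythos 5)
Your proof is correct and takes essentially the same route as the paper, whose entire argument is the one-line observation that the statement is a corollary of Lemma~\ref{lem:bar-S-beta} via the relation $\overline{\Xcal_{\langle\beta\rangle}}=\overline{\Scal_{\langle\beta\rangle}}\cap \Xcal$. Your extra verification that $\beta'\in\Bcal_\Xcal$ (using that $\Xcal$ is closed, flow-invariant and $K$-invariant) is accurate and fills in a detail the paper leaves implicit.
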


The following result is an analogue of Proposition~\ref{prop:HKKN-E-M-strata}, with the same proof.

\begin{proposition}\label{prop:HKKN-Xcal-G-invariant}
Let $\Xcal$ be a $G$-invariant complex submanifold of $V\times M$ that is {\em closed} and {\em connected}. Let $\rho_\Xcal=\min\{\|\beta\|, \beta\in\Bcal_\Xcal\}$.
\begin{enumerate}
\item Let $\beta\in \Bcal_\Xcal$ such that $\|\beta\|>\rho_\Xcal$. Then none connected component of $\Xcal_{\langle\beta\rangle}$ is open.
\item There exists a unique $\beta_{o}\in \Bcal_\Xcal$ such that $\|\beta_{o}\|=\rho_\Xcal$.
\item The stratum $\Xcal_{\langle\beta_o\rangle}$ is a {\em connected} and {\em dense} open subset of $\Xcal$.
\end{enumerate}
\end{proposition}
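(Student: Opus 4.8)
The plan is to transcribe, essentially verbatim, the argument proving Proposition~\ref{prop:HKKN-E-M-strata}, replacing $V\times M$ by $\Xcal$ throughout; the only external inputs are the boundary estimate of Lemma~\ref{lem:bar-X-beta} and the elementary complex-analytic fact recorded in Lemma~\ref{lem:classic-connected}. First I would enumerate the finite set $\Bcal_\Xcal$ so that the norms are non-decreasing, writing $\Bcal_\Xcal=\{\beta_1,\dots,\beta_p\}\cup\{\beta_{p+1},\dots,\beta_\ell\}$ with $\rho_\Xcal=\|\beta_1\|=\cdots=\|\beta_p\|<\|\beta_{p+1}\|\le\cdots\le\|\beta_\ell\|$, and recall from Section~\ref{sec:HKKN-stratification-2} that each $\Xcal_{\langle\beta\rangle}$ is a locally closed complex submanifold of $\Xcal$, being the image of the holomorphic embedding of $G\times_{P(\beta)}(\Scal_\beta\cap\Xcal)$.

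Next I would peel off the strata from the top, setting $N_\ell=\Xcal$ and $N_{k-1}=N_k\setminus\Xcal_{\langle\beta_k\rangle}$, and prove by descending induction on $k$ that $N_k$ is a connected open subset of $\Xcal$ and that $\Xcal_{\langle\beta_k\rangle}$ is a closed complex submanifold of $N_k$. The closedness is where Lemma~\ref{lem:bar-X-beta} enters: the boundary $\overline{\Xcal_{\langle\beta_k\rangle}}\setminus\Xcal_{\langle\beta_k\rangle}$ meets only strata of norm strictly larger than $\|\beta_k\|$, all of which have already been removed in passing from $\Xcal$ to $N_k$. Lemma~\ref{lem:classic-connected} then yields simultaneously that no connected component of $\Xcal_{\langle\beta_k\rangle}$ is open and that $N_{k-1}$ is again connected and open. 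Carrying this down to $k=p+1$ establishes assertion~(i).

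Finally, at the minimal level $N_p$ is connected and open and decomposes as $N_p=\Xcal_{\langle\beta_1\rangle}\cup\cdots\cup\Xcal_{\langle\beta_p\rangle}$, all pieces now having the same norm $\rho_\Xcal$. Applying Lemma~\ref{lem:bar-X-beta} inside $N_p$ shows that each $\Xcal_{\langle\beta_i\rangle}$ is closed in $N_p$ (no stratum of strictly larger norm remains to receive its boundary), hence also open; since a connected space admits no partition into more than one nonempty clopen subset, I conclude $p=1$. This gives the uniqueness of $\beta_o:=\beta_1$ in assertion~(ii) and identifies $\Xcal_{\langle\beta_o\rangle}=N_p$ as a connected open subset of $\Xcal$. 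Density in assertion~(iii) follows because $\Xcal\setminus\Xcal_{\langle\beta_o\rangle}$ is a finite union of complex submanifolds of dimension strictly smaller than $\dim_{\C}\Xcal$.

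The step I expect to require the most care is verifying the hypotheses of Lemma~\ref{lem:classic-connected} at each stage: that $\Xcal_{\langle\beta_k\rangle}$ is genuinely a closed complex submanifold of the shrinking open set $N_k$ (not merely locally closed in $\Xcal$), and that it is a \emph{proper} subset of $N_k$. The submanifold-and-closedness part rests on combining the local structure $\Xcal_{\langle\beta\rangle}\simeq G\times_{P(\beta)}(\Scal_\beta\cap\Xcal)$ from Section~\ref{sec:HKKN-stratification-2} with the boundary control of Lemma~\ref{lem:bar-X-beta}; properness holds because for every index removed ($k\ge p+1\ge 2$) the lower strata $\Xcal_{\langle\beta_1\rangle},\dots,\Xcal_{\langle\beta_p\rangle}$ still lie in $N_k$, so $N_k\neq\Xcal_{\langle\beta_k\rangle}$. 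The connectedness hypothesis on $\Xcal$ is used only once, but crucially, in the final descent forcing $p=1$.
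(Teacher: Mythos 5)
Your proposal is correct and takes essentially the same approach as the paper: the paper's own proof consists precisely of invoking the argument of Proposition~\ref{prop:HKKN-E-M-strata} verbatim with $\Xcal$ in place of $V\times M$, i.e.\ the top-down removal of strata $N_{k-1}=N_k\setminus\Xcal_{\langle\beta_k\rangle}$ using the boundary control of Lemma~\ref{lem:bar-X-beta} together with Lemma~\ref{lem:classic-connected}, followed by the final connectedness argument forcing $p=1$. Your explicit verification that each $\Xcal_{\langle\beta_k\rangle}$ is a closed complex submanifold of $N_k$, proper in $N_k$ because the nonempty minimal-norm strata survive every removal, matches (and makes slightly more explicit) what the paper leaves implicit.
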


\subsubsection{HKKN stratifications for analytic subsets}\label{sec:HKKN-stratification-3}

Let $N$ be a complex  manifold. A subset $\Zcal\subset N$ is analytic, if $\Zcal$ is closed, and for every $x\in \Zcal$, there is an open neighborhood $\Vcal$ of $x$ in $N$ 
and a finite collection of holomorphic functions $h_1,\ldots, h_s\in \Ocal(\Vcal)$ such that $\Zcal\cap \Vcal= \{z\in \Vcal, h_1(z) =\cdots = h_s(z)=0\}$. 

\begin{definition} Let $\Zcal\subset N$ be an analytic subset.  We say that $x\in \Zcal$  is a regular point of $\Zcal$ if 
$\Zcal\cap \Vcal$ is a complex submanifold of $\Vcal$ for some neighborhood $\Vcal$ of $x$. \end{definition}

\begin{example}
According to Lemma~\ref{lem:bar-X-beta}, we know that for any $r\geq 0$, 
$\bigcup_{\|\beta\|\geq r} \Scal_{\langle\beta\rangle}$
is closed, and therefore it is a $G$-invariant analytic subset of $V\times M$.
\end{example}

The subset of an analytic subset $\Zcal$ formed by its regular points is denoted $\Zcal_{reg}$. Let us recall some classical facts: 
$\Zcal_{reg}$ is a complex submanifold, $\Zcal_{reg}$ 
is a {\em dense} open subset of $\Zcal$, and $\Zcal_{reg}$ is connected when the analytic subset $\Zcal$ is {\em irreducible} (see \cite[Chapter~1]{GR94}).

A main property in the study of K\"ahler Hamiltonian $G$-spaces, which can be found in \cite[Section~3]{GMO11}, 
is that the $G$-action behaves similarly to an algebraic action in the following sense.
\begin{proposition}\label{ex:G-x}
For any $x\in V\times M$, we have the following geometric properties:
\begin{enumerate}
\item The closure $\overline{Gx}$ is an irreducible analytic subset of $V\times M$.
\item The orbit $Gx$ is a dense open subset of $\overline{Gx}$.
\item The orbit $Gx$ is a connected complex submanifold of $V\times M$.
\end{enumerate}
\end{proposition}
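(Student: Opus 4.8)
The plan is to establish the three assertions essentially in the order (iii), (ii), (i), since the only genuinely geometric content lies in showing that $Gx$ is a locally closed complex submanifold; granting that, the remaining statements are formal. First I would analyze the orbit map $\alpha_x\colon G\to V\times M$, $g\mapsto gx$. Because $G$ acts by holomorphic transformations, $\alpha_x$ is holomorphic and $G$-equivariant, and its fibers are the left cosets of the stabilizer $G_x=\{g\in G:gx=x\}$, a closed complex (analytic) subgroup of $G$. Hence $\alpha_x$ factors through an injective holomorphic immersion $\bar\alpha_x\colon G/G_x\to V\times M$ of constant rank, with image $Gx$. Since $G$ is connected, $G/G_x$, and therefore $Gx$, is connected. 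This already exhibits $Gx$ as a connected \emph{immersed} complex submanifold; it remains to upgrade ``immersed'' to ``embedded'', i.e.\ to prove that $Gx$ is open in $\overline{Gx}$, which is precisely assertion (ii).

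The heart of the matter --- and the step I expect to be the main obstacle --- is the local structure of the holomorphic $G$-action near a boundary point of the orbit, in the absence of any algebraic or projective hypothesis on $M$. Here I would invoke the complex-analytic slice theory for holomorphic actions of reductive groups (the K\"ahler slice theorem of Heinzner--Huckleberry, in the form used in \cite[Section~3]{GMO11}), whose applicability is guaranteed by the properness of $\Phi$ restricted to $\overline{Gx}$ established in Proposition~\ref{prop:phi-proper-orbit}. Concretely, around any $y\in\overline{Gx}$ one obtains a local model of the form $G\times_H S$, where $H$ is the reductive stabilizer of a point on the closed orbit contained in $\overline{Gy}$ and $S$ is a holomorphic slice carrying a linear $H$-action; this reduces the local study of orbits to the linear, hence essentially algebraic, situation. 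In that situation the familiar facts hold: orbit closures are analytic, $Gy$ is open in $\overline{Gy}$, and the boundary $\overline{Gy}\setminus Gy$ is a union of orbits of strictly smaller dimension. Patching these descriptions over the (proper) set $\overline{Gx}$ shows simultaneously that $\overline{Gx}$ is an analytic subset and that $Gx$ is open in it: indeed $\dim_\C\overline{Gx}=\dim_\C Gx=\dim_\C G-\dim_\C G_x=:d$, every $d$-dimensional orbit in $\overline{Gx}$ is then a full-dimensional smooth locally analytic subset hence open, and any such open orbit meets the dense subset $Gx$ and so coincides with it. Thus $Gx=\{y\in\overline{Gx}:\dim_\C Gy=d\}$ is open, proving (ii); combined with the first paragraph this upgrades $Gx$ to a connected complex submanifold, settling (iii).

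Finally, (i) is immediate. By the previous step $Gx$ is a connected complex submanifold, hence irreducible as an analytic set, and the analytic closure of an irreducible analytic set is irreducible; therefore $\overline{Gx}$ is an irreducible analytic subset of $V\times M$. (Equivalently, $Gx\subset(\overline{Gx})_{\mathrm{reg}}$ is connected, open and dense, so $(\overline{Gx})_{\mathrm{reg}}$ is connected, which again forces irreducibility.) Throughout, the single non-formal ingredient is the slice/analytic-quotient theory of the second paragraph, supported by the properness statement of Proposition~\ref{prop:phi-proper-orbit}; everything else is bookkeeping with the orbit map and with the definition of the closure.
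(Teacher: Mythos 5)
The paper does not actually prove this proposition: it is quoted directly from \cite[Section~3]{GMO11}, so there is no internal argument to compare yours against. Your overall strategy (factor the orbit map through $G/G_x$ to get a connected injectively immersed complex submanifold, pass to local models where the action is linear, then conclude by dimension and density arguments) is indeed the one used in that literature, and your first and last paragraphs --- connectedness of $Gx$, and the deduction of (i) from (ii) and (iii) --- are correct and purely formal.

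The gap sits exactly where you locate ``the main obstacle''. The holomorphic slice theorem of Heinzner--Huckleberry does not produce a local model $G\times_H S$ around an \emph{arbitrary} point $y\in\overline{Gx}$: it produces such a model in a $G$-invariant neighbourhood of a \emph{closed} orbit lying in the zero level set of a (possibly shifted) moment map. Properness of $\Phi$ restricted to $\overline{Gx}$ (Proposition~\ref{prop:phi-proper-orbit}) is not the hypothesis that makes the slice theorem applicable at $y$, and without a local model at every point the closure of an injectively immersed complex orbit has no reason to be analytic --- analyticity of $\overline{Gx}$ is precisely the non-formal content (the statement is false for holomorphic actions without a moment map, e.g.\ a dense one-parameter winding in a complex torus), and it cannot be obtained by ``patching'' models that you only possess near closed orbits. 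To repair the argument you would first have to show that \emph{every} point of $V\times M$ lies in a $G$-invariant open set admitting an analytic Hilbert quotient --- for instance the semistable locus for the moment map shifted by $\Phi(y_\infty)$, which uses $y_\infty\in\overline{Gy}$ and Theorem~\ref{theo:maximal-destabilizing-2}, together with the Heinzner--Huckleberry--Loose quotient theory --- and then invoke the orbit structure theory for such quotients (unique closed orbit per fibre, local closedness, analyticity of closures). As written, the key step is a citation whose hypotheses are not verified at the points where you need them.
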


We start with the following result, where {\bf one works without $G$-invariance condition}.

\begin{proposition}\label{prop:HKKN-Xcal}
Let $\Xcal$ be a connected complex submanifold of $V\times M$. There exists a unique $\beta\in \Bcal$ such that 
$\Xcal_{\langle\beta\rangle}:=\Xcal\cap \Scal_{\langle\beta\rangle}$ is a dense open subset of $\Xcal$.
\end{proposition}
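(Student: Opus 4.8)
The plan is to run the same norm-filtration argument used in Proposition~\ref{prop:HKKN-Xcal-G-invariant}, but to replace the submanifold/connectedness inputs (Lemma~\ref{lem:classic-connected}) by their analytic-set analogues, since here $\Xcal$ is neither assumed $G$-invariant nor closed. Write $\Bcal_\Xcal:=\{\beta\in\Bcal\,:\,\Xcal_{\langle\beta\rangle}\neq\emptyset\}$, which is finite and nonempty because the $\Xcal_{\langle\beta\rangle}$ partition $\Xcal$, and set $\rho:=\min\{\|\beta\|\,:\,\beta\in\Bcal_\Xcal\}$. The first observation I would record is that the frontier inequality $\overline{\Xcal_{\langle\beta\rangle}}^{\,\Xcal}\setminus\Xcal_{\langle\beta\rangle}\subseteq\bigcup_{\|\beta'\|>\|\beta\|}\Xcal_{\langle\beta'\rangle}$ holds for an \emph{arbitrary} submanifold $\Xcal$: indeed $\overline{\Xcal_{\langle\beta\rangle}}^{\,\Xcal}\subseteq\overline{\Scal_{\langle\beta\rangle}}\cap\Xcal$, and intersecting the inclusion of Lemma~\ref{lem:bar-S-beta} with $\Xcal$ gives the claim. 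Next set $\Acal:=\bigcup_{\|\beta'\|>\rho}\Scal_{\langle\beta'\rangle}$; since $\Bcal$ is finite this equals $\bigcup_{\|\beta'\|\geq\rho'}\Scal_{\langle\beta'\rangle}$ for the next occurring norm value $\rho'$, so by the Example following Lemma~\ref{lem:bar-X-beta} it is a closed analytic subset of $V\times M$. Consequently $F:=\Xcal\cap\Acal$ is an analytic subset of $\Xcal$.

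The heart of the argument is that $F$ is a \emph{proper} analytic subset. If we had $\Xcal\subseteq\Acal$, then every point of $\Xcal$ would lie in a stratum of norm $>\rho$, contradicting the choice of $\rho$; hence $F\subsetneq\Xcal$. Invoking the standard fact that the complement of a proper analytic subset of a connected complex manifold is again connected and dense (this is the analytic-subset version of Lemma~\ref{lem:classic-connected}, see \cite{GR94}), I conclude that
$$
\Ucal:=\Xcal\setminus F=\bigsqcup_{\beta\in\Bcal_\Xcal,\ \|\beta\|=\rho}\Xcal_{\langle\beta\rangle}
$$
is a connected, dense, open subset of $\Xcal$. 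This single step simultaneously supplies openness (as $F$ is closed), density (as $F$ has empty interior), and connectedness, which is exactly where the hypothesis that $\Xcal$ is connected enters.

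Finally I would exploit the frontier inequality inside $\Ucal$: for each $\beta$ with $\|\beta\|=\rho$ the set $\overline{\Xcal_{\langle\beta\rangle}}^{\,\Xcal}\setminus\Xcal_{\langle\beta\rangle}$ is contained in $\bigcup_{\|\beta'\|>\rho}\Xcal_{\langle\beta'\rangle}\subseteq F$, which is disjoint from $\Ucal$, so each $\Xcal_{\langle\beta\rangle}$ is closed in $\Ucal$; being a finite disjoint union of relatively closed sets, each is therefore clopen in $\Ucal$. Since $\Ucal$ is connected, exactly one of these pieces is nonempty, giving a \emph{unique} $\beta_o\in\Bcal_\Xcal$ of minimal norm $\rho$ with $\Ucal=\Xcal_{\langle\beta_o\rangle}$ dense and open. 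Uniqueness of the dense open stratum among all of $\Bcal$ then follows formally: two dense open subsets of $\Xcal$ have dense (in particular nonempty) intersection, while distinct strata $\Xcal_{\langle\beta\rangle}$ are disjoint, forcing $\beta=\beta_o$. The only genuinely nontrivial ingredient is the complex-analytic connectedness/density of $\Xcal\setminus F$, for which I rely on the analyticity of the higher-norm union $\Acal$ already established, together with the cited structure theory of analytic sets.
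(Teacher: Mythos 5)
Your proof is correct, but it takes a genuinely different route from the paper's. The paper proves existence by a top-down induction on the strata ordered by decreasing norm: writing $\Bcal=\{\beta_0,\ldots,\beta_\ell\}$ with $\|\beta_0\|<\|\beta_1\|\leq\cdots\leq\|\beta_\ell\|$, it uses Proposition~\ref{prop:HKKN-E-M-strata} to see each $\Scal_{\langle\beta_k\rangle}$ as a closed complex submanifold of the connected dense open subset $N_k=\Scal_{\langle\beta_0\rangle}\cup\cdots\cup\Scal_{\langle\beta_k\rangle}$, so that $\Xcal_k\cap\Scal_{\langle\beta_k\rangle}$ is an analytic subset of $\Xcal_k:=\Xcal\cap N_k$; the dichotomy ``empty interior or everything'' then lets it either stop or peel off the top stratum and repeat. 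You argue in one shot at the minimal norm $\rho$: you bundle all strictly-higher-norm strata into $\Acal$, invoke the Example following Lemma~\ref{lem:bar-X-beta} to regard $\Acal$, hence $F=\Xcal\cap\Acal$, as a closed analytic set, and conclude with a clopen/connectedness argument in $\Ucal=\Xcal\setminus F$ driven by the frontier inequality, which you correctly note holds for arbitrary subsets of $V\times M$. Both proofs rest on Lemma~\ref{lem:bar-S-beta} plus the standard dichotomy and connectedness properties of analytic subsets of connected complex manifolds, and both single out the same stratum; your uniqueness argument is identical to the paper's. The trade-off is in the inputs: your version is shorter and identifies $\beta_o$ conceptually, as the unique surviving stratum of minimal norm among those meeting $\Xcal$, but it hinges on the Example's claim that the closed union $\bigcup_{\|\beta\|\geq r}\Scal_{\langle\beta\rangle}$ is analytic. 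Citing that Example is legitimate, but be aware that its one-line justification in the paper (``closed, and therefore analytic'') is delicate: a closed finite union of locally closed complex submanifolds need not be analytic in general (in $\C^2$, the graph of $e^{1/z}$ together with the line $z=0$ is closed, satisfies the analogous frontier condition, and is not analytic along $z=0$), so the Example implicitly uses finer properties of the HKKN strata. The paper's induction is arranged precisely so that it never needs analyticity of a union of strata, only the fully established fact that a single stratum is a closed submanifold of $N_k$; it is longer, but rests on more elementary inputs.
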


\begin{proof}  Since the strata $\Scal_{\langle\beta\rangle}$ are disjoints, the ``unicity" property is immediate. We now look after the ``existence" property.

From Proposition~\ref{prop:HKKN-E-M-strata}, we know that $\Bcal=\{\beta_{0}\cdots,\beta_\ell\}$ with $\|\beta_0\|< \|\beta_1\|\leq \cdots\leq\|\beta_\ell\|$. 
We know also that for any $k\in \{0,\ldots, \ell\}$, 
$N_k:=\Scal_{\langle\beta_0\rangle}\bigcup\cdots\bigcup \Scal_{\langle\beta_k\rangle}$ is a dense, connected, open subset of 
$N=V\times M$, and $\Scal_{\langle\beta_k\rangle}$ is a closed submanifold of $N_k$. 

Let $\Xcal_k:=\Xcal\cap N_k$ so that $\Xcal_\ell=\Xcal$. We start a top-down recurrence: we have 
$$
\Xcal= \Xcal_{\ell-1}\cup \Xcal_{\langle\beta_\ell\rangle}
$$ 
where $\Xcal_{\langle\beta_\ell\rangle}:=\Xcal\cap \Scal_{\langle\beta_\ell\rangle}$ is an analytic subset of the complex manifold $\Xcal$. 
Due to the connectedness of $\Xcal$, two cases can occur: either $\Xcal_{\langle\beta_\ell\rangle}$ 
has no interior point in $\Xcal$, or $\Xcal_{\langle\beta_\ell\rangle}=\Xcal$. The last case meaning that $\Xcal\subset  \Scal_{\langle\beta_\ell\rangle}$.

If $\Xcal\cap \Scal_{\langle\beta_\ell\rangle}$ has no interior point, then $\Xcal_{\ell-1}$ is a connected dense open subset of $\Xcal$, 
and we can consider the decomposition
$$
\Xcal_{\ell-1}= \Xcal_{\ell-2}\cup \Xcal_{\langle\beta_{\ell-1}\rangle}
$$ 
where $\Xcal_{\langle\beta_{\ell-1}\rangle}:=\Xcal_{\ell-1}\cap \Scal_{\langle\beta_{\ell-1}\rangle}$ is an analytic subset of the complex 
manifold $\Xcal_{\ell-1}$. Like before, either $\Xcal_{\langle\beta_{\ell-1}\rangle}$ has no interior point in $\Xcal_{\ell-1}$, or 
$\Xcal_{\ell-1}\cap \Scal_{\langle\beta_{\ell-1}\rangle}=\Xcal_{\ell-1}$. Here the last condition means that $\Xcal\cap N_{\ell-2}=\emptyset$ 
and $\Xcal\cap \Scal_{\langle\beta_{\ell-1}\rangle}$ is a dense open subset of $\Xcal$.

Continuing this process, we see that there exists $j\in \{0,\ldots, \ell\}$ for which $\Xcal\cap N_{j-1}=\emptyset$ and 
$\Xcal\cap \Scal_{\langle\beta_{j}\rangle}$ is a dense open subset of $\Xcal$.
\end{proof}

\medskip

Let $X$ be an irreducible analytic subset of $V\times M$.  {\bf  We do not assume that $X$ is invariant under the $G$-action}. 
Let us consider the following subsets: $X^{\Phi-ss}:=\Xcal\cap \Scal_{\langle 0\rangle}$ and 
$(X_{reg})^{\,\Phi-ss}:=X_{reg}\cap \Scal_{\langle 0\rangle}$. As $\Scal_{\langle 0\rangle}$ is open in 
$V\times M$, $X^{\,\Phi-ss}$ (resp. $(X_{reg})^{\Phi-ss}$) is open in $X$ (resp. $X_{reg}$).

\begin{proposition}\label{prop:X-ss-B-invariant} Let $X$ be an irreducible analytic subset of $V\times M$.
The following conditions are equivalent:
\begin{enumerate}
\item $(X_{reg})^{\Phi-ss}\neq \emptyset$.
\item $X^{\,\Phi-ss}\neq \emptyset$.
\item $(X_{reg})^{\Phi-ss}$ is a dense open subset of $X_{reg}$.
\item $X^{\,\Phi-ss}$ is a dense open subset of $X$.
\end{enumerate}
\end{proposition}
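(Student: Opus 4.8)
The plan is to reduce everything to Proposition~\ref{prop:HKKN-Xcal} applied to the regular locus, combined with two soft facts: the openness of the semistable stratum and the density of $X_{reg}$ in $X$. First I would record the standing observations. Since $X$ is irreducible, its regular locus $X_{reg}$ is a \emph{connected} complex submanifold of $V\times M$ which is dense and open in $X$. The stratum $\Scal_{\langle 0\rangle}=(V\times M)^{\Phi-ss}$ is open in $V\times M$, so its trace on any subset is open there. Finally the strata $\Scal_{\langle\beta\rangle}$, $\beta\in\Bcal$, are pairwise disjoint, because each point $x$ determines $x_\infty$ and hence $\Phi(x_\infty)$, which lies in a single orbit $K\beta$ with $\beta\in\tgot^*_{0,+}$ uniquely determined. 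Applying Proposition~\ref{prop:HKKN-Xcal} to $\Xcal=X_{reg}$ then yields a unique $\beta_*\in\Bcal$ for which $(X_{reg})_{\langle\beta_*\rangle}:=X_{reg}\cap\Scal_{\langle\beta_*\rangle}$ is dense and open in $X_{reg}$.

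The heart of the argument is the implication (i)$\Rightarrow$(iii). Here I would exploit that $(X_{reg})^{\Phi-ss}=X_{reg}\cap\Scal_{\langle 0\rangle}$ is \emph{open} in $X_{reg}$, because $\Scal_{\langle 0\rangle}$ is open in $V\times M$. Assuming (i), this set is a nonempty open subset of $X_{reg}$, so it must meet the dense subset $(X_{reg})_{\langle\beta_*\rangle}$. But distinct strata are disjoint, so their traces $(X_{reg})_{\langle 0\rangle}$ and $(X_{reg})_{\langle\beta_*\rangle}$ can only intersect if $\beta_*=0$. This forces $(X_{reg})^{\Phi-ss}=(X_{reg})_{\langle\beta_*\rangle}$ to be dense open in $X_{reg}$, which is precisely (iii). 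The converse (iii)$\Rightarrow$(i) is immediate since $X_{reg}$ is nonempty.

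Finally I would close the cycle by transferring density and openness between $X_{reg}$ and $X$. For (iii)$\Rightarrow$(iv): one has $(X_{reg})^{\Phi-ss}\subseteq X^{\Phi-ss}$, and since $(X_{reg})^{\Phi-ss}$ is dense in $X_{reg}$ while $X_{reg}$ is dense in $X$, transitivity of density makes $(X_{reg})^{\Phi-ss}$ dense in $X$, hence $X^{\Phi-ss}$ dense in $X$; as $X^{\Phi-ss}=X\cap\Scal_{\langle 0\rangle}$ is open in $X$, this gives (iv). The implication (iv)$\Rightarrow$(ii) is trivial. For (ii)$\Rightarrow$(i): $X^{\Phi-ss}$ is a nonempty open subset of $X$, so it meets the dense subset $X_{reg}$, and $X^{\Phi-ss}\cap X_{reg}=(X_{reg})^{\Phi-ss}\neq\emptyset$. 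This produces the cycle (i)$\Rightarrow$(iii)$\Rightarrow$(iv)$\Rightarrow$(ii)$\Rightarrow$(i). No step is genuinely hard once Proposition~\ref{prop:HKKN-Xcal} is available; the only point requiring care is the identification of the dense stratum as $\beta_*=0$, which is exactly where openness of $\Scal_{\langle 0\rangle}$ and disjointness of the strata are jointly used, together with the routine bookkeeping that density and openness pass correctly between the analytic set $X$ and its regular locus.
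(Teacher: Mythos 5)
Your proof is correct and follows essentially the same route as the paper's: both reduce to the connected complex submanifold $X_{reg}$, invoke Proposition~\ref{prop:HKKN-Xcal} to produce the unique dense open stratum $X_{reg}\cap\Scal_{\langle\beta_*\rangle}$, and then force $\beta_*=0$. The only (minor) difference is in that last step: the paper argues from condition (ii), using the closure relation $\overline{\Scal_{\langle\beta_*\rangle}}\subset\bigcup_{\beta'\neq 0}\Scal_{\langle\beta'\rangle}$ coming from Lemma~\ref{lem:bar-S-beta} to contradict $X^{\Phi-ss}\neq\emptyset$, whereas you argue from condition (i) using only the openness of $\Scal_{\langle 0\rangle}$ and the disjointness of the strata, which is a slightly more economical variant that bypasses Lemma~\ref{lem:bar-S-beta} entirely.
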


\begin{proof}
  The implications {\em 1} $\Rightarrow$ {\em 2} and {\em 3}
  $\Rightarrow$ {\em 4} are immediate. If
  $X^{\,\Phi-ss}$ is a dense open subset of
  $X$, then
  $(X_{reg})^{\Phi-ss}=X^{\,\Phi-ss}\cap X_{reg}\neq
  \emptyset$ because $X_{reg}$ is a dense open subset of
  $X$: we have proved {\em 4} $\Rightarrow$ {\em 1}.

  Suppose now show that $X^{\,\Phi-ss}\neq \emptyset$. Let us apply Proposition~\ref{prop:HKKN-Xcal} 
  to the connected complex  submanifold $X_{reg}$: there exists $\beta_o$ such that $X_{reg}\cap \Scal_{\langle\beta_o\rangle}$ 
  is a dense open subset of $X_{reg}$. 
  If $\beta_o\neq 0$, it would implies, thanks to (\ref{eq:stratification}), that
$$
X=\overline{X_{reg}}\subset \overline{\Scal_{\langle\beta_o\rangle}}\subset
\bigcup_{\beta'\neq 0} \Scal_{\langle\beta'\rangle}
$$
and then $X^{\,\Phi-ss}= \emptyset$, which is a contradiction. So, we have proved that 
$(X_{reg})^{\Phi-ss}=X_{reg}\cap \Scal_{\langle0\rangle}$ is a dense open subset of $X_{reg}$. 
\end{proof}

\section{Convexity properties of the moment map}\label{sec:convexity}

The aim of this section is to extend the results of Kirwan et alii on the convexity properties 
of the moment map for Hamiltonian group actions, and on the connectedness of the fibers of the moment map,
 to the case of K\"ahler Hamiltonian $G$-manifold with non-proper moment maps, and also to $B$-stable irreducible analytic subsets.

For any subset $X\subset V\times M$, we define 
$$
\Delta(X):=\Phi(X)\cap\tgot^*_{0,+}.
$$
When $X$ is $K$-invariant, $\Delta(X)$ parametrizes the $K$-orbits in $\Phi(X)$.

For any non-empty subset $C$ of a real vector space, we define its asymptotic cone $As(C)$ as the set of all limits 
$y =\lim_{k\to\infty} t_ky_k$ where $(t_k)$ is a sequence of positive reals converging to $0$ and $y_k\in C$. 
When $C$ is closed and convex, $As(C)$ is equal to the recession cone 
$rec(C):=\{v;\ a+tv\in C, \ \forall a\in C,\forall t\geq 0\}$.

\medskip

Let $B\subset G$ be the Borel subgroup associated with the Weyl chamber $\tgot^*_{0,+}$. The main result of this section is the following

\begin{theorem} \label{th:convexity}
Let $X$ be a $B$-invariant irreducible analytic subset of $V\times M$. Then 
\begin{enumerate}
\item $\Delta(X)$ is closed and convex.
\item When $X=V\times M$, the asymptotic cone of $\Delta(V\times M)$ is equal to the closed convex rational cone $\Delta(V)$.
\end{enumerate}
\end{theorem}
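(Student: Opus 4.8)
The plan is to detect membership $\mu\in\Delta(X)$ by a semistability condition on a shifted space, and to convert the convexity of Kempf--Ness functions into convexity of the parameter set. For dominant $\mu$ I would form the compact Kähler Hamiltonian $G$-manifold $M_\mu:=M\times(K\mu)^\circ$ and work in $N_\mu=V\times M_\mu$ with moment map $\Phi^{N}_\mu(v,m,\nu)=\Phi(v,m)-\nu$; since the base point of $(K\mu)^\circ\simeq G/P(-\mu)$ is $B$-fixed (as $[v_\mu]$ is in Section~\ref{sec:polyhedron:B}), the set $\widehat X_\mu:=X\times\{\mu\}$ is again an irreducible $B$-invariant analytic subset. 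If $\Phi(x)=\mu$ then $(x,\mu)\in(\Phi^N_\mu)^{-1}(0)$, giving one inclusion for free. The engine is that Kempf--Ness functions add over products, $\Psi_{(x,\mu)}=\Psi_x+\Psi_\mu$, and that $\Psi_\mu$ is affine in $\mu$ by (\ref{eq:psi-mu-barycentre}); combined with the equivalence ``$\Psi$ bounded below $\iff$ $\Phi$-semistable'' (Proposition~\ref{prop:phi-minimal-orbit}) and with Proposition~\ref{prop:X-ss-B-invariant} (so a single generic $x\in X$ serves two values $\mu_0,\mu_1$ simultaneously), the function $\Psi_x+\Psi_{(1-s)\mu_0+s\mu_1}=(1-s)(\Psi_x+\Psi_{\mu_0})+s(\Psi_x+\Psi_{\mu_1})$ is a convex combination of two functions bounded below, hence bounded below. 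When $X$ is $G$-invariant this already closes the argument: a semistable $(x,\mu)$ yields, via the convergent negative gradient flow of $\tfrac12\|\Phi^N_\mu\|^2$, a point $y_\infty\in\overline{Gx}\subseteq X$ with $\Phi(y_\infty)\in K\mu$, and $K$-invariance of $X$ rotates it into the chamber, so $\mu\in\Delta(X)$; thus $\Delta(\overline{Gx})$ is closed and convex.

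The genuine difficulty is that for a $B$-invariant $X$ that is not $G$-invariant the shift above only computes $\Delta(\overline{Gx})$, which is strictly larger than $\Delta(X)$ (take $X$ a $B$-fixed point). To recover the finer $B$-invariant polytope I would pass to the maximal torus and prove the analytic analogue of Corollary~\ref{coro:Bx} and (\ref{equation:coro-Bx}):
\[
\Delta(\overline{Bx})=\tgot^*_{0,+}\cap\bigcap_{u\in U}\Phi_{T_0}(\overline{Tux}).
\]
Each $\Phi_{T_0}(\overline{Tux})$ is a convex polyhedron by abelian (torus) convexity, the analytic counterpart of the weight polyhedron $\Pcal_T$ of Section~\ref{sec:HKKN-algebraic}, so the right-hand side is an intersection of convex sets with the chamber and is convex; moreover $\Phi$ is proper on $\overline{Gx}\supseteq\overline{Bx}$ (Proposition~\ref{prop:phi-proper-orbit}), whence $\Phi(\overline{Bx})$, and so $\Delta(\overline{Bx})$, is closed. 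Establishing this identity with the analytic tools is the main obstacle: the inclusion $\subseteq$ is $K$- and $T_0$-equivariance of $\Phi$, while $\supseteq$ uses the shifting trick of the first paragraph together with a $T_0$-semistability criterion in the spirit of Lemma~\ref{lem:ssPUx} and the fact that $B=T_0U$.

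With $\overline{Bx}$ understood, I would deduce the general case as in the proof of Theorem~\ref{th:FGS}: only finitely many of the intersections $\bigcap_{u}\Phi_{T_0}(\overline{Tux})$ occur, there is a unique maximal one among them, and by irreducibility of $X$ a generic $x$ realizes it, so $\Delta(X)=\bigcup_{x\in X}\Delta(\overline{Bx})=\Delta(\overline{Bx_{\mathrm{gen}}})$ is closed and convex. Here Proposition~\ref{prop:X-ss-B-invariant}, which requires no $G$-invariance, is exactly what promotes the generic identity to a statement valid on a dense subset of $X$.

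For part (ii) I would use that $\Phi=\Phi_V+\Phi_M$ with $\Phi_V$ homogeneous of degree two, $\Phi_V(sv)=s^2\Phi_V(v)$, and $\Phi_M$ bounded (as $M$ is compact), while $\Phi(V\times M)$ is $K$-invariant. For a limit $y=\lim_k t_k y_k$ defining the asymptotic cone, with $t_k\to0^+$ and $y_k=\Phi_V(v_k)+\Phi_M(m_k)\in\Delta(V\times M)$, the bounded term drops and $t_k\Phi_V(v_k)=\Phi_V(\sqrt{t_k}\,v_k)$, giving $y\in\Phi_V(V)\cap\tgot^*_{0,+}=\Delta(V)$; conversely any dominant $\xi=\Phi_V(w)$ satisfies $s^2\xi=\Phi_V(sw)\in\Phi(V\times M)-\Phi_M(m_0)$, whose dominant representatives rescaled by $s^{-2}$ converge to $\xi$, so $\xi\in As(\Delta(V\times M))$. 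Hence $As(\Delta(V\times M))=\Delta(V)$, and that $\Delta(V)$ is a closed convex rational cone follows from part (i) applied to the $G$-module $V$ together with Sjamaar's affine moment cone (the affine case of Lemma~\ref{lem:CX}). The single hardest ingredient overall remains the torus reduction of the second paragraph.
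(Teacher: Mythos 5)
Your first paragraph correctly reproduces the paper's convexity mechanism: the shifting trick with $X\times\{\mu\}\subset V\times M\times(K\mu)^\circ$, the Kempf--Ness criterion of Proposition~\ref{prop:phi-minimal-orbit}, the affine dependence (\ref{eq:psi-mu-barycentre}), and the crucial use of Proposition~\ref{prop:X-ss-B-invariant} so that one generic point serves both $\mu_0$ and $\mu_1$. But the ``genuine difficulty'' you then identify is not actually there, and the detour you build around it is where the proposal breaks. You claim that for a $B$-invariant, non-$G$-invariant $X$ the shifted semistability only detects $\Delta(\overline{Gx})$. This is false: if $(x,\mu)$ is $\Phi$-semistable, there is a sequence $g_n(x,\mu)$ converging to a zero of the shifted moment map; since $G=KB$ and $X\times\{\mu\}$ is $B$-invariant, write $g_n(x,\mu)=k_n(y_n,\mu)$ with $(y_n,\mu)\in X\times\{\mu\}$, extract $k_n\to k_\infty$ by compactness of $K$, and use closedness of $X$ together with $K$-equivariance of the moment map: the limit $(y_\infty,\mu)$ lies in $X\times\{\mu\}$ and maps to $k_\infty^{-1}\cdot 0=0$, i.e.\ $\Phi(y_\infty)=\mu$ with $y_\infty\in X$. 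This $G=KB$ argument is exactly how the paper proves ``$X^{\Phi-ss}\neq\emptyset\iff 0\in\Delta(X)$'' for $B$-invariant closed $X$, and closedness of $\Delta(X)$ follows by applying that equivalence to $X\times\{\mu\}$ for $\mu\in\overline{\Delta(X)}$. Your $B$-fixed-point example confirms rather than refutes this: for such a point $Gx=KBx=Kx$, and semistability of $(x,\mu)$ forces $\Phi(x)=\mu$ exactly, not merely $\mu\in\Delta(\overline{Gx})$. So $B$-invariance suffices, and your paragraph~1 already closes all of part (i).

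The substitute route is, moreover, not available in this setting. The identity $\Delta(\overline{Bx})=\tgot^*_{0,+}\cap\bigcap_{u\in U}\Phi_{T_0}(\overline{Tux})$ and the finiteness/genericity scheme of Theorem~\ref{th:FGS} (finitely many polyhedra $\Pcal_U(x)$, a unique maximal one, a generic $x$ realizing it, hence $\Delta(X)=\Delta(\overline{Bx_{\mathrm{gen}}})$) are proved in the paper only for $V\times\Pbb E$, using $T$-weights, highest-weight modules and the algebraic shifting trick; for a compact K\"ahler $M$ whose K\"ahler class is not integral, none of these objects exist, and the paper obtains its torus formula (Theorem~\ref{th:convexity-Bx}) as a corollary of the algebraic statement, not conversely. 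In addition, your closedness argument only covers $\overline{Bx}$ (properness of $\Phi$ on $\overline{Gx}$ restricted to the closed subset $\overline{Bx}$), so for a general $B$-invariant irreducible analytic $X$ both convexity and closedness would rest on the unjustified reduction to a generic $B$-orbit closure. Your part (ii) is essentially fine: the inclusion $As(\Delta(V\times M))\subset\Delta(V)$ matches the paper (modulo stating explicitly that you pass to dominant representatives and use closedness of $\Delta(V)$), and your rescaling of dominant representatives, $s^{-2}\,\mathrm{dom}\bigl(\Phi(sw,m_0)\bigr)\to\xi$, is a legitimate and in fact more elementary route to $\Delta(V)\subset As(\Delta(V\times M))$ than the paper's proof, which establishes $\Delta(V)+\Delta(V\times M)\subset\Delta(V\times M)$ via Kempf--Ness functions.
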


\bigskip

\begin{example}
For any $x\in V\times M$, the closure $\overline{Gx}$ is an irreducible analytic subset (see Proposition~\ref{ex:G-x}). Thus,  
$\Delta(\overline{Gx})$ is closed and convex. 
\end{example}

\bigskip

Theorem~\ref{th:convexity} can be specified in the context of projective over affine varieties.

\begin{theorem} \label{th:convexity-algebrique}
Let $X$ be a $G$-invariant  irreducible subvariety of $V\times \Pbb E$. 
Let $X_0\subset V$ be the projection of $X$ on $V$. Then 
\begin{enumerate}
\item There exists a finite set $\{\lambda_1,\ldots,\lambda_\ell\}$ of dominant weights such that
$$
\Delta(X_0)=\sum_{j=1}^\ell \ \R^{\geq 0}\lambda_j.
$$
\item There is a finite set $\{\xi_1,\ldots,\xi_p\}\subset \tgot^*_{0,+}$ of rationals elements such that 
$$
\Delta(X)={\rm convex\ hull}(\{\xi_1,\ldots,\xi_p\})+\Delta(X_0).
$$
\end{enumerate}
In particular, $\Delta(X)$ is a closed convex polyhedron and $\Delta(X_0)$ is the asymptotic cone of $\Delta(X)$.
\end{theorem}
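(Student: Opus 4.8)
The plan is to identify the analytic moment polytope $\Delta(X)$ with the real closure of the algebraic moment polyhedron $\Ccal(X)$ of Section~\ref{sec:polyhedron}, and then read off the statement from Lemma~\ref{lem:CX}. Under the identification $\Xgot^*(T)_\Q\simeq\tgot^*_{0,\Q}$, the central claim is that
\[
\Delta(X)=\overline{\Ccal(X)}\qquad\text{and}\qquad\Delta(X_0)=\overline{\Ccal(X_0)},
\]
the bars denoting closure in $\tgot^*_0$. Granting this, Lemma~\ref{lem:CX} provides dominant weights $\lambda_1,\dots,\lambda_\ell$ and rational dominant $\xi_1,\dots,\xi_p$ with $\Ccal(X)=\mathrm{conv}_\Q(\{\xi_i\})+\sum_j\Q^{\geq0}\lambda_j$ and recession cone $\Ccal(X_0)=\sum_j\Q^{\geq0}\lambda_j$. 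Since for finite rational sets $S,T$ one has $\overline{\mathrm{conv}_\Q(S)+\mathrm{cone}_\Q(T)}=\mathrm{conv}(S)+\mathrm{cone}(T)$, passing to real closures yields $\Delta(X_0)=\sum_j\R^{\geq0}\lambda_j$ and $\Delta(X)=\mathrm{conv}(\{\xi_i\})+\Delta(X_0)$, which are parts (i) and (ii). The final clause follows because $\Delta(X)$ is then a closed convex polyhedron, and for a closed convex set the asymptotic cone coincides with the recession cone, here $\Delta(X_0)$.

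To match the \emph{rational} points, fix a rational dominant $\lambda$ and $\ell\geq1$ with $\ell\lambda\in\Xgot^*(T)^+$. The shifting trick (Lemma~\ref{lem:Shifting}) gives $\lambda\in\Ccal(X)$ iff $(X_{[\ell]}\times\Ocal_{\ell\lambda})^{ss}\neq\emptyset$; by Theorem~\ref{theo:kempf-ness-relatif} relative semistability coincides with $\Phi$-semistability, so this holds iff the shifted variety carries a $\Phi$-semistable point, i.e.\ iff $0\in\Delta(X_{[\ell]}\times\Ocal_{\ell\lambda})$. A direct moment-map computation---the Veronese embedding rescales $\Phi$ by $\ell$ and the factor $\Ocal_{\ell\lambda}$ shifts it by $\ell\lambda$, with the sign convention of Theorem~\ref{theo:RR-structure}---turns this last condition into $\lambda\in\Delta(X)$. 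Thus $\Ccal(X)=\Delta(X)\cap\tgot^*_{0,\Q}$, and the same argument applied to the closed $G$-invariant affine variety $X_0\subset V$ (the case $E=\C$, where Lemma~\ref{lem:CX} shows $\Ccal(X_0)$ is a rational polyhedral cone) gives $\Ccal(X_0)=\Delta(X_0)\cap\tgot^*_{0,\Q}$.

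There remains the passage from rational points to the full sets. Since $\Ccal(X)\subseteq\Delta(X)$ and $\Delta(X)$ is closed and convex by Theorem~\ref{th:convexity} (applicable because $G\supseteq B$), the inclusion $\overline{\Ccal(X)}\subseteq\Delta(X)$ is automatic. The reverse inclusion $\Delta(X)\subseteq\overline{\Ccal(X)}$ is the main obstacle: a closed convex set need not be the closure of its rational points, so this inclusion carries genuine content and cannot be deduced from the rational-point identity alone. The plan is to prove it by checking that every $\Phi(x)$, $x\in X$, lies in each rational supporting half-space of $\Ccal(X)$. Writing $\overline{\Ccal(X)}=\bigcap_k\{\nu:\langle\nu,\tau_k\rangle\geq d_k\}$ with rational data, fix $x$ and one constraint $(\tau,d)$. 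The function $t\mapsto\langle\Phi(e^{-it\tau}x),\tau\rangle$ is non-increasing (Section~\ref{sec:numerical-invariant-phi}) with limit $\varpi_{\Phi}(x,\tau)=\varpi_{rel}(x,\tau)$ (Section~\ref{sec:M-phi-rel}); when this is finite the trajectory converges to a $\tau$-fixed point $x'\in X$ whose $\tau$-moment value equals the rational weight $\varpi_{rel}(x,\tau)$, and the algebraic inequality of Lemma~\ref{lem:ineqopenstrata}, applied to the shift of $\overline{Gx'}\subseteq X$ making $\tau$ optimal (Proposition~\ref{prop:stratproj}), should force $\langle\Phi(x),\tau\rangle\geq\varpi_{rel}(x,\tau)\geq d$; the case $\varpi_{rel}(x,\tau)=-\infty$ occurs only for directions $\tau$ imposing no finite constraint $d$. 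Carrying this monotonicity-plus-combinatorics step through for all facet normals is the hard part, since it is exactly where the rational facet inequalities of $\Ccal(X)$ must be shown to persist at the (possibly irrational) points of $\Delta(X)$; once it is established one obtains $\Phi(x)\in\overline{\Ccal(X)}$, hence the identity, and with it the theorem.
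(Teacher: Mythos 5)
Your overall architecture coincides with the paper's: prove (a) that $\Delta(X)$ is closed and convex via Theorem~\ref{th:convexity}, (b) that $\Ccal(X)=\Delta(X)\cap\tgot^*_{0,\Q}$ via the shifting trick and the identification of relative with $\Phi$-semistability, and (c) that $\Delta(X)=\overline{\Ccal(X)}$, after which Lemma~\ref{lem:CX} yields both parts of the theorem. Steps (a) and (b) are correct and are exactly the paper's points (i) and (ii). The problem is step (c), which you yourself flag as ``the hard part'': the mechanism you propose for it does not work. Your chain is $\langle\Phi(x),\tau\rangle\geq\varpi_{\Phi}(x,\tau)=\varpi_{rel}(x,\tau)\geq d$ for each rational facet constraint $\langle\cdot,\tau\rangle\geq d$ of $\Ccal(X)$, and the last inequality is false in general. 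The quantity $\varpi_{rel}(x,\tau)$ is the support value $\inf_{\nu\in-\Pcal_T(x)}\langle\nu,\tau\rangle$ of the \emph{torus} weight polyhedron of $x$ (equivalently, of $\Ccal_T(\overline{Tx})$), and this polyhedron is in general strictly larger than the $G$-polyhedron $\Ccal(X)$, so its support values can be strictly smaller than $d$. Concretely, take $G=\mathrm{SL}_2$, $V=0$, $E=\ggot$ the adjoint representation, $X=\Pbb E$, $x=[e_{-\alpha}+e_{\alpha}]$, $\tau=\alpha$: here $\Ccal(X)$ is the set of rational points of $[0,\alpha]$, so $d=0$, while $\varpi_{rel}(x,\tau)=-\Vert\alpha\Vert^{2}<0$ (the conclusion $\langle\Phi(x),\alpha\rangle=0\geq d$ still holds, but not through your chain). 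Lemma~\ref{lem:ineqopenstrata} and Proposition~\ref{prop:stratproj} cannot repair this: they produce inequalities satisfied by the elements of $\Ccal$ of a variety whose open unstable stratum is known, not lower bounds on the individual moment values $\Phi(x)$, which a priori need not lie in $\overline{\Ccal(X)}$ at all --- that is precisely what is being proved.

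What step (c) genuinely requires is a density statement: every point of $\Delta(X)$ is a limit of \emph{rational} points of $\Delta(X)$, which is not automatic for a closed convex set and is where the paper invests its real work. The paper supplies this by one of two arguments, neither of which appears in your proposal: for $G$-invariant $X$, the Lerman--Meinrenken--Tolman--Woodward principal-cross-section theorem applied to $X_{reg}$, showing that $\Delta(X)$ spans an affine subspace $\beta(X)+\hgot^{o}$ which is rational (rationality of the minimum $\beta(X)$ coming from its interpretation as an optimal destabilizing vector for $\mathbf{M}_{rel}$), so that rational points are dense; or, for merely $B$-invariant $X$, Guillemin--Sjamaar's approximation of an arbitrary $\lambda\in\Delta(X)$ by shrinking rational polytopes $\Delta_n\ni\lambda$ realized as moment polytopes of auxiliary polarized projective $G$-varieties $(Y_n,L_n)$, combined with a shifting argument producing $\xi_n\in\Ccal(X)\cap\Delta_n$. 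Until you replace your facet-by-facet verification with an argument of this kind, the key inclusion $\Delta(X)\subseteq\overline{\Ccal(X)}$ --- and with it the theorem --- remains unproved.
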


\bigskip

We have another way to describe the moment polytope of the variety $\overline{Bx}\subset V\times \Pbb E$. Let $\Phi_{T_0}: V\times \Pbb E\to\tgot_0^*$ 
be the moment map for the maximal torus $T_0\subset K$. Let $U:=[B,B]$ be the unipotent radical of the Borel subgroup. 

\bigskip

\begin{theorem} \label{th:convexity-Bx} For any $x\in V\times \Pbb E$, we have
$$
\Delta(\overline{Bx})=\tgot_{+,0}^*\cap \bigcap_{u\in U}\Phi_{T_0} (\overline{Tux}).
$$
\end{theorem}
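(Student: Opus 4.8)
The plan is to reduce the statement to the purely algebraic identity of Corollary~\ref{coro:Bx},
\begin{equation*}
\Ccal(\overline{Bx})=\Xgot^*(T)^+_\Q\cap\bigcap_{u\in U}\Ccal_T(\overline{Tux}),
\end{equation*}
by translating each of its constituents into the language of the moment map $\Phi$. The bridge, in both the abelian and the non-abelian case, is the identification of the analytic moment image with the real closure of the rational algebraic moment polyhedron. Throughout I keep track of the sign convention $\langle\Phi_{\Pbb E}([w_\chi]),\xi\rangle=-\langle\chi,\xi\rangle$ and $\langle\Phi_V(v),\xi\rangle=-\tfrac12\sum_\chi\|v_\chi\|^2\langle\chi,\xi\rangle$, which relates the image of $\Phi_{T_0}$ to $-\Pcal_T$.

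First I would treat the torus. Applying the construction to the abelian group $T$ (so $U$ is trivial and $B=T$), one has $\Phi_{T_0}(\overline{Ty})$ closed and convex with rational points equal to $\Ccal_T(\overline{Ty})=\Xgot^*(T)_\Q\cap-\Pcal_T(y)$, by Definition~\ref{def:P-T-X} and the remark following it. Since $-\Pcal_T(y)$ is a rational polyhedron it coincides with the closure of its rational points, whence the abelian Kempf--Ness identification
\begin{equation*}
\Phi_{T_0}(\overline{Tux})=-\Pcal_T(ux),\qquad\forall u\in U .
\end{equation*}

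Next I would establish the non-abelian identification $\Delta(\overline{Bx})=\tgot^*_{0,+}\cap(-\Pcal_U(x))$. By Theorem~\ref{th:convexity} the set $\Delta(\overline{Bx})$ is closed and convex, so it suffices to match its rational points with $\Ccal(\overline{Bx})$. For this I would use the algebraic shifting trick (Lemma~\ref{lem:Shifting-2}) together with the Kempf--Ness characterization of semistability (Theorem~\ref{theo:kempf-ness-relatif}): for rational dominant $\lambda$ and $\ell$ with $\ell\lambda\in\Xgot^*(T)^+$, membership $\lambda\in\Ccal(\overline{Bx})$ is equivalent to $((\overline{Bx})_{[\ell]}\times\{[v_{\ell\lambda}]\})^{ss}\neq\emptyset$, which by Theorem~\ref{theo:kempf-ness-relatif} is equivalent to the existence of a $\Phi$-semistable point in the shifted space; transporting the shift $-\ell\lambda$ of the fixed highest-weight factor back to $V\times\Pbb E$ turns this into $\lambda\in\Phi(\overline{Bx})\cap\tgot^*_{0,+}=\Delta(\overline{Bx})$. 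Hence the rational points of $\Delta(\overline{Bx})$ are exactly $\Ccal(\overline{Bx})$, which Corollary~\ref{coro:Bx} identifies with $\Xgot^*(T)^+_\Q\cap(-\Pcal_U(x))$, the rational points of the rational polyhedron $\tgot^*_{0,+}\cap(-\Pcal_U(x))$; taking closures gives the asserted equality.

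Finally I would assemble the two identifications. Since $\Pcal_U(x)=\bigcap_{u\in U}\Pcal_T(ux)$ by \eqref{def:PUx} and negation commutes with intersection,
\begin{equation*}
\bigcap_{u\in U}\Phi_{T_0}(\overline{Tux})=\bigcap_{u\in U}\bigl(-\Pcal_T(ux)\bigr)=-\Pcal_U(x),
\end{equation*}
so intersecting with $\tgot^*_{0,+}$ and invoking the previous paragraph yields exactly $\Delta(\overline{Bx})=\tgot^*_{0,+}\cap\bigcap_{u\in U}\Phi_{T_0}(\overline{Tux})$; note that although $U$ is infinite only finitely many distinct $\Pcal_T(ux)$ occur, so every intersection is finite and stays within the class of rational polyhedra. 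The main obstacle is the Kempf--Ness identification used in both steps, that the analytic moment image equals the real closure of the algebraic moment polyhedron: the delicate points are the sign bookkeeping relating $T$-weights to values of $\Phi_{T_0}$ and, above all, the careful passage via the two parallel (algebraic and symplectic) shifting tricks between algebraic semistability of the shifted variety and the condition $\lambda\in\Phi(\overline{Bx})$, so that the rational points of the closed convex set $\Delta(\overline{Bx})$ are captured exactly, with no spurious real points, forcing equality with the rational polyhedron $-\Pcal_U(x)$.
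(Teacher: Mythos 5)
Your overall strategy mirrors the paper's own proof in Section~\ref{sec:preuve-th-convex-Bx}: reduce the statement to the algebraic identity of Corollary~\ref{coro:Bx} and convert each constituent into a moment image by identifying it with the closure of the corresponding rational polyhedron; the assembly step and the remark that only finitely many polyhedra $\Pcal_T(ux)$ occur are also exactly as in the paper. However, there is a genuine gap at the central step, and it affects both your abelian and your non-abelian identification. You argue that since $\Delta(\overline{Bx})$ is closed and convex (Theorem~\ref{th:convexity}) and its rational points coincide with those of the rational polyhedron $P:=\tgot^*_{0,+}\cap\left(-\Pcal_U(x)\right)$, ``taking closures gives the asserted equality''. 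This inference is not valid: a closed convex set is not determined by its rational points. From $\Delta(\overline{Bx})\cap\tgot^*_{0,\Q}=P\cap\tgot^*_{0,\Q}$ one gets $P=\overline{P\cap\tgot^*_{0,\Q}}\subset\Delta(\overline{Bx})$, i.e.\ only the inclusion $P\subset\Delta(\overline{Bx})$; nothing in your argument prevents $\Delta(\overline{Bx})$ from containing irrational points outside $P$. (For instance, a segment issuing from the origin in an irrational direction is closed and convex and has the same rational points as the polyhedron $\{0\}$.)

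What is missing is precisely the density of rational points in the analytic moment image, i.e.\ the identity $\Delta(\overline{Bx})=\overline{\Ccal(\overline{Bx})}$ together with its abelian counterpart $\Phi_{T_0}(\overline{Tux})=\overline{\Ccal_T(\overline{Tux})}$. In the paper this is point (iii) of the Proposition in Section~\ref{sec:preuve-th-convex-algebrique}, and it requires a separate, substantive argument: either the Lerman--Meinrenken--Tolman--Woodward cross-section theory (which needs genuine group invariance, so it applies to the $T$-variety $\overline{Tux}$ but not to the merely $B$-invariant $\overline{Bx}$), or, for $B$-invariant varieties, the Guillemin--Sjamaar approximation argument: given $\lambda\in\Delta(X)$, one chooses rational polytopes $\Delta_n\ni\lambda$ of diameter tending to $0$, each realized as the moment polytope of a polarized projective $G$-variety $(Y_n,L_n)$, and applies the shifting trick against $Y_n^\circ$ to produce rational points $\xi_n\in\Ccal(X)\cap\Delta_n$ converging to $\lambda$. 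Your proposal uses the shifting trick only at a fixed rational $\lambda$, so it captures the rational points but produces no approximating rational points for an irrational element of $\Delta(\overline{Bx})$; consequently the inclusion $\Delta(\overline{Bx})\subset\tgot^*_{0,+}\cap\bigcap_{u\in U}\Phi_{T_0}(\overline{Tux})$ remains unproven, and the phrase ``with no spurious real points'' in your closing paragraph is exactly the assertion that still needs a proof.
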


\bigskip

Our last result is concerned with the connectedness of the fibers of the moment map.

\begin{theorem} \label{th:connectedness}
Let $\Xcal$ be a $G$-invariant, closed and connected complex submanifold of $V\times M$.
Then for every $\lambda\in\kgot^*$, the fiber 
$$
\Phi^{-1}(\lambda)\bigcap \Xcal
$$
is path-connected.
\end{theorem}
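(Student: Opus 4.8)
# Proof Plan for Theorem~\ref{th:connectedness} (Connectedness of Moment Map Fibers)

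The plan is to adapt Kirwan's classical inductive argument on connectedness of moment map fibers to the non-compact K\"ahler setting established in the preceding sections. The key structural tool is the HKKN stratification $\Xcal = \bigcup_{\beta\in\Bcal_\Xcal}\Xcal_{\langle\beta\rangle}$ from Section~\ref{sec:HKKN-stratification-2}, combined with the gradient-flow analysis of $f=\tfrac12\|\Phi\|^2$. First I would reduce the statement for a general value $\lambda\in\kgot^*$ to the case $\lambda=0$ by the shifting trick: replacing $\Xcal$ by $\Xcal\times K\lambda^{\circ}\subset V\times M\times K\lambda^{\circ}$ (the product with the coadjoint orbit carrying the opposite symplectic structure, exactly as used in the proof of Theorem~\ref{theo:RR-structure}), so that fibers over $\lambda$ correspond to fibers over $0$, i.e. to $\Phi^{-1}(0)$. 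Thus it suffices to show $\Phi^{-1}(0)\cap\Xcal$ is path-connected.

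The heart of the argument is an induction on the strata ordered by $\|\beta\|$. I would filter $\Xcal$ by the $G$-invariant closed analytic subsets $F_r:=\bigcup_{\|\beta\|\geq r}\Xcal_{\langle\beta\rangle}$ (these are closed by Lemma~\ref{lem:bar-X-beta}) and consider the complementary open sets $U_r=\Xcal\setminus F_r$, which are unions of strata of small norm. The crucial observation is that $\Phi^{-1}(0)\cap\Xcal$ meets only the open semistable stratum $\Xcal_{\langle 0\rangle}=(V\times M)^{\Phi-ss}\cap\Xcal$, since any $x$ with $\Phi(x)=0$ is automatically a critical point of $f$ lying in $\Phi^{-1}(0)$, hence $\lambda_x=0$. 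The negative gradient flow of $f$ retracts each open piece $U_r$ onto the lower-norm critical strata; concretely, using Proposition~\ref{prop:flow-moment-map-square}, every $x\in U_r$ flows to $x_\infty\in\mathrm{Crit}(f)$, and the flow preserves the $G$-orbit closures. I would show that $\Phi^{-1}(0)\cap\Xcal$ is a deformation retract of the whole $\Xcal_{\langle 0\rangle}$ via the Kempf-Ness/gradient flow, using that $x_\infty=0$ precisely on the $\Phi$-semistable locus (from Proposition~\ref{prop:phi-minimal-orbit}), and that the retraction $x\mapsto x_\infty$ maps $\Xcal_{\langle 0\rangle}$ continuously onto $\Phi^{-1}(0)\cap\Xcal$.

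The connectedness then follows by a Morse-theoretic/inductive argument: by Proposition~\ref{prop:HKKN-Xcal-G-invariant} the minimal-norm stratum $\Xcal_{\langle\beta_o\rangle}$ is \emph{connected and dense} in $\Xcal$. If $\beta_o=0$ we are done, since $\Phi^{-1}(0)\cap\Xcal$ is a retract of the connected open dense set $\Xcal_{\langle 0\rangle}$. If $\beta_o\neq 0$, then $\Phi^{-1}(0)\cap\Xcal=\emptyset$, and the statement is vacuous. For the general value $\lambda$, after shifting, the analogous minimal stratum of $\Xcal\times K\lambda^\circ$ governs whether $\Phi^{-1}(\lambda)\cap\Xcal$ is empty or a retract of a connected dense open subset. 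I would verify that the $G$-equivariance of the gradient flow, together with the path-connectedness of $K$ acting on each $K$-orbit inside a fiber, upgrades mere connectedness of the retract to \emph{path}-connectedness of the fiber.

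The main obstacle I anticipate is making the deformation retraction argument rigorous in the non-compact, non-proper setting, where the classical compactness of $M$ is unavailable. Here the essential replacements are Proposition~\ref{prop:phi-proper-orbit} (properness of $\Phi$ restricted to orbit closures) and the boundedness statement in Proposition~\ref{prop:flow-moment-map-square}(ii) guaranteeing $\{x(t)\}\subset B\times M$ for a fixed ball $B$; these keep all gradient trajectories in a compact region so the Lojasiewicz estimates and the limit $x_\infty$ behave as in the compact case. The delicate point is continuity of $x\mapsto x_\infty$ across strata and the verification that the flow genuinely deformation-retracts each $U_r$ onto the union of lower strata without the limit escaping to infinity in the $V$-direction; this is exactly where the energy-completeness of $V\times M$ (Remark~\ref{rem:Phi-r}) and the uniform bounds of Section~\ref{sec:squared} must be invoked carefully.
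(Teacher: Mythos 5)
Your proposal is correct and takes essentially the same route as the paper: reduce to $\lambda=0$ by the shifting trick with $(K\lambda)^\circ$, use the continuity of the gradient-flow retraction $x\mapsto x_\infty$ from $\Xcal^{\Phi-ss}$ onto $\Phi^{-1}(0)\cap\Xcal$ (the paper's Proposition~\ref{prop:retraction}, proved exactly via the Lojasiewicz estimates on compact flow-invariant sets that you flag as the delicate point), and invoke the connectedness and density of the open semistable stratum from Proposition~\ref{prop:HKKN-Xcal-G-invariant}, so your extra stratum-by-stratum induction is not needed beyond what that proposition already provides. The one step the paper makes more precise than your closing remark is the final descent: path-connectedness of $\tilde{\Phi}^{-1}(0)\cap\tilde{\Xcal}\simeq K\times_{K_\lambda}\bigl(\Phi^{-1}(\lambda)\cap\Xcal\bigr)$ implies that of $\Phi^{-1}(\lambda)\cap\Xcal$ because the stabilizer $K_\lambda$ is path-connected.
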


The convexity properties of the moment map have been the subject of numerous contributions. The convexity of $\Delta(X)$ 
was proved by Atiyah and Guillemin–Sternberg \cite{Ati82,GS82,GS84}, when $K$ is abelian and $X$ is compact symplectic, 
by Brion \cite{Br87} (see also Mumford \cite{Mumford84} and Guillemin–Sternberg \cite{GS82,GS84}) 
for $X$ a $G$-invariant projective algebraic variety, and in the compact symplectic case by Kirwan when $K$ is non-abelian \cite{Kir84b}. 
Finally, Guillemin and Sjamaar \cite{GSj06} 
generalized Brion's result to irreducible analytic subsets preserved only by a Borel subgroup.

When the symplectic manifold is non-compact, Hilgert-Neeb-Plank \cite{HNP94} proved that convexity holds for proper moment map 
(see also \cite{LMTW}). Another convexity result was obtained by Sjamaar for $X$ affine \cite{Sjamaar98}, and 
finally Heinzner-Huckleberry \cite{HH96} proved that convexity holds for non-compact K\"ahler Hamiltonian 
$G$-manifold (possibly singular), when the $G$-action is {\em regular}.

Theorem~\ref{th:convexity} generalizes Guillemin-Sjamaar's result \cite{GSj06} to the non-compact framework. 
Its proof takes up Heinzner-Huckleberry's main idea in \cite{HH96}, and we will see that the stratification studied above allows 
it to be implemented for irreducible analytic subsets preserved only by a Borel subgroup.

Theorem~\ref{th:convexity-algebrique} is both a generalization of Brion's result \cite{Br87} obtained for projective algebraic $G$-varieties 
and that of Sjamaar \cite{Sjamaar98} obtained for affine $G$-varieties.

Theorem~\ref{th:convexity-Bx} generalises a result obtained by Guillemin and Sjamaar \cite{GSj05} in the projective setting.

\subsection{Proof of Theorem~\ref{th:convexity}}\label{sec:preuve-th-convex}

\subsubsection{$\Delta(X)$ is closed}\label{sec:convexity-closed}

In this section, we prove the first part of point {\em 1.} of Theorem~\ref{th:convexity}.

\begin{proposition} Let $X\subset V\times M$ be a $B$-invariant closed subset. 
\begin{enumerate}
\item The following conditions are equivalent: 
\begin{itemize}
\item[(a)] $0\in \Delta(X)$.
\item[(b)] $X^{\Phi - ss}\neq\emptyset$.
\item[(c)] $0\in \overline{\Delta(X)}$.
\end{itemize}
\item The set $\Delta(X)$ is closed.
\end{enumerate}
\end{proposition}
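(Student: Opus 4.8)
The plan is to prove the equivalence of the three conditions and then deduce that $\Delta(X)$ is closed. The heart of the matter is the implication $(c) \Rightarrow (a)$: knowing that $0$ is a limit point of $\Delta(X)$, we must upgrade this to actual membership, which is a \emph{closedness at the origin} statement.

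For the equivalence $(a) \iff (b)$, I would proceed as follows. The condition $0 \in \Delta(X) = \Phi(X) \cap \tgot^*_{0,+}$ means there exists $x \in X$ with $\Phi(x) = 0$; since such an $x$ is automatically $\Phi$-semistable (as $x \in \overline{Gx} \cap \Phi^{-1}(0)$), we get $X^{\Phi-ss} \neq \emptyset$. Conversely, if $x \in X^{\Phi-ss}$, then by definition $\overline{Gx} \cap \Phi^{-1}(0) \neq \emptyset$; but here the $B$-invariance must be used carefully, since $\overline{Gx}$ need not lie in $X$. The natural route is to invoke Theorem~\ref{theo:kempf-ness-relatif} / Proposition~\ref{prop:phi-minimal-orbit}: $x$ being $\Phi$-semistable is equivalent to $\Phi(x_\infty) = 0$, where $x_\infty$ is the limit of the negative gradient flow. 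Because $X$ is closed and $B$-invariant, and the gradient flow of $f = \frac12\|\Phi\|^2$ keeps trajectories within $\overline{Gx}$, one needs $x_\infty \in X$. Since the flow stays in $Gx$ and $X$ is merely $B$-invariant, the cleanest argument is to observe that $\mathbf{M}_\Phi(x) \leq 0$ (Theorem~\ref{theo:teleman}) forces the existence of a point of $\Phi^{-1}(0)$ in the closure, and then that this point, after conjugation by the Weyl group into the chamber, lands in $\Delta(X)$. I expect this step to require the $B$-invariance precisely to control the torus part of the flow, analogous to Lemma~\ref{lem:ssPUx}.

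The implication $(a) \Rightarrow (c)$ is trivial. The crux is $(c) \Rightarrow (b)$, and here I would exploit the analytic and Kempf--Ness machinery established earlier rather than any compactness. Suppose $0 \in \overline{\Delta(X)}$, so there is a sequence $x_n \in X$ with $\Phi(x_n) \to 0$ along dominant directions. I would argue by contradiction: if $X^{\Phi-ss} = \emptyset$, then every $x \in X$ is $\Phi$-unstable, so by Theorem~\ref{theo:maximal-destabilizing-1} each carries an optimal destabilizing vector $\lambda_x$ with $\mathbf{M}_\Phi(x) = \|\lambda_x\| > 0$, and by Proposition~\ref{prop:M-G-invariant} we have $\|\Phi(x_\infty)\| = \mathbf{M}_\Phi(x) > 0$. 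The finiteness of $\Bcal$ (Lemma~\ref{lem:crit-f}) gives a \emph{uniform} lower bound $\rho > 0$ on $\|\Phi(x_\infty)\|$ over the nonempty strata meeting $X$. Combined with $\mathbf{M}_\Phi(x) \leq \|\Phi(x)\|$ this does not immediately contradict $\Phi(x_n) \to 0$; the subtlety is that $\mathbf{M}_\Phi(x_n)$ could be small even when $\Phi(x_n)$ is small. The resolution, following the Heinzner--Huckleberry idea invoked in the introduction, is to use the $B$-invariance together with the weight-polyhedron description: for a $B$-stable set, $0 \in \overline{\Delta(X)}$ should force $0 \in \bigcap_u \Pcal_U$ at the level of an approximating algebraic model, hence genuine semistability via Lemma~\ref{lem:ssPUx}.

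The main obstacle I anticipate is exactly this last passage from \emph{closure} to \emph{membership} for a non-compact, merely analytic, only-$B$-invariant set. The usual compactness arguments are unavailable, so I would instead reduce to the properness-on-orbit-closures statement (Proposition~\ref{prop:phi-proper-orbit}): although $\Phi$ is not proper on $X$, it \emph{is} proper on each $\overline{Gx}$. The strategy is to pick points $x_n$ with $\Phi(x_n) \to 0$, pass to their gradient-flow limits $(x_n)_\infty$ which land in the finite critical-value set, and show that the corresponding $\beta_n \in \Bcal$ must eventually be $0$; the finiteness of $\Bcal$ is what makes the limit $\beta = 0$ \emph{attained} rather than merely approached. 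Once $(a)$, $(b)$, $(c)$ are equivalent, point~2 follows by a shifting argument: to show $\Delta(X)$ is closed, take $\xi \in \overline{\Delta(X)}$, shift by a coadjoint orbit $\Ocal$ through $-\xi$ (working in $X \times \Ocal$, still $B$-invariant and with closure analytic by Proposition~\ref{ex:G-x}), and apply the equivalence $(c) \Rightarrow (a)$ at the origin of the shifted moment map to conclude $\xi \in \Delta(X)$.
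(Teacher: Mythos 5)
Your proposal contains a genuine gap at the step $(b)\Rightarrow(a)$, which is precisely where the hypotheses ``$B$-invariant and closed'' must do their work. You correctly identify the difficulty: semistability of $x\in X$ produces a point of $\overline{Gx}\cap\Phi^{-1}(0)$, and $\overline{Gx}$ need not lie in $X$. But your proposed resolution --- ``after conjugation by the Weyl group into the chamber'' --- does not address it: $0$ is fixed by the Weyl group, and the problem is not one of chamber placement but of producing a point \emph{of $X$} in $\Phi^{-1}(0)$, since $\Delta(X)=\Phi(X)\cap\tgot^*_{0,+}$. Likewise ``controlling the torus part of the flow, analogous to Lemma~\ref{lem:ssPUx}'' is not available: that lemma belongs to the algebraic setting $V\times\Pbb E$, not to the general K\"ahler one. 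The missing idea is the decomposition $G=KB$: writing a sequence $g_nx\to x_\infty\in\Phi^{-1}(0)$ as $g_nx=k_ny_n$ with $y_n\in Bx\subset X$ and $k_n\in K$, compactness of $K$ gives a subsequence $k_{\varphi(n)}\to k_\infty$, hence $y_{\varphi(n)}\to k_\infty^{-1}x_\infty$, which lies in $X$ because $X$ is closed, and satisfies $\Phi\bigl(k_\infty^{-1}x_\infty\bigr)=k_\infty^{-1}\cdot 0=0$ by $K$-equivariance. This short argument is the heart of the paper's proof of point (i) and is absent from your proposal.

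Two further remarks. For $(c)\Rightarrow(b)$ your first argument already works, and you should not have abandoned it: if $x$ is $\Phi$-unstable then $\|\Phi(x)\|\geq\inf_{g\in G}\|\Phi(gx)\|=\|\Phi(x_\infty)\|=\mathbf{M}_{\Phi}(x)\geq\rho:=\min\{\|\beta\|:\beta\in\Bcal\setminus\{0\}\}>0$, by Proposition~\ref{prop:phi-minimal-orbit}, Theorems~\ref{theo:maximal-destabilizing-1}--\ref{theo:maximal-destabilizing-2} and the finiteness of $\Bcal$ (Lemma~\ref{lem:crit-f}); so any sequence $x_n\in X$ with $\Phi(x_n)\to0$ consists of semistable points for $n$ large. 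There is no remaining ``subtlety'', and the detour through Heinzner--Huckleberry and the polyhedra $\Pcal_U$ is both unnecessary and unavailable here (your closing paragraph in fact returns to this correct gap argument). Finally, in point (ii) you shift by the \emph{whole} orbit, working in $X\times\Ocal$: applying point (i) there only yields $\Phi(X)\cap K\xi\neq\emptyset$, and for a merely $B$-invariant $X$ one cannot rotate by $k\in K$ to conclude $\xi\in\Phi(X)$. The paper instead takes $Y=X\times\{\xi\}\subset V\times M\times(K\xi)^\circ$, where $\{\xi\}$ is the $B$-fixed point of $(K\xi)^\circ\simeq G/P(-\xi)$ (note $B\subset P(-\xi)$), so that $Y$ is closed and $B$-invariant and $0\in\Delta(Y)$ means exactly $\Phi(x)=\xi$ for some $x\in X$; with this choice your shifting argument goes through.
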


\begin{proof}
Let us prove point $(i)$. The implications $(a)\Longrightarrow (b)$ and $(a)\Longrightarrow (c)$ are immediate. 
Let $x\in X^{\Phi - ss}$: it means that there exists a sequence $x_n\in Gx$ such that $\lim_{n\to\infty} x_n=x_\infty$ satisfies $\Phi(x_\infty)=0$.
Since $G=KB$, we can write $x_n= k_n y_n$ with $y_n\in Bx\subset X$ and $k_n\in K$. Since $K$ is compact, there exists a converging 
subsequence $(k_{\varphi(n)})$. Thus we get $x_\infty=k_\infty y_\infty$, with $k_\infty:= \lim_{n\to\infty}k_{\varphi(n)}$ and 
$y_\infty:=\lim_{n\to\infty}=y_{\varphi(n)}$. Finally $y_\infty\in \overline{Bx}\subset X$ and $\Phi(y_\infty)=0$. We have checked $(b)\Longrightarrow (a)$. 
Suppose that $0\in \overline{\Delta(X)}$, so there exists a sequence $x_n\in X$ such that $\lim_{n\to\infty} \Phi(x_n)=0$. Since $X^{\Phi - ss}$ is an open subset of $X$ containing $X\cap \Phi^{-1}(0)$, there exists $N\in\N$ such that $\forall n\geq N, x_n\in X^{\Phi - ss}$. We have verify that $(c)\Longrightarrow (b)$, and therefore point $(i)$ is demonstrated.

Let $\mu\in \overline{\Delta(X)}$. We consider the K\"ahler
Hamiltonian $G$-manifold $N:= V\times M \times (K\mu)^\circ$.  Here,
$(K\mu)^\circ$ denotes the adjoint orbit $K\beta$ equipped with the
opposite symplectic structure: the complex structure is defined
through the identifications
$(K\mu)^\circ \simeq K/K_{\mu}\simeq G/P({-\mu})$. Notice that $\{\mu\}\in (K\mu)^\circ$ is fixed by the $B$-action since 
$B\subset P({-\mu})$. We consider the closed $B$-invariant subset $Y=X\times \{\mu\}\subset N$, so that
the condition $\mu\in \overline{\Delta(X)}$ (resp.
$\mu\in \Delta(X)$) is equivalent to
$0\in \overline{\Delta(Y)}$ (resp. $0\in \Delta(Y)$).

If we apply the first point to the closed $B$-invariant  subset
$Y\subset N$, we see that $0\in \overline{\Delta(Y)}$
$\Longleftrightarrow$ $0\in \Delta(Y)$.  We have proved finally that
$\mu\in \Delta(X)$, and thus shown that
$\Delta(X)$ is closed. 
\end{proof}

\subsubsection{$\Delta(X)$ is convex}\label{sec:convexity-convex}

Recall that we can associate a Kempf-Ness function $\Psi_\mu$ to any $\mu\in\tgot^*_{0,+}$ (see Example~\ref{example:psi-mu}), and a Kempf-Ness function 
$\Psi_x$ to any $x\in V\times M$. The second part of point {\em 1.} of Theorem~\ref{th:convexity} follows from the following result.

\begin{proposition}\label{prop:proof-th-convexity}Let $X$ be a $B$-invariant irreducible analytic subset of $V\times M$. Then, 
\begin{enumerate}
\item $\mu\in \Delta(X)$ if and only if there exists a {\em dense} open subset $\Vcal_\mu\subset X$ such that the Kempf-Ness function 
$\Psi_x-\Psi_{\mu}:G\to \R$ is bounded from below for any $x\in \Vcal_\mu$.
\item If $\mu_0,\mu_1\in \Delta(X)$, then $[\mu_0,\mu_1]\in \Delta(X)$.
\end{enumerate}
\end{proposition}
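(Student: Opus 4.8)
\textbf{Proof strategy for Proposition~\ref{prop:proof-th-convexity}.}

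The plan is to prove the two assertions in order, since the convexity statement (ii) will follow cleanly from the Kempf-Ness characterization (i). For point (i), I would first exploit the Shifting Trick at the level of Kempf-Ness functions. The key observation is that $\mu\in\Delta(X)$ means $\mu\in\Phi(X)\cap\tgot^*_{0,+}$, i.e. there is $x\in X$ with $\Phi(x)=\mu$ after a suitable $K$-translation; more usefully, I would reformulate membership in terms of $\Phi$-semistability of the shifted point. Concretely, consider the product manifold $N:=V\times M\times(K\mu)^\circ$ introduced in the closedness argument, and the $B$-invariant analytic subset $Y_\mu:=X\times\{\mu\}\subset N$. Then $\mu\in\Delta(X)\iff 0\in\Delta(Y_\mu)\iff Y_\mu^{\Phi-ss}\neq\emptyset$, using the closedness proposition just established. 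Now I would invoke Proposition~\ref{prop:X-ss-B-invariant}: for an irreducible analytic subset, $Y_\mu^{\Phi-ss}\neq\emptyset$ is equivalent to $Y_\mu^{\Phi-ss}$ being a \emph{dense} open subset of $Y_\mu$. Translating back through the identification $Y_\mu\cong X$, this produces a dense open $\Vcal_\mu\subset X$ consisting of $\Phi$-semistable points of the shifted problem. The final link is the additivity of Kempf-Ness functions: the Kempf-Ness function on $N$ associated to $(x,\mu)$ decomposes as $\Psi_x+\Psi_{\{\mu\}}$, and since $(K\mu)^\circ$ carries the \emph{opposite} symplectic structure, $\Psi_{\{\mu\}}=-\Psi_\mu$ (compatible with the conventions of Example~\ref{example:psi-mu} and the sign built into $(K\mu)^\circ\simeq G/P(-\mu)$). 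Thus the Kempf-Ness function of $(x,\mu)$ equals $\Psi_x-\Psi_\mu$, and by Theorem~\ref{theo:kempf-ness-kahler}\eqref{theo:kempf-ness-kahler-ass6} together with Proposition~\ref{prop:phi-minimal-orbit}, it is bounded below precisely when $(x,\mu)$ is $\Phi$-semistable in $N$. This gives exactly the dense open set $\Vcal_\mu$ on which $\Psi_x-\Psi_\mu$ is bounded below.

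For point (ii), let $\mu_0,\mu_1\in\Delta(X)$ and write $\mu_t:=[\mu_0,\mu_1]$ for the point on the geodesic segment (in $\tgot^*_{0,+}$); here I would fix a convex combination $\mu=r_0\mu_0+r_1\mu_1$ with $r_0,r_1\geq 0$, $r_0+r_1=1$. The crucial input is the barycenter relation \eqref{eq:psi-mu-barycentre}, namely $\Psi_\mu=r_0\Psi_{\mu_0}+r_1\Psi_{\mu_1}$. By point (i), there are dense open subsets $\Vcal_{\mu_0},\Vcal_{\mu_1}\subset X$ on which $\Psi_x-\Psi_{\mu_0}$, respectively $\Psi_x-\Psi_{\mu_1}$, are bounded below. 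Since $X$ is irreducible, $\Vcal_{\mu_0}\cap\Vcal_{\mu_1}$ is again a dense open subset of $X$, hence nonempty. Picking any $x$ in this intersection, the convex combination
\[
\Psi_x-\Psi_\mu=r_0\bigl(\Psi_x-\Psi_{\mu_0}\bigr)+r_1\bigl(\Psi_x-\Psi_{\mu_1}\bigr)
\]
is a nonnegative linear combination of two functions each bounded below, hence itself bounded below. By the ``if'' direction of point (i), this forces $\mu\in\Delta(X)$, which is the desired convexity.

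\textbf{Main obstacle.} The delicate step is point (i), and specifically the identification of the Kempf-Ness function on the auxiliary manifold $N=V\times M\times(K\mu)^\circ$ with the difference $\Psi_x-\Psi_\mu$, including getting the sign right via the opposite symplectic structure on $(K\mu)^\circ$; this is where Example~\ref{example:psi-mu}, the decomposition $(K\mu)^\circ\simeq G/P(-\mu)$, and the additivity of Kempf-Ness functions under products (Proposition~\ref{prop:psi-n}) must be combined carefully. Equally, I must be careful that Proposition~\ref{prop:X-ss-B-invariant} genuinely applies to the shifted subset $Y_\mu$, i.e. that $Y_\mu$ is an \emph{irreducible} analytic subset of $N$: this holds because $X$ is irreducible and we are merely taking a product with the single point $\{\mu\}$, so irreducibility is preserved. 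Once these identifications are secured, point (ii) is essentially formal, relying only on the linearity \eqref{eq:psi-mu-barycentre} and the density of the intersection of two dense open subsets in an irreducible space.
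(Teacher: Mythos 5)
Your proposal is correct and follows essentially the same route as the paper: the same shifted manifold $N_\mu=V\times M\times(K\mu)^\circ$ with $Y_\mu=X\times\{\mu\}$, the same appeal to Proposition~\ref{prop:X-ss-B-invariant} to upgrade nonemptiness of $(Y_\mu)^{\Phi-ss}$ to density, the same identification $\Psi_{(x,\mu)}=\Psi_x-\Psi_\mu$ via Proposition~\ref{prop:phi-minimal-orbit}, and for (ii) the same convex-combination inclusion $\Vcal_{\mu_0}\cap\Vcal_{\mu_1}\subset\Vcal_{\mu_t}$ coming from the barycenter relation \eqref{eq:psi-mu-barycentre}. The only cosmetic difference is that you spell out the sign convention on $(K\mu)^\circ$ explicitly, which the paper leaves implicit.
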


\begin{proof}
  Let $\mu\in\tgot^*_{0,+}$. We consider the K\"ahler Hamiltonian
  $G$-manifold $N_\mu:= V\times M \times (K\mu)^\circ$ and the $B$-invariant irreducible subset 
  $Y_\mu= X\times \{\mu\}$ of $N_\mu$. We know
  that the condition $\mu\in \Delta(X)$ is equivalent
  to $0\in \Delta(Y_\mu)$, and that
  $0\in \Delta(Y_\mu)$ holds if and only if $(Y_\mu)^{\,\Phi-ss}$
is a dense open subset of $Y_\mu$ (See Proposition~\ref{prop:X-ss-B-invariant}). We consider now the subset
$$
\Vcal_\mu:=\left\{x\in X, (x,\mu)\in
(Y_\mu)^{\,\Phi-ss}\right\},
$$ 
so that $(Y_\mu)^{\,\Phi-ss}$ is a dense open subset of $Y_\mu$ if and only if $\Vcal_\mu$ is a
dense open subset of $X$. Notice that, for any $x\in X$,
the following facts are equivalent
\begin{itemize}
\item $x$ belongs to $\Vcal_\mu$,
\item $(x,\mu)$ is $\Phi$-semistable,
\item the Kempf-Ness function $\Psi_{(x,\mu)}=\Psi_{x}-\Psi_{\mu}$ is
  bounded from below.
\end{itemize}
See Proposition~\ref{prop:phi-minimal-orbit}. We have finally proved
the first point : $\mu\in \Delta(X)$ if and only if
$$
\Vcal_\mu:=\left\{x\in X,\ \Psi_{x}-\Psi_{\mu} \
  \hbox{is\ bounded\ from\ below}\right\}
$$
is a dense open subset of $X$.

Now, let $\mu_0,\mu_1\in \Delta(X)$. For $t\in [0,1]$,
we define $\mu_t=t\mu_1+(1-t)\mu_0\in\tgot_{0,+}^*$. Since
$$
\Psi_{x}-\Psi_{\mu_t}= t\Big(\Psi_{x}-\Psi_{\mu_1}\Big)+
(1-t)\Big(\Psi_{x}-\Psi_{\mu_0}\Big),
$$
we have the following relation\footnote{A similar identity is obtained
  in the ``Momentum Inclusion Lemma'' in \cite{HH96}.}:
$$
\Vcal_{\mu_0}\bigcap \Vcal_{\mu_1}\subset \Vcal_{\mu_t},\qquad \forall t\in [0,1].
$$ Since
$\Vcal_{\mu_0}$ and $\Vcal_{\mu_1}$ are dense open subsets of
$X$, we see that $\Vcal_{\mu_t}$ is a dense open subset
of $X$. We have proved that $\mu_t\in \Delta(X)$, $\forall t\in [0,1]$.
\end{proof}

\subsubsection{The asymptotic cone of $\Delta(V\times M)$}\label{sec:convexity-asymptotic}

We first check that $As(\Delta(V\times M))\subset \Delta(V)$. 

Let $\xi\in As(\Delta(V\times M))$. There exists a sequence $\xi_n=\Phi_V(v_n)+\Phi_M(m_n)\in \Delta(V\times M)$, and  a sequence $(t_n)$ of positive reals 
converging to $0$, such that $\xi=\lim_{n\to\infty}t_n\xi_n\in\tgot_{0,+}^*$. Since the sequence $\Phi_M(m_n)$ is bounded, we have $\xi=\lim_{n\to\infty}t_n z_n$, 
with $z_n:=\Phi_V(v_n)\in \kgot^*$. Let us choose a sequence $k_n\in K$ such that $k_nz_n\in \Delta(V)$. Since $K$ is compact, we can assume that 
$k_n$ converges to $k_\infty\in K$. We see that the sequence $t_n(k_nz_n)$, which belongs to $\Delta(V)$, converges to $k_\infty\xi$: we have $k_\infty\xi\in\tgot_{0,+}^*$ and it implies that 
$\xi=k_\infty\xi\in \Delta(V)$.

\medskip

We notice that the opposite inclusion, $\Delta(V)\subset As(\Delta(V\times M))$, holds if $\Delta(V)$ $+$ $\Delta(V\times M) \subset \Delta(V\times M)$. 

\medskip

We now prove that $\Delta(V)+\Delta(V\times M) \subset \Delta(V\times M)$. Let $(\mu_0,\mu_1)\in \Delta(V)\times \Delta(V\times M)$.  We define the subsets
\begin{align*}
\Ucal_{\mu_0}&:=\left\{v\in V,\ \Psi_{v}-\Psi_{\mu_0} \ \hbox{is\ bounded\ from\ below}\right\},\\
\Vcal_{\mu_1}&:= \left\{(v,m)\in V\times M,\ \Psi_{v}+\Psi_m-\Psi_{\mu_1} \ \hbox{is\ bounded\ from\ below}\right\}.
\end{align*}
Following the arguments of Proposition~\ref{prop:proof-th-convexity}, we know that $\Ucal_{\mu_0}$ and $\Vcal_{\mu_1}$ are respectively dense open 
subsets of $V$ and $V\times M$. Let us denote by $p: V\times M\to V$ the projection. Since $p(\Vcal_{\mu_1})$ is open in $V$ there exists 
$(v_1,m_1)\in \Vcal_{\mu_1}$ such that $v_1=p(v_1,m_1)$ belongs to $\Ucal_{\mu_0}$. Thus, the Kempf-Ness functions $\Psi_{v_1}-\Psi_{\mu_0}$ and 
$\Psi_{v_1}+\Psi_{m_1}-\Psi_{\mu_1}$ are bounded from below. Hence,
$$
\Big(\Psi_{v_1}-\Psi_{\mu_0}\Big)+\Big(\Psi_{v_1}+\Psi_{m_1}-\Psi_{\mu_1}\Big)=\Psi_{\sqrt{2}v_1}+\Psi_{m_1}-\Psi_{\mu_0+\mu_1}
$$
is bounded from below. This proves that $\mu_0+\mu_1\in \Delta(V\times M)$.

\subsection{Proof of Theorem~\ref{th:convexity-algebrique}}\label{sec:preuve-th-convex-algebrique}

Let $X$ be a $B$-invariant irreducible subvariety of $V\times \Pbb E$.  Let $\Ccal(X)$ be the $\Q$-convex subset of $(\tgot_0^*)_\Q$ 
defined by (\ref{eq:defmpB}).  Theorem~\ref{th:convexity-algebrique} is a consequence of the following 
\begin{proposition}
\begin{enumerate}
\item $\Delta(X)$ is closed and convex.
\item $\Delta(X)\cap\tgot^*_{0,\Q}=\Ccal(X)$.
\item $\overline{\Ccal(X)}=\Delta(X)$.
\end{enumerate}
\end{proposition}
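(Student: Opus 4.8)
The three assertions can be established in turn, with assertion (ii) carrying the main content and the other two reducing to it together with results already proved.

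\textbf{Assertion (i).} Since $M=\Pbb E$ is a compact Kähler Hamiltonian $G$-manifold and $X$ is, in particular, a $B$-invariant irreducible analytic subset of $V\times\Pbb E$, the closedness and convexity of $\Delta(X)$ are exactly the content of Theorem~\ref{th:convexity}(1). So nothing new is needed: I would simply invoke it.

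\textbf{Assertion (ii).} Fix a rational dominant $\lambda$ and $\ell\ge 1$ with $\ell\lambda\in\Xgot^*(T)^+$, and run a chain of equivalences. By the algebraic shifting trick (Lemma~\ref{lem:Shifting-2}), $\lambda\in\Ccal(X)$ iff the $B$-stable subvariety $Z:=X_{[\ell]}\times\{[v_{\ell\lambda}]\}\subset V\times M'$, with $M'=\Pbb(\mathrm{Sym}^\ell E\otimes V_{\ell\lambda})$, has a relatively semistable point. By the Kempf--Ness characterization (Theorem~\ref{theo:kempf-ness-relatif}), relative semistability on $V\times M'$ coincides with $\Phi_{M'}$-semistability; and by the equivalence $0\in\Delta(Z)\iff Z^{\Phi-ss}\ne\emptyset$ proved in Section~\ref{sec:convexity-closed} for any $B$-invariant closed subset, this is $0\in\Delta(Z)$. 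A direct moment-map computation on the pure points gives
\[
\Phi_{M'}\big(v,[w^{\ell}\otimes v_{\ell\lambda}]\big)=\Phi_V(v)+\ell\,\Phi_{\Pbb E}([w])-\ell\lambda=\ell\big(\Phi_{1/\ell}(v,[w])-\lambda\big),
\]
using that the Veronese scales $\Phi_{\Pbb E}$ by $\ell$ and that $\Phi_{\Pbb(V_{\ell\lambda})}([v_{\ell\lambda}])=-\ell\lambda$. Hence $0\in\Delta(Z)\iff\lambda\in\Delta_{1/\ell}(X)$, where $\Delta_r(X):=\Phi_r(X)\cap\tgot^*_{0,+}$.

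The one non-formal point is that this produces $\Delta_{1/\ell}$ rather than $\Delta=\Delta_1$, because the Veronese rescales only the $\Pbb E$-factor. I would remove the discrepancy with a \emph{scaling lemma}: for every $r>0$ and every $\mu\in\tgot^*_{0,+}$, $\mu\in\Delta_r(X)\iff\mu\in\Delta(X)$. To prove it, form $N=V\times M\times(K\mu)^\circ$ and $Y_\mu=X\times\{\mu\}$; the shifting identity gives $\mu\in\Delta_r(X)\iff 0\in\Delta_r(Y_\mu)\iff (Y_\mu)^{\Phi_{N,r}-ss}\ne\emptyset$ (Section~\ref{sec:convexity-closed}, with the $r$-scaled moment map on $N$), and the $r$-independence of $\Phi$-semistability established in Section~\ref{sec:phi-semistability} identifies $(Y_\mu)^{\Phi_{N,r}-ss}$ with $(Y_\mu)^{\Phi_N-ss}$; reversing the chain at $r=1$ gives $\mu\in\Delta(X)$. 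Granting the scaling lemma, the chain reads $\lambda\in\Ccal(X)\iff\lambda\in\Delta(X)$ for all rational dominant $\lambda$, i.e. $\Ccal(X)=\Delta(X)\cap\tgot^*_{0,\Q}$.

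\textbf{Assertion (iii) and the main obstacle.} The inclusion $\overline{\Ccal(X)}\subseteq\Delta(X)$ is immediate from (i) and (ii). For the reverse I would use the already-proven description of $\Ccal(X)$: by Theorem~\ref{th:FGS} there is a generic $x_0\in X$ with $\Ccal(X)=\Xgot^*(T)^+_\Q\cap(-\Pcal_U(x_0))$, where $-\Pcal_U(x_0)$ is a rational polyhedron. Thus $P:=\tgot^*_{0,+}\cap(-\Pcal_U(x_0))$ is a rational polyhedron whose rational points are exactly $\Ccal(X)$, so $\overline{\Ccal(X)}=P\subseteq\Delta(X)$. Now $\Delta(X)$ and $P$ are closed convex sets with identical rational points by (ii); since $P$ is a rational polyhedron, its relative interior contains rational points, and $\Delta(X)\setminus P$ is relatively open in $\Delta(X)$, so in the full-dimensional case any point of $\Delta(X)\setminus P$ would possess rational neighbours in $\Delta(X)$ lying outside $P$, contradicting $\Ccal(X)\subseteq P$; the lower-dimensional case follows by passing to the rational affine hull of $P$, which must contain $\Delta(X)$ since the two share all their rational points. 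This forces $\Delta(X)=P=\overline{\Ccal(X)}$. The genuine difficulty throughout is the scaling mismatch in (ii): the non-compact $V$-factor is not rescaled by the Veronese, so one is forced to prove $r$-independence of the whole polytope $\Delta_r(X)$, and this is precisely where the $r$-independence of $\Phi$-semistability is indispensable; a secondary delicate point is the density/rationality step closing (iii), which relies essentially on the rational-polyhedron description of $\Ccal(X)$ from Theorem~\ref{th:FGS}.
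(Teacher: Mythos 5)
Your assertions (i) and (ii) are correct. For (i) you invoke Theorem~\ref{th:convexity}, exactly as the paper does. For (ii) your chain of equivalences is the paper's shifting-trick argument via Lemma~\ref{lem:Shifting-2}; your ``scaling lemma'' usefully makes explicit a step the paper compresses into the identity $(X_{[k]}\times\{\mu\})^{ss}=(X\times\{\mu/k\})^{\Phi-ss}$, namely that the Veronese embedding rescales only the projective factor and not $\Phi_V$, so that one must invoke the $r$-independence of $\Phi_r$-semistability established earlier. That clarification is sound.

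The problem is in (iii), in the lower-dimensional case. From (ii) you know that $\Delta(X)$ and the rational polyhedron $P=\tgot^*_{0,+}\cap(-\Pcal_U(x_0))$ have the same rational points, with $P\subseteq\Delta(X)$. Your full-dimensional argument is fine, but the reduction of the lower-dimensional case fails: it is not true that a closed convex set containing a rational polyhedron and sharing all its rational points must lie in the rational affine hull of that polyhedron. For instance, $P=\{0\}\subset\R^2$ and $D$ the segment from $0$ to $(1,\sqrt2)$ satisfy $P\subseteq D$ and $D\cap\Q^2=P\cap\Q^2=\{0\}$, yet $D\not\subseteq\mathrm{aff}(P)$. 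In other words, nothing in your argument excludes that $\Delta(X)$ extends beyond $P$ in an irrational direction along which it acquires no new rational points; this is precisely the phenomenon that must be ruled out, and your proof assumes it rather than establishes it. The paper closes exactly this gap by one of two substantive inputs: either the LMTW principal cross-section theorem, which shows that $\Delta(X)$ generates the affine subspace $\beta(X)+\hgot^{o}$, rational because $\hgot$ is a principal isotropy algebra and $\beta(X)$, being an optimal destabilizing vector for $\mathbf{M}_{rel}$, is rational; or Guillemin--Sjamaar's approximation of an arbitrary $\lambda\in\Delta(X)$ by points of $\Ccal(X)$ lying in a shrinking sequence of rational polytopes realized as moment polytopes of auxiliary polarized projective varieties. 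Some argument of this kind is indispensable to conclude $\overline{\Ccal(X)}=\Delta(X)$.
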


\begin{proof}
Let us check each points. Point (i) is a direct consequence of Theorem~\ref{th:convexity}. Point (ii) is due to the ``Shifting trick'' (see Lemma~\ref{lem:Shifting-2}): 
$$
\tfrac{\mu}{k}\in\Ccal (X)\ \Longleftrightarrow \ 
\left(X_{[k]}\times \{\mu\}\right)^{ss}=\left(X\times \{\tfrac{\mu}{k}\}\right)^{\Phi-ss}\neq \emptyset \ \Longleftrightarrow \ 
\tfrac{\mu}{k}\in \Delta(X).
$$
Here, $\mu\in (K\mu)^{\circ}$ corresponds to the element $[v_{\mu}]\in \Ocal_\mu\subset \Pbb(V_\mu)$ that is used in Lemma~\ref{lem:Shifting-2}.

We will propose two different proofs of the fact that $\Ccal(X)$ is dense in $\Delta(X)$.

The first proof works when we assume that $X$ is $G$-invariant, so that $X_{reg}$ is a $G$-invariant connected complex submanifold of $V\times M$.
If we apply the result of \cite[Section 2.3]{LMTW} to our connected $K$-Hamiltonian manifold $X_{reg}$, we see that 
\begin{itemize} 
\item There exists a unique open wall $\sigma$ of the Weyl chamber $\tgot^*_{0,+}$ with the property that $\Delta(X_{reg})\cap \sigma$ is dense in $\Delta(X_{reg})$. 
\item The preimage $Y=X_{reg}\cap \Phi^{-1}(\sigma)$ is a connected symplectic $T$-invariant submanifold of $X_{red}$, such that $KY$ is dense in  $X_{reg}$.
\item There exists an open, connected and dense subset $Y_{prin}\subset Y$ such that all points in $Y_{prin}$ have the same isotropy Lie algebra $\hgot$ satisfying 
$[\kgot_{\sigma},\kgot_{\sigma}]\subset \hgot\subset \kgot_\sigma$.
\item $\Delta(X_{reg})$ generates an affine subspace of $\sigma$ with direction the annihilator $\hgot^o$ of $\hgot$ in $\kgot^*$.
\end{itemize}

The smallest element $\beta(X)$ of the convex set $\Delta(X)$ is rational since, when $\beta(X)\neq 0$, it is an optimal destabilizing vector for the 
Mumford numerical invariant $\mathbf{M}_{rel}$. Finally, we have proven that the convex set $\Delta(X)$ generates 
the rational affine subspace $\beta(X)+\hgot^o$. Since $\Ccal(X)$ is the set of rational points 
of the convex set $\Delta(X)$, we can conclude that $\Ccal(X)$ is dense in $\Delta(X)$.

Let us give another proof of the denseness of the rational points of $\Delta(X)$ when $X$ is only invariant under the action of the Borel subgroup $B$. 
We replicate the arguments given by Guillemin-Sjamaar in \cite[Section 4]{GSj06}. 
Let $\lambda\in\Delta(X)$. There exists a sequence $(\Delta_n)$ of rational polytopes in $\tgot^*_{0,+}$ subject to the following requirements :
\begin{itemize}
\item $\lambda\in\Delta_n,\forall n\in\N$;
\item the diameter $\delta_n$ of the polytope $\Delta_n$ tends to $0$, when $n\to\infty$;
\item for each $n$ there exists a $G$-invariant projective variety $(Y_n,L_n)$ such that $\Delta(Y_n,L_n)=d_n\Delta_n$ for some $d_n\geq 1$.
\end{itemize}

Let $Y_n^\circ$ be the variety $Y_n$ with opposite complex structure and opposite polarization $L_n^{-1}$. From our construction, 
we have  $0\in\Delta(X_{[d_n]}\times (Y_n^\circ,L_n^{-1}))$, and that means 
that there exist $k\geq 1$ and a invariant section $s\in H^0(V\times \Pbb E \times Y_n, L^{kd_n}\otimes L_n^{-k})^G$ that does not 
vanish when restricted to $X\times Y_n$. This implies that there exists a dominant weight $\mu$, and non-zero sections 
$s_1\in H^0(V\times \Pbb E,L^{kd_n})^{U}_\mu$ and $s_2\in H^0(Y_n, L_n^{k})^{U}_\mu$ such that $s_1$ does not vanish 
on $X$ (here $U$ is the unipotent radical of $B$). We have then 
$$
\xi_n:=\frac{\mu}{kd_n}\in \Ccal(X)\cap\Delta_n, 
$$
and so $\|\xi_n-\lambda\|\leq\delta_n$. By this way, we have proved that $\Ccal(X)$ is dense in $\Delta(X)$. 
\end{proof}

\bigskip

We can now complete the proof of Theorem~\ref{th:convexity-algebrique}. Here, we work with an irreducible subvariety $X\subset V\times \Pbb E$ invariant under $G$, and we denote by $X_0\subset V$ the projection of $X$ onto $V$.

Let us apply Point (iii) to the $G$-affine variety $X_0\subset V$. There exists a finite collection $\{\lambda_1,\ldots,\lambda_r\}$ 
of dominant weights such that $\Ccal(X_0)=\sum_{j=1}^r \Q^{\geq 0}\lambda_j$ (see Section~\ref{sec:polyhedron}), and 
then\footnote{This property was first shown by Sjamaar \cite{Sjamaar98}.} $\Delta(X_0)=\overline{\Ccal (X_0)}=\sum_{j=1}^r \R^{\geq 0}\lambda_j$.

We have proven in Lemma \ref{lem:CX} that there exists is a finite collection $\{\xi_1,\ldots,\xi_p\}$ of rational dominant elements such that 
$$
\Ccal(X)={\rm convex\ hull}_\Q(\{\xi_1,\ldots,\xi_p\})+\Ccal(X_0).
$$
Since $\Delta(X)=\overline{\Ccal (X)}$, we obtain that $\Delta(X)={\rm convex\ hull}(\{\xi_1,\ldots,\xi_p\})+\Delta(X_0)$.

\subsection{Proof of Theorem~\ref{th:convexity-Bx}}\label{sec:preuve-th-convex-Bx}

In this section, we consider the action of a maximal torus $T\subset B\subset G$ on $V\times\Pbb E$. 
Let $T_0\subset K$ be the maximal torus such that $T$ is the complexification of $T_0$. The moment map $\Phi_{T_0}: V\times \Pbb E\to\tgot_0^*$,  
relative to the $T_0$-action on $V\times \Pbb E$, is the composition of the moment map $\Phi$ with the projection $\kgot^*\to\tgot_0^*$.

We associate a moment polyhedron $\Ccal_T(X)\subset \Xgot^*(T)_\Q$ to any irreducible $T$-stable subvariety 
$X\subset V\times \Pbb E$ (see Definition~\ref{def:P-T-X}). Theorem~\ref{th:convexity-algebrique}, 
applied to the case where the reductive group $G$  is abelian,  insures that $\Phi_{T_0}(X)\subset \tgot^*_0$ 
is a rational closed convex set such that 
$$
\overline{\Ccal_T(X)}=\Phi_{T_0}(X).
$$

Thanks to (\ref{equation:coro-Bx}), we know that 
\begin{equation}\label{eq:preuve-th-convexity-Bx}
\Ccal(\overline{Bx})=\Xgot^*(T)^+_\Q\cap \bigcap_{u\in U}\Ccal_T(\overline{Tux}).
\end{equation}
Finally, the closures of $\Ccal(\overline{Bx})$ and $\Ccal_T(\overline{Tux})$ are respectively equal to 
$\Delta(\overline{Bx})$ and $\Phi_{T_0}(\overline{Tux})$. Thus, (\ref{eq:preuve-th-convexity-Bx}) gives that 
$$
\Delta(\overline{Bx})=\tgot_{+,0}^*\cap \bigcap_{u\in U}\Phi_{T_0}(\overline{Tux}).
$$
The proof of Theorem~\ref{th:convexity-Bx} is completed.

\subsection{Proof of Theorem~\ref{th:connectedness}}\label{sec:preuve-th-connexe}

Let $N:=V\times M$ and $f=\frac{1}{2}\|\Phi\|^2 : N\to \R$.

The following result is well-known in the compact framework \cite{Lerman05}.
\begin{proposition} \label{prop:retraction}
Suppose that $N^{\Phi-ss}$ is non-empty. The map $N^{\Phi-ss}\to \Phi^{-1}(0), x\mapsto x_\infty$ is a continuous retraction.
\end{proposition}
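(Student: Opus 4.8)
The plan is to verify the three defining properties of a retraction in turn, treating continuity as the only substantive point. First I would record that the map is well defined: by the characterization $(V\times M)^{\Phi-ss}=\{x:\Phi(x_\infty)=0\}$ established right after Proposition~\ref{prop:phi-minimal-orbit-1}, every $x\in N^{\Phi-ss}$ has $x_\infty\in\Phi^{-1}(0)$. Next, that it is a retraction onto $\Phi^{-1}(0)$: if $\Phi(x)=0$ then trivially $x\in\overline{Gx}\cap\Phi^{-1}(0)$, so $\Phi^{-1}(0)\subset N^{\Phi-ss}$; and since $\Phi(x)=0$ forces $\Phi(x)\cdot x=0$, such an $x$ is a critical point of $f$, whence the negative gradient flow line through $x$ is stationary and $x_\infty=x$. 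This gives $r|_{\Phi^{-1}(0)}=\mathrm{id}$.

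The remaining and main task is continuity of $x\mapsto x_\infty$ at a point $x_0\in N^{\Phi-ss}$, with limit $p:=(x_0)_\infty\in\Phi^{-1}(0)$, a critical point at which $f$ attains its minimal value $f(p)=0$. The tool is the Łojasiewicz gradient inequality already invoked in the proof of Proposition~\ref{prop:flow-moment-map-square}: near $p$ there are constants $C>0$ and $\theta\in(0,1)$ with $\|\nabla f(z)\|\geq C\,f(z)^{\theta}$ on a ball $B(p,\delta)$. I would first establish the standard \emph{uniform tail estimate}: integrating $\frac{d}{dt}f(z(t))^{1-\theta}\leq -(1-\theta)C\,\|\dot z(t)\|$ along a flow line bounds the arc-length of the tail by $\frac{1}{(1-\theta)C}f(z(0))^{1-\theta}$, and a bootstrap on the ball shows that whenever $z(0)$ is close enough to $p$ and $f(z(0))$ is small enough, the whole trajectory stays in $B(p,\delta)$ and its limit satisfies $d(z_\infty,p)\leq\frac{1}{(1-\theta)C}f(z(0))^{1-\theta}$. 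Since the set of critical values is finite (Lemma~\ref{lem:crit-f}), one may also shrink the data so that $0$ is the only critical value below some $\eta>0$, forcing any critical limit with $f<\eta$ to lie in $\Phi^{-1}(0)$.

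With the tail estimate in hand, I would fix $\varepsilon>0$ and choose $\delta,\eta$ as above so that any flow started within $B(p,\delta)$ at $f$-level below $\eta$ converges, inside $\Phi^{-1}(0)$, to a point within $\varepsilon$ of $p$. Because $x_0(t)\to p$ and $f(x_0(t))\to 0$, there is a finite time $T$ at which $d(x_0(T),p)$ and $f(x_0(T))$ both lie strictly inside these thresholds. Continuous dependence of solutions of the smooth flow equation~\eqref{eq:flow-f} on initial data over the compact interval $[0,T]$—valid because the reference trajectory exists on all of $[0,\infty[$ by Proposition~\ref{prop:flow-moment-map-square}—then yields a neighborhood of $x_0$ on which $x\mapsto x(T)$ maps into $B(p,\delta)\cap\{f<\eta\}$. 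Applying the tail estimate to the continued trajectory $t\mapsto x(T+t)$, whose limit is exactly $x_\infty$, gives $d(x_\infty,p)<\varepsilon$, proving continuity.

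The hard part will be the uniformity hidden in both estimates: making the Łojasiewicz confinement and convergence bounds independent of the particular initial point in the small ball (the bootstrap), and ensuring the continuous-dependence estimate on $[0,T]$ is genuinely uniform despite $N=V\times M$ being non-compact. The latter is handled by observing, via Proposition~\ref{prop:phi-proper-orbit} and Proposition~\ref{prop:flow-moment-map-square}, that the trajectories issuing from initial data near $x_0$ remain, for $t\in[0,T]$, in one fixed compact subset of $V\times M$, where the classical Gronwall continuous-dependence argument applies verbatim.
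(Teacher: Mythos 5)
Your proposal is correct and takes essentially the same route as the paper: a Łojasiewicz gradient inequality near the zero level of $f$ giving a uniform tail estimate $d(z,z_\infty)\lesssim f(z)^{1-\theta}$, followed by flowing for a finite time and invoking continuity of the finite-time flow map, then a triangle-type estimate to conclude. The only difference is packaging: the paper applies the Łojasiewicz inequality on flow-invariant compact sets $\Kcal_{r,\eta}=N_r\cap\{f\leq\eta\}$, where $N_r=\overline{G\cdot\{v\in V,\ \|v\|\leq r\}}\times M$ is $G$-invariant and $f\vert_{N_r}$ is proper (via Sjamaar's lemma), rather than on a small ball about $p$ with a bootstrap — a construction that also settles, in one stroke, the uniform-compactness concern you raise in the continuous-dependence step (and note the cosmetic slip that your tail estimate bounds $d(z_\infty,z(0))$, so $d(z_\infty,p)$ picks up the extra term $d(z(0),p)$, which your thresholds already control).
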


\begin{proof} Here, we adapt the arguments of \cite{Lerman05} to our non-compact framework. For any $r\geq 0$, we consider 
$$
N_r:= \overline{G\cdot \{v\in V, \|v\|\leq r\}}\times  M,
$$
which is a closed and $G$-invariant subset of $N$. Sjamaar has shown in \cite[Lemma 4.10]{Sjamaar98} that the moment map 
$\Phi_V$ is proper when restricted to $\overline{G\cdot \{v\in V, \|v\|\leq r\}}$. 
It follows that $f$ is proper when restricted to $N_r$, and so 
$$
\Kcal_{r,\eta}:=N_r\bigcap\{ f\leq \eta\} 
$$
is compact for any $r,\eta\geq 0$. Notice that the compact sets $\Kcal_{r,\eta}$ are stable under the gradient flows $x\mapsto x_t$ 
for any $t\geq 0$. Denote by $d$ the distance on $N$ defined by the Riemannian metric.

\begin{lemma}\label{lem:distance-epsilon}
Let $r>0$ such that $N_r\cap N^{\Phi-ss}$ is non-empty. For any $\epsilon>0$, there exists $\eta>0$ such that 
\begin{enumerate}
\item $\Kcal_{r,\eta}\subset N^{\Phi-ss}$,
\item $d(x,x_\infty)\leq \epsilon$, $\forall x\in \Kcal_{r,\eta}$.
\end{enumerate}
\end{lemma}

\begin{proof} The condition $N_r\cap N^{\Phi-ss}\neq\emptyset$ implies that $N_r\cap\{ f=0\}\neq\emptyset$. 
By the Lojasiewicz gradient inequality there exist constants $C_r>0$, and $\alpha_r\in ]0,1[$, such that $C_r\|\nabla f(x)\|\geq  f(x)^{\alpha_r}$ 
for any $x$ in a neighborhood $U_r$ of the compact subset $N_r\cap\{ f=0\}$. Since $f$ is proper on $N_r$, there exists $\eta'>0$ 
such that the compact subset 
$\Kcal_{r,\eta'}$ is contained in $U_r$.  

If we take $x\in \Kcal_{r,\eta'}$, the gradient flow $x_t$ stays in $\Kcal_{r,\eta'}$ for ant $t\geq 0$: hence we get $C_r\|\nabla f(x_t)\|\geq  f(x_t)^{\alpha_r}$ for any 
$t\geq 0$. If we take the limit $t\to\infty$, we obtain that $f(x_\infty)=0$ for any $x\in \Kcal_{r,\eta'}$, i.e. $\Kcal_{r,\eta'}\subset N^{\Phi-ss}$. 
The former inequalities imply also that $d(x,x_\infty)\leq \frac{C_r}{1-\alpha_r}f(x)^{1-\alpha_r}$ for any $x\in \Kcal_{r,\eta'}$. 
Finally, if we take $\eta\in ]0,\eta']$ 
 such that   $\frac{C_r}{1-\alpha_r}(\eta)^{1-\alpha_r}\leq \epsilon$, we obtain 
 $d(x,x_\infty)\leq \epsilon$ for any $x\in \Kcal_{r,\eta}\subset  \Kcal_{r,\eta'}$. 
 \end{proof}
 
 \medskip
 
 We can now terminate the proof of Proposition~\ref{prop:retraction}.
 
 Let us fix $y\in N^{\Phi-ss}$ and $\epsilon>0$. Let $r>0$ such that $y\in N_{\frac{r}{2}}$. Let $\eta>0$, associated to $(\epsilon,r)$, as in Lemma~\ref{lem:distance-epsilon}.
We fix $t_0>0$ such that $y_{t_0}\in  \Kcal_{\frac{r}{2},\frac{\eta}{2}}\subset \Kcal_{r,\eta}$. 

Since the map $x\in N\mapsto x_{t_0}\in N$ is continuous, 
 there exists $\delta>0$ satisfying:  $\forall x\in N$ such that $d(x,y)\leq \delta$ we have 
$$
\begin{cases}
x\in N_r,&\\
 d(x_{t_0},y_{t_0})\leq \epsilon,& \\
|f(x_{t_0})-f(y_{t_0})|\leq \frac{\eta}{2}.&
\end{cases}
$$
 
 In particular, if $d(x,y)\leq \delta$, then $x_{t_0}\in\Kcal_{r,\eta}$ and so $x\in N^{\Phi-ss}$. We obtain finally
 $$
 d(x_\infty,y_\infty)\leq  d(x_\infty,x_{t_0})+ d(x_{t_0},y_{t_0})+ d(y_{t_0},y_\infty)\leq 3\epsilon
 $$
for any $x\in N$ satisfying $d(x,y)\leq \delta$. We have proved that the map $N^{\Phi-ss}\to \Phi^{-1}(0), x\mapsto x_\infty$ is continuous. 
Proposition~\ref{prop:retraction} is proved. 

\end{proof}

\medskip

The rest of this section is devoted to the proof of the Theorem~\ref{th:connectedness}.

Let $\Xcal\subset N$ be a complex submanifold, which we assume to be closed, connected and invariant under the $G$-action. 

Let us show first that the subset $\Xcal\cap \Phi^{-1}(0)$ is path-connected (if non-empty). Thanks to Proposition~\ref{prop:retraction}, we know that 
 $\varphi:\Xcal^{\Phi-ss}\to \Xcal\cap \Phi^{-1}(0), x\mapsto x_\infty$ is a continuous retraction. We have proved that $\Xcal^{\Phi-ss}$ is a connected open subset of $\Xcal$ 
 (see Proposition~\ref{prop:HKKN-Xcal-G-invariant}). Consequently, $\Xcal^{\Phi-ss}$ is path-connected, and this is also the case for $\Phi^{-1}(0)$ since it is equal to the image of 
 $\Xcal^{\Phi-ss}$ through the continuous map $\varphi$. 
 
 \bigskip
 
Now, the proof of Theorem~\ref{th:connectedness} is a consequence of the shifting trick. Let $K\lambda$ be the coadjoint orbit associated to $\lambda\in\kgot^*$. 
We consider the K\"ahler Hamiltonian $G$-manifold $\tilde{N}:= V\times M\times (K\lambda)^\circ$, with moment map $\tilde{\Phi}$. Through the isomorphism 
$\tilde{N}\simeq K\times_{K_\lambda}(V\times M)$, we have the identity $\tilde{\Phi}^{-1}(0)\simeq K\times_{K_\lambda} \Phi^{-1}(\lambda)$. 
Let us consider the complex submanifold $\tilde{\Xcal}:= \Xcal\times (K\lambda)^\circ$ of $\tilde{N}$. From the case studied above, we know that
$$
\tilde{\Phi}^{-1}(0)\bigcap \tilde{\Xcal} \simeq K\times_{K_\lambda} \left(\Phi^{-1}(\lambda)\bigcap \Xcal\right).
$$
is path-connected. Since the stabilizer subgroup $K_\lambda$ is path-connected, it implies that $\Phi^{-1}(\lambda)\bigcap \Xcal$ is also path-connected.

{\footnotesize

}

\end{document}